\newtheorem{theorem}{Theorem}[section]
 \newtheorem{lemma}[theorem]{Lemma}
 \newtheorem{proposition}[theorem]{Proposition}
 \newtheorem{corollary}[theorem]{Corollary}
\newtheorem*{theorem*}{Theorem}
\theoremstyle{definition}
 \newtheorem{definition}[theorem]{Definition}
 \newtheorem{construction}[theorem]{Construction}
 \theoremstyle{remark}
 \newtheorem{remark}[theorem]{Remark}
 \newtheorem{question}[theorem]{Question}
 \newtheorem{example}[theorem]{Example}
 \newtheorem{hypothesis}[theorem]{Hypothesis}
 \newtheorem*{acknowledgments}{Acknowledgements}
 \newtheorem*{remark*}{Remark}
\newcommand{\N}{\mathbf{N}}
\newcommand{\Z}{\mathbf{Z}}
\newcommand{\Q}{\mathbf{Q}}
\newcommand{\Var}{\mathsf{Var}}
\newcommand{\Sm}{\mathsf{Sm}}
\newcommand{\cdp}{\mathrm{cdp}}
\newcommand{\cdh}{\mathrm{cdh}}
\newcommand{\cd}{\mathrm{cd}}
\newcommand{\et}{\mathrm{\acute{e}t}}
\newcommand{\rig}{\mathrm{rig}}
\newcommand{\cris}{\mathrm{cris}}
\newcommand{\Zar}{\mathrm{Zar}}
\newcommand{\dom}{\mathsf{Dom}}
\newcommand{\Spec}{\operatorname{Spec}}
\newcommand{\Spf}{\operatorname{Spf}}  
\newcommand{\kr}{\,
\begin{picture}(-1,1)(-1,-2)\circle*{2}\end{picture}\ }
\newcommand{\Frac}{\mathrm{Frac}}
\newcommand{\salt}{\mathrm{s}\text{-}\mathrm{alt}}
\newcommand{\sm}{\mathrm{sm}}
\newcommand{\cC}{\mathcal{C}}
\newcommand{\co}{\mathcal{O}}
\newcommand{\fX}{\mathfrak{X}}
\newcommand{\fD}{\mathfrak{D}}
\DeclareMathOperator{\cosk}{cosk}
\DeclareMathOperator{\Bl}{Bl}
\DeclareMathOperator{\Sh}{Sh}
\renewcommand\labelenumi{(\roman{enumi})}
\renewcommand\theenumi\labelenumi
\begin{document}

\title[Integral $p$-adic cohomology theories]{Integral $p$-adic cohomology theories for open and singular varieties}
\author{Veronika Ertl}
\address{Université Caen - Normandie, Laboratoire de Mathématiques Nicolas Oresme, 6 boulevard Maréchal Juin, 14032 Caen, France}
\email{veronika.ertl@unicaen.fr}
\author{Atsushi Shiho}
\address{Graduate School of Mathematical Sciences,
The University of Tokyo,
3-8-1 Komaba,  Meguro-ku,
Tokyo 153-8914, Japan}
\email{shiho@ms.u-tokyo.ac.jp}
\author{Johannes Sprang}
\address{Universität Duisburg-Essen, Fakultät für Mathematik, Thea-Leymann-Straße 9, 45127 Essen, Germany}
\email{johannes.sprang@uni-due.de}
\date{\today}
\thanks{The first named author's research was supported in part by the EPSRC grant  EP/R014604/1 and by the DFG grant SFB 1085.
The second named author was supported in part by JSPS KAKENHI (Grant Numbers JP17K05162, JP18H03667, JP18H05233, JP23K03049, and JP24H00015). 
The third named author was supported by the DFG grant SFB 1085.}
\maketitle

\selectlanguage{british}
\vspace{-.7cm}
\begin{abstract} 
For open and singular varieties in positive characteristic $p$ 
we study the existence of an integral $p$-adic cohomology theory 
which is finitely generated,  compatible with log crystalline cohomology
and rationally compatible with rigid cohomology.
We develop such a theory under certain assumptions of resolution of singularities in positive characteristic, 
 by using $\cdp$- and $\cdh$-topologies.

Without resolution of singularities in positive characteristic, we prove the existence of a good $p$-adic cohomology theory for open  and singular varieties in cohomological degree 1,  by using 
split proper generically \'etale hypercoverings. 
This is a slight generalisation of a result due to Andreatta--Barbieri-Viale.
 We also prove that this approach does not work for higher cohomological degrees.
\end{abstract}

\selectlanguage{french}
\vspace{-0.7cm}
\begin{abstract}
Pour les vari\'et\'es ouvertes et singuli\`eres en caract\'eristique positive  $p$
on \'etudie l'existence d'une th\'eorie de cohomologie $p$-adique \`a co\'efficients entiers qui est de g\'en\'eration finie,  compatible \`a la cohomologie log cristalline et compatible rationellement \`a la cohomologie rigide.
On d\'eveloppe une telle th\'eorie sous des conditions de r\'esolution de singularit\'es en caract\'eristique positive, 
en utilisant les topologies $\cdp$ et $\cdh$.

Sans conditions de r\'esolution de singularit\'es  en caract\'eristique positive, on montre l'existence d'une bonne th\'eorie de cohomologie $p$-adique pour les vari\'et\'es ouvertes  et singuli\`eres en degr\'e cohomologique 1, en  utilisant 
les hyperrecouvrements propres génériquement étales scindés.
C'est une l\'eg\`ere g\'en\'eralisation d'un r\'esultat d'Andreatta--Barbieri-Viale.
 On montre aussi que cette approche ne marche pas en degr\'es cohomologiques sup\'erieurs.\\

\noindent
\textit{Key Words}: Integral $p$-adic cohomology, de Rham--Witt complex, resolution of singularities\\
\textit{Mathematics Subject Classification 2020}: 14F30, 14E15, 14F20, 18F10 
\end{abstract}

\selectlanguage{british}
\tableofcontents

The famous Weil conjectures can be seen as a starting point for the study of $p$-adic cohomology theories.
Already Weil has suggested to use a suitable cohomology theory to solve these conjectures for proper and smooth varieties over a field $k$ of characteristic $p$.  For $l\neq p$, this has long been solved by Grothendieck's school using $l$-adic cohomology.  The desire to fill the gap for $l=p$ motivates the search for a ``good'' $p$-adic  cohomology theory. 

In the following, let $k$ be a perfect field of positive characteristic $p$, $W(k)$ its ring of Witt vectors, and $K$ the fraction field of $W(k)$. 
The first candidate for such a theory was defined by Berthelot \cite{B_1974} 
after a suggestion due to Grothendieck in the form of crystalline cohomology $H^\ast_{\textrm{cris}}(X/W(k))$.
A drawback of crystalline cohomology is that it works well only for proper and smooth schemes -- 
for singular or non-proper schemes, the crystalline cohomology groups are not necessarily finitely generated over $W(k)$. 
In this case rigid cohomology $H^i_{\mathrm{rig}}(X/K)$ introduced by Berthelot \cite{B_1996}, \cite{B_1997} 
has become an important tool. However, rigid cohomology has coefficients in the fraction field $K$
of $W(k)$ and hence it is not an integral cohomology theory.

For a smooth variety $X$ over $k$, 
Davis--Langer--Zink \cite{DLZ_2011} introduced the overconvergent de Rham--Witt 
complex $W^{\dagger}\Omega_X^{\kr}$ as a certain subcomplex of Illusie's de Rham--Witt complex
$W\Omega_X^{\kr}$ (a complex of \'etale sheaves whose cohomology is isomorphic to the crystalline cohomology \cite{I_1979})
 and they \cite{DLZ_2011} and Lawless \cite{L_2018} proved that 
its rational cohomology $H^\ast(X, W^{\dagger}\Omega_X^{\kr}) \otimes_{ \Z} \Q$ 
is isomorphic to the rigid cohomology $H^\ast_{\mathrm{rig}}(X/K)$. 
While it is well-known that the integral overconvergent de Rham--Witt 
cohomology $H^\ast(X, W^{\dagger}\Omega_X^{\kr})$ can have infinitely generated torsion,
one might still hope that modulo torsion, it gives a  finitely generated $W(k)$-lattice in $H^\ast_{\mathrm{rig}}(X/K)$.
But in \cite{ES_2020} the first two authors have produced counterexamples to this assertion as well.

In this paper, we  would like to ask the question under which conditions one can expect the existence of a 
``good'' cohomology theory with coefficients in $W(k)$ for varieties $X$ over $k$. 
The basic requirements for ``goodness'' should include that the cohomology groups 
are finitely generated 
over $W(k)$, that they coincide with (log) crystalline cohomology for (log) smooth and proper varieties, 
and that they are rationally isomorphic to rigid cohomology.

In the first part of this paper, we construct  under certain assumptions of resolution of singularities in positive characteristic
(Hypotheses \ref{strong resolutions}, \ref{embedded resolutions}, \ref{embedded resolution with boundary} and \ref{weak factorisation}) 
a cohomology theory on the category $\Var_k$ of $k$-varieties
$$ R\Gamma_{\cdh}(X,a_{\cdh}^\ast A^{\kr}),\; X\in \Var_k$$
and show that it provides a ``good'' integral $p$-adic cohomology theory in the following sense:

\begin{theorem*}{\rm (Corollary \ref{cor: cdp-rh}, Theorems \ref{thm: fin gen}, \ref{thm:comparison})}
The cohomology theory  $R\Gamma_{\cdh}(-,a_{\cdh}^\ast A^{\kr})$ satisfies the following conditions:
\begin{enumerate}
\item
For any $X \in \Var_k$, the cohomology groups of $R\Gamma_{\cdh}(X,a_{\cdh}^\ast A^{\kr})$ are finitely generated over $W(k)$. 
\item 
If $X$ has a normal crossing compactification \textup{(}see Definitions \ref{def:normal crossing compactification}, 
\ref{def:pair} for definition\textup{)} by a proper smooth variety $\overline{X}$ and we denote by $(X,\overline{X})$ the log scheme
whose underlying scheme is  $\overline{X}$ endowed with the log structure 
associated to ``the divisor at infinity'' $\overline{X} \setminus X$ 
there exists a functorial quasi-isomorphism 
$$R\Gamma_{\cdh}(X,a_{\cdh}^\ast A^{\kr}) \simeq R\Gamma_{\cris}((X,\overline{X})/W(k)), $$ 
where $R\Gamma_{\cris}((X,\overline{X})/W(k))$ denotes the log crystalline cohomology  complex.
\item 
For any $X \in \Var_k$, there exists a functorial quasi-isomorphism 
$$R\Gamma_{\cdh}(X,a_{\cdh}^\ast A^{\kr}) \otimes_{\Z} \Q \simeq R\Gamma_{\rig}(X/K),$$
where $R\Gamma_{\rig}(X/K)$ denotes the rigid cohomology complex. 
\end{enumerate}
\end{theorem*}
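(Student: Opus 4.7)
The plan is to reduce all three properties to computations on smooth proper varieties equipped with normal crossing compactifications, exploiting the resolution-of-singularities hypotheses to build suitable cdh-hypercovers and then leveraging classical finite generation and comparison results for log crystalline cohomology.

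The common first step is the construction, for an arbitrary $X \in \Var_k$, of a smooth proper cdh-hypercover $\fU_\bullet \to X$ such that each $\fU_n$ admits a normal crossing compactification $(\fU_n, \overline{\fU}_n)$ by a smooth proper $k$-variety. Iteratively compactifying, resolving via Hypothesis \ref{strong resolutions}, and forming abstract blow-ups along the discrepancy loci (using Hypothesis \ref{embedded resolution with boundary} to keep the boundary a simple normal crossings divisor) produces such a hypercover. The construction of $A^{\kr}$ is set up so that, restricted to the small Nisnevich/Zariski site of a smooth $X$ with a normal crossing compactification $\overline{X}$, its sections compute $R\Gamma_{\cris}((X, \overline{X})/W(k))$.

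For (i), combine cdh-descent with the hypercover spectral sequence
$$E_1^{p,q} = H^q(\fU_p, a_{\cdh}^\ast A^{\kr}|_{\fU_p}) \Longrightarrow H^{p+q}_{\cdh}(X, a_{\cdh}^\ast A^{\kr}).$$
Each $E_1$-term is a log crystalline cohomology group of $(\fU_p, \overline{\fU}_p)$, hence finitely generated over $W(k)$ by the Kato--Tsuji theorem. Under the resolution hypotheses one controls the cdh-cohomological dimension of $X$, and the complex $A^{\kr}$ has bounded amplitude, so only finitely many $E_1$-terms contribute and finite generation of the abutment follows. For (ii), suppose $X$ has a normal crossing compactification $(X, \overline{X})$ with $\overline{X}$ smooth proper. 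The augmentation to the hypercover yields a canonical map $R\Gamma_{\cris}((X, \overline{X})/W(k)) \to R\Gamma_{\cdh}(X, a_{\cdh}^\ast A^{\kr})$. To show this is a quasi-isomorphism one reduces, via Hypothesis \ref{weak factorisation}, to analysing what happens under a single blow-up along a smooth centre meeting the boundary transversally; blow-up invariance of log crystalline cohomology then closes the argument, and the same input guarantees independence of the chosen compactification (hence functoriality).

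For (iii), invoke cdh-descent for rigid cohomology after tensoring with $\Q$ — either as an input established in an earlier section of the paper or via the Chiarellotto--Le~Stum style descent for proper hypercovers combined with Nisnevich descent — so that $R\Gamma_{\rig}(X/K)$ is computed by the same hypercover. The rational comparison then reduces to the term-by-term identification $R\Gamma_{\cris}((\fU_n, \overline{\fU}_n)/W(k)) \otimes_{\Z} \Q \simeq R\Gamma_{\rig}(\fU_n/K)$, which is the well-known comparison for smooth varieties with smooth proper normal crossing compactification. I expect the main obstacle to lie in (ii): proving blow-up invariance of the natural comparison in a form that is strong enough to yield functoriality in pairs, since one must track the cdh-sheafification of $A^{\kr}$ carefully across a sequence of weak factorisation steps. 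The secondary obstacle is verifying the cdh-descent input for rigid cohomology, which, although essentially folklore in characteristic zero, requires extra care in positive characteristic under our resolution assumptions.
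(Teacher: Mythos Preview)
Your overall strategy matches the paper's: reduce to smooth varieties with normal crossing compactifications via hypercoverings, and use finite generation of log crystalline cohomology together with known descent/comparison results. Parts (i) and (iii) are essentially as in the paper, modulo one bibliographic point: for (iii) the paper does not establish $\cdh$-descent for rigid cohomology directly, but instead invokes Nakkajima's proper-descent comparison \cite[Cor.\,7.7, Cor.\,11.7]{N_2012}, which gives $R\Gamma_{\rig}(X/K) \cong R\Gamma_{\et}(\overline{X}_{\bullet}, W\Omega^{\kr}_{\overline{X}_\bullet}(\log D_{\bullet})) \otimes_{\Z}\Q$ for a split proper (hence $\cdp$-) hypercovering by nc-pairs; the identification with $R\Gamma_{\cdh}(X,a_{\cdh}^\ast A^{\kr})\otimes_{\Z}\Q$ then follows from $\cdp$-descent for $A^{\kr}$, already proved. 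So the ``secondary obstacle'' you flag is handled by citing Nakkajima rather than by a Chiarellotto--Le~Stum argument.

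There is, however, a genuine gap in your treatment of (ii). Reducing via weak factorisation to blow-ups in smooth centres transverse to the boundary, and then invoking blow-up invariance of log crystalline cohomology, establishes only that $A^{\kr}$ satisfies the Mayer--Vietoris property for smooth \emph{blow-up} squares --- this yields $\cdp$-descent (Corollary~\ref{cor: cdp-descent}), not $\cdh$-descent. The $\cdh$-topology on $\Sm_k$ is generated by smooth blow-up squares \emph{and} smooth Nisnevich squares (Proposition~\ref{cor:cdprhtoponsm}), so you must also prove the Mayer--Vietoris property for the latter. This is where the paper spends most of its effort: an arbitrary smooth Nisnevich square need not extend to a square of nc-pairs, so the paper introduces ``good'' Nisnevich squares (Definition~\ref{def:goodsmcdstr}(ii)) and proves (Proposition~\ref{prop: good simplicial Zariski}, using embedded resolution with boundary) that every Nisnevich square becomes, $\cdp$-locally, an iterate of good ones; Mayer--Vietoris for good Nisnevich squares is then established in Proposition~\ref{prop: MV Zariski} via the weight filtration on the log de Rham--Witt complex. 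You identify the obstacle in (ii) as functoriality of the blow-up comparison, but the harder step is precisely this Nisnevich reduction, which your outline omits.
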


\begin{remark*}
A recent preprint by  Merici \cite{M_p2024} proposes a motivic approach to the same question, 
obtaining a cohomology theory that agrees with the one presented in this paper.  
His approach allows to weaken some of the hypotheses on resolution of singularities, and to deduce some of the properties of the cohomology theory like the Künneth formula purely formally. 
This showcases nicely the interplay between explicit and more formal approaches. 
\end{remark*}

The idea of the construction is as follows: 
Let us first recall that the log crystalline cohomology provides a candidate for a good $p$-adic cohomology 
on a smooth $k$-variety $X$ if $X$ admits a normal crossing compactification $\overline{X}$. 
In the first step, we prove -- under assumption of resolution of singularities -- 
that the log crystalline cohomology of a smooth $k$-variety does not depend upon the choice of a normal crossing compactification. 
In the second step, we use -- again assuming resolution of singularities -- $\cdh$-sheafification to extend this to a cohomology theory on all $k$-varieties. Finally, we prove descent statements, which allow us to compare our new cohomology theory to the existing theories.
This construction generalises an approach considered by Mokrane for smooth open varieties in \cite{M_1993(a)}.

Since the existence of resolution of singularities has not been established in full generality in positive characteristic,
one might have the idea to use de Jong's alteration theorem \cite{dJ_1996}  instead. 
Unlike resolutions, alterations are not coverings in the $\cdh$-topology,  but in the proper topology.
In the second part of the paper, we study the question
whether $R\Gamma_{\cris}((X_{\bullet},\overline{X}_{\bullet})/W(k))$, for a given variety $X$, is independent of 
the choice of a split  proper generically \'etale hypercovering $X_{\bullet}$ of $X$ 
with normal crossing compactification $\overline{X}_{\bullet}$. 
It turns out that this is not the case in general.
We have the following theorem:

\begin{theorem*}{\rm (Theorem \ref{thm:H^1}, Proposition \ref{prop:negative}, Corollary \ref{cor:negativehighercoh})}
Let $X$ be a $k$-variety, let $\overline{X}$ be its compactification and 
let $X_{\bullet}$ be a split proper generically \'etale hypercovering of $X$ 
with a normal crossing compactification $\overline{X}_{\bullet}$ by a proper and smooth simplicial 
$k$-variety over $\overline{X}$. 
\begin{enumerate}
\item For $i=0,1$ the cohomology group $H^i_{\cris}((X_{\bullet},\overline{X}_{\bullet})/W(k))$ is independent 
of the choice of 
$(X_{\bullet},\overline{X}_{\bullet})$.

\item 
For $i\geqslant 2$ the cohomology group $H^i_{\cris}((X_{\bullet},\overline{X}_{\bullet})/W(k))$ is 
\textbf{not} in general independent 
of the choice of
$(X_{\bullet},\overline{X}_{\bullet})$.
\end{enumerate}
\end{theorem*}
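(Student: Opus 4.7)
The plan is to treat the positive result (i) and the negative result (ii) separately, since they require different techniques.

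For (i), the case $i = 0$ is essentially formal: the augmentation identifies $H^0_{\cris}((X_\bullet,\overline{X}_\bullet)/W(k))$ with the equalizer $\ker\bigl(H^0(\overline{X}_0, W\co) \rightrightarrows H^0(\overline{X}_1, W\co)\bigr)$, and since $\overline{X}_0 \to \overline{X}$ is surjective this equalizer computes $W(k)^{\pi_0(X)}$, which depends only on $X$. For $i = 1$, I would extend the Andreatta--Barbieri-Viale strategy. Any two valid choices $(X_\bullet,\overline{X}_\bullet)$ and $(X'_\bullet,\overline{X}'_\bullet)$ admit a common refinement, so it suffices to show that a morphism $\phi$ between two such hypercoverings of $X$ induces an isomorphism on $H^1_{\cris}$. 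Analyzing $\phi^*$ via the spectral sequence of $\phi$, injectivity reduces to the $i = 0$ assertion applied to the relative fibers of $\phi$, and surjectivity exploits an intrinsic description of log crystalline $H^1$ as the crystalline realization of a $1$-motive attached to $X$ (built from $\mathrm{Pic}^0$ of a smooth alteration and a character group capturing the boundary). The main obstacle here is controlling \emph{integral} (as opposed to merely rational) torsion during descent: rationally the statement reduces quickly to rigid cohomology via Theorem~\ref{thm:comparison}, but integrally one must ensure that the refinement map does not lose torsion, which is exactly what the $1$-motive description is designed to control.

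For (ii), the strategy proceeds in two steps. The core is an explicit counterexample in cohomological degree~$2$ (Proposition~\ref{prop:negative}). My plan is to pick a smooth proper $X$ together with a non-trivial proper generically étale cover $Y \to X$ (arising, for instance, as a finite cover ramified along a simple normal crossing divisor of $\overline{X}$, or as a suitable alteration), and compare the identity hypercovering of $X$ with the \v{C}ech-type hypercovering built from $Y \to X$. The ramification or non-étale locus contributes extra cycle-class-type elements in $H^2_{\cris}((Y,\overline{Y})/W(k))$ that survive the \v{C}ech differentials and produce $W(k)$-torsion absent from $H^2_{\cris}(X/W(k))$. Rationally these discrepancies must cancel (by the rigid comparison), so the obstruction class is necessarily torsion; the main technical difficulty is choosing $Y \to X$ concretely enough that one can compute both sides and exhibit a nonzero torsion obstruction. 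Once this is in hand, the bootstrap to arbitrary $i \geq 2$ (Corollary~\ref{cor:negativehighercoh}) follows by taking products of the degree-$2$ counterexample with a smooth proper variety of positive cohomology (e.g.\ $\mathbb{P}^{i-2}$) and applying K\"unneth to shift the discrepancy into cohomological degree $i$.
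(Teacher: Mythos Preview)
Your overall architecture matches the paper, but there are genuine gaps in each of the three pieces.

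\medskip

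\textbf{Part (i), $i=1$.} Your plan is essentially the Andreatta--Barbieri-Viale argument: identify $H^1_{\cris}((X_\bullet,\overline{X}_\bullet)/W(k))$ with the crystalline realization ${\bold T}_{\cris}$ of the Picard $1$-motive ${\rm Pic}^+(X_\bullet,\overline{X}_\bullet)$, and then invoke the independence of the $1$-motive. The problem is that the comparison ${\bold T}_{\cris}({\rm Pic}^+) \cong H^1_{\cris}$ is only proven by ABV for $p \geq 3$; the paper explicitly avoids this. Instead, the paper uses only the \emph{easier} ABV input, namely that ${\rm Pic}^{0,{\rm red}}_{\overline{X}_\bullet}$ and ${\rm Div}^0_{D_\bullet}(\overline{X}_\bullet)$ are independent of the hypercovering (Corollary~\ref{cor:ABV}), and then builds the comparison by hand: it shows via the slope spectral sequence that $H^1_{\cris}(\overline{X}_\bullet/W(k))$ sits in a short exact sequence with $H^0(\overline{X}_\bullet,W\Omega^1)$ and $H^1(\overline{X}_\bullet,W\mathcal{O})$, identifies these two pieces directly with the Dieudonn\'e modules of the \'etale and connected parts of ${\rm Pic}^{0,{\rm red}}_{\overline{X}_\bullet}[p^\infty]$ (Propositions~\ref{prop:h0}, \ref{prop:h1}), and then handles the boundary contribution ${\rm Div}^0_{D_\bullet}$ via the weight filtration and an injectivity of ${\rm NS}\otimes W(k) \hookrightarrow H^2_{\cris}$ (Lemma~\ref{lem:nsinj}). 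Your sentence ``injectivity reduces to the $i=0$ assertion applied to the relative fibers of $\phi$'' does not correspond to any step that actually works here; there is no useful spectral sequence for a morphism of simplicial objects whose $E_2$-page is $H^0$ of fibers.

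\medskip

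\textbf{Part (ii), the degree-$2$ counterexample.} You write ``pick a smooth proper $X$'' but then speak of ramification along $\overline{X}\setminus X$; this is internally inconsistent. More substantively, a cover by a group of order prime to $p$ will never produce an obstruction, since the relevant group cohomology is then killed by a unit in $W(k)$. The paper's example is an Artin--Schreier ($\Z/p\Z$-Galois) cover $\overline{X}_0 \to \overline{X} = \mathbb{P}^1_k$, with $X$ the complement of the ramification locus; the nonvanishing of $H^2_{\cris}((X_\bullet,\overline{X}_\bullet)/W(k))$ comes from $H^{2}(\Z/p\Z, W(k)) \cong k$ and from a Riemann--Hurwitz estimate on ${\rm rk}\,H^1_{\cris}((X_0,\overline{X}_0)/W(k))$, not from cycle classes.

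\medskip

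\textbf{The bootstrap.} Taking products with $\mathbb{P}^{i-2}$ fails for odd $i$, since $H^{i-2}_{\cris}(\mathbb{P}^{i-2}/W(k)) = 0$ when $i$ is odd. The paper instead multiplies by a proper smooth $Y$ with $H^n_{\cris}(Y/W(k))$ nonzero and free in \emph{every} degree $0 \le n \le 2\dim Y$ (e.g.\ a self-product of a curve of positive genus), so that K\"unneth transports the degree-$2$ discrepancy to every $i$ with $2 \le i \le 2\dim Y + 2$.
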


The first statement has been shown by Andreatta--Barbieri-Viale for  $p\geqslant 3$  in \cite{ABV_2005}. We give an alternative proof which also works in characteristic $2$. Moreover, our argument to prove the assertion (ii) for $i=2$ implies 
that there does \textbf{not exist} a functor 
$$ A_{\et}^{\kr}: \Sm_k \rightarrow C^{\geqslant 0}(W(k)),$$
where $\Sm_k$ denotes the category of smooth $k$-varieties 
and $C^{\geqslant 0}(W(k))$ denotes the category of complexes of $W(k)$-modules of non-negative degree, 
which provides a ``good'' integral $p$-adic cohomology theory in the following sense (see Remark \ref{rem:etexplicit}):

\begin{enumerate}
\item
For any smooth $k$-variety $X$ with a normal crossing compactification $\overline{X}$ by a proper and smooth $k$-variety, 
there exists a functorial quasi-isomorphism 
$$A^{\kr}_{\et}(X) \simeq R\Gamma_{\cris}((X,\overline{X})/W(k)).$$ 
\item
It satisfies Galois descent in the sense that, for any \v{C}ech hypercovering 
$X_{\bullet} \rightarrow X$ associated to a finite \'etale Galois covering 
$X_0 \rightarrow X$, 
the induced morphism 
$$A^{\kr}_{\et}(X) \rightarrow A^{\kr}_{\et}(X_{\bullet})$$ 
is a quasi-isomorphism. 
\end{enumerate}

This is compatible with the non-existence result of Abe and Crew in \cite{AC}, which says that there is no integral $p$-adic cohomology theory which is finitely generated, coincides rationally with rigid cohomology and satisfies finite \'etale descent.

\begin{acknowledgments}
We would like to thank Shane Kelly, 
Wies\l{}awa Nizio\l{} and Alexander Schmidt for helpful discussions related to this project.
We are also very grateful to Aise Johan de Jong for his suggestions concerning several properties of our cohomology theory.
The first named author would like to thank the Isaac Newton Institute for Mathematical Sciences for support and hospitality during the programme ``$K$-theory, algebraic cycles and motivic homotopy theory'' when work on this paper was undertaken.
\end{acknowledgments}

\subsection*{Conventions}\label{conventions}

Throughout the paper, $p$ is a fixed prime, $k$ is a perfect field of characteristic $p$, 
$W(k)$ is the ring of Witt vectors of $k$ and $K=\Frac(W(k))$ is the fraction field of $W(k)$. 
By a variety over $k$ or a $k$-variety we mean a reduced separated scheme of finite type over $k$.
We denote by $\Var_k$ the category of varieties over $k$, and by $\Sm_k$ the category of smooth varieties over $k$.
Note that finite limits exist in $\Var_k$: The limit of a finite diagram $\{X_i\}_i$ in $\Var_k$ 
is given by $(\varprojlim_i X_i)_{\rm red}$, where $\varprojlim_i X_i$ is the limit of the diagram $\{X_i\}_i$ 
in the category of schemes over $k$ and $(-)_{\rm red}$ denotes the maximal reduced closed subscheme. 
On the other hand, finite limits do not always exist in $\Sm_k$.

\section{Construction under assumption of resolution of singularities}

Let $X$ be a smooth variety over $k$. 
When studying finiteness properties it is common to consider compactifications of $X$. 
While by Nagata's compactification theorem every smooth $k$-variety has a compactification (i.e., a quasi-compact open immersion into a proper $k$-variety), it might be a rather complicated one.
However, under the assumption of resolution of singularities, it is possible to reduce to the case of normal crossing compactifications. 

\subsection{Assumptions concerning resolution of singularities}

In this section, we will assume  resolution of singularities in positive characteristic.
To make this notion precise we first introduce some notations and terminology which  streamline the discussion about compactifications.

\begin{definition}\label{def:pair}
A \emph{geometric pair} is a pair $(X,\overline{X})$ of $k$-varieties 
 such that $\overline{X}$ is proper and 
equipped with an open immersion $X\hookrightarrow\overline{X}$ with dense image.
A geometric  pair is  a \emph{normal crossing  pair} (or an $nc$-\emph{pair} for short)  
if $\overline{X}$ is proper smooth and $\overline{X} \backslash X$ 
is a simple normal crossing divisor in $\overline{X}$. 

A morphism of geometric pairs $f:(X_1,\overline{X}_1) \rightarrow (X_2,\overline{X}_2)$ is a morphism of $k$-varieties $f:\overline{X}_1 \rightarrow \overline{X}_2$ such that $f(X_1)\subset X_2$.
A morphism of normal crossing pairs is a morphism  of geometric pairs.

Denote by $\Var_k^{geo}$ and $\Var_k^{nc}$ the categories of geometric and normal crossing pairs respectively.
\end{definition}

Beware that, when we say that $(X,\overline{X})$ is a normal crossing pair, we always assume that the complement $\overline{X} \setminus X$ is a \emph{simple} normal crossing divisor. 

\begin{remark}\label{rem:finiteinverselimit}
 Note that finite limits exist in $\Var_k^{geo}$: The limit of a finite diagram 
$\{(X_i,\overline{X}_i)\}_i$ in $\Var_k^{geo}$ is given by $(\varprojlim_i X_i, (\varprojlim_i \overline{X}_i)')$, 
where $\varprojlim_i X_i$, $\varprojlim_i \overline{X}_i$ are the limits of the diagrams $\{X_i\}_i, \{\overline{X}_i\}_i$ 
in the category $\Var_k$ respectively and $(\varprojlim_i \overline{X}_i)'$ is the closure of 
$\varprojlim_i X_i$ in $\varprojlim_i \overline{X}_i$ with reduced closed subscheme structure. 
On the other hand, finite limits do not always exist in $\Var_k^{nc}$. 
\end{remark}

\begin{definition}
Let $X$ be a smooth $k$-variety, $D$ a simple normal crossing divisor in $X$ and 
$Z$ a smooth closed $k$-subvariety of $X$. We say that \emph{$Z$ has normal crossing with $D$} if, 
Zariski locally on $X$, there exists an \'etale morphism 
\[ 
X \rightarrow  \Spec k[x_1,\dots,x_a,\dots,x_b,\dots, x_c,\dots,x_d],
\]
for some $0 \leqslant a \leqslant b \leqslant c \leqslant d$ such that
\begin{align*}
	Z&=\{ x_1=\cdots=x_b=0 \}, \\
	D&=\{ x_1x_2\cdots x_a x_{b+1}\cdots x_c=0 \}.
\end{align*}
\end{definition}

\begin{definition}
\begin{enumerate}
\item
A morphism of geometric pairs $f:(X_1,\overline{X}_1) \rightarrow (X_2,\overline{X}_2)$ is called \emph{strict} if $f^{-1}(X_2)=X_1$.

\item 
A morphism of geometric pairs $f:(X_1,\overline{X}_1) \rightarrow (X_2,\overline{X}_2)$ is called \emph{birational}, if the underlying morphism of $k$-varieties $f:\overline{X}_1 \rightarrow \overline{X}_2$ is birational. The morphism $f:(X_1,\overline{X}_1) \rightarrow (X_2,\overline{X}_2)$ is called \emph{a closed immersion}, if the underlying morphism $f:\overline{X}_1 \rightarrow \overline{X}_2$ is a closed immersion.
\item 
A \emph{weak factorisation} (compare \cite[\S\,1.2]{AT_2019}) of a strict birational morphism $f:(X_1,\overline{X}_1)\rightarrow (X_2,\overline{X}_2)$ of normal crossing  pairs which is an isomorphism on $ X_2$ is 
a diagram of rational maps
	$$
	\xymatrix{
	(X_1,\overline{X}_1)=(V_0,\overline{V}_0) \ar@{-->}[r]^-{f_1}& (V_1,\overline{V}_1) \ar@{-->}[r]^-{f_2} & \cdots \ar@{-->}[r] & (V_{l-1},\overline{V}_{l-1})\ar@{-->}[r]^-{f_l}  & (V_l,\overline{V}_l)= (X_2,\overline{X}_2)}
	$$
such that
\begin{enumerate}
\item 
the composition $f_l\circ f_{l-1}\circ\cdots\circ f_2\circ f_1$ gives $f$;

\item 
for $i\in\{0,\ldots,l-1\}$ the maps $\xymatrix{(V_i,\overline{V}_i)\ar@{-->}[r]&(X_2,\overline{X}_2)}$ are  strict morphisms and induce isomorphisms on $X_2$;

\item 
for every $i\in\{1,\ldots,l\} $ either $f_i$ or $f_i^{-1}$ is a blow-up along a smooth centre $Z_i$ which is a subscheme of $\overline{V}_i$ or $\overline{V}_{i-1}$ respectively disjoint from (the inverse image of) $X_2$;

\item 
for each $i\in\{1,\ldots,l\}$ the subscheme $Z_i$ of $\overline{V}_i$ or $\overline{V}_{i-1}$ respectively has normal crossing with the normal crossing divisor $\overline{V}_i\backslash V_i$ or $\overline{V}_{i-1}\backslash V_{i-1}$ respectively.
\end{enumerate}
\end{enumerate}
\end{definition}

Conceptually, normal crossing pairs are to geometric pairs 
what smooth varieties are to varieties.
In this section we assume resolution of singularities for both situations.
We will indicate clearly where each of the hypotheses is needed.

\begin{hypothesis}[Strong resolution of singularities]\label{strong resolutions}
\phantom{ }
\begin{enumerate}
\item 
 For any $k$-variety $X$ there exists a proper birational morphism $f:X'\rightarrow X$ from a smooth $k$-variety $X'$  
which is an isomorphism on the smooth locus $X_{\sm}$ of $X$.
 
\item 
For every proper birational morphism  $f:X'\rightarrow X$ of smooth $k$-varieties, there is a sequence of birational blow-ups along smooth centres $X_n \rightarrow X_{n-1} \rightarrow \cdots \rightarrow X_1 \to X$ such that the composition $X_n \rightarrow X$ factors through $f$. 
\end{enumerate}
\end{hypothesis}

\begin{remark}
Note that it would have been sufficient to assume strong resolution of singularities for irreducible varieties, i.e., integral schemes. 
Then one obtains a resolution for an arbitrary variety as the disjoint union of resolutions of its irreducible components.
\end{remark}

\begin{hypothesis}[Embedded resolution of singularities]\label{embedded resolutions}
For any geometric  pair $(X,\overline{X})$ with  $\overline{X}$ smooth, there exists a strict birational morphism $f:(X,\overline{X}')\rightarrow (X,\overline{X})$ from a normal crossing pair $(X,\overline{X}')$ which  is an isomorphism on $X$. 
\end{hypothesis}

Additionally, we will need stronger assumptions concerning certain morphisms of geometric and normal crossing 
 pairs.

\begin{hypothesis}[Embedded resolution of singularities with boundary]\label{embedded resolution with boundary}
For any strict closed immersion of geometric pairs $(Y,\overline{Y})\rightarrow (X,\overline{X})$ with $Y$ smooth and $(X,\overline{X})$ a normal crossing pair
there exists a commutative diagram
	$$
	\xymatrix{(Y,\overline{Y}') \ar@{^{(}->}[r] \ar[d] & (X,\overline{X}') \ar[d]\\
	(Y,\overline{Y}) \ar@{^{(}->}[r] & (X,\overline{X})}
	$$
such that $(Y,\overline{Y}')$ and $(X,\overline{X}')$ are normal crossing  pairs, 
the horizontal morphisms are strict  closed immersions, 
the vertical morphisms are strict birational morphisms which are isomorphisms outside $\overline{Y} \setminus Y$, and $\overline{Y}'$ has normal crossing with $\overline{X}' \setminus X$. (Compare \cite[Thm.\,1.4]{CJS}.)
\end{hypothesis}

\begin{hypothesis}[Weak factorisation]\label{weak factorisation}
For any strict birational morphism $(X,\overline{X}')\rightarrow (X,\overline{X})$ of normal crossing pairs 
which is an isomorphism on $X$, 
there  exists a weak factorisation  of it.
\end{hypothesis}

\begin{remark}
Currently, Hypothesis \ref{strong resolutions}(i) is known in the case of dimension $\leqslant 3$ (\cite{CP1}, \cite{CP2}) 
and Hypotheses \ref{strong resolutions}(ii), \ref{embedded resolutions}, 
\ref{embedded resolution with boundary}, \ref{weak factorisation} are known 
in the case of dimension $\leqslant 2$. (For Hypothesis \ref{embedded resolution with boundary}, 
see \cite[Thm.\,1.4]{CJS}.) It is proven in \cite{AT_2019} that Hypothesis \ref{weak factorisation} holds 
if a certain hypothesis on usual and embedded resolutions stronger than 
Hypotheses \ref{strong resolutions} and 
\ref{embedded resolutions} is true. 
\end{remark}

\subsection{The $\cdp$- and $\cdh$-topology on $k$-varieties}

In this subsection we will consider the completely decomposed proper topology (also called the $\cdp$-topology) 
 and the $\cdh$-topology
on the category $\Var_k$ of $k$-varieties  and the category $\Sm_k$ of smooth $k$-varieties. 
We prove that, under Hypothesis \ref{strong resolutions}, the $\cdp$-topology (resp.\ the $\cdh$-topology) on $\Sm_k$ is generated by blow-ups with smooth centre (resp.\ blow-ups with smooth centre and Nisnevich coverings).

Recall that a \emph{$\cd$-structure} on a small category $\mathcal{C}$ 
 with initial object $\emptyset$
is a class $\mathcal{Q}$ of commutative squares 
\begin{equation}\label{eq:defcdstr}
	\xymatrix{B \ar[r] \ar[d] & Y \ar^-{p}[d]\\
	A \ar^-{e}[r] & X} 
\end{equation}
which is closed under isomorphisms \cite[Def.\,2.1]{V_2010}. 
For a family $\mathcal{Q}_i \, (i \in I)$ of $\cd$-structures on $\mathcal{C}$, 
the topology generated by $\mathcal{Q}_i \, (i \in I)$ is defined to be 
the topology generated by the coverings $\{A \xrightarrow{e} X, Y \xrightarrow{p} X\}$ 
for commutative squares \eqref{eq:defcdstr} in some $\mathcal{Q}_i$ and 
the empty covering of the initial object $\emptyset$.

\begin{definition}\label{def: squares}
We consider the following $\cd$-structures on $\Var_k$:
\begin{enumerate}
\item \label{eq:blowup_cd_square} The \emph{blow-up $\cd$-structure} on $\Var_k$ consists of Cartesian squares of the form
	\begin{equation*}
	\xymatrix{Z' \ar@{^{(}->}[r] \ar[d] & X' \ar^{p}[d]\\
	Z \ar@{^{(}->}^{e}[r] & X,}
	\end{equation*}
where $e$ is a closed immersion and $p$  is a proper morphism which is an isomorphism over $X \setminus e(Z)$. 

\item \label{eq:Zariski_cd_square} The \emph{Nisnevich $\cd$-structure} on $\Var_k$ 
 consists of
Cartesian squares of the form
	\begin{equation*}
	\xymatrix{Y' \ar@{^{(}->}[r] \ar[d] & X' \ar^{p}[d]\\
	Y \ar@{^{(}->}^{e}[r] & X,}
\end{equation*}
where $e$ is an open immersion and $p$ is an \'etale morphism such that the morphism $p^{-1}(X \setminus e(Y)) \rightarrow X \setminus e(Y)$ 
(with reduced closed subscheme structure) induced by $p$ is an isomorphism.
\end{enumerate}
\end{definition}

These $\cd$-structures and their properties have been studied by Voevodsky in \cite{V_2010a,V_2010}. Note that our blow-up $\cd$-structure is called \emph{lower $\cd$-structure} in \cite{V_2010a}, while the Nisnevich $\cd$-structure is also called \emph{upper $\cd$-structure} in \cite{V_2010a}. In \cite{V_2010a} Voevodsky has proven the following:

\begin{theorem}[{\cite[Thm.\,2.2]{V_2010a}}]
	The blow-up $\cd$-structure and the Nisnevich $\cd$-structure on $\Var_k$ are complete, 
regular and bounded. \textup{(}For definitions of these terms, 
see \cite[Def.\,2.3, 2.10, 2.22]{V_2010}.\textup{)}
\end{theorem}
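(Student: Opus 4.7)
The plan is to verify, for each of the two cd-structures and each of the three properties (\emph{complete}, \emph{regular}, \emph{bounded}) as defined in \cite[Def.\,2.3, 2.10, 2.22]{V_2010}, the corresponding conditions of Voevodsky. Both cd-structures in Definition \ref{def: squares} are defined through Cartesian squares, which makes the checks of completeness and regularity largely formal, while boundedness requires producing a suitable density structure on $\Var_k$.

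For completeness, I would use the pullback stability of both classes of squares: a proper (resp.\ \'etale) morphism pulls back to a proper (resp.\ \'etale) morphism, and the condition that $p$ becomes an isomorphism over the complement of $e(Z)$ (resp.\ $e(Y)$, with reduced structure) is preserved under base change along an arbitrary morphism $T\to X$, precisely because the distinguished squares are required to be Cartesian. Completeness in Voevodsky's sense then follows formally: any covering sieve in the associated topology can be refined by a sieve generated by a single distinguished square.

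For regularity, I would observe that in both cd-structures the lower horizontal morphism $e$ is a monomorphism (being respectively a closed or open immersion) and that the upper-left corner $B$ is the fibre product $A \times_X Y$ by Cartesianness. These two properties together suffice to verify the diagonal condition on $B \to B \times_Y B$ required by \cite[Def.\,2.10]{V_2010}, since both diagonals reduce to isomorphisms via the universal property of fibre products.

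The main obstacle is boundedness. I would introduce the density structure $d_\bullet$ on $\Var_k$ with $d_i(X)$ the set of open immersions $U \hookrightarrow X$ whose closed complement has codimension $\geq i$ in every irreducible component of $X$; since $k$-varieties have finite Krull dimension this structure is bounded by $\dim(X)$. Boundedness of the two cd-structures in the sense of \cite[Def.\,2.22]{V_2010} then amounts to showing that, given a distinguished square and an element of $d_i(X)$, one can produce compatible refining elements in $d_i(A)$ and $d_i(Y)$. For the blow-up cd-structure one uses that the centre $Z$ has positive codimension in every component it meets and that $p$ is an isomorphism outside $e(Z)$; for the Nisnevich cd-structure one uses that \'etale morphisms preserve local dimensions and that $p$ becomes an isomorphism over $X \setminus e(Y)$. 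The delicate point is the codimension bookkeeping across irreducible components in the presence of reducible varieties, which is where careful casework is unavoidable and where the argument departs from being purely formal.
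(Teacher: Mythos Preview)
The paper does not give its own proof of this statement; it is recorded as a citation of \cite[Thm.\,2.2]{V_2010a} and used as a black box. Your sketch is therefore not being compared against anything in the present paper, but rather against Voevodsky's original argument, whose broad outline you reproduce correctly: completeness via pullback stability of the distinguished squares, regularity via the monomorphism and pullback conditions plus a diagonal check, and boundedness via the standard density structure by codimension of the closed complement.

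One genuine imprecision: your description of the regularity condition is off. The third clause of \cite[Def.\,2.10]{V_2010} is not a condition on $B \to B \times_Y B$ (which is automatically an isomorphism once $e$, and hence its pullback $B\to Y$, is a monomorphism). Rather, one must show that the square built from the relative diagonals of the \emph{vertical} maps---with $Y \to Y\times_X Y$ in the role of the new lower horizontal map and $B \to B\times_A B$ above it---is again distinguished. For the Nisnevich cd-structure this uses that the diagonal of an \'etale morphism is an open immersion; for the blow-up cd-structure it uses that the diagonal of a proper morphism is a closed immersion and that $p$ remains an isomorphism over the complement in $Y\times_X Y$. Your claim that ``both diagonals reduce to isomorphisms'' does not establish this and is not what is needed.

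Your identification of boundedness as the non-formal part, and of the codimension bookkeeping across reducible varieties as the delicate point, is accurate and matches where the work lies in Voevodsky's proof.
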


Using the above $\cd$-structures we define the $\cdp$-topology, the Nisnevich topology and the $\cdh$-topology on $\Var_k$:

\begin{definition}
The \emph{$\cdp$-topology on $\Var_k$} is the topology generated by the blow-up $\cd$-structure on $\Var_k$. The \emph{Nisnevich topology on $\Var_k$} is the topology generated by the Nisnevich $\cd$-structure.
The \emph{$\cdh$-topology on $\Var_k$} is the topology generated by the blow-up $\cd$-structure and the Nisnevich $\cd$-structure.

We denote by $\Var_{k,\cdp}$ and $\Var_{k,\cdh}$ the sites obtained from the category of $k$-varieties endowed with the $\cdp$- and $\cdh$-topology respectively.
\end{definition}

Let us observe that the morphism
	\[
	e\coprod p\colon Z \coprod X'\rightarrow X
	\]
induced by a blow-up  square in Definition \ref{def: squares}\ref{eq:blowup_cd_square} is a completely decomposed proper morphism in the usual sense:

\begin{definition}\label{def:cdpmorphism}
	A morphism $p\colon Y\rightarrow X$ in $\Var_k$ is a \emph{completely decomposed proper morphism} if it is proper and for all $x\in X$ there is a  point $y\in p^{-1}(\{x\})\subseteq Y$ such that the induced morphism on residue fields $\kappa(x)\rightarrow \kappa(y)$ is an isomorphism.
\end{definition}

Conversely, the following  lemma shows that every completely decomposed proper morphism is a covering in the $\cdp$-topology:

\begin{lemma}[{cf.\,\cite[Lem.\,5.8]{SV_2000}}]\label{lem:cdpvardescription}
 The topology $\tau$ on $\Var_k$ generated by the families of morphisms 
$\{f_i \colon X_i \rightarrow X\}_{i \in I}$ with $I$ finite and 
$\coprod_i f_i \colon \coprod_i X_i \rightarrow X$ a completely decomposed proper morphism 
coincides with the $\cdp$-topology on $\Var_k$. In particular, every
completely decomposed proper morphism in $\Var_k$ is 
a covering in the $\cdp$-topology on $\Var_k$. 
\end{lemma}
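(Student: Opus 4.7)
The plan is to prove the two inclusions $\cdp\subseteq\tau$ and $\tau\subseteq\cdp$; the ``in particular'' assertion then follows because any completely decomposed proper morphism $Y\rightarrow X$ is by definition a $\tau$-covering.

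For $\cdp\subseteq\tau$, I would check directly that given any blow-up $\cd$-square as in Definition \ref{def: squares}(i), the morphism
$$e\coprod p\colon Z\coprod X'\rightarrow X$$
is completely decomposed and proper. Properness is immediate. For complete decomposition: if $x\in X\setminus e(Z)$, then $p$ is an isomorphism over a neighbourhood of $x$ and produces a point above $x$ with isomorphic residue field; if $x\in e(Z)$, the point $e^{-1}(x)\in Z$ suffices. Hence the covering associated to the square is a $\tau$-covering, and since both topologies contain the empty covering of the initial object $\emptyset$, the generating families of $\cdp$ consist of $\tau$-coverings.

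For $\tau\subseteq\cdp$, I would show by Noetherian induction on $\dim X$ that every completely decomposed proper morphism $f\colon Y\rightarrow X$ generates a covering sieve for the $\cdp$-topology. Since $f$ is completely decomposed, at each generic point $\eta$ of $X$ there exists a preimage $\eta'\in f^{-1}(\eta)$ with $\kappa(\eta)\xrightarrow{\sim}\kappa(\eta')$, giving a rational section of $f$ which spreads out to a morphism $s\colon U\rightarrow Y$ over some dense open $U\subseteq X$. Set $Z:=(X\setminus U)_{\rm red}$ and let $X'$ be the closure of $s(U)$ in $Y$ with reduced scheme structure. Since $f$ is separated, $s$ is a closed immersion on $U$, so $X'\rightarrow X$ is proper and birational and restricts to an isomorphism over $X\setminus Z=U$. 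The resulting blow-up $\cd$-square yields a $\cdp$-covering $\{Z\hookrightarrow X,\,X'\rightarrow X\}$ of $X$, and by construction $X'\rightarrow X$ factors through $f$. Applying the induction hypothesis to the base change $(Y\times_X Z)_{\rm red}\rightarrow Z$ — which remains completely decomposed and proper, now over a variety of strictly smaller dimension — one obtains a $\cdp$-covering of $Z$ whose morphisms all factor through $f$. By transitivity of coverings, refining the $Z$-leg of $\{Z\hookrightarrow X,\,X'\rightarrow X\}$ by this covering gives a $\cdp$-covering of $X$ whose morphisms all factor through $f$; hence $\{f\colon Y\rightarrow X\}$ generates a $\cdp$-covering sieve.

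The main obstacle is not conceptual but technical bookkeeping inside $\Var_k$: one must check that the rational section genuinely spreads out to a dense open (a standard limit argument using that $f$ is of finite type), that passing to the reduction of the fibre product preserves complete decomposition and properness (reduction is a universal homeomorphism preserving residue fields), and that all intermediate constructions — closures and fibre products — stay in $\Var_k$ after reduction. The base case $\dim X=0$ is handled by observing that $f$ then admits a global section whose closed image is isomorphic to $X$, so the sieve generated by $\{f\colon Y\rightarrow X\}$ contains an isomorphism and is automatically a $\cdp$-covering sieve.
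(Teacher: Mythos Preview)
Your proof is correct and follows essentially the same strategy as the paper: produce a proper birational morphism into $X$ from the closure of lifted generic points, combine it with the complementary closed subset to obtain a blow-up square, and conclude by induction. The only cosmetic differences are that the paper uses Noetherian induction and first reduces to integral $X$ (showing the decomposition into irreducible components is itself a $\cdp$-covering), whereas you induct on dimension and handle all generic points simultaneously via spreading out; both variants work.
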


\begin{proof}
The observation before Definition \ref{def:cdpmorphism} implies that $\tau$ is finer than 
the $\cdp$-topology. To prove the converse, take a $\tau$-covering  
$\{f_i \colon X_i \rightarrow X\}_{i \in I}$ as in the statement of the lemma, and we will prove that 
it is a $\cdp$-covering. By Noetherian induction, we may assume that, 
for any proper closed subvariety $T \subsetneq X$, the pullback of the family to $T$ 
is a $\cdp$-covering. Note also that, for a covering $X = \bigcup_{j \in J}  Y_j$ of $X$ 
by a finite number of closed subvarieties, the family $\{ Y_j \rightarrow X\}_{j \in J}$ is a 
$\cdp$-covering: Indeed, we can reduce to the case $J=\{1,2\}$ and in this case, 
the square 
$$ 
\xymatrix{  Y_1 \cap  Y_2 \ar[r] \ar[d] &  Y_1 \ar[d] \\ 
 Y_2 \ar[r] & X }
$$ 
is a blow-up square. From these, we may assume that $X$ is integral to prove the claim. 

Let $\xi$ be the generic point of $X$. Then there exists an index $i \in I$ and 
a point $\xi' \in X_i$ over $\xi$ such that the 
the induced morphism $\kappa(\xi) \rightarrow \kappa(\xi')$ is an isomorphism. 
Then, if we define $Y$ to be the closure of $\xi'$ in $X_i$, 
$f_i|_Y: Y \rightarrow X$ is a proper birational morphism. 
Let $T \subsetneq X$ be the locus where $f_i|_Y$ is not an isomorphism. 
Then $\{Y \rightarrow X, T \rightarrow X\}$ is a $\cdp$-covering. 
The pullback of the family $\{f_i \colon X_i \rightarrow X\}_{i \in I}$ to $Y$ is a $\cdp$-covering 
because the pullback of the morphism $X_i \rightarrow X$ to $Y$ admits a section. 
Also, the pullback of the family $\{f_i \colon X_i \rightarrow X\}_{i \in I}$ 
to $T$ is a $\cdp$-covering by induction hypothesis. 
Thus the family $\{f_i \colon X_i \rightarrow X\}_{i \in I}$
is a $\cdp$-covering locally in the $\cdp$-topology and hence it is a $\cdp$-covering.
\end{proof}

From now on we use the terms $\cdp$-morphism and completely decomposed proper morphism interchangeably.

Next we recall the analogue of Lemma \ref{lem:cdpvardescription} for the Nisnevich topology, which is more or less well-known. The morphism
	\[
	e\coprod p\colon Y \coprod X'\rightarrow X
	\]
induced by a Nisnevich square in Definition \ref{def: squares}\ref{eq:Zariski_cd_square} is a Nisnevich morphism in the usual sense:

\begin{definition}\label{def:Nismorphism}
	A morphism $p\colon Y\rightarrow X$ in $\Var_k$ is a \emph{Nisnevich morphism} if it is \'etale and for all $x\in X$ there is a  point $y\in p^{-1}(\{x\})\subseteq Y$ such that the induced morphism on residue fields $\kappa(x)\rightarrow \kappa(y)$ is an isomorphism.
\end{definition}

\begin{lemma}[{cf.\,\cite[\S\,3 Prop.\,1.4]{MV_1999}}]\label{lem:Nisvardescription}
 The topology $\tau$ on $\Var_k$ generated by the families of morphisms 
$\{f_i \colon X_i \rightarrow X\}_{i \in I}$ with $I$ finite and 
$\coprod_i f_i \colon \coprod_i X_i \rightarrow X$ a Nisnevich morphism 
coincides with the Nisnevich topology on $\Var_k$. In particular, every Nisnevich morphism in $\Var_k$ is a covering in the Nisnevich topology on $\Var_k$. 
\end{lemma}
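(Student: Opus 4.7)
The plan is to mirror the proof of Lemma~\ref{lem:cdpvardescription}, with the role of the closure of the generic-fibre lift (used in the $\cdp$-case) now played by a Zariski-local section of an étale morphism. One direction is immediate from the observation just before Definition~\ref{def:Nismorphism}: any Nisnevich square produces a Nisnevich morphism $e \sqcup p : Y \sqcup X' \to X$, so $\tau$ is at least as fine as the Nisnevich topology.

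For the converse I would argue by Noetherian induction on $X$, exactly as in Lemma~\ref{lem:cdpvardescription}. Given a family $\{f_i : X_i \to X\}_{i \in I}$ with $\coprod_i f_i$ a Nisnevich morphism, I would first reduce to the case $X$ integral, using the same finite closed-cover trick as in the $\cdp$-case (the decomposition of $X$ into irreducible components being a $\tau$-cover via iterated distinguished squares). For $X$ integral with generic point $\xi$, pick $i_0$ and $\xi' \in X_{i_0}$ with $f_{i_0}(\xi') = \xi$ and $\kappa(\xi') \cong \kappa(\xi)$. The replacement for the ``take the closure of $\xi'$'' step is the étale \emph{section lemma}: because $f_{i_0}$ is étale and $\xi'$ is an isolated clopen point of $f_{i_0}^{-1}(\xi)$ with trivial residue-field extension, the standard local structure of étale morphisms yields a non-empty Zariski open $U \subset X$ and an open immersion $s: U \hookrightarrow X_{i_0}$ with $f_{i_0}\circ s = \id_U$.

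Set $Z := X \setminus U$ with reduced closed-subscheme structure. The pullback of the family to $Z$ is again a Nisnevich morphism, hence a $\tau$-covering of $Z$ by Noetherian induction, and the pullback to $U$ is refined by $\id_U$ via $s$, hence trivially a $\tau$-covering. The remaining step would be to assemble these into a $\tau$-covering of $X$ via an elementary Nisnevich square with open piece $U \hookrightarrow X$ and étale piece a suitable modification of $X_{i_0}$ chosen so that the iso-on-complement condition of Definition~\ref{def: squares}(ii) is satisfied, then compose with the inductive $\tau$-cover of $Z$.

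The main obstacle is this last gluing step: a single elementary Nisnevich square cannot in general be extracted from $X_{i_0}$ alone, since $f_{i_0}^{-1}(Z) \to Z$ need not be an isomorphism even after shrinking $U$. I would handle this by iterating the section construction along a finite chain of closed subvarieties of $Z$; the iteration terminates and produces a $\tau$-covering by the abstract machinery of complete, regular, bounded cd-structures \cite[Def.\,2.3, 2.10, 2.22]{V_2010}, according to which coverings in the topology generated by a cd-structure are precisely those refined by a finite sequence of distinguished squares.
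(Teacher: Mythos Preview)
Your proposal has a genuine gap in the gluing step, and the paper's argument proceeds rather differently.

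A minor point first: the reduction to $X$ integral via the closed-cover trick is not available here. In the $\cdp$ proof that trick rests on the fact that for $X=\bigcup_j Y_j$ a decomposition into closed subvarieties, the family $\{Y_j\hookrightarrow X\}$ is a $\cdp$-cover, witnessed by blow-up squares. Closed immersions are not \'etale, so this family is never a cover in the Nisnevich cd-topology. (This step is in fact unnecessary: one may pick any generic point of $X$ and run Noetherian induction on $X$ itself.)

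The real problem is the construction of the Nisnevich square, and your final paragraph does not close it. Your section $s:U\hookrightarrow X_{i_0}$ lives over an \emph{open} $U$; to make a Nisnevich square with $U$ as the open leg you would need an \'etale $X'\to X$ that is an isomorphism over $Z=X\setminus U$, and nothing in your data furnishes one. The appeal to cd-structure machinery is circular: completeness says that covers are exactly the families refined by iterated distinguished squares, but you are trying to prove your family is a cover, so you still owe the squares.

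The paper's approach, following \cite[\S3 Lem.\,1.5]{MV_1999}, is to first choose a \emph{splitting sequence}
\[
\emptyset=Z_{n+1}\subseteq Z_n\subseteq\cdots\subseteq Z_0=X
\]
so that $\coprod_i f_i$ admits a section over each locally closed stratum $Z_j\setminus Z_{j+1}$, and to induct on its length. The key point is that one uses the section $s$ over the \emph{closed} piece $Z_n$, not over an open: because $\coprod_i f_i$ is \'etale, the image of $s$ is open and closed in $(\coprod_i f_i)^{-1}(Z_n)$, so $(\coprod_i f_i)^{-1}(Z_n)=\operatorname{Im}(s)\amalg C$ as schemes. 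Setting $X':=(\coprod_i X_i)\setminus C$ then gives an \'etale morphism $X'\to X$ which is, by construction, an isomorphism over $Z_n$. With $Y:=X\setminus Z_n$ one has a genuine Nisnevich square; the pullback of the original family to $Y$ has a splitting sequence of length $n-1$, and the pullback to $X'$ is refined by the decomposition $\{X_i\setminus C\to X'\}_i$. This ``remove the complement $C$'' step is precisely what your outline was missing: the section must sit over the closed stratum so that one can trim the \'etale total space to force the isomorphism-on-complement condition in Definition~\ref{def: squares}\ref{eq:Zariski_cd_square}.
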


\begin{proof}
The observation before Definition \ref{def:Nismorphism} implies that $\tau$ is finer than the Nisnevich topology. 
To prove the converse, take a $\tau$-covering  
$\{f_i \colon X_i \rightarrow X\}_{i \in I}$ as in the statement of the lemma. Following \cite[p.\,97]{MV_1999}, we call a sequence of closed subvarieties of the form 
$$ \emptyset = Z_{n+1} \subseteq Z_n \subseteq \cdots \subseteq Z_0 = X $$
a splitting sequence of length $n$ if the morphism 
$(\coprod_i f_i)^{-1}(Z_j \setminus Z_{j+1}) 
\rightarrow Z_j \setminus Z_{j+1}$ induced by $\coprod_i f_i$ 
splits for any $0 \leqslant j \leqslant n$. By \cite[\S\,3 Lem.\,1.5]{MV_1999}, any 
$\tau$-covering admits a splitting sequence. We prove that 
the above $\tau$-covering is a covering in the Nisnevich topology 
on $\Var_k$ by induction on the minimal length $n$ of its splitting sequence. 

If $n=0$, the morphism $\coprod_i f_i$ splits and so it is a a covering in the Nisnevich topology on $\Var_k$. If $n>0$, choose a splitting $s$ of the morphism $(\coprod_i f_i)^{-1}(Z_n) \rightarrow Z_n$. Since $\coprod_i f_i$ is \'etale, we have $(\coprod_i f_i)^{-1}(Z_n) = {\rm Im}\,s \coprod C$ for some closed subvariety $C$ of $\coprod_i X_i$. Let $Y = X \setminus Z_n$, $X' = (\coprod_i X_i) \setminus C$, $Y' = Y \times_X X'$. Then the square 
$$ 
\xymatrix{  Y' \ar[r] \ar[d] & X' \ar[d] \\ 
 Y \ar[r] & X }
$$ 
(where the vertical arrows are induced by $\coprod_i f_i$ and horizontal arrows are canonical open immersions) is a Nisnevich square. 
Since the pullback of the $\tau$-covering  
$\{f_i \colon X_i \rightarrow X\}_{i \in I}$ to $Y$ has a splitting sequence of length $n-1$, it is a covering in the Nisnevich topology on $\Var_k$ by the induction hypothesis. 
Also, the pullback of the $\tau$-covering  
$\{f_i \colon X_i \rightarrow X\}_{i \in I}$ to $X'$ has a refinement of the form $\{X_i \setminus C \rightarrow \coprod_i (X_i \setminus C) = X'\}_{i \in I}$, which is easily seen to be a covering in the Nisnevich topology on $\Var_k$. Hence $\{f_i \colon X_i \rightarrow X\}_{i \in I}$ is a covering in 
the Nisnevich topology on $\Var_k$, as required. 
\end{proof}

Next we will prove the analogue of Lemma \ref{lem:cdpvardescription} for the $\cdh$-topology. 

\begin{lemma}[{cf.\,\cite[Prop.\,5.9]{SV_2000}}]\label{lem:rhvardescription}
The topology $\sigma$ on $\Var_k$ generated by the families of morphisms 
$\{f_i \colon X_i \rightarrow X\}_{i \in I}$ with $I$ finite such that 
$\coprod_i f_i \colon \coprod_i X_i =: Y \rightarrow X$ can be
written as the composite 
$Y \xrightarrow{g} Z \xrightarrow{h} X$ of a Nisnevich morphism $g$ and a $\cdp$-morphism $h$ 
coincides with the $\cdh$-topology on $\Var_k$. 
\end{lemma}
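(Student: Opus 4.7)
The plan is to verify both inclusions between $\sigma$ and the $\cdh$-topology directly, by showing that the generating families of one are coverings in the other, which by definition of ``topology generated by'' suffices.

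For the inclusion $\sigma \subseteq \cdh$, I would take a generating family $\{f_i \colon X_i \rightarrow X\}_{i \in I}$ of $\sigma$, so that $\coprod_i f_i$ factors as $Y \xrightarrow{g} Z \xrightarrow{h} X$ with $g$ a Nisnevich morphism and $h$ a $\cdp$-morphism. By Lemma \ref{lem:Nisvardescription}, $\{g\}$ is a Nisnevich covering, hence a $\cdh$-covering; by Lemma \ref{lem:cdpvardescription}, $\{h\}$ is a $\cdp$-covering, hence a $\cdh$-covering. Since Grothendieck topologies are closed under composition, the composite is a $\cdh$-covering of $X$, and the family $\{f_i\}_i$ refines it, so it is itself a $\cdh$-covering.

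For the reverse inclusion $\cdh \subseteq \sigma$, I recall that the $\cdh$-topology on $\Var_k$ is generated by the blow-up and Nisnevich $\cd$-structures, so it is enough to exhibit each square from these $\cd$-structures as a $\sigma$-covering. For a blow-up square as in Definition \ref{def: squares}(i), the induced morphism $Z \coprod X' \rightarrow X$ is a $\cdp$-morphism, and I would trivially factor it as
$$ Z \coprod X' \xrightarrow{\,\mathrm{id}\,} Z \coprod X' \xrightarrow{\,h\,} X,$$
with the identity viewed as a Nisnevich morphism. Symmetrically, for a Nisnevich square as in Definition \ref{def: squares}(ii), the induced morphism $Y \coprod X' \rightarrow X$ is Nisnevich, and I would factor it as
$$ Y \coprod X' \xrightarrow{\,g\,} X \xrightarrow{\,\mathrm{id}\,} X, $$
with the identity viewed as a $\cdp$-morphism. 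Both factorisations exhibit the generating square as a $\sigma$-covering, so the generators of the $\cdh$-topology all lie in $\sigma$.

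I do not expect a genuine obstacle: once Lemmas \ref{lem:cdpvardescription} and \ref{lem:Nisvardescription} are in hand, the argument is a matter of bookkeeping combined with the elementary observation that the identity morphism is simultaneously Nisnevich and $\cdp$. The only conceptual point to bear in mind is that we are comparing Grothendieck topologies, not pretopologies, so it is not necessary to close under refinements or composites by hand; containment of generators already implies containment of the topologies.
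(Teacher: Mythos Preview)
Your argument is correct and in fact more economical than the paper's. The direction $\sigma \subseteq \cdh$ is handled identically. For $\cdh \subseteq \sigma$, you use the universal property of ``topology generated by'': since each blow-up or Nisnevich square already furnishes a family of the generating shape for $\sigma$ (via the trivial factorisation through an identity, which is simultaneously Nisnevich and $\cdp$), the generators of $\cdh$ are $\sigma$-coverings and the inclusion follows. One should also note that the empty covering of $\emptyset$ is a $\sigma$-generator (take $I=\emptyset$ and factor $\emptyset\to\emptyset$ trivially), but this is immediate.

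The paper instead invokes completeness of the two $\cd$-structures to reduce an arbitrary $\cdh$-covering to an iterated composite of Nisnevich and $\cdp$ morphisms, and then appeals to the ``swap'' statement from \cite[Prop.\,5.9]{SV_2000} that a $\cdp$-then-Nisnevich composite can be refined by a Nisnevich-then-$\cdp$ composite. This yields the slightly stronger conclusion that every $\cdh$-covering is refined by a single $\sigma$-generator, not merely that it is a $\sigma$-covering; but for the lemma as stated your direct verification of generators suffices and avoids both the completeness machinery and the swap lemma.
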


\begin{proof}
For a $\sigma$-covering $\{f_i \colon X_i \rightarrow X\}_{i \in I}$ and a factorisation $\coprod_i X_i = Y \xrightarrow{g} Z \xrightarrow{h} X$ of $\coprod_i f_i$ 
as in the statement of the lemma, $h$ is a $\cdp$-covering by Lemma 
\ref{lem:cdpvardescription} and $g$ is a Nisnevich covering by Lemma 
\ref{lem:Nisvardescription}. Hence the above $\sigma$-covering is a 
$\cdh$-covering and so we conclude that the $\cdh$-topology is finer than $\sigma$.

We prove the converse. Since the blow-up and the Nisnevich cd-structures are complete, it suffices to prove that a family of morphisms 
$\{f'_{i'} \colon X'_{i'} \rightarrow X\}_{i' \in I'}$ with $I'$ finite such that 
$\coprod_{i'} f'_{i'} \colon \coprod_{i'} X_{i'} =: Y' \rightarrow X$ is written as the composite 
\begin{equation}\label{eq:zarcdpzarcdp}
Y' = Y'_n \rightarrow \cdots \rightarrow Y'_1 \rightarrow Y'_0 = Y  
\end{equation}
with each arrow a cdp-morphism or a Nisnevich morphism
admits a refinement of the form $\{f_i \colon X_i \rightarrow X\}_{i \in I}$ as in the statement of the lemma (see \cite[Def.\,2.2, Def.\,2.3]{V_2010}). 

We call a morphism of the form 
$Y \xrightarrow{g} Z \xrightarrow{h} X$ with $g$ a Nisnevich morphism and $h$ a $\cdp$-morphism 
a \emph{special morphism}.  
To prove the claim in the previous paragraph, it suffices to prove 
that the morphism \eqref{eq:zarcdpzarcdp} admits a refinement by a special morphism.
Working recursively, this in turn is reduced to 
proving the claim that a morphism of the form 
\[ T \xrightarrow{p} U \xrightarrow{q} X \] 
with $p$ a $\cdp$-morphism and $q$ a Nisnevich morphism 
admits a refinement by a special morphism. 
This claim is shown in 
\cite[Prop.\,5.9]{SV_2000} and we are done. 
\end{proof}

We now consider the fully faithful functor given by inclusion $\Sm_k\hookrightarrow \Var_k$.
Resolution of singularities plays an important role after this point.

\begin{definition}
The $\cdp$-\emph{topology on} $\Sm_k$ is  the topology induced by the inclusion $\Sm_k\hookrightarrow \Var_k$ from the $\cdp$-topology on $\Var_k$.
In other words, it is the restriction of the $\cdp$-topology from $\Var_k$ to $\Sm_k$. Similarly, the $\cdh$-\emph{topology on} $\Sm_k$ is the restriction of the $\cdh$-topology from $\Var_k$ to $\Sm_k$.
\end{definition}

\begin{proposition}\label{prop: base1}
Let $\tau$ be any topology on $\Var_k$ finer than the $\cdp$-topology.
Under Hypothesis \ref{strong resolutions}, the inclusion $\Sm_k\hookrightarrow \Var_k$ induces an equivalence of topoi
	$$\Sh(\Sm_{k,\tau}) \xrightarrow{\sim} \Sh(\Var_{k,\tau}). $$
In particular, there are equivalences of topoi
		\begin{eqnarray*}
		\Sh(\Sm_{k,\cdp}) &\xrightarrow{\sim}& \Sh(\Var_{k,\cdp}),\\
		\Sh(\Sm_{k,\cdh}) &\xrightarrow{\sim}& \Sh(\Var_{k,\cdh}).
		\end{eqnarray*}
\end{proposition}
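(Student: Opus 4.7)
The plan is to invoke the classical comparison theorem for sites (Verdier, SGA 4 III.4.1, or \emph{loc.\,cit.}\ under the name ``dense subsite'' lemma in modern references): given a full subcategory $\mathcal{C}$ of a site $(\mathcal{D},\tau)$, equipped with the topology induced from $\mathcal{D}$, if every object of $\mathcal{D}$ admits a $\tau$-covering by objects of $\mathcal{C}$, then the inclusion induces an equivalence of topoi $\Sh(\mathcal{C},\tau)\simeq \Sh(\mathcal{D},\tau)$. In our situation $\mathcal{C}=\Sm_k$ sits as a full subcategory of $\mathcal{D}=\Var_k$, and the topology on $\Sm_k$ is by definition the one induced from $\Var_k$, so continuity of the inclusion is automatic. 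The only nontrivial point to verify is therefore the density condition: every $X\in\Var_k$ admits a $\tau$-covering by smooth $k$-varieties.

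Since $\tau$ is finer than the $\cdp$-topology, it suffices to produce a $\cdp$-covering of $X$ by smooth varieties. I would proceed by Noetherian induction on $\dim X$. The base case $\dim X=0$ is trivial: a reduced zero-dimensional $k$-variety is a finite disjoint union of spectra of finite field extensions of $k$, hence smooth. For the inductive step, apply Hypothesis \ref{strong resolutions}(i) to obtain a proper birational morphism $f\colon X'\to X$ with $X'$ smooth and $f$ an isomorphism over $X_{\sm}$. Let $Z$ denote the singular locus of $X$ with its reduced structure; then $\dim Z<\dim X$, and the Cartesian square
$$
\xymatrix{
f^{-1}(Z)\ar[r]\ar[d] & X'\ar[d]^{f}\\
Z\ar[r] & X
}
$$
is a blow-up square in the sense of Definition \ref{def: squares}\ref{eq:blowup_cd_square}, so $\{X'\to X,\, Z\to X\}$ is a $\cdp$-covering of $X$. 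The component $X'$ is already smooth, and by the inductive hypothesis $Z$ admits a $\cdp$-covering $\{U_j\to Z\}_j$ by smooth varieties. Composition then gives the desired $\cdp$-covering $\{X'\to X\}\cup\{U_j\to X\}_j$ of $X$ by smooth varieties.

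This proves the equivalence for any $\tau$ finer than $\cdp$; the final two equivalences follow at once, since the $\cdp$-topology is finer than itself and the $\cdh$-topology is finer than $\cdp$ by construction. The main (indeed only) substantive obstacle is the Noetherian induction step, and this is precisely where Hypothesis \ref{strong resolutions}(i) is essential: without a proper birational resolution that is an isomorphism on the smooth locus, one cannot realise the singular locus as the complementary closed piece of a blow-up square, and so cannot decrease the dimension while staying in the $\cdp$-topology. Once density is in hand, invoking the comparison theorem is formal.
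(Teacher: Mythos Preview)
Your proof is correct and essentially identical to the paper's: both establish the density condition (every $k$-variety admits a $\cdp$-covering by smooth varieties) via an inductive argument using Hypothesis~\ref{strong resolutions}(i), then invoke Verdier's comparison theorem \cite[Thm.\,4.1]{V_1972}. The only cosmetic difference is that the paper takes $T$ to be the complement of a general open dense locus of isomorphy rather than specifically the singular locus, and phrases the covering as a single $\cdp$-morphism $\widetilde{X}\coprod T'\to X$ rather than a family.
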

\begin{proof}
By the hypothesis of strong resolution of singularities, for every object $X\in \Var_k$ there is a $\cdp$-morphism $f:X'\rightarrow X$ where $X'$ is smooth. This can be seen by an inductive argument as follows: By strong resolution of singularities, there is a proper birational morphism $\widetilde{f}\colon \widetilde{X}\rightarrow X$ with $\widetilde{X}$ smooth. Let us denote by $U\subseteq X$ an open dense subset such that $\tilde{f}$ induces an isomorphism $\widetilde{f}^{-1}(U)\cong U$, and let $T:=X\setminus U$ denote its complement. Then $\widetilde{X}\coprod T\rightarrow X$ is a $\cdp$-covering. By induction, there is a $\cdp$-morphism $T'\rightarrow T$ with $T'$ smooth. The disjoint union $X':=\widetilde{X}\coprod T'\rightarrow X$ gives the desired $\cdp$-covering of $X$ by a smooth $k$-variety.\par  
This means that any $k$-variety can be covered in the $\cdp$-topology by smooth $k$-varieties.
As the inclusion $\Sm_k\hookrightarrow \Var_k$ is a fully faithful functor, 
the statement follows from \cite[Thm.\,4.1]{V_1972}.
\end{proof}

\begin{remark}
	Due to the lack of enough fibre products, the $\cdp$-topology (resp.\ the $\cdh$-topology) on $\Sm_k$ is not defined in terms of covering families, but in terms of covering sieves. Using \cite[Prop.\,3.2]{V_1972} and the fact that any $k$-variety can be covered in the $\cdp$-topology by smooth $k$-varieties, we can describe the covering sieves explicitly in the following way (see also \cite[Prop.\,5.11]{SV_2000}): Every $\cdp$-covering (resp.\ $\cdh$-covering) $\{ X_i\rightarrow X \}_i$ in $\Var_k$ of a smooth $k$-variety $X$  defines a covering sieve of $X$ in $\Sm_k$ which consists of all morphisms of smooth $k$-varieties $X'\rightarrow X$ factoring through one of the $X_i$.  
The covering sieves on $\Sm_k$ are the ones
which contain some sieve of the form just described.
\end{remark}

We give  now an interpretation of the $\cdp$- and  the $\cdh$-topology on $\Sm_k$ in terms of $\cd$-structures.

\begin{definition}\label{def:smoothsquare}
We consider the following $\cd$-structures on $\Sm_k$:
\begin{enumerate}
\item\label{eq:smooth blowup_cd_square}
The \emph{smooth blow-up $\cd$-structure} on $\Sm_k$ consists of Cartesian squares of smooth $k$-varieties
	\begin{equation*}
	\xymatrix{Z' \ar@{^{(}->}[r] \ar[d] & X' \ar^{p}[d]\\
	Z \ar@{^{(}->}^{e}[r] & X}
	\end{equation*}
where $e$ is a closed immersion and $p:X'\rightarrow X$ is a blow-up with centre $Z$. 

\item \label{eq:smooth Zariski_cd_square} The  \emph{smooth Nisnevich $\cd$-structure} on $\Sm_k$ 
consists of Cartesian
squares of the form
	\begin{equation*}
	\xymatrix{Y' \ar@{^{(}->}[r] \ar[d] & X' \ar^{p}[d]\\
	Y \ar@{^{(}->}^{e}[r] & X}
\end{equation*}
where $e$ is an open immersion and $p$ is an \'etale morphism such that the morphism $p^{-1}(X \setminus e(Y)) \rightarrow X \setminus e(Y)$ 
(with reduced closed subscheme structure) induced by $p$ is an isomorphism.
\end{enumerate}
\end{definition}

In \cite{V_2010a} Voevodsky has proven the following:

\begin{proposition}[{\cite[Lem.\,4.3, 4.4,  4.5, Thm.\,2.2]{V_2010a}}]
\begin{enumerate}
\item
	The smooth blow-up $\cd$-structure on $\Sm_k$ is regular and bounded. 
Assuming strong resolution of singularities, i.e., Hypothesis \ref{strong resolutions}, it is also complete.

\item
The smooth Nisnevich $\cd$-structure on $\Sm_k$ is complete, regular and bounded.
\end{enumerate}
\end{proposition}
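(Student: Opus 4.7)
The proposition combines three Voevodsky properties --- completeness, regularity, and boundedness (\cite[Def.\,2.3, 2.10, 2.22]{V_2010}) --- for each of the two $\cd$-structures on $\Sm_k$. My plan is to dispatch regularity and boundedness first for both cases, then handle completeness in the Nisnevich case (which needs no hypothesis), and finally treat completeness in the smooth blow-up case, where Hypothesis \ref{strong resolutions} becomes essential.

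For regularity and boundedness, the key observation is that every defining square is Cartesian within $\Sm_k$. In the smooth Nisnevich case, $Y \times_X X'$ is smooth because $Y \hookrightarrow X$ is an open immersion and $X' \to X$ is étale. In the smooth blow-up case, the exceptional divisor $Z'$ is the projectivised normal bundle of $Z$ in $X$ and equals the scheme-theoretic pullback $Z \times_X X'$, by the standard calculation for blow-ups along regularly immersed smooth centres. The required diagonal monomorphism conditions of \cite[Def.\,2.10]{V_2010} are then formal, yielding regularity as in \cite[Lem.\,4.4]{V_2010a}. For boundedness, one equips $\Sm_k$ with the density structure by Krull dimension and checks that $\dim Z' < \dim X$ in a blow-up square (since exceptional divisors are proper Cartier divisors of strictly smaller dimension) and that the closed complement in a Nisnevich square has strictly smaller dimension than $X$; this is \cite[Lem.\,4.5]{V_2010a}.

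Completeness of the smooth Nisnevich $\cd$-structure follows from the splitting-sequence argument of \cite[\S3, Lem.\,1.5]{MV_1999} already employed in the proof of Lemma \ref{lem:Nisvardescription}. All of the auxiliary objects produced by that argument remain in $\Sm_k$, since open complement, étale pullback, and disjoint union preserve smoothness; only the closed strata of the filtration may fail to be smooth, and those are not required to be objects of the $\cd$-structure.

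Completeness of the smooth blow-up $\cd$-structure is the substantive remaining point and the only place where resolution of singularities is invoked. To refine an arbitrary simple covering of a smooth variety $X$ built from blow-up squares, I would use Hypothesis \ref{strong resolutions}(i) to pass from a proper birational source to a smooth model after a further $\cdp$-refinement, and then Hypothesis \ref{strong resolutions}(ii) to dominate the resulting proper birational morphism of smooth varieties by a finite tower of blow-ups along smooth centres. The hard part will be the inductive assembly of this tower so that each intermediate blow-up square remains a member of the smooth blow-up $\cd$-structure --- i.e., that the centre stays smooth and the ambient variety remains smooth at every stage, and that the resulting composite genuinely refines the original covering in the sense required by \cite[Def.\,2.3]{V_2010}. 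This combinatorial bookkeeping is precisely the content of \cite[Lem.\,4.3]{V_2010a}, and once accepted the completeness assertion is immediate.
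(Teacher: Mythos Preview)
The paper provides no proof of its own for this proposition: it simply records the statement and cites \cite[Lem.\,4.3, 4.4, 4.5, Thm.\,2.2]{V_2010a} as its justification. Your proposal correctly identifies and sketches the content of precisely those references---regularity via \cite[Lem.\,4.4]{V_2010a}, boundedness via the dimension density structure \cite[Lem.\,4.5]{V_2010a}, completeness for the smooth blow-up structure under resolution via \cite[Lem.\,4.3]{V_2010a}, and the Nisnevich case via \cite[Thm.\,2.2]{V_2010a}---so your approach is essentially the same as what the paper defers to, only made explicit.
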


\begin{proposition}\label{prop: scdp_smoothbu}
 	Assuming Hypothesis \ref{strong resolutions}, every birational 
blow-up $X'=\Bl_Z X\rightarrow X $ with smooth centre $Z\subseteq X$ is 
 	a covering of $X\in \Sm_k$ in the topology generated by the smooth blow-up $\cd$-structure.
 \end{proposition}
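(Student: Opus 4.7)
The plan is to show that the sieve generated by the single morphism $X'\to X$ is a covering sieve in the topology generated by the smooth blow-up $\cd$-structure. Since $\{Z\hookrightarrow X,\,X'\to X\}$ is already the covering family associated to the blow-up $\cd$-square, it is enough to produce, for every smooth $k$-variety $T$ and every morphism $f\colon T\to X$, a covering $\{T_\alpha\to T\}$ in this topology whose members all factor through $X'\to X$. The case $T=X$ with $f=\id_X$ then gives the proposition. I would prove this stronger statement by induction on $\dim T$.

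For the base case $\dim T=0$, the variety $T$ is a disjoint union of spectra of fields and every component admits a lift to $X'$: for a point not mapping into $Z$, the open immersion $X\setminus Z\hookrightarrow X'$ provides a unique lift, while for a point $t$ mapping into $Z$ any $\kappa(t)$-rational point of the fibre of $X'\to X$ above $f(t)$, which is a projective space over $\kappa(t)$, gives one; the trivial covering is therefore sufficient.

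For the inductive step, assume first that $f(T)\not\subseteq Z$. Let $\tilde T\subset T\times_k X'$ be the closure of the graph of the canonical lift $T\setminus f^{-1}(Z)\hookrightarrow(T\setminus f^{-1}(Z))\times_k X'$. Then $\tilde T\to T$ is proper and birational, and projection to $X'$ realises the tautological lift of $f$ on the dense open. By Hypothesis \ref{strong resolutions}(i) one resolves $\tilde T$, obtaining a smooth variety $\bar T$ with a proper birational morphism $\bar T\to T$, and Hypothesis \ref{strong resolutions}(ii) applied to this morphism yields a tower of smooth-centre blow-ups $T_n\to T_{n-1}\to\cdots\to T_0=T$ whose composition factors through $\bar T\to T$, and hence through $X'\to X$. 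Assembling the smooth blow-up $\cd$-squares of this tower produces a covering sieve of $T$ which contains $T_n\to T$ together with each morphism $W_i\to T_{i-1}\to\cdots\to T$, where $W_i\subset T_{i-1}$ denotes the smooth centre of the $i$-th blow-up. Since $W_i$ is smooth with $\dim W_i<\dim T_{i-1}\leq\dim T$ for a non-trivial blow-up, the induction hypothesis applies and supplies a covering of $W_i$ whose members factor through $X'$; combining these with the tower produces the required covering of $T$.

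The main obstacle is the residual case $f(T)\subseteq Z$, where the strict transform is empty and the preceding construction breaks down. One is then reduced to producing local sections, in the smooth blow-up topology, of the projective bundle $\mathbb{P}(N_{Z/X}|_T)\to T$; such sections exist Zariski-locally since the normal bundle is Zariski-locally trivial. The work is to realise a Zariski-open piece by iterated smooth blow-ups of $T$, which is accomplished by resolving the Zariski-complement via Hypothesis \ref{strong resolutions}(i), realising the resolution as a composition of smooth blow-ups via Hypothesis \ref{strong resolutions}(ii), and applying the smooth blow-up $\cd$-square to the resulting smooth complement. Arranging this uniformly with the dimension induction constitutes the main technical difficulty of the argument.
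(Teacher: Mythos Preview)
Your overall strategy—induction on dimension, using Hypothesis~\ref{strong resolutions} to produce towers of smooth-centre blow-ups, and then handling the centres by the induction hypothesis—is sound, and your first case ($f(T)\not\subseteq Z$) is carried out correctly. The problem lies in your second case.

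The approach you sketch for $f(T)\subseteq Z$, namely ``realising a Zariski-open piece by iterated smooth blow-ups of $T$'', cannot work as written. Every member of a covering in the smooth blow-up topology is a proper morphism, so no member can factor through a proper open subset $U\subsetneq T$; there is no way to ``see'' $U$ directly. Resolving the complement $C=T\setminus U$ produces a variety over $C$, not a smooth closed subvariety of $T$, so it does not furnish a smooth centre in $T$; turning $C$ into a smooth subvariety of a blow-up of $T$ would require embedded resolution (Hypothesis~\ref{embedded resolutions} or \ref{embedded resolution with boundary}), which you are not assuming here. So this step is a genuine gap.

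The fix is that no case distinction is needed: when $f(T)\subseteq Z$, the pullback $T\times_X X'\to T$ is a projective bundle, hence admits a rational section over each component; the closure of its graph gives a proper birational $\tilde T\to T$ with a lift to $X'$, and you are back in the situation of your first case. This is precisely what the paper does, only more directly: instead of quantifying over all $T$, the paper inducts on $\dim X$, uses the smooth blow-up square to reduce to refining $Z'=X'\times_X Z\to Z$, picks a closed $Z''\subset Z'$ proper birational over $Z$ (a rational section of the projective bundle), refines $Z''\to Z$ by a tower of smooth-centre blow-ups via Hypothesis~\ref{strong resolutions}, and invokes the induction hypothesis since $\dim Z<\dim X$. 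Your argument, once repaired, is this argument run through an unnecessary extra layer of generality.
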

 
\begin{proof}
In this proof, we denote by $\tau$ the 
topology on $\Sm_k$ generated by the smooth blow-up $\cd$-structure.

Since $X' \rightarrow X$ is birational, 
$Z$ is a proper closed subvariety of $X$ on each connected component of $X$. 
We prove the claim by induction on $\dim X$. 
In dimension $0$ the claim is trivial. 
In general case, consider the smooth blow-up square
 \[
 \xymatrix{Z' \ar@{^{(}->}[r] \ar[d] & X' \ar^{p}[d]\\
	Z \ar@{^{(}->}^{e}[r] & X.}
 \]
We observe that $X'\coprod Z\rightarrow X$ is 
a $\tau$-covering.

Let us assume that the statement is proven for all smooth varieties 
of dimension $< \dim X$. 
The pullback of $p\colon X'\rightarrow X$ to $X'$ is a $\tau$-covering.
Hence by locality it suffices to show that the pullback
$X'\times_X Z\rightarrow Z$ of $p$ to $Z$ can be refined 
by a $\tau$-covering.   
But this is the left vertical map $Z'\rightarrow Z$ in the smooth blow-up square, 
and since $Z$ is a proper closed subvariety of $X$ on each connected component of $X$, 
it is a (nonempty) projective bundle. 
Thus, we find a closed subvariety $Z''\hookrightarrow Z'$ of $Z'$ such that 
 \[
 	Z''\hookrightarrow Z'\rightarrow Z
 \]
 is a proper birational morphism. 
By strong resolution of singularities there is a refinement of $Z''\rightarrow Z$ given by a sequence of birational blow-ups with smooth centre. Since $Z$ is a proper closed subvariety of $X$ 
on each connected component of $X$, $\dim Z < \dim X$ holds and the same inequality  
holds for varieties appearing in the sequence of blow-ups. 
So, by induction hypothesis, this sequence of blow-ups is a $\tau$-covering of $Z$ 
refining $Z'\rightarrow Z$. 
Hence $X'\rightarrow X$ is a $\tau$-covering, as required. 
 \end{proof}

\begin{lemma}\label{lem:smoothNis}
A family of morphisms 
$\{f_i \colon X_i \rightarrow X\}_{i \in I}$ with $I$ finite and 
$\coprod_i f_i \colon \coprod_i X_i \rightarrow X$ a Nisnevich morphism 
is a covering of $X \in \Sm_k$ in the topology generated by the smooth Nisnevich  $\cd$-structure. 
\end{lemma}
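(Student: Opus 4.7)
The plan is to adapt the proof of Lemma \ref{lem:Nisvardescription} to the smooth setting, the key point being that all constructions used there preserve smoothness. Let $\tau$ denote the topology on $\Sm_k$ generated by the smooth Nisnevich $\cd$-structure. First note that since $X$ is smooth and each $f_i$ is étale (as the restriction of the étale morphism $\coprod_i f_i$), every $X_i$ is smooth, so the family genuinely lives inside $\Sm_k$.

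By \cite[\S3 Lem.\,1.5]{MV_1999} there exists a splitting sequence $\emptyset = Z_{n+1} \subseteq Z_n \subseteq \cdots \subseteq Z_0 = X$ for $\coprod_i f_i$, and I proceed by induction on its minimal length $n$. If $n=0$, then $\coprod_i f_i$ admits a global section $s$; setting $X_i^{\ast} := s^{-1}(X_i)$, the decomposition $X = \coprod_i X_i^{\ast}$ together with the sections $X_i^{\ast} \hookrightarrow X_i$ induced by $s$ shows that $\{f_i\}_i$ is refined by the clopen family $\{X_i^{\ast} \hookrightarrow X\}_i$. This clopen family is itself a $\tau$-covering by iterated application of the smooth Nisnevich $\cd$-structure (at each step taking $Y = X_1^{\ast}$ and $X' = \coprod_{i\geqslant 2} X_i^{\ast}$, where $Y \cup X' = X$ is a Zariski cover).

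For the inductive step, I would mimic the construction in Lemma \ref{lem:Nisvardescription}: choose a splitting $s$ of $(\coprod_i f_i)^{-1}(Z_n) \to Z_n$, use étaleness to write $(\coprod_i f_i)^{-1}(Z_n) = \im(s) \coprod C$, and set $Y := X \setminus Z_n$, $X' := (\coprod_i X_i) \setminus C$, $Y' := Y \times_X X'$. Each of these four varieties lies in $\Sm_k$: $Y$ is open in the smooth variety $X$, $X'$ is open in the smooth variety $\coprod_i X_i$, and $Y'$ is étale over the smooth $Y$. The resulting Cartesian square is therefore a smooth Nisnevich square, so $\{Y \to X, X' \to X\}$ is a $\tau$-covering. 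The pullback of the original family to $Y$ has a splitting sequence of length at most $n-1$, hence is a $\tau$-covering by the induction hypothesis, while its pullback to $X'$ admits the tautological refinement $\coprod_i (X_i \setminus C) = X'$ and is therefore a $\tau$-covering as well. Locality of $\tau$ then forces $\{f_i\}_i$ itself to be a $\tau$-covering.

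The main thing to verify — rather than a genuine obstacle — is that the recursive construction remains inside $\Sm_k$; as indicated above, this is automatic because every operation either restricts to an open of a smooth variety or performs an étale base change. The argument is thus essentially identical to that of Lemma \ref{lem:Nisvardescription}, with no need to appeal to resolution of singularities.
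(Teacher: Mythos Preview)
Your proof is correct and follows exactly the approach the paper intends: the paper's own proof consists of the single sentence ``The same proof as that in Lemma \ref{lem:Nisvardescription} works,'' and you have simply spelled out that argument while checking that each intermediate object ($Y$, $X'$, $Y'$) remains in $\Sm_k$, which is indeed automatic from openness and \'etaleness.
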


\begin{proof}
The same proof as that in Lemma \ref{lem:Nisvardescription} works. 
\end{proof}

\begin{proposition}\label{cor:cdprhtoponsm}
Assume Hypothesis \ref{strong resolutions}.
\begin{enumerate}
\item The $\cdp$-topology on $\Sm_k$ coincides with the topology $\tau$ generated by 
the smooth blow-up $\cd$-structure. 
\item The $\cdh$-topology on $\Sm_k$ coincides with the topology $\sigma$ generated by the smooth blow-up and the smooth Nisnevich $\cd$-structures. 
\end{enumerate}
\end{proposition}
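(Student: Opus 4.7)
The plan is to prove in each part that both inclusions between the topologies hold. In each case one inclusion is immediate: every smooth blow-up (resp.\ smooth Nisnevich) square is a blow-up (resp.\ Nisnevich) square in $\Var_k$, so $\tau \subseteq \cdp|_{\Sm_k}$ and $\sigma \subseteq \cdh|_{\Sm_k}$. The work lies in the reverse direction, namely in exhibiting, for every $\cdp$- or $\cdh$-covering sieve on $X \in \Sm_k$ induced from $\Var_k$ via the remark following Proposition \ref{prop: base1}, a $\tau$- or $\sigma$-covering sieve contained in it.

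For part (i), I would start from a $\cdp$-covering family $\{X_i \to X\}_{i \in I}$ in $\Var_k$ with $X$ smooth. Working on each connected component I may assume $X$ is integral. Setting $Y := \coprod_i X_i$, Lemma \ref{lem:cdpvardescription} makes $p\colon Y \to X$ a completely decomposed proper morphism, so there exists $\xi' \in Y$ over the generic point $\xi$ of $X$ with $\kappa(\xi) \xrightarrow{\sim} \kappa(\xi')$; the closure $Z$ of $\xi'$ in $Y$ then gives a proper birational morphism $Z \to X$. Applying Hypothesis \ref{strong resolutions}(i) to $Z$ yields a smooth model $\widetilde Z \to Z$, and hence a proper birational morphism $\widetilde Z \to X$ between smooth varieties. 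Hypothesis \ref{strong resolutions}(ii) then provides a tower of birational smooth-centre blow-ups $X_n \to \cdots \to X_0 = X$ such that $X_n \to X$ factors through $\widetilde Z$, and therefore through $Y$. By Proposition \ref{prop: scdp_smoothbu} each step is a $\tau$-covering, so by transitivity $X_n \to X$ is a $\tau$-covering. The sieve it generates in $\Sm_k$ is a $\tau$-covering sieve, and every morphism in it factors through some $X_i$, which places it inside the original $\cdp$-sieve.

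For part (ii), I would start from a $\cdh$-covering family $\{X_i \to X\}_{i \in I}$ in $\Var_k$ with $X$ smooth, and use Lemma \ref{lem:rhvardescription} to factor
\[
\coprod_i X_i \;=\; Y \xrightarrow{\;g\;} Z \xrightarrow{\;h\;} X
\]
with $g$ a Nisnevich morphism and $h$ a $\cdp$-morphism. The argument of part (i) applied to $h$ yields a tower of smooth-centre blow-ups $X_n \to X$ (a $\sigma$-covering) factoring through $Z$. Since Nisnevich morphisms are stable under base change, the pullback $X_n \times_Z Y \to X_n$ is again Nisnevich; because it is étale over the smooth base $X_n$, the source $X_n \times_Z Y$ is smooth, and Lemma \ref{lem:smoothNis} then makes it a smooth Nisnevich covering of $X_n$. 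Composing with the $\tau$-covering $X_n \to X$ gives a $\sigma$-covering $X_n \times_Z Y \to X$, and the Cartesian square shows that this composite factors through $Y$, producing the desired refinement of the original $\cdh$-sieve.

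The main technical obstacle is the construction in part (i): given a $\cdp$-morphism onto a smooth integral variety, one must rig up a smooth-centre blow-up tower that dominates it, and this is what forces the use of both halves of Hypothesis \ref{strong resolutions} in combination with Proposition \ref{prop: scdp_smoothbu}. Once this is set up, part (ii) follows formally by étale base change.
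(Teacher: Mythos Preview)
Your argument is correct and follows essentially the same route as the paper's proof: both use the completely decomposed proper description to extract a proper birational morphism onto each component of $X$, invoke the two parts of Hypothesis \ref{strong resolutions} to dominate it by a tower of smooth-centre blow-ups (which is a $\tau$-covering by Proposition \ref{prop: scdp_smoothbu}), and then for part (ii) pull the Nisnevich map back along this tower and apply Lemma \ref{lem:smoothNis}. The only point you leave implicit that the paper spells out is why the reduction to connected components is legitimate in $\tau$: the decomposition $\{X_j \hookrightarrow X\}_j$ is itself a $\tau$-covering, realised by iterated smooth blow-up squares with one entry empty.
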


\begin{proof}
First we prove (i). Since every smooth blow-up square in $\Sm_k$ is a  
blow-up square in $\Var_k$, $\tau$ is coarser than the $\cdp$-topology on $\Sm_k$. 

To prove the converse, let $\mathcal{S}$ be a covering sieve in the $\cdp$-topology 
defined by a $\cdp$-covering $\{f_i \colon X'_i \rightarrow X \}_{i \in I}$ in $\Var_k$ 
with $X$ smooth, $I$ finite and $\coprod_i f_i \colon \coprod_i X'_i \rightarrow X$ a 
$\cdp$-morphism. (It is a covering sieve and it is sufficient to consider such covering sieves by Lemma 
\ref{lem:cdpvardescription}.) 
For each connected component $X_j$ of $X$ with generic point $\xi_j$, choose an index 
$i(j) \in I$ and a point $\xi'_j \in X'_{i(j)}$ above $\xi_j$ such that 
$\kappa(\xi_j) \rightarrow \kappa(\xi'_j)$ is an isomorphism. 
If we define $Y_j$ to be the closure of $\xi'_j$ in $X'_{i(j)}$, 
$Y_j \rightarrow X_j$ is a proper birational morphism in $\Var_k$. By replacing $Y_j$ by a suitable refinement 
using Hypothesis \ref{strong resolutions}, we may assume that it is a composition of birational 
blow-ups with smooth centre. Then, 
the family $\{Y_j \rightarrow X_j \hookrightarrow X\}_j$ is a $\tau$-covering 
because $\{X_j \hookrightarrow X\}_j$ is realized by an iteration of smooth blow-up squares of the form 
$$
\xymatrix{
\emptyset \ar[r] \ar[d] & X_1\ar[d] \\ 
X_2 \ar[r] & X_1 \coprod X_2
}
$$
and each $Y_j \rightarrow X_j$ is a $\tau$-covering by Proposition 
\ref{prop: scdp_smoothbu}. Also, this family 
refines the original family $\{f_i \colon X'_i \rightarrow X\}_{i \in I}$ by construction. 
Thus $\mathcal{S}$ is a covering sieve 
in the $\tau$-topology and so $\tau$ is finer than the $\cdp$-topology on $\Sm_k$, as desired. 

Next we prove (ii). Since every smooth blow-up square in $\Sm_k$ 
is a  blow-up square in $\Var_k$ and every smooth Nisnevich square in $\Sm_k$ is a Nisnevich square in $\Var_k$, $\sigma$ is coarser than the $\cdh$-topology on $\Sm_k$. 

To prove the converse, let $\mathcal{S}$ be a covering sieve in the $\cdh$-topology 
defined by $\cdh$-covering $\{f_i \colon X_i \rightarrow X\}_{i \in I}$ in $\Var_k$ with $X$ smooth, $I$ finite and $\coprod_i f_i \colon \coprod_i X_i \rightarrow X$ factors as $\coprod_i X_i \rightarrow Y \rightarrow X$, where the first morphism is a Nisnevich morphism 
and the second one is a $\cdp$-morphism. (It is a covering sieve 
and it is sufficient to consider such covering sieves by Lemma \ref{lem:rhvardescription}.) By replacing $Y$ by a suitable refinement using Hypothesis \ref{strong resolutions} 
and by replacing the $X_i$'s by their pullbacks, 
we may assume that the morphism $Y \rightarrow X$ is a composition of birational blow-ups with 
smooth centre. (Repeat the argument in the proof of (i) on each connected component of $X$.) Then the family $\{f_i \colon X_i \rightarrow X\}_{i \in I}$ is a $\sigma$-covering by Proposition \ref{prop: scdp_smoothbu} and Lemma \ref{lem:smoothNis}. Thus $\mathcal{S}$ is a covering sieve 
in the $\sigma$-topology and so $\sigma$ is finer than the $\cdp$-topology on $\Sm_k$, as desired.
\end{proof}

\subsection{Good blow-up and good Nisnevich cd-structures on smooth $k$-varieties}

In this section, we consider compactifications of smooth $k$-varieties and prove propositions which roughly claim that, 
under the assumptions of Hypotheses \ref{strong resolutions}, \ref{embedded resolutions} 
and \ref{embedded resolution with boundary}, 
it suffices to consider normal crossing compactifications for our purpose.

\begin{definition}\label{def:normal crossing compactification}
Let $X\in\Sm_k$. 
A \emph{normal crossing compactification} of $X$ is an embedding $X\hookrightarrow \overline{X}$ such that $(X,\overline{X})$ is a normal crossing pair.
\end{definition}

\begin{lemma}\label{lem: nc compactification}
Under Hypotheses \ref{strong resolutions} and \ref{embedded resolutions} every smooth $k$-variety has a normal crossing compactification.
Moreover, for $X\in\Sm_k$ fixed, the category of normal crossing pairs $(X,\overline{X})$ extending $X$ is cofiltered.
For simplicity, we denote this category by $\{(X,\overline{X})/X\}$.
\end{lemma}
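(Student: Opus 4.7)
The plan is to derive both assertions from Nagata's compactification theorem combined with the two resolution hypotheses, plus the elementary observation that a morphism in the category of normal crossing compactifications of a fixed $X$ is determined by its (identity) restriction to $X$.

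For the existence of a normal crossing compactification, I would proceed in three steps. First, Nagata's compactification theorem yields some compactification $X \hookrightarrow \overline{X}_0$ with $\overline{X}_0$ proper, and after replacing $\overline{X}_0$ by the closure of $X$ we may assume $X$ is dense. Second, Hypothesis \ref{strong resolutions}(i) applied to $\overline{X}_0$ produces a proper birational morphism $\overline{X}_1 \to \overline{X}_0$ from a smooth $k$-variety $\overline{X}_1$ which is an isomorphism over $(\overline{X}_0)_{\sm}$; since $X$ is smooth and open in $\overline{X}_0$, it lies in $(\overline{X}_0)_{\sm}$ and hence embeds as a dense open in the smooth proper variety $\overline{X}_1$. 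Third, feed the geometric pair $(X, \overline{X}_1)$ into Hypothesis \ref{embedded resolutions} to obtain a normal crossing pair $(X, \overline{X})$ extending $X$.

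For cofilteredness of $\{(X, \overline{X})/X\}$, non-emptiness is the first assertion. The equalizer condition is automatic: two parallel morphisms $(X, \overline{Y}) \rightrightarrows (X, \overline{X})$ of the category both restrict to $\id_X$ on the dense open $X \subset \overline{Y}$, and since $\overline{Y}$ is reduced and $\overline{X}$ is separated they must agree on all of $\overline{Y}$ (in particular there is at most one morphism between any two objects, so the category is equivalent to a cofiltered poset). The main content is building a common refinement of two normal crossing compactifications $(X, \overline{X}_1)$ and $(X, \overline{X}_2)$. For this I would consider the locally closed immersion
\[
X \xrightarrow{\Delta} X \times_k X \hookrightarrow \overline{X}_1 \times_k \overline{X}_2
\]
and take $\overline{X}_{12}$ to be the reduced closure of its image. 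Then $(X, \overline{X}_{12})$ is a proper geometric pair whose two projection morphisms to $(X, \overline{X}_i)$ restrict to $\id_X$ on $X$. Now apply Hypothesis \ref{strong resolutions}(i) to $\overline{X}_{12}$ to replace it by a smooth proper $\overline{X}_{12}'$ mapping properly and birationally, with the map an isomorphism over $(\overline{X}_{12})_{\sm} \supset X$; then feed $(X, \overline{X}_{12}')$ into Hypothesis \ref{embedded resolutions} to produce a normal crossing compactification $(X, \overline{X}_{12}'')$ equipped with the required morphisms to each $(X, \overline{X}_i)$.

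Given the resolution hypotheses, there is no serious obstacle; the proof is essentially formal. The only point requiring care is the routine bookkeeping that each of the operations involved (Nagata compactification, strong resolution, embedded resolution, as well as the passage to $\overline{X}_{12}$) is an isomorphism over $X$, so that the constructed morphisms do genuinely restrict to the identity on $X$ and thus live in the category $\{(X,\overline{X})/X\}$.
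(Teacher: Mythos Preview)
Your proof is correct and follows essentially the same approach as the paper: Nagata compactification followed by Hypotheses \ref{strong resolutions}(i) and \ref{embedded resolutions} for existence, and the closure of the diagonal in $\overline{X}_1 \times_k \overline{X}_2$ (again resolved via the two hypotheses) for the common refinement. Your treatment is in fact slightly more thorough, since you explicitly verify the equalizer condition by observing that the category has at most one morphism between any two objects; the paper's proof leaves this implicit.
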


\begin{proof}
According to Nagata's theorem, there is a proper $k$-variety $\overline{X}$ 
such that $(X,\overline{X})$ is a geometric pair.
Because of Hypotheses \ref{strong resolutions} (i) and \ref{embedded resolutions} this can be transformed via a strict  birational morphism into a normal crossing pair 
$(X,\overline{X}')$. 

To see that the category of normal crossing pairs extending $X$ is cofiltered, let $(X,\overline{X}_1)$ and $(X,\overline{X}_2)$ be two normal crossing pairs.
Take $\overline{X}'$ to be the closure of $X\hookrightarrow \overline{X}_1 \times \overline{X}_2$. 
Then $(X,\overline{X}')$ is a geometric pair.
By Hypotheses \ref{strong resolutions} (i) and \ref{embedded resolutions}
this can again be transformed via a strict birational morphism into a normal crossing pair $(X,\overline{X}_3)$ together with strict morphisms
	$$
	\xymatrix{ & (X,\overline{X}_3) \ar[dr] \ar[dl]&\\ (X,\overline{X}_1) && (X,\overline{X}_2)}
	$$
over $X$.
\end{proof}

\begin{definition}\label{def:goodsmcdstr}
We consider the following $\cd$-structures on $\Sm_k$ which we call \emph{good}:
\begin{enumerate}
\item\label{eq: good smooth blowup_cd_square}
The \emph{good smooth blow-up $\cd$-structure} on $\Sm_k$ consists of smooth blow-up squares
	\begin{equation*}
	\xymatrix{Z' \ar@{^{(}->}[r] \ar[d] & X' \ar^{p}[d]\\
	Z \ar@{^{(}->}^{e}[r] & X}
	\end{equation*}
which can be embedded into Cartesian squares of normal crossing pairs
	$$
	\xymatrix{(Z',\overline{Z}') \ar@{^{(}->}[r] \ar[d] & (X',\overline{X}') \ar^{p}[d]\\
	(Z,\overline{Z}) \ar@{^{(}->}^{e}[r] & (X,\overline{X}),}
	$$
where all morphisms are strict, $p$ is the blow-up with centre $\overline{Z}$, and $\overline{Z}$ has normal crossings with $\overline{X}\setminus X$.

\item \label{eq: good smooth Zariski_cd_square} The  \emph{good smooth Nisnevich $\cd$-structure} on $\Sm_k$ is given by smooth Nisnevich squares 
	\begin{equation*}
	\xymatrix{Y' \ar@{^{(}->}[r] \ar[d] & X' \ar[d]^p\\
	Y \ar@{^{(}->}[r]^e & X},
	\end{equation*}
	such that its restriction to each connected component $X_0$ of $X$
	\begin{equation*}
	\xymatrix{Y'_0 \ar@{^{(}->}[r] \ar[d] & X'_0 \ar[d]^p\\
	Y_0 \ar@{^{(}->}[r]^e & X_0},
	\end{equation*}
	 can be embedded into a Cartesian square of normal crossing pairs
		$$
	\xymatrix{(Y'_0,\overline{X}') \ar@{^{(}->}[r] \ar[d] & (X'_0,\overline{X}') \ar[d]^p\\
	(Y_0,\overline{X}) \ar@{^{(}->}[r]^e & (X_0,\overline{X})}
	$$
	 of the form (a) or (b) below:
	\begin{enumerate}
	\item a diagram
	$$
	\xymatrix{(Y'_0,\overline{X}') \ar@{^{(}->}[r] \ar[d] & (X'_0,\overline{X}') \ar[d]^p\\
	(Y_0,\overline{X}) \ar@{^{(}->}[r]^e & (X_0,\overline{X})}
	$$
with $e$ identity on $\overline{X}$ such that the closure of $X_0 \setminus Y_0$ in $\overline{X}$ is a smooth divisor of $\overline{X}$.

	\item diagrams
	$$
	\xymatrix{\emptyset \ar[r] \ar[d] & \emptyset \ar[d]^p & \emptyset \ar[r] \ar[d] & (X_0,\overline{X}) \ar[d]^p \\
	(X_0,\overline{X}) \ar[r]^e& (X_0,\overline{X}), & \emptyset \ar[r]^-e &(X_0,\overline{X}), }
	$$
	where two of the entries are empty.
	\end{enumerate}

\end{enumerate}
In (i) (resp. (ii)), for a good smooth blow-up square (resp. a good smooth Nisnevich square), 
we call a square of normal crossing pairs as above in which the given square is embedded 
a \emph{good compactification} of the given square. 
\end{definition}

\begin{proposition}\label{prop:goodsquares}
	Under Hypotheses \ref{strong resolutions}, \ref{embedded resolutions} and \ref{embedded resolution with boundary}, any smooth blow-up square is a good smooth blow-up square.
\end{proposition}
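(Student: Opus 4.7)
The plan is to compactify the given smooth blow-up square to a square of normal crossing pairs by blowing up (a suitable refinement of) the closure of $Z$. The key point is Hypothesis~\ref{embedded resolution with boundary}, which lets us arrange for the closure of $Z$ in the compactification of $X$ to meet the divisor at infinity with normal crossings, so that blowing it up preserves the normal crossing property of the boundary.

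First, by Lemma~\ref{lem: nc compactification}, choose a normal crossing compactification $(X,\overline{X}_0)$ of $X$ and let $\overline{Z}_0\subset\overline{X}_0$ be the closure of $Z$. Since $Z$ is closed in $X$, one has $\overline{Z}_0\cap X=Z$, so $(Z,\overline{Z}_0)\hookrightarrow(X,\overline{X}_0)$ is a strict closed immersion of geometric pairs satisfying the assumptions of Hypothesis~\ref{embedded resolution with boundary}. Applying the hypothesis yields normal crossing pairs $(Z,\overline{Z})$ and $(X,\overline{X})$ with a strict closed immersion $\overline{Z}\hookrightarrow\overline{X}$, with strict proper birational modifications to $(Z,\overline{Z}_0)$ and $(X,\overline{X}_0)$ that restrict to the identity on $Z$ and $X$, and such that $\overline{Z}$ has normal crossing with $\overline{X}\setminus X$.

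Now set $\pi\colon\overline{X}':=\Bl_{\overline{Z}}\overline{X}\to\overline{X}$ and let $\overline{Z}':=\pi^{-1}(\overline{Z})$ be the exceptional divisor. Strictness gives $\overline{Z}\cap X=Z$, so restricting $\pi$ to $X$ recovers $\Bl_Z X=X'$ with exceptional divisor $\pi^{-1}(Z)=Z'$, and the original smooth blow-up square is the open part of
\[
\xymatrix{(Z',\overline{Z}') \ar@{^{(}->}[r] \ar[d] & (X',\overline{X}') \ar[d]^{\pi}\\
(Z,\overline{Z}) \ar@{^{(}->}[r] & (X,\overline{X}),}
\]
which is Cartesian with all morphisms strict and $\pi$ the blow-up with centre $\overline{Z}$, as required by Definition~\ref{def:goodsmcdstr}. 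That $(Z',\overline{Z}')$ is a normal crossing pair is immediate, since $\overline{Z}'=\mathbb{P}(N_{\overline{Z}/\overline{X}})$ is a projective bundle over $\overline{Z}$ with $Z'$ sitting over $Z$, so the boundary $\overline{Z}'\setminus Z'$ is the smooth pullback of the simple normal crossing divisor $\overline{Z}\setminus Z$. The remaining and main technical point is that $(X',\overline{X}')$ is a normal crossing pair: the boundary $\overline{X}'\setminus X'$ is the strict transform of $\overline{X}\setminus X$, and one has to check that this strict transform is simple normal crossing in $\overline{X}'$. Using the assumption that $\overline{Z}$ has normal crossing with $\overline{X}\setminus X$, this reduces to an \'etale-local coordinate computation with $\overline{Z}=\{x_1=\cdots=x_b=0\}$ and $\overline{X}\setminus X=\{x_1\cdots x_a x_{b+1}\cdots x_c=0\}$, where a chart-by-chart blow-up calculation exhibits the strict transform of $\overline{X}\setminus X$ together with the exceptional divisor as a simple normal crossing divisor in $\overline{X}'$. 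This explicit local calculation is the main obstacle, though it is routine in the theory of resolutions.
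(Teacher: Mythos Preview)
Your proof is correct and follows essentially the same approach as the paper: compactify $X$ to an $nc$-pair via Lemma~\ref{lem: nc compactification}, take the closure of $Z$, apply Hypothesis~\ref{embedded resolution with boundary} to make that closure have normal crossing with the boundary, and then blow up. The paper's proof is terser and simply asserts the resulting square is the desired good compactification, whereas you spell out why $(Z',\overline{Z}')$ and $(X',\overline{X}')$ are $nc$-pairs; note that since $Z$ is dense in $\overline{Z}$ and disjoint from $\overline{X}\setminus X$, no component of $\overline{Z}$ lies in the boundary, so in the local model one has $a=0$ and the boundary of $\overline{X}'$ is indeed the strict transform (equivalently, the reduced total transform) of $\overline{X}\setminus X$, which is the local picture computed in Proposition~\ref{prop: MV blow-up}.
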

\begin{proof}
Given a smooth blow-up square in $\Sm_k$
\begin{equation*}
	\xymatrix{Z' \ar@{^{(}->}[r] \ar[d] & X' \ar^{p}[d]\\
	Z \ar@{^{(}->}^{e}[r] & X,}
\end{equation*}
there exists according to Lemma \ref{lem: nc compactification} a normal crossing compactification $(X,\overline{X}_1)$ with $\overline{X}_1$ smooth. 
We denote by $\overline{Z}_1$ the closure of $Z$ in $\overline{X}_1$. 
By Hypothesis \ref{embedded resolution with boundary}, there is a commutative diagram
	$$
	\xymatrix{(Z,\overline{Z}) \ar@{^{(}->}[r] \ar[d] & (X,\overline{X}) \ar[d]\\
	(Z,\overline{Z}_1) \ar@{^{(}->}[r] & (X,\overline{X}_1)}
	$$
	such that $(Z,\overline{Z})\hookrightarrow (X,\overline{X})$ is a strict closed immersion of $nc$-pairs, the vertical morphisms are strict birational, and $\overline{Z}$ has normal crossings with $\overline{X}\setminus X$. 
	By setting $\overline{X}':=\Bl_{\overline{Z}}\overline{X}$,
	we obtain the desired good compactification  
	$$
	\xymatrix{(Z',\overline{Z}') \ar@{^{(}->}[r] \ar[d] & (X',\overline{X}') \ar^{p}[d]\\
	(Z,\overline{Z}) \ar@{^{(}->}^{e}[r] & (X,\overline{X})}
	$$
	of our given smooth blow-up square, given that $X'=\Bl_Z X$ by hypothesis.
\end{proof}

In general, not every Nisnevich square is good, but the following result shows that we can relate every Nisnevich square with good ones at least cdp-locally:

\begin{proposition}\label{prop: good simplicial Zariski}
	Let us assume Hypotheses \ref{strong resolutions}, \ref{embedded resolutions} and \ref{embedded resolution with boundary}. For a given Nisnevich square 
	\begin{equation}\label{eq:smZariskiSquare}
		\xymatrix{Y' \ar[r] \ar[d] & X' \ar[d]\\
	Y \ar[r] & X}
	\end{equation}
	in  $\Var_k$, there is a split hypercovering $X_\bullet \rightarrow X$ with 
respect to the $cdp$-topology in $\Var_k$ with $X_\bullet$ smooth such that 
the pullback
	\[
		\xymatrix{Y'_\bullet \ar[r] \ar[d] & X'_\bullet \ar[d]\\
	Y_\bullet \ar[r] & X_\bullet}
	\]
of the square \eqref{eq:smZariskiSquare} to $X_\bullet$ satisfies the following condition: For each $i$, the square 
	\[
	\xymatrix{Y'_i \ar[r] \ar[d] & X'_i \ar[d] \\
	Y_i \ar[r] & X_i}
	\]
admits a factorisation 
	\[
	\xymatrix{Y'_i \ar@{=}[r] & X'_{i,n} \ar[r] \ar[d] & X'_{i,n-1} \ar[r] \ar[d] & \cdots \ar[r] & X'_{i,1} \ar[r] \ar[d] & X'_{i,0} \ar@{=}[r] \ar[d] & X'_i \\ 
	Y_i \ar@{=}[r] & X_{i,n} \ar[r]  & X_{i,n-1} \ar[r]  & \cdots \ar[r] & X_{i,1} \ar[r] & X_{i,0} \ar@{=}[r] & X_i}
	\]
for some $n$ (which may depend on $i$) such that each square 
	\[
	\xymatrix{X'_{i,l} \ar[r] \ar[d] & X'_{i,l-1} \ar[d]\\
	X_{i,l} \ar[r] & X_{i,l-1}}
	\]
is a good smooth Nisnevich square.
\end{proposition}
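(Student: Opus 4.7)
The plan is to reduce the construction of the split cdp-hypercovering to a local problem, then assemble the hypercovering by the standard inductive construction. The local problem is: for any smooth $k$-variety $V$ equipped with a pullback of the Nisnevich square (an open immersion $U \hookrightarrow V$ and an \'etale morphism $V' \to V$ inducing an isomorphism on the complements), find a smooth cdp-cover $\tilde{V} \to V$ on which the pulled-back Nisnevich square admits the required factorisation through good smooth Nisnevich squares. Given this local statement, one builds $X_\bullet$ level by level: at level $n$, apply the local statement to the (non-degenerate part of the) matching object of the partially constructed hypercovering, which exists in $\Var_k$ by Remark~\ref{rem:finiteinverselimit}, and the iteration terminates by Noetherian induction and boundedness of the blow-up cd-structure.

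To prove the local statement, set $Z = V \setminus U$ with its reduced structure. The goal is to produce a smooth cdp-refinement $\tilde{V}$ of $V$ together with a normal crossing compactification $(\tilde{V}, \overline{\tilde{V}})$ in which the closure of the preimage of $Z$ is a simple normal crossing divisor having normal crossing with the boundary $\overline{\tilde{V}} \setminus \tilde{V}$. I would proceed in three steps: (i) iterate Hypothesis~\ref{strong resolutions} to find a smooth cdp-cover of $V$ on which the preimage of $Z$ becomes a divisor with smooth irreducible components (blow up $Z$ to render it a Cartier divisor, then resolve the result); (ii) apply Hypothesis~\ref{embedded resolutions} to obtain a normal crossing compactification $(\tilde{V}, \overline{\tilde{V}})$; (iii) apply Hypothesis~\ref{embedded resolution with boundary} successively to the smooth irreducible components of the preimage of $Z$ to arrange that their closures in $\overline{\tilde{V}}$ together with the boundary $\overline{\tilde{V}} \setminus \tilde{V}$ form a simple normal crossing divisor. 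On the \'etale cover side, choose any compactification $\overline{V}'$ of $V' \times_V \tilde{V}$ with a morphism to $\overline{\tilde{V}}$ (via Nagata), then refine by Hypotheses~\ref{strong resolutions} and \ref{embedded resolutions} to obtain a compatible normal crossing compactification.

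With the setup in hand, write $\overline{\tilde{Z}} = D_1 \cup \cdots \cup D_n$ as the union of its smooth irreducible components, and set $X_{i,l} := \tilde{V} \setminus \bigcup_{j \le l} D_j$ and $X'_{i,l} := V' \times_V X_{i,l}$. Then $X_{i,0} = \tilde{V}$, $X_{i,n} = \tilde{U} := U \times_V \tilde{V}$, and each square
$$
\xymatrix{X'_{i,l} \ar[r] \ar[d] & X'_{i,l-1} \ar[d] \\ X_{i,l} \ar[r] & X_{i,l-1}}
$$
admits a good compactification of scheme (a) in Definition~\ref{def:goodsmcdstr}(ii): the ambient compactifications remain $\overline{\tilde{V}}$ and $\overline{V}'$, and the closure of the complement $X_{i,l-1} \setminus X_{i,l}$ is the single smooth divisor $D_l$ (respectively, its pullback to $\overline{V}'$, smooth by our compatible choice of compactifications).

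The main obstacle will be steps (i)--(iii) of the local problem. Since $Z$ can be arbitrarily singular and of mixed codimension, turning its preimage (via a smooth cdp-cover) into a simple normal crossing divisor with smooth components, while simultaneously controlling crossings with the boundary of a smooth compactification, requires a careful iterative application of the three resolution hypotheses; in particular, the embedded resolution with boundary must be invoked component-by-component in a way that preserves the normal crossings achieved by earlier steps. A secondary difficulty is arranging the \'etale side with a compatible good compactification, but this is mitigated by the fact that Definition~\ref{def:goodsmcdstr}(ii)(a) requires only that both sides be normal crossing pairs with a morphism between them, not that the induced morphism $\overline{V}' \to \overline{\tilde{V}}$ be \'etale.
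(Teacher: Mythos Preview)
Your overall architecture matches the paper's: reduce to a local statement (find a cdp-cover on which the pulled-back Nisnevich square factors into good smooth Nisnevich squares), then build the split hypercovering inductively via the coskeleton recipe. The idea of factoring the open immersion by removing one smooth divisor at a time is also what the paper does. However, your treatment of the \'etale side has a genuine gap.

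For the intermediate square
\[
\xymatrix{X'_{i,l} \ar[r] \ar[d] & X'_{i,l-1} \ar[d] \\ X_{i,l} \ar[r] & X_{i,l-1}}
\]
to be a good smooth Nisnevich square of type (a), Definition~\ref{def:goodsmcdstr}(ii) requires all four corners of the good compactification to be normal crossing pairs. On the base row this is fine: you have arranged that $\overline{\tilde{V}} \setminus \tilde{V}$ together with the closures $\overline{D}_1,\dots,\overline{D}_n$ form a simple normal crossing divisor, so each $(X_{i,l},\overline{\tilde{V}})$ is an $nc$-pair. But on the top row you only produced a single $nc$-compactification $\overline{V}'$ of $V'\times_V \tilde{V}$. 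The claim that the pullback of $D_l$ to $\overline{V}'$ is smooth, or that $(X'_{i,l},\overline{V}')$ is an $nc$-pair for every $l$, does not follow from this: the closure of $D'_l := X'_{i,l-1}\setminus X'_{i,l}$ in $\overline{V}'$ can acquire singularities over the boundary, and nothing in your construction forces it to meet $\overline{V}'\setminus X'_{i,l-1}$ transversally. Your ``compatible choice of compactifications'' is asserted, not constructed.

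The paper's proof evades exactly this difficulty by not insisting on a single compactification upstairs. It introduces condition~$(N)$, which only asks that \emph{each} intermediate square admit \emph{some} modification of its compactifications that is good. Concretely, for each $l$ separately the paper applies Hypothesis~\ref{embedded resolution with boundary} to the strict closed immersion $(D'_l,\overline{D}'_l)\hookrightarrow (X'_{l-1},\overline{X}')$, obtaining a new compactification $\widetilde{X}'$ (depending on $l$) in which $\widetilde{D}'_l$ is smooth with normal crossing with $\widetilde{X}'\setminus X'_{l-1}$; then both $(X'_{l-1},\widetilde{X}')$ and $(X'_l,\widetilde{X}')$ are $nc$-pairs. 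This per-step flexibility is the missing ingredient in your argument. (A secondary issue: your step~(iii) on the base, iterating Hypothesis~\ref{embedded resolution with boundary} over the components of $Z$, has the same fragility --- later applications may disturb earlier normal crossings. The paper instead applies Hypothesis~\ref{embedded resolutions} once to the pair $(Y,\overline{X})$ to make $X\setminus Y$ a simple normal crossing divisor in one shot.)
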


\begin{proof}
In the proof, for a commutative square 
\begin{equation}\label{eq:defmodification}
\xymatrix{(B', \overline{B}') \ar[r] \ar[d] & (A',\overline{A}') \ar[d] \\
(B,\overline{B}) \ar[r] & (A,\overline{A})}
\end{equation}
in $\Var^{geo}_k$, we call a commutative square in 
$\Var^{geo}_k$ of the form 
\[ 
\xymatrix{(B', \widetilde{B}') \ar[r] \ar[d] & (A',\widetilde{A}') \ar[d] \\
(B,\widetilde{B}) \ar[r] & (A,\widetilde{A})} 
\] 
over the given square \eqref{eq:defmodification} a modification of the square \eqref{eq:defmodification}.  Also, we say that a commutative square \eqref{eq:defmodification} in $\Var^{geo}_k$ satisfies the condition $(N)$ if it admits a factorisation 
	\[
	\xymatrix{(B',\overline{B}') \ar@{=}[r] & (A'_n,\overline{A}'_n) \ar[r] \ar[d] & (A'_{n-1}, \overline{A}'_{n-1}) \ar[r] \ar[d] & \cdots \ar[r] & (A'_1,\overline{A}'_1) \ar[r] \ar[d] & (A'_0,\overline{A}'_0) \ar@{=}[r] \ar[d] & (A',\overline{A}') \\ 
	(B,\overline{B}) \ar@{=}[r] & (A_n,\overline{A}_n) \ar[r]  & (A_{n-1},\overline{A}_{n-1}) \ar[r]  & \cdots \ar[r] & (A_1,\overline{A}_1) \ar[r] & (A_0,\overline{A}_0) \ar@{=}[r] & (A,\overline{A})}
	\]
for some $n$ such that each square 
	\[
	\xymatrix{(A'_l,\overline{A}'_{l}) \ar[r] \ar[d] & (A'_{l-1},\overline{A}'_{l-1}) \ar[d]\\
	(A_l,\overline{A}_l) \ar[r] & (A_{l-1},\overline{A}_{l-1})}
	\]
admits a modification which is a good compactification of a good smooth Nisnevich square. 

Now we start the proof. By Nagata's theorem we have a geometric pair of the form $(X,\overline{X})$ and a geometric pair of the form $(X',\overline{X}')$ over $(X,\overline{X})$ which extends the given morphism $X' \rightarrow X$. If we define $\overline{Y}, \overline{Y}'$ to be the closure of $Y,Y'$ in $\overline{X}, \overline{X}'$ respectively, the resulting square 
	\begin{equation}\label{eq:geocptZariskiSquare}
		\xymatrix{(Y',\overline{Y}') \ar[r] \ar[d] & (X',\overline{X}') \ar[d]\\
	(Y,\overline{Y}) \ar[r] & (X,\overline{X})}
	\end{equation}
is a commutative square in $\Var_k^{geo}$ enclosing the Nisnevich square \eqref{eq:smZariskiSquare}.
\bigskip 

\noindent 
{\bf Claim 1.} Given a commutative square \eqref{eq:geocptZariskiSquare} in $\Var_k^{geo}$ 
enclosing a Nisnevich square \eqref{eq:smZariskiSquare}, there exists a strict birational morphism $f_1 \colon (W_1, \overline{W}_1) \rightarrow (X, \overline{X})$ with 
an $nc$-pair $(W_1, \overline{W}_1)$ such that the pullback of the square 
\eqref{eq:geocptZariskiSquare} by $f_1$ in the category $\Var_k^{geo}$ 
satisfies the condition $(N)$.

\begin{proof}[Proof of Claim 1] 
By Hypotheses \ref{strong resolutions} and \ref{embedded resolutions}, we may assume that 
$(X, \overline{X})$ is an $nc$-pair to prove the claim. 
Furthermore, by working on each connected component separately, we may assume that $(X, \overline{X})$ is connected.

If $Y'= \emptyset$, either $Y = X$, $X' = \emptyset$ or $Y = \emptyset, X' = X$.
In the former (resp.\ the latter) case, the square \eqref{eq:geocptZariskiSquare} is equal to 
the former (resp.\ the latter) square in Definition \ref{def:goodsmcdstr}(ii)(b), which is a good compactification of a good smooth Nisnevich square. In particular, it satisfies the condition $(N)$.

If $Y' \not= \emptyset$, 
$\overline{Y} = \overline{X}$ and $\overline{Y}' = \overline{X}'$. 
Applying Hypothesis \ref{embedded resolutions} to $(Y,\overline{Y})$, 
we obtain a morphism of $nc$-pairs $(Y,\widetilde{Y}) \rightarrow (X,\overline{X})$ over $(Y,\overline{Y}) \rightarrow (X,\overline{X})$ such that $(Y,\widetilde{Y}) \rightarrow (Y,\overline{Y})$ is a strict 
birational morphism.
Because the inverse image of the simple normal crossing divisor
$\overline{X} \setminus X$ in $\widetilde{Y}$ is a simple normal 
crossing subdivisor of $\widetilde{Y} \setminus Y$, we see that 
the pullback of the morphism $(Y,\overline{Y}) \rightarrow (X,\overline{X})$ by the strict birational morphism 
$(X \times_{\overline{X}} \widetilde{Y}, \widetilde{Y}) \rightarrow (X, \overline{X})$ is a morphism of $nc$-pairs 
$(Y, \widetilde{Y}) \rightarrow (X \times_{\overline{X}} \widetilde{Y}, \widetilde{Y})$. 
Hence, to prove the claim, we may assume that the morphism $(Y,\overline{Y}) = (Y,\overline{X}) \rightarrow (X,\overline{X})$ is a morphism of $nc$-pairs. It suffices to prove that, under this assumption, the square \eqref{eq:geocptZariskiSquare} satisfies the condition $(N)$. 

We denote the decomposition of the simple normal crossing divisor $X \setminus Y$ into irreducible components by $X \setminus Y = \bigcup_{m=1}^n D_m$ and denote by $\overline{D}_m$ the closure of $D_m$ in $\overline{X}$. Then each $\overline{D}_m$ is a smooth divisor of $\overline{X}$ and we have 
$\overline{X} \setminus Y = (\overline{X} \setminus X) \cup \bigcup_{m=1}^n \overline{D}_m$.

For $0 \leqslant l \leqslant n$, define 
\[ 
X_l := X \setminus \bigcup_{m=1}^l D_l = 
\overline{X} \setminus ((\overline{X} \setminus X) \cup \bigcup_{m=1}^l \overline{D}_l), \quad X'_l := X_l \times_X X'. 
\] 
Then the square \eqref{eq:geocptZariskiSquare} admits a factorisation 
	\[
	\xymatrix{(Y',\overline{Y}') \ar@{=}[r] & (X'_n,\overline{X}') \ar[r] \ar[d] & (X'_{n-1}, \overline{X}') \ar[r] \ar[d] & \cdots \ar[r] & (X'_1,\overline{X}') \ar[r] \ar[d] & (X'_0,\overline{X}') \ar@{=}[r] \ar[d] & (X',\overline{X}') \\ 
	(Y,\overline{Y}) \ar@{=}[r] & (X_n,\overline{X}) \ar[r]  & (X_{n-1},\overline{X}) \ar[r]  & \cdots \ar[r] & (X_1,\overline{X}) \ar[r] & (X_0,\overline{X}) \ar@{=}[r] & (X,\overline{X}).}
	\]
It remains to show that each of the squares in this diagram admits a modification which is a good compactification of a of a good smooth Nisnevich square. Using the factorisation $X_n\to X_{n-1}\to\dots\to X_1$ we can reduce to the case when $n = 1$, that is we consider the diagram
\begin{equation}\label{eq:ell}
\xymatrix{
(Y', \overline{X}') \ar[r] \ar[d] & (X',\overline{X}') \ar[d] \\
(Y, \overline{X}) \ar[r] & (X, \overline{X}), 
}
\end{equation}
where the lower horizontal morphism is a morphism of $nc$-pairs, the closure $\overline{D}$ of 
$ X \setminus Y$ in $\overline{X}$ is a smooth divisor and the square 
\[\xymatrix{
Y'  \ar[r] \ar[d] & X' \ar[d] \\
Y \ar[r] & X
}\]
induced by \eqref{eq:ell} is a smooth Nisnevich square. 

Let $D' := X' \setminus Y'$ and let $\overline{D}'$ be the closure of $D'$ in $\overline{X}'$. Then we have the isomorphism $D' \xrightarrow{\cong} X\setminus Y$ by definition of smooth Nisnevich square and so $D'$ is smooth. Also, we obtain a strict closed immersion 
$(D', \overline{D}') \hookrightarrow (X',\overline{X}')$ of geometric pairs. So, 
by Hypothesis \ref{embedded resolution with boundary}, 
there exists a commutative diagram 
\[ 
\xymatrix{
(D', \widetilde{D}') \ar@{^{(}->}[r] \ar[d] 
& (X',\widetilde{X}') \ar[d] \\ 
(D', \overline{D}') \ar@{^{(}->}[r] 
& (X',\overline{X}')
} \] 
such that $(D', \widetilde{D}'), (X',\widetilde{X}')$ are 
$nc$-pairs, the horizontal arrows are strict closed immersions, the vertical arrows are strict birational morphisms which are isomorphisms outside $\overline{D}' \setminus D'$, and $\widetilde{D}'$ has normal crossing with $\widetilde{X}' \setminus X'$. Then we have 
$$ Y' = X' \setminus D' = \widetilde{X}' \setminus 
((\widetilde{X}' \setminus X') \cup \widetilde{D}') $$
and $(\widetilde{X}' \setminus X') \cup \widetilde{D}'$ is a simple normal crossing divisor in $\widetilde{X}'$. So $(Y',\widetilde{X}') \rightarrow 
(X',\widetilde{X}')$ is a morphism of $nc$-pairs. Therefore, the square 
\[ 
\xymatrix{
(Y',\widetilde{X}') \ar[r] \ar[d] & (X',\widetilde{X}') \ar[d] \\ 
(Y,\overline{X}) \ar[r] & (X,\overline{X})  
}
\]
is a good compactification of a good smooth Nisnevich square which is a modification of the square \eqref{eq:ell}. So the square \eqref{eq:geocptZariskiSquare} satisfies the condition $(N)$, as required. 
\end{proof}

While Claim 1 says that the pullback of a given commutative square \eqref{eq:geocptZariskiSquare} along a \emph{strict birational morphism} satisifies condition $(N)$, we will now prove that any such diagram satisfies condition $(N)$ \emph{$\cdp$-locally} in the following precise sense:

\noindent 
{\bf Claim 2.} Given a commutative square \eqref{eq:geocptZariskiSquare} in $\Var_k^{geo}$ 
enclosing a Nisnevich square \eqref{eq:smZariskiSquare}, there exists a strict morphism $f \colon (X_0, \overline{X}_0) \rightarrow (X, \overline{X})$ with 
$(X_0, \overline{X}_0)$ an $nc$-pair and $X_0 \rightarrow X$ a $\cdp$-covering 
such that the pullback of the square 
\eqref{eq:geocptZariskiSquare} by $f$ in the category $\Var_k^{geo}$ satisfies the condition $(N)$.

\begin{proof}[Proof of Claim 2] 
Let $f_1 \colon (W_1, \overline{W}_1) \rightarrow (X, \overline{X})$ be the strict birational morphism 
constructed in Claim 1. Let $\overline{Z}_1'\subseteq \overline{X}$ be the locus where 
the morphism $f_1 \colon \overline{W}_1 \rightarrow \overline{X}$ is not an isomorphism, 
put $Z_1 := X \cap \overline{Z}'_1$ and let $\overline{Z}_1$ be the closure 
of $Z_1$ in $\overline{Z}'_1$. Then we apply Claim 1 to the pullback of the square 
\eqref{eq:geocptZariskiSquare} to $(Z_1, \overline{Z}_1)$ in the category $\Var_k^{geo}$ to 
define a strict birational morphism $f_2 \colon (W_2, \overline{W}_2) \rightarrow (Z_1, \overline{Z}_1)$. 
Let $\overline{Z}_2'\subseteq \overline{Z}_1$ be the locus where the morphism $f_2 \colon \overline{W}_2 \rightarrow \overline{Z}_1$ is not an isomorphism, set $Z_2 := X \cap \overline{Z}'_2$ and let $\overline{Z}_2$ be the closure 
of $Z_2$ in $\overline{Z}'_2$. Continuing this process, which terminates because $\overline{X}$ is Noetherian, we obtain a strict morphism $f \colon (X_0,\overline{X}_0):=\coprod_{j} (W_j,\overline{W}_j)\rightarrow (X,\overline{X})$ 
with $(X_0, \overline{X}_0)$ an $nc$-pair and $X_0 \rightarrow X$ a $\cdp$-covering 
such that the pullback of the square 
\eqref{eq:geocptZariskiSquare} by $f$ in the category $\Var_k^{geo}$ satisfies the condition $(N)$.
\end{proof}

Now we prove the proposition. It suffices to construct by induction  
an $i$-truncated simplicial $nc$-pairs $(X_\bullet, \overline{X}_\bullet)_{\bullet \leqslant i}$ 
over $(X,\overline{X})$ such that $X_{\bullet \leqslant i} \rightarrow X$ is an 
 $i$-truncated split hypercovering with respect to the $\cdp$-topology in $\Var_k$ and that 
the pullback 
\begin{equation}\label{eq:truncatedsquare}
\xymatrix{(Y'_\bullet,\overline{Y}'_\bullet)_{\bullet \leqslant i} \ar[r] \ar[d] & (X'_\bullet,\overline{X}'_\bullet)_{\bullet \leqslant i} \ar[d]\\
	(Y_\bullet,\overline{Y}_\bullet)_{\bullet \leqslant i} \ar[r] & (X_\bullet,\overline{X}_\bullet)_{\bullet \leqslant i}}
\end{equation}
of the square \eqref{eq:geocptZariskiSquare} to $(X_\bullet,\overline{X}_\bullet)_{\bullet \leqslant i}$ 
is a square of simplicial geometric pairs such that, for each 
$0 \leqslant j \leqslant i$, the induced square 
\begin{equation*}
\xymatrix{(Y'_j,\overline{Y}'_j) \ar[r] \ar[d] & (X'_j,\overline{X}'_j) \ar[d]\\
	(Y_j,\overline{Y}_j) \ar[r] & (X_j,\overline{X}_j) }
\end{equation*}
satisfies the condition $(N)$.
The case $i=0$ follows from Claim 2. Suppose that we have constructed 
an $i$-truncated simplicial $nc$-pairs $(X_\bullet, \overline{X}_\bullet)_{\bullet \leqslant i}$ 
over $(X,\overline{X})$ as above. Let 

\begin{equation}\label{eq:cosksquare}
\xymatrix{
((\cosk_i Y'_\bullet)_{i+1},  (\cosk_i \overline{Y}'_\bullet)_{i+1})
\ar[r] \ar[d] & 
((\cosk_i X'_\bullet)_{i+1},  (\cosk_i \overline{X}'_\bullet)_{i+1})
\ar[d] \\ 
((\cosk_i Y_\bullet)_{i+1},  (\cosk_i \overline{Y}_\bullet)_{i+1})
\ar[r] & 
((\cosk_i X_\bullet)_{i+1},  (\cosk_i \overline{X}_\bullet)_{i+1})
}
\end{equation}
be the square we obtain by taking 
the degree $i+1$ part of the pullback of the square \eqref{eq:geocptZariskiSquare} to $(\cosk_i X_\bullet,  \cosk_i \overline{X}_\bullet)$
in $\Var_k^{geo}$. By applying Claim 2 to the square \eqref{eq:cosksquare}, 
we find a strict morphism 
\[ 
g \colon (X''_{i+1},\overline{X}''_{i+1}) \rightarrow((\cosk_i X_\bullet)_{i+1},(\cosk_i \overline{X}_\bullet)_{i+1})
\]  
with $(X''_{i+1},\overline{X}''_{i+1})$ an $nc$-pair and $X''_{i+1} \rightarrow (\cosk_i X_\bullet)_{i+1}$ 
a $\cdp$-covering such that the pullback of the square 
\eqref{eq:geocptZariskiSquare} by $g$ in the category $\Var_k^{geo}$ satisfies the condition $(N)$.
This map $g$ induces an $(i+1)$-truncated 
simplicial $nc$-pairs $(X_\bullet, \overline{X}_\bullet)_{\bullet \leqslant i+1}$ 
over $(X,\overline{X})$ with desired properties by the recipe explained 
in  \cite[Prop.\,5.1.3]{S_1972} (see also \cite[6.2]{D1974}). 
So the proof of the proposition is finished. 
\end{proof}

\subsection{Integral $p$-adic cohomology for smooth and open varieties}

In this subsection we will define a candidate for an integral $p$-adic cohomology theory on $\Sm_k$.
In addition to Hypotheses  \ref{strong resolutions}, \ref{embedded resolutions} and \ref{embedded resolution with boundary} we also assume Hypothesis \ref{weak factorisation} which assures weak factorisation.

In the following, we regard $\Spec k, \Spec W_n(k)$ and $\Spf W(k)$ as fine log (formal) 
schemes endowed with trivial log structure.
For a log smooth fine log scheme $X$ over $\Spec k$,
let $W\omega^{\kr}_{X/k}:= \varprojlim W_n\omega^{\kr}_{X/k }$ 
be the \emph{logarithmic de~Rham--Witt complex} \cite[\S\,4]{HK_1994}. 
(In \cite[\S\,4]{HK_1994} the structure morphism is supposed to be of Cartier type, 
but this is automatically satisfied in our case (see the paragraph after \cite[Def.\,(4.8)]{K_1989}.) It is a complex of \'etale sheaves on $X$, and
it computes log crystalline cohomology, that is, 
\begin{align*}
   & R\Gamma_{\cris}(X/W_n(k) ) \cong R\Gamma_{\et}(X,W_n\omega^{\kr}_{X/k}), \\ 
	& R\Gamma_{\cris}(X/W(k) ) \cong R\varprojlim R\Gamma_{\cris}(X/W_n(k) ) 
\cong R\varprojlim R\Gamma_{\et}(X,W_n\omega^{\kr}_{X/k}) \cong R\Gamma_{\et}(X,W\omega^{\kr}_{X/k}). 
\end{align*}
In particular, we have the isomorphisms 
\begin{align*}
H^\ast_{\cris}(X/W_n(k) ) \cong H^\ast_{\et}(X,W_n\omega^{\kr}_{X/k }), \quad 
H^\ast_{\cris}(X/W(k) ) \cong H^\ast_{\et}(X,W\omega^{\kr}_{X/k }). 
	\end{align*}
	
\begin{remark}
It is possible to find an explicit complex representing 
$R\Gamma_{\cris}(X/W_n(k) ) \cong R\Gamma_{\et}(X,W_n\omega^{\kr}_{X/k})$
in the derived category in such a way that it is functorial in $X$. 
This can be accomplished by taking for example the Godement resolution of $W_n\omega^{\kr}_{X/k }$ on the \'etale site. 
The same holds for 
$R\Gamma_{\cris}(X/W (k) ) \cong R\Gamma_{\et}(X,W \omega^{\kr}_{X/k})$.
\end{remark}

\begin{example}
A normal crossing pair $(X,\overline{X})$ gives rise to a log scheme which we denote again by $(X,\overline{X})$
where the underlying scheme is $\overline{X}$ and the log structure is induced by the divisor $D= \overline{X}\backslash X$.
Such a log scheme is fine and proper log smooth over $\Spec k$.
In this case, the truncated (resp.\, untruncated) logarithmic de Rham--Witt complex 
$W_n\omega^{\kr}_{(X,\overline{X})/k}$ (resp.\, $W\omega^{\kr}_{(X,\overline{X})/k}$) is also denoted by $W_n\Omega^{\kr}_{\overline{X}/k}(\log D)$ (resp.\,$ W\Omega^{\kr}_{\overline{X}/k}(\log D)$). 
So we have the isomorphisms
	$$
	H^\ast_{\cris}((X,\overline{X})/W_n(k)) \cong H^\ast_{\et}(\overline{X},W_n\Omega^{\kr}_{\overline{X}/k }(\log D)), 
\quad 	
H^\ast_{\cris}((X,\overline{X})/W(k)) \cong H^\ast_{\et}(\overline{X},W\Omega^{\kr}_{\overline{X}/k }(\log D)).
	$$
\end{example}

\begin{remark}\label{rem:matsuue}
Another definition of the logarithmic de Rham--Witt complex $W_n\omega^{\kr}_{(X,\overline{X})/k}, W\omega^{\kr}_{(X,\overline{X})/k}$ is given by Matsuue \cite{M_2017}, and the compatibility of 
the two definitions are proven in \cite[A]{HS_2018}. 
In particular, $W_n\omega^{\kr}_{(X, \overline{X})/k}$ is a quotient of 
the log de Rham complex $\omega^{\kr}_{W_n(X,\overline{X})/W_n}$ of the (log) Witt scheme 
$W_n(X,\overline{X})$ \cite[Def.~2.5]{M_2017} of $(X,\overline{X})$, by the construction given in \cite[3.4]{M_2017}. 
Hence, for $u \in {\mathcal{M}}_{\overline{X}}$, the section 
$d\log u \in W_n\omega^{\kr}_{(X, \overline{X})/k}$ is well-defined, and 
it is equal to $\frac{d[u]}{[u]}$ if $u \in {\mathcal{O}}_{\overline{X},\et}^{\times}$. 
\end{remark}

\begin{definition}\label{def: quasi-iso}
For any normal crossing pair $(X,\overline{X})$  and any $n\geqslant 1$ let $A^{\kr}_n(X,\overline{X})$ be an explicit complex functorial in the log scheme associated to $(X,\overline{X})$ representing $R\Gamma_{\et}(\overline{X},W_n\omega^{\kr}_{(X,\overline{X})/k })$. 
\end{definition}

\begin{proposition}\label{prop: quasi-iso}
Let us assume Hypotheses \ref{strong resolutions}, \ref{embedded resolutions} and 
\ref{weak factorisation}. Then, for a fixed $X \in \Sm_k$, the complexes in the family
$\{A^{\kr}_n(X,\overline{X})\}_{\overline{X}}$  are all quasi-isomorphic, 
where $\overline{X}$ runs through all normal crossing compactifications of $X$.
\end{proposition}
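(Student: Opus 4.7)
The plan is to exploit the cofilteredness of the category $\{(X,\overline{X})/X\}$ established in Lemma \ref{lem: nc compactification} together with the weak factorisation of Hypothesis \ref{weak factorisation}, so as to reduce the question to comparing the log de Rham--Witt cohomology across a single smooth blow-up with centre disjoint from $X$. Concretely, given two normal crossing compactifications $(X,\overline{X}_1)$ and $(X,\overline{X}_2)$, I would pick a third $(X,\overline{X}_3)$ dominating both, which exists by Lemma \ref{lem: nc compactification}. It then suffices to show that for every strict proper birational morphism $(X,\overline{X}') \to (X,\overline{X})$ of normal crossing pairs which is an isomorphism on $X$, the induced map
\[
A^{\kr}_n(X,\overline{X}) \longrightarrow A^{\kr}_n(X,\overline{X}')
\]
is a quasi-isomorphism.

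Next I would apply Hypothesis \ref{weak factorisation} to write such a morphism as a zig-zag of blow-ups $f_i^{\pm 1}$ with smooth centres $Z_i$ that are disjoint from $X$ and have normal crossing with the boundary divisor. So, by induction on the length of this factorisation, the problem reduces to proving: if $p \colon (X,\overline{X}') \to (X,\overline{X})$ is the blow-up of a normal crossing pair $(X,\overline{X})$ along a smooth centre $Z \subseteq \overline{X} \setminus X$ having normal crossing with the boundary divisor $\overline{X} \setminus X$, then the natural map
\[
R\Gamma_{\et}(\overline{X}, W_n\omega^{\kr}_{(X,\overline{X})/k}) \longrightarrow R\Gamma_{\et}(\overline{X}', W_n\omega^{\kr}_{(X,\overline{X}')/k})
\]
is a quasi-isomorphism.

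For this final step I would argue on the level of sheaves, showing that the adjunction unit
\[
W_n\omega^{\kr}_{(X,\overline{X})/k} \xrightarrow{\ \simeq\ } Rp_* W_n\omega^{\kr}_{(X,\overline{X}')/k}
\]
is an isomorphism in the derived category of \'etale sheaves on $\overline{X}$. Since this statement is local on $\overline{X}$ and since $Z$ has normal crossing with $\overline{X} \setminus X$, one can work in local charts of the form $\Spec k[x_1,\dots,x_d]$ in which both $Z$ and the boundary are given by coordinates, so that $p$ becomes the standard blow-up of a linear subspace along which the log structure has the standard product shape. There the claim follows from a log crystalline (equivalently log de Rham--Witt) version of the blow-up invariance formula: for a log smooth blow-up along a smooth centre which is either disjoint from or transverse to the log structure, the pushforward computes the identity. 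This can be deduced by reduction mod~$p$ (i.e.\ for $W_1\omega^{\kr} = \omega^{\kr}$ in the log de Rham setting, this is a classical computation) together with dévissage using the exact sequences relating $W_n\omega^{\kr}$ and $W_{n-1}\omega^{\kr}$ via Frobenius and the Cartier operator, which preserve quasi-isomorphisms compatible with proper pushforward.

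The main obstacle I expect is the log blow-up formula itself: one needs a clean statement that the log de Rham--Witt complex is invariant under a blow-up whose centre has normal crossing with the boundary, and one has to make sure it is compatible with all the functoriality giving rise to $A^{\kr}_n$. Once this local computation is secured, Hypothesis \ref{weak factorisation} glues everything together and the remainder of the argument is a routine induction.
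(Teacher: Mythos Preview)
Your reduction via Lemma \ref{lem: nc compactification} and weak factorisation to a single blow-up along a smooth centre $\overline{Z} \subset \overline{X} \setminus X$ having normal crossing with $D = \overline{X} \setminus X$ is exactly the paper's first step. The divergence is in the local computation, and here your sketch underestimates the situation. Since $\overline{Z}$ is disjoint from $X$, it lies \emph{inside} $D$: in local coordinates one has $\overline{Z} = \{x_1 = \cdots = x_b = 0\}$ and $D = \{x_1\cdots x_a x_{b+1}\cdots x_c = 0\}$, so $\overline{Z}$ is \emph{contained} in the components $\{x_i = 0\}$ for $i \le a$ and transverse to those with $b < i \le c$. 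Your phrase ``either disjoint from or transverse to the log structure'' does not describe this, and the blow-up invariance in this mixed situation ($0 < a < b$) is precisely the non-obvious part; it is not covered by any off-the-shelf ``classical computation''.

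The paper also handles the local step differently from your d\'evissage on $n$. After Zariski-localising via a \v{C}ech hypercovering, it lifts the entire configuration $(\overline{X},D,\overline{Z})$ smoothly to $(\overline{\mathfrak{X}},\mathfrak{D},\overline{\mathfrak{Z}})$ over $W_n(k)$ together with a Frobenius lift, and invokes the Hyodo--Kato comparison $\Omega^{\kr}_{\overline{\mathfrak{X}}/W_n(k)}(\log \mathfrak{D}) \xrightarrow{\simeq} W_n\Omega^{\kr}_{\overline{X}/k}(\log D)$ to replace the de Rham--Witt complex by the ordinary log de Rham complex of the lift. Blow-up invariance is then established for this lifted complex by an induction on the local type $(a,b,c,d)$: a residue short exact sequence obtained by enlarging the log structure along $\{x_{a+1}=0\}$ (and its analogue after blow-up) reduces type $(a,b,c,d)$ to types $(a+1,b,c,d)$ and $(a,b-1,c-1,d-1)$, eventually reaching $a=b$, where one checks directly that $\pi^*\Omega^i(\log \mathfrak{D}) \cong \Omega^i(\log \mathfrak{D}')$ and concludes via the projection formula together with $R\pi_*\mathcal{O}_{\overline{\mathfrak{X}}'} \cong \mathcal{O}_{\overline{\mathfrak{X}}}$. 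This bypasses any induction on the Witt level; your d\'evissage route would instead require blow-up invariance for the graded pieces of the canonical filtration on $W_n\Omega^{\kr}$, which involve $B\Omega$, $Z\Omega$, and Frobenius twists, so considerably more bookkeeping than the single lifted computation.
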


\begin{proof}
By Lemma \ref{lem: nc compactification} and weak factorisation it suffices to prove that
\[
	A^{\kr}_n(X,\overline{X}) \rightarrow A^{\kr}_n(X,\overline{X}')
\]
is a quasi-isomorphism 	when $(X,\overline{X}')\rightarrow(X,\overline{X})$ is a blow-up 
with smooth centre $\overline{Z}$ which is normal-crossing with the boundary divisor $D=\overline{X}\backslash X$. 
Let $\overline{U}_\bullet\rightarrow \overline{X}$ be a Zariski \v{C}ech hypercovering 
induced by an affine covering of $\overline{X}$ and 
denote the pullback of it to $\overline{X}'$
by $\overline{U}'_\bullet \rightarrow \overline{X}'$. 
All of the $\overline{U}_i$ and $\overline{U}'_i$ are smooth 
(but in general not proper) and may be endowed with the log structure associated 
to the simple normal crossing divisors $D_i:=\overline{U}_i\cap D$ and $D'_i:=\overline{U}'_i\cap D'$. 
In analogy to the notation above, we denote the respective log schemes 
by $(U_i,\overline{U}_i)$ and $(U'_i,\overline{U}'_i)$, 
where $U_i:=\overline{U}_i\backslash D_i$ and $U'_i:=\overline{U}'_i\backslash D'_i$.
In particular we have a morphism of simplicial log schemes
$$(U'_\bullet,\overline{U}'_\bullet)\rightarrow (U_\bullet,\overline{U}_\bullet).$$
By Zariski descent, the vertical maps in the diagram
	\begin{equation}\label{equ: descent diag}
	\xymatrix{H^\ast A^{\kr}_n(X,\overline{X}) \cong H^\ast_{\et}(\overline{X},W_n\Omega_{\overline{X}/k}(\log D)) \ar[d]^{\sim} \ar[r] & H^\ast A^{\kr}_n(X',\overline{X}') \cong H^\ast_{\et}(\overline{X}',W_n\Omega_{\overline{X}'/k}(\log D')) \ar[d]^{\sim}\\
	H^\ast_{\et}(\overline{U}_\bullet,W_n\Omega_{\overline{U}_\bullet/k}(\log D_\bullet)) \ar[r] & H^\ast_{\et}(\overline{U}'_\bullet,W_n\Omega_{\overline{U}'_\bullet/k}(\log D'_\bullet))}
	\end{equation}
are isomorphisms. So it suffices to prove that the lower horizontal map is an isomorphism, 
and this is reduced to proving the isomorphism 
\[ 
H^\ast_{\et}(\overline{U}_i,W_n\Omega_{\overline{U}_i/k}(\log D_i)) \rightarrow 
H^\ast_{\et}(\overline{U}'_i,W_n\Omega_{\overline{U}'_i/k}(\log D'_i))
\] 
for each $i$, by standard spectral sequence argument. 
Hence it suffices to work Zariski locally on $\overline{X}$. 
Thus, we may assume that $\overline{X}$ is affine and that there exists an \'etale morphism $\overline{X}\rightarrow \mathbb{A}^d_k$ such that $D$ and $\overline{Z}$ are defined as zeros of some coordinates of $\mathbb{A}^d_k$.
More precisely, there are integers $a,b,c,d$ with $ 0\leqslant a\leqslant b\leqslant c\leqslant d$ and an \'etale morphism
\[
	\overline{X} \rightarrow \Spec k[x_1,\dots,x_a,\dots,x_b,\dots, x_c,\dots,x_d]
\]
such that
\begin{align*}
	\overline{Z} &=\{ x_1=\cdots=x_b=0 \}, \\
	D&=\{ x_1x_2\cdots x_a x_{b+1}\cdots x_c=0 \}.
\end{align*}
Take an \'etale morphism 
\[ 
\overline{\mathfrak{X}} \rightarrow \mathbb{A}_{W_n(k)}^d 
= \Spec W_n(k)[x_1,\dots,x_a,\dots,x_b,\dots, x_c,\dots,x_d]
\] 
which lifts the morphism 
$\overline{X} \rightarrow \mathbb{A}^d_k = \Spec k[x_1,\dots,x_a,\dots,x_b,\dots, x_c,\dots,x_d]$ above, 
define the closed subschemes $\overline{\mathfrak{Z}}$, $\fD$  of $\overline{\mathfrak{X}}$ by 
\begin{align*}
	\overline{\mathfrak{Z}} &:=\{ x_1=\cdots=x_b=0 \},\\
	\fD&:=\{ x_1x_2\cdots x_a x_{b+1}\cdots x_c=0 \},
\end{align*}
and set $\mathfrak{X}:=\overline{\mathfrak{X}} \backslash \fD$.
The log scheme $(\mathfrak{X},\overline{\fX})$,  whose underlying scheme is $\overline{\mathfrak{X}}$ and whose log structure is induced by the divisor $\fD$, is a lift to $W_n(k)$ of the log scheme $(X,\overline{X})$. 
The pair $\{(X,\overline{X}) , (\mathfrak{X},\overline{\fX})\}$ is an embedding system in the sense of \cite[(2.18)]{HK_1994}.
Let
\[
	F\colon  \overline{\mathfrak{X}} \rightarrow \overline{\mathfrak{X}}
\]
be the Frobenius lift on $\overline{\mathfrak{X}}$ induced by $x_i\mapsto x_i^p$ and the canonical Frobenius on $W_n(k)$. 
The map $F$ induces a morphism of schemes
\[
	W_n(\overline{X})  \xrightarrow{t_F} \overline{\mathfrak{X}},
\]
where $t_F$ is induced by the chosen Frobenius lift $F$ (see \cite[0.\,(1.3.20)]{I_1979}). 
Let us write $W_n(X,\overline{X})$ for the $n$-th Witt lift of the log scheme associated to $(X,\overline{X})$ (see \cite[Def.\,3.1]{HK_1994}). 
With the above choice of $F$, the morphism $t_F$ induces a morphism of log schemes
\[
	W_n(X,\overline{X})\rightarrow (\mathfrak{X},\overline{\fX})
\]
and so we obtain a morphism of complexes of sheaves
\[
	\Omega^{\kr}_{\overline{\mathfrak{X}}/W_n(k)}(\log \fD) \rightarrow 
	 W_n\Omega^{\kr}_{\overline{X}/k}(\log D), 
\]
where $\Omega^{\kr}_{\overline{\mathfrak{X}}/W_n(k)}(\log \fD)$ denotes the logarithmic de Rham complex of $(\mathfrak{X},\overline{\mathfrak{X}})$ over $W_n(k)$. 
Note that $\overline{\mathfrak{X}}$ has already divided powers (cf. proof of \cite[II.\,Thm.\,1.4]{I_1979}) and hence $\Omega^{\kr}_{\overline{\mathfrak{X}}/W_n(k)}(\log \fD)$ is the crystalline complex denoted by $C_n$ in the proof of \cite[Thm.\,4.19]{HK_1994}.
According to\textit{ loc.\,cit.} the above morphism of complexes is a quasi-isomorphism.

Similarly, one obtains a quasi-isomorphism
\[
	\Omega^{\kr}_{\overline{\mathfrak{X}}'/W_n(k)}(\log \fD') \xrightarrow{\sim}  W_n\Omega^{\kr}_{\overline{X}'/k}(\log D')
\]
where
\[
	\pi \colon \overline{\mathfrak{X}}':=\Bl_{\overline{\mathfrak{Z}}} \overline{\mathfrak{X}} \rightarrow \overline{\mathfrak{X}}
\]
denotes the blow-up of $\overline{\mathfrak{X}}$ along $\overline{\mathfrak{Z}}$ and $\fD':=(\pi^{-1}\fD)_{\mathrm{red}}$. 
Thus it suffices to prove the quasi-isomorphism 
\begin{equation}\label{eq:prop_quasiiso:reduction}
	R\Gamma_\et(\overline{\mathfrak{X}},\Omega^{\kr}_{\overline{\mathfrak{X}}/W_n(k)}(\log \fD)  ) \cong R\Gamma_\et(\overline{\mathfrak{X}}, R\pi_*\Omega^{\kr}_{\overline{\mathfrak{X}}'/W_n(k)}(\log \fD') ).
\end{equation}

In the first step, we reduce the general case to the case $a=b$.
Thus assume $b>a$. We define a new divisor 
$\widetilde{\fD}:=\{x_1x_2\cdots x_ax_{a+1}x_{b+1}\cdots x_c=0\}\subseteq 
\overline{\mathfrak{X}}$ and consider the closed immersion
\[
	i\colon \overline{\mathfrak{X}}_1:=\{x_{a+1}=0\}\hookrightarrow \overline{\mathfrak{X}}. 
\]
We will write $\fD_1:=\fD\cap \overline{\mathfrak{X}}_1$ and 
$\overline{\mathfrak{Z}}_1:=\overline{\mathfrak{Z}}\cap  \overline{\mathfrak{X}}_1$ for the intersections with $\fD$ and $\overline{\mathfrak{Z}}$. 
The blow-up
\[
	\pi\colon  \overline{\mathfrak{X}}':=\Bl_{\overline{\mathfrak{Z}}}  \overline{\mathfrak{X}}\rightarrow  \overline{\mathfrak{X}}
\]
admits a standard affine open covering
\[
	 \overline{\mathfrak{X}}'=\bigcup_{r=1}^b \overline{\mathfrak{X}}'_r
\]
such that $\pi$ on $\overline{\mathfrak{X}}'_r$ is given by 
the Cartesian diagram 
\[ 
\xymatrix{ \overline{\mathfrak{X}}'_r \ar[r] \ar[d] &  \overline{\mathfrak{X}} \ar[d] \\ 
\Spec W_n(k)[y_1,\dots,  \check{y}_r,\dots, y_b,x_r,x_{b+1},\dots,x_d] \ar[r] & 
\Spec W_n(k)[x_1,\dots,x_d], 
}\] 
where the variable $y_r$ is omitted on the lower left-hand side and where the lower horizontal morphism is
induced by $x_i\mapsto x_r y_i$ for  $1\leqslant i\leqslant b, i\not=r$ and $x_i\mapsto x_i$  otherwise. 
 The divisors on $\overline{\mathfrak{X}}'$ induced by $\fD$ and $\widetilde{\fD}$ 
will be denoted by $\fD':=(\pi^{-1}\fD)_{\text{red}}$ and $\widetilde{\fD}':=(\pi^{-1}\widetilde{\fD})_{\text{red}}$. 

Similarly, the blow-up
\[
	\pi_1 \colon  \overline{\mathfrak{X}}'_1:=\Bl_{\overline{\mathfrak{Z}}_1}  \overline{\mathfrak{X}}_1 \rightarrow  \overline{\mathfrak{X}}_1
\]
admits a standard affine open covering
\[
	 \overline{\mathfrak{X}}'_1=\bigcup_{\substack{1\leqslant r \leqslant b \\ r\neq a+1}} \overline{\mathfrak{X}}'_{1,r}
\]
such that  $\pi_1$ on $\overline{\mathfrak{X}}'_{1,r}$ is given by 
the Cartesian diagram 
\[ 
\xymatrix{ \overline{\mathfrak{X}}'_{1,r} \ar[r] \ar[d] &  \overline{\mathfrak{X}}_1 \ar[d] \\ 
\Spec W_n(k)[y_1,\dots, \check{y}_r,\dots,\check{y}_{a+1},\dots,y_b,x_r,x_{b+1},\dots,x_d] \ar[r] & 
\Spec W_n(k)[x_1,\dots\check{x}_{a+1,\dots},x_d] 
}\]
with $x_i\mapsto x_r y_i$ for $1\leqslant i\leqslant b, i\neq  r, a+1$ and $x_i\mapsto x_i$  otherwise. 
The divisor on $\overline{\mathfrak{X}}'_1$ induced by $\fD$ will be denoted by $\fD'_1:=(\pi_1^{-1}\fD)_{\text{red}}$. 
Let $i':\overline{\mathfrak{X}}'_1 \hookrightarrow \overline{\mathfrak{X}}'$ be the closed immersion induced by $i:\overline{\mathfrak{X}}_1 \hookrightarrow \overline{\mathfrak{X}}$. 
\bigskip 

\noindent
{\bf Claim.} There are short exact sequences of complexes
\begin{align*}
& \xymatrix{
	0 \ar[r] & \Omega^{\kr}_{\overline{ \mathfrak{X}}/W_n(k)}(\log \fD) \ar[r] & \Omega^{\kr}_{\overline{ \mathfrak{X}} /W_n(k)}(\log \widetilde{\fD}) \ar[r] & i_* \Omega^{\kr-1}_{ \overline{\mathfrak{X}}_1/W_n(k)}(\log \fD_1) \ar[r] & 0, 
} \\ & 
\xymatrix{
	0 \ar[r] & \Omega^{\kr}_{ \overline{\mathfrak{X}}'/W_n(k)}(\log \fD')  \ar[r] & \Omega^{\kr}_{ \overline{\mathfrak{X}}'/W_n(k)}(\log \widetilde{\fD}')  \ar[r] & i'_* \Omega^{\kr-1}_{ \overline{\mathfrak{X}}'_1/W_n(k)}(\log \fD'_1) \ar[r] & 0.
}
\end{align*}

\begin{proof}[Proof of Claim]
The sheaf $\Omega^{1}_{\overline{ \mathfrak{X}} /W_n(k)}(\log \widetilde{\fD})$ is  freely generated by
  \[
 	\frac{d x_i}{x_i} \text{ for } 1\leqslant i \leqslant a+1,\quad dx_i \text{ for } a+2\leqslant i \leqslant b,\quad \frac{d x_i}{x_i} \text{ for } b+1\leqslant i \leqslant c,\quad dx_i \text{ for } c+1\leqslant i \leqslant d,
 \]
 while the sheaf $\Omega^{1}_{\overline{ \mathfrak{X}}/W_n(k)}(\log \fD)$ is  freely generated by
 \[
 	\frac{d x_i}{x_i} \text{ for } 1\leqslant i \leqslant a,\quad dx_i \text{ for } a+1\leqslant i \leqslant b,\quad \frac{d x_i}{x_i} \text{ for } b+1\leqslant i \leqslant c,\quad dx_i \text{ for } c+1\leqslant i \leqslant d. 
 \]
 So the quotient 
$\Omega^{1}_{\overline{ \mathfrak{X}}/W_n(k)}(\log \widetilde{\fD}) /\Omega^{1}_{\overline{\mathfrak{X}}/W_n(k)}(\log \fD)$ 
is the free $i_*\co_{ \overline{\mathfrak{X}}_1}$-module generated by the class of $\dfrac{dx_{a+1}}{x_{a+1}}$. 
Passing to exterior powers proves the exactness of the first short exact sequence.
 
 We prove the exactness of the second short exact sequence on each of the standard open sets $\overline{\fX}'_{r}$.
For  $1 \leqslant r \leqslant a$ the sheaf $\Omega^1_{\overline{ \mathfrak{X}}'/W_n(k)} (\log \widetilde{\fD}')|_{\overline{\fX}'_{r}}$ is  freely generated by
   \[
 	\frac{d y_i}{y_i} \text{ for } 1\leqslant i \leqslant a, i \not= r,\quad \frac{dx_r}{x_r}, \quad \frac{dy_{a+1}}{y_{a+1}}, \quad dy_i \text{ for } a+2\leqslant i \leqslant b,\quad \frac{d x_i}{x_i} \text{ for } b+1\leqslant i \leqslant c,\quad dx_i \text{ for } c+1\leqslant i \leqslant d,
 \]
 while the sheaf $\Omega^{1}_{\overline{\mathfrak{X}}'/W_n(k)}(\log \fD') |_{\overline{\fX}'_{r}}$ is freely generated by
    \[
 	\frac{d y_i}{y_i} \text{ for }  1\leqslant i \leqslant a, i \not= r,\quad \frac{dx_r}{x_r}, \quad dy_i \text{ for } a+1\leqslant i \leqslant b,\quad \frac{d x_i}{x_i} \text{ for } b+1\leqslant i \leqslant c,\quad dx_i \text{ for } c+1\leqslant i \leqslant d.
 \] 
For $a+2 \leqslant r \leqslant b$ the sheaf $\Omega^1_{\overline{ \mathfrak{X}}'/W_n(k)} (\log \widetilde{\fD}')|_{\overline{\fX}'_{r}}$ is freely generated by
\[
\frac{d y_i}{y_i} \text{ for } 1\leqslant i \leqslant a,\quad \frac{dx_r}{x_r}, \quad \frac{dy_{a+1}}{y_{a+1}}, \quad dy_i \text{ for } a+2\leqslant i \leqslant b, i \not= r,\quad \frac{d x_i}{x_i} \text{ for } b+1\leqslant i \leqslant c,\quad dx_i \text{ for } c+1\leqslant i \leqslant d,
\]
while the sheaf $\Omega^{1}_{\overline{\mathfrak{X}}'/W_n(k)}(\log \fD') |_{\overline{\fX}'_{r}}$ is  freely generated by
\[
\frac{d y_i}{y_i} \text{ for } 1\leqslant i \leqslant a,\quad \frac{dx_r}{x_r}, \quad dy_i \text{ for } a+1\leqslant i \leqslant b,i \not= r,\quad \frac{d x_i}{x_i} \text{ for } b+1\leqslant i \leqslant c,\quad dx_i \text{ for } c+1\leqslant i \leqslant d.
\]
In both cases, the quotient $\Omega^1_{\overline{ \mathfrak{X}}'/W_n(k)}(\log \widetilde{\fD}') / \Omega^{1}_{\overline{\mathfrak{X}}'/W_n(k)}(\log \fD')|_{\overline{\fX}'_{r}}$ is the free $i_*\co_{\overline{\fX}'_{1,r}}$-module generated by  the class of $\dfrac{dy_{a+1}}{y_{a+1}}$. 
Passing to exterior powers proves the exactness of the second short exact sequence on all standard open subsets 
$\overline{\fX}'_{r}$ for  $1\leqslant r\leqslant b, i \not= a+1$. 
It remains to prove the exactness on $\overline{\fX}'_{a+1}$. 
The sheaves $\Omega^1_{\overline{ \mathfrak{X}}'/W_n(k)}(\log \widetilde{\fD}')|_{\overline{\fX}'_{a+1}}$ and $\Omega^1_{\overline{ \mathfrak{X}}'/W_n(k)}(\log \fD')|_{\overline{\fX}'_{a+1}}$  are both  freely generated by
   \[
 	\frac{d y_i}{y_i} \text{ for } 1\leqslant i \leqslant a,\quad \frac{dx_{a+1}}{x_{a+1}},\quad dy_i \text{ for } a+2\leqslant i \leqslant b,\quad \frac{d x_i}{x_i} \text{ for } b+1\leqslant i \leqslant c,\quad dx_i \text{ for } c+1\leqslant i \leqslant d.
 \]
 So the quotient $\Omega^1_{\overline{ \mathfrak{X}}'/W_n(k)}(\log \widetilde{\fD}') / \Omega^{1}_{\overline{ \mathfrak{X}}' /W_n(k)}(\log \fD')|_{\overline{\fX}'_{a+1}}$ is zero, but $\overline{\fX}'_{a+1}$ is the complement of the image of $i'$, 
so $\left(i'_*\Omega^{\kr}_{\overline{ \mathfrak{X}}'_1 /W_n(k)}(\log \fD'_1)\right) |_{\overline{\fX}'_{a+1}}=0$. 
This proves the exactness of the second short exact sequence on $\overline{\fX}'_{a+1}$ and concludes the proof of the claim.
\end{proof}

By the above claim, we obtain a morphism of triangles
\[
	\xymatrix{
		\Omega^{\kr}_{\overline{ \mathfrak{X}} /W_n(k)}(\log \fD) \ar[r]\ar[d] &  \Omega^{\kr}_{\overline{ \mathfrak{X}} /W_n(k)}(\log \widetilde{\fD}) \ar[r]\ar[d]  &  \Omega^{\kr}_{\overline{ \mathfrak{X}}_1/W_n(k)}(\log  \fD_1)[-1] \ar[r]^-{+1}\ar[d]   & \\
		R\pi_*\Omega^{\kr}_{\overline{\mathfrak{X}}' /W_n(k)}(\log \fD') \ar[r] &  R\pi_*\Omega^{\kr}_{\overline{ \mathfrak{X}}' /W_n(k)}(\log \widetilde{\fD}') \ar[r] &  R\pi_*i'_*\Omega^{\kr}_{\overline{ \mathfrak{X}}'_1 /W_n(k)}(\log \fD'_1)[-1] \ar[r]^-{+1} & .
	}
\]
In particular, the claimed quasi-isomorphism \eqref{eq:prop_quasiiso:reduction} for the triple 
$( \overline{\mathfrak{X}},\overline{\mathfrak{Z}},\fD)$ follows if one proves 
the quasi-isomorphism \eqref{eq:prop_quasiiso:reduction} for the triples  
$(\overline{ \mathfrak{X}}_1,\overline{\mathfrak{Z}}_1,\fD_1)$ and $( \overline{\mathfrak{X}},\overline{\mathfrak{Z}},\widetilde{\fD})$. 
For a moment, let us call the parameters $(a,b,c,d)$ the type of the triple $( \overline{\mathfrak{X}},\overline{\mathfrak{Z}},\fD)$. 
With this terminology, the triple $( \overline{\mathfrak{X}}_1,\overline{\mathfrak{Z}}_1,\fD_1)$ is of 
type $(a,b-1,c-1,d-1)$ and $( \overline{\mathfrak{X}},\overline{\mathfrak{Z}},\widetilde{\fD})$ is of type $(a+1,b,c,d)$. 
Thus, we can inductively reduce the proof to the case $a=b$.

From now on we assume that $a=b$. 
Let us first verify the isomorphism
\begin{equation}\label{eq:prop_quasiiso:pullback}
	\pi^*\Omega^i_{\overline{ \mathfrak{X}} /W_n(k)}(\log \fD) \cong \Omega^i_{ \overline{ \mathfrak{X}}' /W_n(k)}(\log \fD').
\end{equation}
This is easily checked locally on the open charts $\overline{\fX}'_{r}$: 
the sheaf $\pi^*\Omega^1_{\overline{\mathfrak{X}}/W_n(k)}(\log  \fD)$ is freely generated by
\[
	\frac{dx_i}{x_i} \text{ for } 1\leqslant i \leqslant a, \quad \frac{dx_i}{x_i} \text{ for }b+1\leqslant i\leqslant c, \quad dx_i \text{ for } c+1\leqslant i\leqslant d,
\]
while $\Omega^1_{\overline{\mathfrak{X}}'/W_n(k)}( \log \fD')|_{\overline{\mathfrak{X}}'_{r}}$ is  freely generated by 
 \[
	\frac{dy_i}{y_i} \text{ for } 1\leqslant i \leqslant a, i\not=r, \quad \frac{dx_r}{x_r}, \quad \frac{dx_i}{x_i} \text{ for }b+1\leqslant i\leqslant c, \quad dx_i \text{ for } c+1\leqslant i\leqslant d.
\]
Also, the map $\pi^*\Omega^1_{\overline{ \mathfrak{X}} /W_n(k)}(\log \fD)\rightarrow \Omega^1_{\overline{ \mathfrak{X}}'  /W_n(k)}(\log \fD')|_{\overline{\fX}'_{r}}$ is given by
\begin{align*}
	\frac{dx_i}{x_i} & \mapsto \frac{dy_i}{y_i}+\frac{dx_r}{x_r} \text{ for } 1\leqslant i\leqslant a  \text{ and }i \not=r,\\
	\frac{dx_i}{x_i} & \mapsto \frac{dx_i}{x_i} \text{ for } b+1\leqslant i\leqslant c  \text{ or }i=r,\\
	dx_i & \mapsto dx_i \text{ for } c+1\leqslant i\leqslant d.
\end{align*}
This proves the isomorphism \eqref{eq:prop_quasiiso:pullback} for $i=1$ and the general case follows by passing to exterior powers.
 
Using the isomorphism \eqref{eq:prop_quasiiso:pullback}, the projection formula and 
the well-known quasi-isomorphism $R\pi_*\co_{ \overline{\mathfrak{X}}'} \cong 
\co_{ \overline{\mathfrak{X}}}$, we obtain  the quasi-isomorphisms
\[
	R\pi_*\Omega^i_{\overline{ \mathfrak{X}}' /W_n(k)}(\log \fD') \cong R\pi_*\co_{ \overline{\mathfrak{X}}'}\otimes_{\co_{ \overline{\mathfrak{X}}}} \Omega^i_{\overline{ \mathfrak{X}} /W_n(k)}(\log \fD) \cong \Omega^i_{\overline{ \mathfrak{X}} /W_n(k)}(\log \fD),
\]
and finally  the quasi-isomorphism
\[
	R\Gamma_{\et}( \overline{\mathfrak{X}},\Omega^{\kr}_{\overline{ \mathfrak{X}} /W_n(k)}(\log \fD)) \cong R\Gamma_{\et}( \overline{\mathfrak{X}},R\pi_*\Omega^{\kr}_{\overline{ \mathfrak{X}}'/W_n(k)}(\log  \fD') ).
\]
\end{proof}

\begin{definition}
Assume Hypotheses  \ref{strong resolutions} and \ref{embedded resolutions}.
For $X\in\Sm_k$ and $n\geqslant 1$ set
	$$
	A^{\kr}_n(X) := \varinjlim_{(X,\overline{X})/X} A^{\kr}_n(X,\overline{X})
	$$
where the limit runs through the filtered category of 
normal crossing pairs extending $X$ (the opposite category of the one in \,Lemma \ref{lem: nc compactification}).
Let $A^{\kr}(X):= R\varprojlim A^{\kr}_n(X)$.
\end{definition}

\begin{corollary}\label{cor: iso}
Under Hypotheses  \ref{strong resolutions}, \ref{embedded resolutions} and \ref{weak factorisation}, 
$A^{\kr}_n(X)$ is quasi-isomorphic to $A^{\kr}_n(X,\overline{X})$ for $X\in\Sm_k$ and 
any normal crossing compactification $\overline{X}$ of $X$.
\end{corollary}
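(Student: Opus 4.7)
The plan is to deduce this immediately from Proposition \ref{prop: quasi-iso} together with the cofilteredness of the category of normal crossing compactifications. The key point is that the filtered colimit defining $A^{\kr}_n(X)$ takes place in a system where all the transition maps are already quasi-isomorphisms, and filtered colimits of $W_n(k)$-modules are exact.

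First, by Lemma \ref{lem: nc compactification}, under Hypotheses \ref{strong resolutions} and \ref{embedded resolutions}, the category $\{(X,\overline{X})/X\}$ of normal crossing compactifications of $X$ is cofiltered, so its opposite, which indexes the colimit defining $A^{\kr}_n(X)$, is filtered. For any morphism $(X,\overline{X}') \to (X,\overline{X})$ in $\{(X,\overline{X})/X\}$, one has an induced pullback map $A^{\kr}_n(X,\overline{X}) \to A^{\kr}_n(X,\overline{X}')$, and Proposition \ref{prop: quasi-iso} (which uses Hypothesis \ref{weak factorisation}) guarantees that this transition map is a quasi-isomorphism.

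Next, fix a normal crossing compactification $\overline{X}_0$ of $X$. The canonical map
\[
A^{\kr}_n(X,\overline{X}_0) \longrightarrow \varinjlim_{(X,\overline{X})/X} A^{\kr}_n(X,\overline{X}) = A^{\kr}_n(X)
\]
is the structural map into the colimit. To see it is a quasi-isomorphism, one passes to cohomology. Since the category of $W_n(k)$-modules has exact filtered colimits, cohomology commutes with $\varinjlim$, so
\[
H^i(A^{\kr}_n(X)) \;\cong\; \varinjlim_{(X,\overline{X})/X} H^i(A^{\kr}_n(X,\overline{X})).
\]
By the previous paragraph, every transition morphism in this filtered system of cohomology groups is an isomorphism; therefore the canonical map from any term $H^i(A^{\kr}_n(X,\overline{X}_0))$ into the colimit is an isomorphism, yielding the claim.

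There is no real obstacle here; the work is concentrated in Proposition \ref{prop: quasi-iso} and in the cofilteredness statement of Lemma \ref{lem: nc compactification}. The only point to verify is the harmless compatibility that the transition maps appearing in the definition of $A^{\kr}_n(X)$ (induced by functoriality of the complex $A^{\kr}_n(-,-)$ in the log scheme $(X,\overline{X})$, cf.\ Definition \ref{def: quasi-iso}) are indeed the pullback maps to which Proposition \ref{prop: quasi-iso} applies. This is immediate from the construction.
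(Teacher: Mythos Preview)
Your argument is correct and is precisely the implicit reasoning behind the paper's corollary, which is stated without proof immediately after Proposition~\ref{prop: quasi-iso} and the definition of $A^{\kr}_n(X)$ as the filtered colimit. You have simply made explicit the standard fact that a filtered colimit of quasi-isomorphisms is a quasi-isomorphism (via exactness of filtered colimits of abelian groups), which is all that is needed.
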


\begin{proposition}\label{prop: functorial}
Under Hypotheses  \ref{strong resolutions}, \ref{embedded resolutions} and \ref{weak factorisation}, $A^{\kr}_n(X)$ 
(and hence $A^{\kr}(X)$) is functorial in $X$.
\end{proposition}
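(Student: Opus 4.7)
The plan is to define, for each morphism $f \colon X \rightarrow Y$ in $\Sm_k$, a pullback map $f^{\ast} \colon A^{\kr}_n(Y) \rightarrow A^{\kr}_n(X)$ by producing, for every normal crossing compactification $(Y,\overline{Y})$, a compatible map $A^{\kr}_n(Y,\overline{Y}) \rightarrow A^{\kr}_n(X)$. The key subtlety is that $f$ need not extend to a morphism between arbitrary chosen compactifications of $X$ and $Y$, but the graph of $f$ provides a canonical way to refine the compactification on the source so that such an extension exists.

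Fix $(Y,\overline{Y})$. Choose any normal crossing compactification $(X,\overline{X}_0)$ of $X$ (which exists by Lemma \ref{lem: nc compactification}) and let $Z$ denote the closure of the graph of $(\id_X,f) \colon X \hookrightarrow \overline{X}_0 \times \overline{Y}$. Then $(X,Z)$ is a geometric pair, so by Hypotheses \ref{strong resolutions} and \ref{embedded resolutions} there exists a strict proper birational morphism $(X,\overline{X}) \rightarrow (X,Z)$ with $(X,\overline{X})$ a normal crossing pair. Composing with the second projection $Z \rightarrow \overline{Y}$ yields a morphism of normal crossing pairs $\widetilde{f} \colon (X,\overline{X}) \rightarrow (Y,\overline{Y})$ extending $f$. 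Functoriality of $A^{\kr}_n(-,-)$ on the associated log schemes gives a morphism $\widetilde{f}^{\ast} \colon A^{\kr}_n(Y,\overline{Y}) \rightarrow A^{\kr}_n(X,\overline{X})$, and composing with the canonical map to the colimit produces the desired map to $A^{\kr}_n(X)$.

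The main obstacle is to show that this construction is independent of the choice of $(X,\overline{X})$ and compatible with variation in $(Y,\overline{Y})$. Given two extensions $\widetilde{f}_i \colon (X,\overline{X}_i) \rightarrow (Y,\overline{Y})$ for $i=1,2$, cofilteredness of $\{(X,\overline{X})/X\}$ from Lemma \ref{lem: nc compactification} produces a normal crossing compactification $(X,\overline{X}_3)$ with strict morphisms $g_i \colon \overline{X}_3 \rightarrow \overline{X}_i$ over $X$. The two composites $\widetilde{f}_i \circ g_i \colon \overline{X}_3 \rightarrow \overline{Y}$ agree on the dense open subset $X \subseteq \overline{X}_3$; since $\overline{X}_3$ is reduced and $\overline{Y}$ is separated, they must coincide globally. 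Consequently the two induced maps $A^{\kr}_n(Y,\overline{Y}) \rightarrow A^{\kr}_n(X,\overline{X}_i) \rightarrow A^{\kr}_n(X,\overline{X}_3)$ are equal, and hence represent the same element in the colimit $A^{\kr}_n(X)$. Compatibility in $(Y,\overline{Y})$ is established analogously: any morphism $\overline{Y}_1 \rightarrow \overline{Y}_2$ over $Y$ can be precomposed with a chosen extension of $f$ to $\overline{Y}_1$ to produce an extension to $\overline{Y}_2$ that renders the relevant square commutative, again by the same separatedness argument.

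Preservation of identities is immediate. For a composition $X \xrightarrow{f} Y \xrightarrow{g} Z$, iterating the graph-closure construction yields compatible extensions $(X,\overline{X}) \rightarrow (Y,\overline{Y}) \rightarrow (Z,\overline{Z})$, and functoriality of the logarithmic de Rham--Witt complex on log schemes then gives $(gf)^{\ast} = f^{\ast} g^{\ast}$. Finally, functoriality of $A^{\kr}(X) = R\varprojlim_n A^{\kr}_n(X)$ follows by passing to the limit in $n$, since all the above constructions are manifestly compatible with the transition maps $A^{\kr}_{n+1}(-,-) \rightarrow A^{\kr}_n(-,-)$. Throughout the argument the essential ingredients are the cofilteredness of the category of normal crossing compactifications and the separatedness of the target scheme; together they force all the ambiguous choices of compactification to produce the same pullback.
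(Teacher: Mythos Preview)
Your proof is correct and follows essentially the same approach as the paper: both use the graph closure together with Hypotheses \ref{strong resolutions} and \ref{embedded resolutions} to extend $f$ to a morphism of normal crossing pairs, and both rely on the cofilteredness of the relevant category of compactifications to show independence of choices. The paper packages the argument slightly differently, introducing the cofiltered category $\{\overline{f}/f\}$ of all such extensions with projections $P_1, P_2$ to $\{(X,\overline{X})/X\}$ and $\{(Y,\overline{Y})/Y\}$, and obtaining $f^*$ from the zigzag
\[
\varinjlim_{(Y,\overline{Y})/Y} A^{\kr}_n(Y,\overline{Y}) \xleftarrow{\sim} \varinjlim_{\overline{f}/f} A^{\kr}_n(P_2(\overline{f})) \rightarrow \varinjlim_{\overline{f}/f} A_n^{\kr}(P_1(\overline{f})) \rightarrow \varinjlim_{(X,\overline{X})/X} A^{\kr}_n(X,\overline{X}),
\]
where the leftward arrow is an actual isomorphism because $P_2$ is surjective on objects; your compatibility checks amount to verifying precisely this surjectivity and the cofilteredness of $\{\overline{f}/f\}$ by hand.
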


\begin{proof}
Let $f: X\rightarrow Y$ be a morphism in $\Sm_k$. 
We consider the category
	$$
	\{\overline{f}:(X,\overline{X}) \rightarrow (Y,\overline{Y})\;\vert\; (X,\overline{X}),(Y,\overline{Y}) \in \Var_k^{nc}\}
	$$
of morphisms of normal crossing pairs extending $f$.
For simplicity, we denote it by $\{\overline{f}/f\}$.
We first observe that this forms a non-empty cofiltered category.

Choose normal crossing pairs  $(X,\overline{X})$ and $(Y,\overline{Y})$ extending $X$ and $Y$, 
which exist by Lemma \ref{lem: nc compactification}.
Let $\overline{X}'$ be the closure of the graph of $f$ in $\overline{X}\times_k \overline{Y}$.
Then $(X,\overline{X}')$ is a geometric pair, and the projection to $\overline{Y}$ 
induces a morphism of geometric pairs $(X,\overline{X}') \rightarrow (Y,\overline{Y})$ extending $f$. 
By Hypotheses \ref{strong resolutions} and \ref{embedded resolutions} there exists a birational morphism $(X,\overline{X}'') \rightarrow (X,\overline{X}')$ from a normal crossing pair which is an isomorphism on $X$. 
The composition $\overline{f}: (X,\overline{X}'') \rightarrow (Y,\overline{Y})$ is a morphism of normal crossing pairs extending $f$ as desired.
With similar methods as in Lemma \ref{lem: nc compactification} it can be seen that the category $\{\overline{f}/f\}$ is cofiltered. 

There are the two projection functors
	$$
	\xymatrix{ &\{\overline{f}/f\} \ar[dl]_{P_1} \ar[dr]^{P_2}&\\
	\{(X,\overline{X}) /X\} && \{(Y,\overline{Y})/Y\},}
	$$
	where $\{(X,\overline{X})/X\}$ denotes the category of normal crossing compactifications introduced in Lemma \ref{lem: nc compactification}.
The argument above shows in fact that $P_2$ is surjective. 
Clearly, any extension $\overline{f}$ of $f$ to normal crossing pairs induces a natural morphism of complexes
	$$
	A_n^{\kr}(P_2(\overline{f})) \rightarrow A_n^{\kr}(P_1(\overline{f})).
	$$
Hence, we obtain a zigzag of morphisms of complexes
	$$
	\varinjlim_{(Y,\overline{Y})/Y} A^{\kr}_n(Y,\overline{Y}) \xleftarrow{\sim} 
	\varinjlim_{\overline{f}/f} A^{\kr}_n(P_2(\overline{f})) \rightarrow \varinjlim_{\overline{f}/f} A_n^{\kr}(P_1(\overline{f})) \rightarrow \varinjlim_{(X,\overline{X})/X} A^{\kr}_n(X,\overline{X})
	$$
where the left pointing map is an isomorphism (not just a quasi-isomorphism) by the surjectivity of the projection $P_2$.
This provides a morphism of complexes $A^{\kr}_n(Y)\rightarrow A^{\kr}_n(X)$ induced by $f$ and it is now easy to check the functoriality.
\end{proof}

As a consequence, we may regard $A^{\kr}_n$  as a complex of presheaves on $\Sm_k$.
\begin{definition}\label{def:acdparhsm}
We define complexes of sheaves on $\Sm_{k,\cdp}$ and $\Sm_{k,\cdh}$
	$$a_{\cdp}^\ast A_n^{\kr} \quad\text{ and }\quad a_{\cdh}^\ast A_n^{\kr}$$
as the sheafifications with respect to the $\cdp$- and cdh-topology of the complex of presheaves on $\Sm_k$ given by
	$$
	X \mapsto A_n^{\kr}(X).
	$$
We denote the complexes of sheaves on $\Var_{k,\cdp}$ and $\Var_{k,\cdh}$
we obtain from 
$a_{\cdp}^\ast A_n^{\kr}$ and $a_{\cdh}^\ast A_n^{\kr}$ respectively using the equivalences 
$\Sh(\Sm_{k,\cdp}) \cong \Sh(\Var_{k,\cdp})$ and $\Sh(\Sm_{k,\cdh}) \cong \Sh(\Var_{k,\cdh})$ by 
the same symbol. Furthermore, we set 
	\begin{align*}
	a_{\cdp}^\ast A^{\kr} :=R\varprojlim a_{\cdp}^\ast A_n^{\kr}, \quad 
	a_{\cdh}^\ast A^{\kr}  := R\varprojlim a_{\cdh}^\ast A_n^{\kr}. 
	\end{align*}
Also, we denote by $R\Gamma_{\cdp}(X,-)$ and $R\Gamma_{\cdh}(X,-)$ the derived global section functor 
for $X \in \Var_k$ with respect to $\cdp$- and $\cdh$-topology. 
\end{definition}

Note that $R\Gamma_{\cdp}(-,a^\ast_{\cdp} A_n^{\kr})$ and $R\Gamma_{\cdp}(-,a^\ast_{\cdp} A^{\kr})$   
(resp. $R\Gamma_{\cdh}(-,a^\ast_{\cdh} A_n^{\kr})$ and $R\Gamma_{\cdh}(-,a^\ast_{\cdh} A^{\kr})$)  
satisfy (cohomological) $\cdp$-descent (resp. $\cdh$-descent) in $\Var_k$ by definition.

We show now that $A^{\kr}_n(X)$
 satisfies the so-called \textit{Mayer--Vietoris property} (compare \cite{CHSW_2008}) 
for smooth blow-up squares and smooth Nisnevich squares. 
 We will see that this implies that $A^{\kr}_n$ and  $A^{\kr}$ satisfy cohomological $\cdp$- and $\cdh$-descent.

\begin{proposition}\label{prop: MV blow-up}
Assume Hypotheses \ref{strong resolutions}, \ref{embedded resolutions}, \ref{embedded resolution with boundary} and \ref{weak factorisation}.
For a smooth blow-up square
	$$
	\xymatrix{Z' \ar[r] \ar[d] & X' \ar[d]\\ Z \ar[r] & X}
	$$
the induced diagram
	$$
	\xymatrix{A^{\kr}_n(Z')& A^{\kr}_n(X') \ar[l]\\ A^{\kr}_n(Z) \ar[u] & A^{\kr}_n(X)\ar[u]\ar[l]}
	$$
is homotopy co-Cartesian.
\end{proposition}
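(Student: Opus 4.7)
The plan is to reduce the claim to a local computation that ultimately invokes the classical Mayer--Vietoris triangle for a blow-up along a smooth centre.

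First, by Proposition \ref{prop:goodsquares}, the given smooth blow-up square admits a good compactification: a Cartesian square of normal crossing pairs
\[
\xymatrix{(Z',\overline{Z}') \ar@{^{(}->}^{e'}[r] \ar[d]^{p_Z} & (X',\overline{X}') \ar[d]^p \\ (Z,\overline{Z}) \ar@{^{(}->}^e[r] & (X,\overline{X})}
\]
with all morphisms strict and $p:\overline{X}'\to\overline{X}$ the blow-up along the smooth centre $\overline{Z}$, which has normal crossings with $D:=\overline{X}\setminus X$. By Corollary \ref{cor: iso}, each of $A^{\kr}_n(X), A^{\kr}_n(X'), A^{\kr}_n(Z), A^{\kr}_n(Z')$ is canonically quasi-isomorphic to the corresponding $A^{\kr}_n(-,\overline{(-)})$ computed from this good compactification. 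So it suffices to show that the sequence of complexes of \'etale sheaves on $\overline{X}$
\[
W_n\Omega^{\kr}_{\overline{X}/k}(\log D) \longrightarrow Rp_* W_n\Omega^{\kr}_{\overline{X}'/k}(\log D') \oplus Re_* W_n\Omega^{\kr}_{\overline{Z}/k}(\log D_{\overline{Z}}) \longrightarrow Re_* Rp_{Z,*} W_n\Omega^{\kr}_{\overline{Z}'/k}(\log D_{\overline{Z}'})
\]
forms a distinguished triangle, where $D_{\overline{Z}} := D \cap \overline{Z}$ and $D_{\overline{Z}'} := D' \cap \overline{Z}'$.

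Next, I would argue as in the proof of Proposition \ref{prop: quasi-iso} to reduce this triangle to an explicit local computation. Working Zariski-locally I may assume that $\overline{X}$ admits an \'etale morphism to $\Spec k[x_1,\ldots,x_d]$ with $\overline{Z} = \{x_1 = \cdots = x_b = 0\}$ and $D = \{x_{b+1}\cdots x_c = 0\}$. Crucially, since $Z \subseteq X$ is non-empty, the local invariant $a$ in the normal-crossing normal form is forced to be $0$, so $\overline{Z}$ meets $D$ transversally rather than being contained in it. Lifting to an \'etale $\overline{\fX} \to \Spec W_n(k)[x_1,\ldots,x_d]$ and invoking Hyodo--Kato \cite[Thm.\,4.19]{HK_1994} (compatibly with $p, e$ and their restrictions by functoriality of embedding systems), the question becomes: show that
\[
\Omega^{\kr}_{\overline{\fX}/W_n(k)}(\log \fD) \longrightarrow Rp_*\Omega^{\kr}_{\overline{\fX}'/W_n(k)}(\log \fD') \oplus Re_*\Omega^{\kr}_{\overline{\fZ}/W_n(k)}(\log \fD_{\overline{\fZ}}) \longrightarrow Re_*Rp_{Z,*}\Omega^{\kr}_{\overline{\fZ}'/W_n(k)}(\log \fD_{\overline{\fZ}'})
\]
is a distinguished triangle of sheaves on $\overline{\fX}$.

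The essential observation is that in these coordinates the log divisor $\fD$ involves only variables disjoint from the blow-up coordinates $x_1,\ldots,x_b$. Writing $\overline{\fX} = \mathfrak{Y} \times_{W_n(k)} \mathfrak{W} \times_{W_n(k)} \mathfrak{V}$ with $\mathfrak{Y} = \Spec W_n(k)[x_1,\ldots,x_b]$, $\mathfrak{W} = \Spec W_n(k)[x_{b+1},\ldots,x_c]$ (carrying $\fD$) and $\mathfrak{V} = \Spec W_n(k)[x_{c+1},\ldots,x_d]$, the blow-up acts only on the $\mathfrak{Y}$-factor: $\overline{\fX}' = (\Bl_{\{0\}}\mathfrak{Y}) \times \mathfrak{W} \times \mathfrak{V}$, $\overline{\fZ} = \{0\}\times \mathfrak{W} \times \mathfrak{V}$ and $\overline{\fZ}' = \mathbb{P}^{b-1}_{W_n(k)} \times \mathfrak{W} \times \mathfrak{V}$. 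By the K\"unneth formula for log de Rham complexes, the desired triangle decomposes as the exterior product of the non-log Mayer--Vietoris triangle for $\mathfrak{Y} = \mathbb{A}^b_{W_n(k)}$ blown up at the origin with the identity triangles for $\mathfrak{W}$ and $\mathfrak{V}$.

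The main obstacle is therefore the classical non-log Mayer--Vietoris triangle for the blow-up of $\mathbb{A}^b_{W_n(k)}$ at the origin: writing $\mathfrak{Y}' := \Bl_{\{0\}}\mathfrak{Y}$ with exceptional divisor $\mathbb{P}^{b-1}$, the triangle
\[
\Omega^{\kr}_{\mathfrak{Y}/W_n(k)} \longrightarrow Rp_*\Omega^{\kr}_{\mathfrak{Y}'/W_n(k)} \oplus Ri_*\Omega^{\kr}_{\{0\}/W_n(k)} \longrightarrow Ri_*Rp_{0,*}\Omega^{\kr}_{\mathbb{P}^{b-1}_{W_n(k)}/W_n(k)}
\]
is distinguished. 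This is the classical blow-up formula, whose proof over a base ring reduces, via the projection formula and Grothendieck's projective bundle formula, to the sheaf-level decomposition $Rp_*\Omega^i_{\mathfrak{Y}'/W_n(k)} \cong \Omega^i_{\mathfrak{Y}/W_n(k)} \oplus \bigoplus_{j=1}^{b-1} Ri_*\Omega^{i-j}_{\{0\}/W_n(k)}[-j]$ for blow-ups along smooth centres.
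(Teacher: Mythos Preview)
Your overall strategy --- reduce to a good compactification via Proposition \ref{prop:goodsquares} and Corollary \ref{cor: iso}, localise, lift to $W_n(k)$, and ultimately invoke the non-logarithmic blow-up formula --- matches the paper's. The observation that the local invariant $a$ is forced to be $0$ (since $Z\subseteq X$ is non-empty) is also correct and is exactly the local model the paper writes down. However, the execution diverges at a crucial point and contains a gap.

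The problem is your identification $\overline{\fX} = \mathfrak{Y} \times_{W_n(k)} \mathfrak{W} \times_{W_n(k)} \mathfrak{V}$. After the Zariski localisation you have only arranged an \'etale morphism $\overline{\fX} \to \mathbb{A}^d_{W_n(k)}$; the scheme $\overline{\fX}$ is not itself a product of affine spaces. Hence the K\"unneth decomposition you invoke does not literally apply. The gap is repairable: one may establish the distinguished triangle on $\mathbb{A}^d_{W_n(k)}$ via your K\"unneth argument and then pull it back along the \'etale map, using that (log) differentials are \'etale-local and that $Rp_*$ commutes with flat base change for the proper blow-up morphism $p$. But this step is missing from your write-up.

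The paper avoids this issue by taking a different route after the lift to $W_n(k)$: rather than K\"unneth, it uses the weight filtration on the logarithmic de Rham complex. The Poincar\'e residue isomorphism identifies the $l$-th graded piece of $\Omega^{\kr}_{\overline{\fX}/W_n(k)}(\log \fD)$ (and likewise for $\overline{\fX}', \overline{\mathfrak{Z}}, \overline{\mathfrak{Z}}'$) with the non-logarithmic de Rham complex $\Omega^{\kr}_{\fD_l/W_n(k)}[-l]$ on the disjoint union $\fD_l$ of $l$-fold intersections of irreducible components of $\fD$. Since the blow-up is compatible with passage to these strata, the question reduces for each $l$ to the non-logarithmic blow-up square for $\mathfrak{Z}_l \hookrightarrow \fD_l$, which is handled by Gros \cite[IV, Thm.\,1.2.1]{G_1985}. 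This weight-filtration argument works directly on the \'etale neighbourhood $\overline{\fX}$ without any product assumption, and is the cleaner way to strip off the log structure here.
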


\begin{proof}
The statement is clear if $Z=\emptyset$, so we may assume that $Z$ is non-empty.
By Proposition \ref{prop:goodsquares} there is 
a good compactification
\begin{equation}\label{eq:diagBU}
	\xymatrix{(Z',\overline{Z}') \ar[r] \ar[d] & (X',\overline{X}') \ar[d]\\ (Z,\overline{Z}) \ar[r] & (X,\overline{X})}
\end{equation}
of the given square where all morphisms are strict. 
By Corollary \ref{cor: iso} it suffices to prove that 
\[
	\xymatrix{A^{\kr}_n(Z',\overline{Z}')& A^{\kr}_n(X',\overline{X}') \ar[l]\\ A_n^{\kr}(Z,\overline{Z}) \ar[u] & A^{\kr}_n(X,\overline{X}) \ar[u]\ar[l]}
\]
is homotopy co-Cartesian. 
Note that the diagram \eqref{eq:diagBU} is uniquely determined by $\overline{X}$, $D=\overline{X}\backslash X$ and $\overline{Z}$. 
As in the proof of Proposition \ref{prop: quasi-iso}, using a suitable Zariski \v{C}ech hypercovering, 
we can reduce to the situation where there exists an \'etale morphism 
\[ \overline{X} \rightarrow \Spec k[x_1,\dots,x_b,\dots,x_c,\dots,x_d] \] 
 for some $0 \leqslant b \leqslant c \leqslant d$ such that 
\begin{align*}
	\overline{Z}&=\{x_1=\dots=x_b=0\},\\
	D&=\{x_{b+1}\cdots x_c=0\}.
\end{align*}
Note that the parameter $a$, which appears in the proof of Proposition \ref{prop: quasi-iso}, is zero since $Z$ is assumed to be non-empty. Again we choose lifts 
\begin{align*}
	\overline{\fX}& \,\,  \rightarrow \,  \Spec W_n(k)[x_1,\dots,x_b,\dots,x_c,\dots,x_d],\\
	\overline{\mathfrak{Z}}&=\{x_1=\dots=x_b=0\},\\
	\fD&=\{x_{b+1}\cdots x_c=0\}
\end{align*}
to $W_n(k)$ with the Frobenius lift given by $x_i\mapsto x_i^p$. 
Define $\fX:=\overline{\fX}\backslash\fD$, $\mathfrak{Z}=\overline{\mathfrak{Z}}\cap \fX$ and set
$(\fX',\overline{\fX}'):=(\Bl_{\mathfrak{Z}}\fX,\Bl_{\overline{\mathfrak{Z}}}\overline{\fX})$ and $(\mathfrak{Z}',\overline{\mathfrak{Z}}'):=(\mathfrak{Z}\times_{\fX} \fX',\overline{\mathfrak{Z}}\times_{\overline{\fX}} \overline{\fX}')$. 
As in the proof of Proposition \ref{prop: quasi-iso}, the Frobenius lift induces quasi-isomorphisms
\begin{align*}
	\Omega^{\kr}_{\overline{\fX}/W_n(k)}(\log \mathfrak{D}) &\xrightarrow{\sim} W_n\Omega^{\kr}_{\overline{X}/k}(\log D), \\
	\Omega^{\kr}_{\overline{\fX}'/W_n(k)}(\log (\overline{\mathfrak{X}}'\backslash \mathfrak{X}')) &\xrightarrow{\sim} W_n\Omega^{\kr}_{\overline{X}'/k}(\log (\overline{X}'\backslash X')), \\
	\Omega^{\kr}_{\overline{\mathfrak{Z}}/W_n(k)}(\log (\overline{\mathfrak{Z}}\backslash\mathfrak{Z})) &\xrightarrow{\sim} W_n\Omega^{\kr}_{\overline{Z}/k}(\log (\overline{Z}\backslash Z)), \\
	\Omega^{\kr}_{\overline{\mathfrak{Z}}'/W_n(k)}(\log (\overline{\mathfrak{Z}}'\backslash\mathfrak{Z}'))  &\xrightarrow{\sim} W_n\Omega^{\kr}_{\overline{Z}'/k}(\log (\overline{Z}'\backslash Z')),
\end{align*}
and hence it suffices to show that 
\[
	\xymatrix{R\Gamma_\et(\overline{\mathfrak{Z}}',\Omega^{\kr}_{\overline{\mathfrak{Z}}'/W_n(k)}(\log (\overline{\mathfrak{Z}}'\backslash \mathfrak{Z}'))) & R\Gamma_\et(\overline{\fX}',\Omega^{\kr}_{\overline{\fX}'/W_n(k)}(\log (\overline{\mathfrak{X}}'\backslash \mathfrak{X}'))) \ar[l]\\ R\Gamma_\et(\overline{\mathfrak{Z}},\Omega^{\kr}_{\overline{\mathfrak{Z}}/W_n(k)}(\log (\overline{\mathfrak{Z}} \backslash \mathfrak{Z})))  \ar[u] & R\Gamma_\et(\overline{\fX},\Omega^{\kr}_{\overline{\fX}/W_n(k)}(\log \mathfrak{D} )) \ar[u]\ar[l]}
\]
is homotopy co-Cartesian. Now $\overline{\fX}'$ admits a standard affine open covering 
\[
	 \overline{\mathfrak{X}}'=\bigcup_{r=1}^b \overline{\mathfrak{X}}'_r
\]
such that the restriction of the blow-up 
$\pi \colon \overline{\fX}'\rightarrow \overline{\fX}$ 
to $\overline{\mathfrak{X}}'_r$, $1\leqslant r\leqslant d$, is given by 
the Cartesian diagram
\[ 
\xymatrix{ \overline{\mathfrak{X}}'_r \ar[r] \ar[d] &  \overline{\mathfrak{X}} \ar[d] \\ 
\Spec W_n(k)[y_1,\dots, \check{y}_r,\dots,y_b,x_r,x_{b+1},\dots,x_d] \ar[r] & 
\Spec W_n(k)[x_1,\dots,x_d], 
}\] 
(here again the variable $y_r$ is skipped) 
where the lower horizontal morphism is induced by 
$x_i\mapsto x_r y_i$ for $1\leqslant i\leqslant b, i \not= r$ and $x_i\mapsto x_i$ otherwise. 
On these charts, we have the following explicit descriptions of the sheaves 
$\pi^* \Omega^{1}_{\overline{\fX}/W_n(k)}(\log \mathfrak{D} )$ and 
$ \Omega^{1}_{\overline{\fX}'/W_n(k)}(\log \overline{\mathfrak{X}}'\backslash \mathfrak{X}')$: 
The sheaf $\pi^* \Omega^{1}_{\overline{\fX}/W_n(k)}(\log \mathfrak{D} )|_{\overline{\mathfrak{X}}'_r}$ 
is freely generated by
    \[
 	dx_i \text{ for } 1\leqslant i \leqslant b,\quad \frac{dx_i}{x_i}\text{ for } b+1\leqslant i \leqslant c,\quad dx_i \text{ for } c+1\leqslant i \leqslant d,
 \]
 while $\Omega^{1}_{\overline{\fX}'/W_n(k)}(\log \overline{\mathfrak{X}}'\backslash 
\mathfrak{X}')|_{\overline{\mathfrak{X}}'_r}$ is freely generated by 
\[
 	dy_i \text{ for } 1\leqslant i \leqslant b,  i \not= r,\quad dx_r,\quad \frac{dx_i}{x_i}\text{ for } b+1\leqslant i \leqslant c,\quad dx_i \text{ for } c+1\leqslant i \leqslant d.
 \]
The logarithmic differentials $\frac{dx_i}{x_i}$ for $b+1\leqslant i\leqslant c$ induce compatible weight filtrations on the above complexes. In the following, let us describe the associated graded pieces of these filtrations in terms of the residue map. For $b+1\leqslant i\leqslant c$ let us write $\fD_i$ for the closed subscheme of $\fD$ given by $\fD_i=\{x_i=0\}$ and $\overline{\mathfrak{Z}}_i=\overline{\mathfrak{Z}}\cap \fD_i$. Furthermore, let us write $\fD_i'=\Bl_{\overline{\mathfrak{Z}}_i}\fD_i$ and $\overline{\mathfrak{Z}}_i'=\overline{\mathfrak{Z}}_i \times_{\fD_i}\fD_i'$. 
For a subset $L\subseteq \{ b+1,\dots,c \}$ let us introduce the notation
\begin{align*}
	\fD_L:=\bigcap_{i\in L} \fD_i, && \fD'_L:=\bigcap_{i\in L} \fD'_i\\
	\overline{\mathfrak{Z}}_L:=\bigcap_{i\in L} \overline{\mathfrak{Z}}_i, && \overline{\mathfrak{Z}}'_L:=\bigcap_{i\in L} \overline{\mathfrak{Z}}'_i,
\end{align*}
and for $1\leqslant l\leqslant c-b$
\begin{align*}
	\fD_{(l)}:=\coprod_{\substack{ L\subseteq \{b+1,\dots,c\} \\ |L|=l}} \fD_L, && \fD'_{(l)}:=\coprod_{\substack{ L\subseteq \{b+1,\dots,c\} \\ |L|=l}} \fD'_L\\
	\overline{\mathfrak{Z}}_{(l)}:=\coprod_{\substack{ L\subseteq \{b+1,\dots,c\} \\ |L|=l}} \overline{\mathfrak{Z}}_L, &&\overline{\mathfrak{Z}}'_{(l)}:=\coprod_{\substack{ L\subseteq \{b+1,\dots,c\} \\ |L|=l}} \overline{\mathfrak{Z}}'_L
\end{align*}
as well as
\begin{align*}
	\fD_{(0)}:= \overline{\mathfrak{X}}, && \fD'_{(0)}:=\overline{\mathfrak{X}}'\\
	\overline{\mathfrak{Z}}_{(0)}:=\overline{\mathfrak{Z}}, &&\overline{\mathfrak{Z}}'_{(0)}:=\overline{\mathfrak{Z}}'.
\end{align*}
With this notation, the residue map gives the following explicit description of the associated graded pieces for $0\leqslant l\leqslant c-b$:
\begin{align*}
	\mathrm{gr}_l \Omega^{\kr}_{\overline{\fX}/W_n(k)}(\log \mathfrak{D}) &= \Omega^{\kr}_{\fD_{(l)}/W_n(k)}[-l]\\
	\mathrm{gr}_l \Omega^{\kr}_{\overline{\fX'}/W_n(k)}(\log (\overline{\fX}'\backslash\fX'))&= \Omega^{\kr}_{\fD'_{(l)}/W_n(k)}[-l]\\
	\mathrm{gr}_l \Omega^{\kr}_{\overline{\mathfrak{Z}}/W_n(k)}(\log (\overline{\mathfrak{Z}}\backslash\mathfrak{Z})) &= \Omega^{\kr}_{\overline{\mathfrak{Z}}_{(l)}/W_n(k)}[-l]\\
	\mathrm{gr}_l \Omega^{\kr}_{\overline{\mathfrak{Z}}'/W_n(k)}(\log (\overline{\mathfrak{Z}}'\backslash\mathfrak{Z}'))  &= \Omega^{\kr}_{\overline{\mathfrak{Z}}'_{(l)}/W_n(k)}[-l].
\end{align*}
Hence, it is enough to prove that for $0\leqslant l\leqslant c-b$ the diagram
\[
	\xymatrix{R\Gamma_\et(\overline{\mathfrak{Z}}'_{(l)},\Omega^{\kr}_{\overline{\mathfrak{Z}}'_{(l)}/W_n(k)}) & R\Gamma_\et({\fD}'_{(l)},\Omega^{\kr}_{{\fD}'_{(l)}/W_n(k)}) \ar[l]\\ R\Gamma_\et(\overline{\mathfrak{Z}}_{(l)},\Omega^{\kr}_{\overline{\mathfrak{Z}}_{(l)}/W_n(k)}) \ar[u] & R\Gamma_\et({\fD}_{(l)},\Omega^{\kr}_{{\fD}_{(l)}/W_n(k)})\ar[u]\ar[l]},
\]
associated to the blow-up square
	$$
	\xymatrix{\overline{\mathfrak{Z}}'_{(l)} \ar[r] \ar[d] & \fD'_{(l)} \ar[d]\\ \overline{\mathfrak{Z}}_{(l)} \ar[r] & \fD_{(l)}}
	$$
is homotopy co-Cartesian. But this is the non-logarithmic case proved in \cite[IV, Thm.\,1.2.1.]{G_1985}.
\end{proof}

\begin{corollary}\label{cor: cdp-descent}
Assume Hypotheses \ref{strong resolutions}, \ref{embedded resolutions}, \ref{embedded resolution with boundary} and \ref{weak factorisation}. 
For $X \in \Sm_k$, the
natural morphisms 
	\begin{align*}
	A_n^{\kr}(X)  \rightarrow R\Gamma_{\cdp}(X, a^\ast_{\cdp} A_n^{\kr}),\\
	A^{\kr}(X)  \rightarrow R\Gamma_{\cdp}(X, a^\ast_{\cdp} A^{\kr})
	\end{align*}
are quasi-isomorphisms.
\end{corollary}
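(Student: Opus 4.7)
The plan is to apply Voevodsky's descent criterion for bounded, regular, complete cd-structures. By Proposition \ref{cor:cdprhtoponsm}(i), under Hypothesis \ref{strong resolutions} the $\cdp$-topology on $\Sm_k$ is the topology generated by the smooth blow-up cd-structure, which is regular, bounded and, under the same hypothesis, complete. For such a cd-structure, Voevodsky's general descent theorem (cf.\ \cite[Lem.\,4.3]{V_2010a} together with \cite[\S3]{V_2010}) asserts that a presheaf of complexes satisfies cohomological descent with respect to the generated topology if and only if it sends the initial object $\emptyset$ to an acyclic complex and sends every square in the cd-structure to a homotopy Cartesian (equivalently, for complexes, homotopy co-Cartesian) square.

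First I would verify the two hypotheses for the presheaf $X \mapsto A_n^{\kr}(X)$. The vanishing on the initial object is immediate: $(\emptyset,\emptyset)$ is a normal crossing pair extending $\emptyset$, and $R\Gamma_{\et}(\emptyset,-) = 0$, so $A_n^{\kr}(\emptyset) \simeq 0$. The homotopy co-Cartesian condition for every smooth blow-up square in $\Sm_k$ is precisely the content of Proposition \ref{prop: MV blow-up}. Applying Voevodsky's criterion then yields the quasi-isomorphism
$$A_n^{\kr}(X) \xrightarrow{\sim} R\Gamma_{\cdp}(X, a_{\cdp}^\ast A_n^{\kr})$$
for every $X \in \Sm_k$.

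For the integral statement one passes to the homotopy inverse limit in $n$. By construction $A^{\kr}(X) = R\varprojlim_n A_n^{\kr}(X)$ and $a_{\cdp}^\ast A^{\kr} = R\varprojlim_n a_{\cdp}^\ast A_n^{\kr}$; since $R\Gamma_{\cdp}(X,-)$ commutes with $R\varprojlim$, taking the homotopy limit of the mod-$p^n$ quasi-isomorphisms delivers the desired
$$A^{\kr}(X) \xrightarrow{\sim} R\Gamma_{\cdp}(X, a_{\cdp}^\ast A^{\kr}).$$

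The main subtlety I anticipate is purely bookkeeping: the descent statement in Voevodsky's work is typically formulated for presheaves of simplicial sets or spectra, whereas here we work with presheaves of complexes of $W_n(k)$-modules. However, via the Dold--Kan correspondence (or directly, since for presheaves of complexes homotopy Cartesian and homotopy co-Cartesian squares agree up to a shift), the criterion transfers verbatim, and Proposition \ref{prop: MV blow-up} provides exactly the needed input. Apart from this translation, no new geometric ingredient beyond Propositions \ref{cor:cdprhtoponsm} and \ref{prop: MV blow-up} is required.
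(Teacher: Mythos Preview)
Your proof is correct and follows essentially the same approach as the paper: both use that the $\cdp$-topology on $\Sm_k$ is generated by the smooth blow-up cd-structure (Proposition \ref{cor:cdprhtoponsm}), invoke the Mayer--Vietoris property from Proposition \ref{prop: MV blow-up}, apply the descent criterion for complete, regular, bounded cd-structures, and pass to homotopy limits for the integral statement. The only cosmetic difference is that the paper cites \cite[Thm.\,3.7]{CHSW_2008}, which already packages Voevodsky's criterion in the language of presheaves of complexes, so the Dold--Kan translation you flag as a subtlety is absorbed into that reference.
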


\begin{proof}
Under the hypotheses mentioned, we have seen in Proposition \ref{prop: MV blow-up} 
 that $A_n^{\kr}$ satisfies the Mayer--Vietoris property for smooth blow-up squares. 
As smooth blow-up squares generate the $\cdp$-topology on $\Sm_k$
by Proposition \ref{cor:cdprhtoponsm}, 
this implies the first quasi-isomorphism by \cite[Thm.\,3.7]{CHSW_2008}.
The second quasi-isomorphism follows by taking (homotopy) limits.
\end{proof}

\begin{proposition}\label{prop: MV Zariski}
Assume Hypotheses \ref{strong resolutions}, \ref{embedded resolutions}, \ref{embedded resolution with boundary} and \ref{weak factorisation}.
For a smooth Nisnevich square

	$$
	\xymatrix{Y' \ar[r] \ar[d] & X' \ar[d]\\ Y \ar[r] & X,}
	$$
the induced diagram
	$$
	\xymatrix{A^{\kr}_n(Y')& A^{\kr}_n(X') \ar[l]\\ A^{\kr}_n(Y) \ar[u] & A^{\kr}_n(X)\ar[u]\ar[l]}
	$$
is homotopy co-Cartesian.
\end{proposition}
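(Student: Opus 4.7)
The proof follows the template of Proposition \ref{prop: MV blow-up}, with one new reduction at the start: in contrast to the blow-up case (Proposition \ref{prop:goodsquares}), not every smooth Nisnevich square is good, so I first pass to good squares.

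Applying Proposition \ref{prop: good simplicial Zariski} to the given Nisnevich square yields a split $\cdp$-hypercovering $X_{\bullet} \rightarrow X$ by smooth $k$-varieties such that the pullback of the square to each $X_i$ factors as a finite composition of good smooth Nisnevich squares. By cohomological $\cdp$-descent of $A_n^{\kr}$ on $\Sm_k$ (Corollary \ref{cor: cdp-descent}), the homotopy co-Cartesian property for the original square is equivalent to the same property for each pullback; and by the pasting lemma for pushouts, the property is preserved under composition of the intermediate squares. Hence it suffices to prove the statement for each good smooth Nisnevich square in the factorisation.

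Good squares of type (b) (Definition \ref{def:goodsmcdstr}) have two or three empty corners and the statement is trivial. For type (a), Corollary \ref{cor: iso} lets me compute each $A_n^{\kr}$ term using the given good compactification. Denote by $\overline{D}$ the smooth closure of $X \setminus Y$ in $\overline{X}$ and by $\overline{D}'$ the preimage $p^{-1}(\overline{D}) \subset \overline{X}'$. Inspection of the construction in the proof of Proposition \ref{prop: good simplicial Zariski} shows that for good squares arising in the factorisation, $\overline{D}' \rightarrow \overline{D}$ is an isomorphism which identifies the intersections with the boundary divisors $\overline{X} \setminus X$ and $\overline{X}' \setminus X'$. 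The residue short exact sequence along $\overline{D}$ (analogous to the one used in the proof of Proposition \ref{prop: MV blow-up}) then identifies the vertical cofibres of the square of $A_n^{\kr}$'s with the \'etale cohomology of the same logarithmic de Rham--Witt complex on $\overline{D} \cong \overline{D}'$, proving the homotopy co-Cartesian property.

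The main obstacle is the precise compatibility claim $\overline{D}' \xrightarrow{\sim} \overline{D}$ in good type (a) squares. This requires unwinding Claim 1 in the proof of Proposition \ref{prop: good simplicial Zariski}: the good squares there are constructed so that the smooth divisor at level $l$ is the closure of $D_l^\circ := X_l \setminus X_{l-1} \subset X$, and the Nisnevich condition gives $p^{-1}(D_l^\circ) \cong D_l^\circ$; extending this to closures via the application of Hypothesis \ref{embedded resolution with boundary} on the side of $\overline{X}'$ yields the required identification. Once this is in place the remaining steps are formal: weight filtration along $\overline{D}$, identification of cofibres via the residue map, and the pasting lemma to descend the conclusion back from the hypercovering.
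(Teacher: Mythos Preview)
Your overall architecture matches the paper's: reduce via Proposition~\ref{prop: good simplicial Zariski} and $\cdp$-descent (Corollary~\ref{cor: cdp-descent}) to good smooth Nisnevich squares, dismiss type~(b), and for type~(a) identify the cofibre of $A_n^{\kr}(X,\overline{X}) \to A_n^{\kr}(Y,\overline{X})$ via the weight filtration with log crystalline cohomology of the closure $\overline{D}$ of $D = X \setminus Y$. That part is fine.

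The gap is the claim that $\overline{D}' := p^{-1}(\overline{D}) \to \overline{D}$ is an isomorphism, and that this can be extracted from the construction in Claim~1 of Proposition~\ref{prop: good simplicial Zariski}. It cannot. In that construction the compactification $\widetilde{X}'$ on the upper row is produced by applying Hypothesis~\ref{embedded resolution with boundary} to $(D'_l,\overline{D}'_l) \hookrightarrow (X'_{l-1},\overline{X}')$; this only guarantees that the closure $\widetilde{D}'_l$ of $D'_l$ in $\widetilde{X}'$ is some $nc$-compactification of $D'_l$, with no control relating it to $\overline{D}_l$. The map $p$ is allowed to do essentially anything over $\overline{D}_l \setminus D_l^{\circ}$. (Separately, $p^{-1}(\overline{D})$ need not agree with the closure of $D'$ in $\overline{X}'$, which is the object the residue map actually produces.)

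The paper's fix is simpler and sidesteps this geometry entirely. Take $\overline{D}'$ to be the closure of $D'$ in $\overline{X}'$; it is a smooth divisor because $(Y',\overline{X}')$ is an $nc$-pair. The weight filtration of \cite[Cor.\,2.12.4]{NS_2008} identifies the two horizontal cofibres with $A_n^{\kr}(D,\overline{D})$ and $A_n^{\kr}(D',\overline{D}')$. The Nisnevich condition gives only $D' \cong D$ as smooth varieties, but that is exactly what is needed: by Proposition~\ref{prop: quasi-iso} the complex $A_n^{\kr}$ is independent of the choice of $nc$-compactification, so $A_n^{\kr}(D,\overline{D}) \simeq A_n^{\kr}(D',\overline{D}')$ without ever comparing $\overline{D}$ and $\overline{D}'$ as schemes. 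In other words, the missing ingredient in your argument is precisely another invocation of Proposition~\ref{prop: quasi-iso}, not a stronger geometric identification of the boundaries.
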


\begin{proof}
Take a split hypercovering $X_{\bullet} \rightarrow X$  with respect to the $\cdp$-topology in $\Var_k$ with $X_{\bullet}$ smooth 
which satisfies the condition in the statement of Proposition \ref{prop: good simplicial Zariski}, 
and let 
\[ 
\xymatrix{ 
Y'_{\bullet} \ar[r] \ar[d] & X'_{\bullet} \ar[d] \\ 
Y_{\bullet} \ar[r] & X_{\bullet}}
\]
be the pullback of the given smooth Nisnevich square to $X_{\bullet}$. 
Then we have the quasi-isomorphisms 
\[ 
A_n^{\kr}(X) \xrightarrow{\simeq} 
R\Gamma_{\cdp}(X, a^{\ast}_{\cdp}A^{\kr}_n) 
\xrightarrow{\simeq} 
R\Gamma_{\cdp}(X_{\bullet}, a^{\ast}_{\cdp}A^{\kr}_n) 
\xleftarrow{\simeq} 
A_n^{\kr}(X_{\bullet}), 
\] 
where the first and the third quasi-ismorphisms follows from Corollary \ref{cor: cdp-descent} 
and the second quasi-isomorphism is the $\cdp$-descent in $\Var_k$. 
Similar quasi-isomorphisms hold also for $X', Y$ and $Y'$. So it suffices to prove that the 
diagram
$$
	\xymatrix{A^{\kr}_n(Y'_{\bullet})& A^{\kr}_n(X'_{\bullet}) \ar[l]\\ A^{\kr}_n(Y_{\bullet}) \ar[u] & A^{\kr}_n(X_{\bullet})
\ar[u]\ar[l]}
	$$
is homotopy co-Cartesian, and then it is further reduced to proving that the diagram 
$$
	\xymatrix{A^{\kr}_n(Y'_i)& A^{\kr}_n(X'_i) \ar[l]\\ A^{\kr}_n(Y_i) \ar[u] & A^{\kr}_n(X_i)
\ar[u]\ar[l]}
$$
is homotopy co-Cartesian for each $i$.
Take a factorisation 
	\[
	\xymatrix{Y'_i \ar@{=}[r] & X'_{i,n} \ar[r] \ar[d] & X'_{i,n-1} \ar[r] \ar[d] & \cdots \ar[r] & X'_{i,1} \ar[r] \ar[d] & X'_{i,0} \ar@{=}[r] \ar[d] & X'_i \\ 
	Y_i \ar@{=}[r] & X_{i,n} \ar[r]  & X_{i,n-1} \ar[r]  & \cdots \ar[r] & X_{i,1} \ar[r] & X_{i,0} \ar@{=}[r] & X_i}
	\]
such that each square 
	\[
	\xymatrix{X'_{i,l} \ar[r] \ar[d] & X'_{i,l-1} \ar[d]\\
	X_{i,l} \ar[r] & X_{i,l-1}}
	\]
is a good smooth Nisnevich square, which exists by the conclusion of Proposition \ref{prop: good simplicial Zariski}. Then it suffices to prove that the diagram 
\[
\xymatrix{
A^{\kr}_n(X'_{i,l}) & A^{\kr}_n(X'_{i,l-1}) \ar[l] \\
A^{\kr}_n(X_{i,l}) \ar[u] & A^{\kr}_n(X_{i,l-1}) \ar[u] \ar[l]
}\]
is homotopy co-Cartesian for each $i$ and $l$.
In particular, we may assume 
that the given smooth Nisnevich square is good. Then we may assume that $X$ is connected, 
because we can work on each connected component of $X$.
If the associated square on pairs is one of the Cartesian diagrams
	$$
	\xymatrix{\emptyset \ar[r] \ar[d] & \emptyset \ar[d] & \emptyset \ar[r] \ar[d] & (X,\overline{X}) \ar[d] \\
	(X,\overline{X}) \ar[r] & (X,\overline{X}) & \emptyset \ar[r] &(X,\overline{X}) }
	$$
the statement is obvious. 
Thus assume that the given Nisnevich square may be embedded into a Cartesian diagram of $nc$-pairs of the form 
	$$
	\xymatrix{(Y',\overline{X}') \ar@{^{(}->}[r] \ar[d] & (X',\overline{X}') \ar[d]\\
	(Y,\overline{X}) \ar@{^{(}->}[r] & (X,\overline{X})}
	$$
such that the closure $\overline{D}$ of $D := X \setminus Y$ is a smooth divisor of $\overline{X}$. Let $\overline{D}'$ be the closure of $D' := X' \setminus Y'$ in $\overline{X}'$. By definition of smooth Nisnevich square, 
$D'$ is isomorphic to $D$ and so it is smooth. Hence $\overline{D}'$ is a smooth divisor of $\overline{X}'$. Put 
\[
\overline{D}_X := \overline{X} \setminus X, \quad 
\overline{D}_Y := \overline{X} \setminus Y, \quad 
\overline{D}_{X'} := \overline{X}' \setminus X', \quad 
\overline{D}_{Y'} := \overline{X}' \setminus Y',
\]
so that $\overline{D}_Y = \overline{D} \cup \overline{D}_X, 
\overline{D}_{Y'} = \overline{D}' \cup \overline{D}_{X'}$. 
Then, by \cite[Cor.\,2.12.4]{NS_2008}, we have the canonical quasi-isomorphisms 
\begin{align*}
[A_n^{\kr}(X,\overline{X}) \rightarrow A_n^{\kr}(Y,\overline{X})] 
& \cong
R\Gamma_{\et}(\overline{X}, W_n\Omega^{\kr}_{\overline{X}}(\log \overline{D}_X) \to W_n\Omega^{\kr}_{\overline{X}}(\log (\overline{D} \cup \overline{D}_X))) \\ & \cong
R\Gamma_{\et}(\overline{X}, 
W_n\Omega^{\kr}_{\overline{X}}(\log (\overline{D} \cup \overline{D}_X))/ 
P_0^{\overline{D}}W_n\Omega^{\kr}_{\overline{X}}(\log (\overline{D} \cup \overline{D}_X))) \\
& \cong 
R\Gamma_{\et}(\overline{D}, 
W_n\Omega^{\kr}_{\overline{D}}(\log (\overline{D} \cap \overline{D}_X)))
\cong A_n^{\kr}(D,\overline{D}),
\end{align*}
where $P_0^{\overline{D}}$ denotes the $0$-th weight filtration with respect to $\overline{D}$ defined in \cite[Def.\,2.12.1]{NS_2008}. Similar quasi-isomorphisms exist also for 
the complex 
$[A_n^{\kr}(X',\overline{X}') \rightarrow A_n^{\kr}(Y',\overline{X}')].$
So it suffices to prove that the canonical map 
$A_n^{\kr}(D,\overline{D}) 
\rightarrow A_n^{\kr}(D',\overline{D}')
$
is a quasi-isomorphism. Since $D'$ is smooth and isomorphic to $D$, 
it follows from Proposition \ref{prop: quasi-iso}. 
\end{proof}

\begin{corollary}\label{cor: rh descent}
Let $X\in\Sm_k$ and assume Hypotheses  \ref{strong resolutions}, \ref{embedded resolutions}, \ref{embedded resolution with boundary} and \ref{weak factorisation}.
The natural morphisms
	\begin{align*}
	A_n^{\kr}(X)  \rightarrow R\Gamma_{\cdp}(X,a^\ast_{\cdp}A_n^{\kr}) \rightarrow R\Gamma_{\cdh}(X, a^\ast_{\cdh} A_n^{\kr}),\\
	A^{\kr}(X)  \rightarrow R\Gamma_{\cdp}(X,a^\ast_{\cdp}A^{\kr})   \rightarrow R\Gamma_{\cdh}(X, a^\ast_{\cdh} A^{\kr})
	\end{align*}
are quasi-isomorphisms.
\end{corollary}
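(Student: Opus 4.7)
The plan is to reduce the statement to a direct application of the abstract descent criterion of Cisinski--Haesemeyer--Schlichting--Walker \cite[Thm.\,3.7]{CHSW_2008}, exactly as in Corollary \ref{cor: cdp-descent}, but now for the $\cdh$-topology rather than the $\cdp$-topology. The first map in each chain is already a quasi-isomorphism by Corollary \ref{cor: cdp-descent}, so the entire content is to establish the second map, i.e.\ that $A_n^{\kr}$ (and then $A^{\kr}$) satisfies cohomological $\cdh$-descent on $\Sm_k$.

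First, I would recall that under Hypothesis \ref{strong resolutions} the $\cdh$-topology on $\Sm_k$ is generated by the union of the smooth blow-up $\cd$-structure and the smooth Nisnevich $\cd$-structure (Proposition \ref{cor:cdprhtoponsm}(ii)). Both of these $\cd$-structures are complete, regular and bounded (the smooth blow-up one under Hypothesis \ref{strong resolutions}, the smooth Nisnevich one unconditionally). Thus their union is again complete, regular and bounded, so the criterion of \cite[Thm.\,3.7]{CHSW_2008} applies: a presheaf of complexes satisfies cohomological descent with respect to the $\cdh$-topology on $\Sm_k$ if and only if it sends every distinguished square (of either type) to a homotopy Cartesian square of complexes.

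Next, the Mayer--Vietoris property for $A_n^{\kr}$ is precisely what was verified in Propositions \ref{prop: MV blow-up} and \ref{prop: MV Zariski} (this is where Hypotheses \ref{embedded resolutions}, \ref{embedded resolution with boundary} and \ref{weak factorisation} come in, through the reduction to good compactifications). Applying \cite[Thm.\,3.7]{CHSW_2008} therefore yields the quasi-isomorphism
\[
A_n^{\kr}(X) \xrightarrow{\sim} R\Gamma_{\cdh}(X, a^\ast_{\cdh} A_n^{\kr})
\]
for every $X \in \Sm_k$. Combined with Corollary \ref{cor: cdp-descent}, the intermediate map $R\Gamma_{\cdp}(X, a^\ast_{\cdp} A_n^{\kr}) \to R\Gamma_{\cdh}(X, a^\ast_{\cdh} A_n^{\kr})$ is then automatically a quasi-isomorphism as well (two-out-of-three).

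Finally, to pass to the integral statement, I would take $R\varprojlim_n$ of both sides. Since $R\varprojlim$ commutes with $R\Gamma_{\cdp}$ and $R\Gamma_{\cdh}$ (both being right derived functors), and since $A^{\kr}(X)$ and $a^\ast_{\cdp} A^{\kr}, a^\ast_{\cdh} A^{\kr}$ are defined exactly as the corresponding $R\varprojlim$, the four quasi-isomorphisms for $A_n^{\kr}$ assemble into the stated quasi-isomorphisms for $A^{\kr}$. I do not expect a real obstacle here: all the genuine work has already been done in the Mayer--Vietoris Propositions \ref{prop: MV blow-up} and \ref{prop: MV Zariski}, and what remains is a bookkeeping step via the CHSW descent theorem together with commutation of derived limits with derived global sections.
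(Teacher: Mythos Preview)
Your proposal is correct and follows essentially the same approach as the paper's proof: invoke Proposition~\ref{cor:cdprhtoponsm}(ii) to identify the $\cdh$-topology on $\Sm_k$ with the one generated by smooth blow-up and smooth Nisnevich squares, feed Propositions~\ref{prop: MV blow-up} and~\ref{prop: MV Zariski} into \cite[Thm.\,3.7]{CHSW_2008}, and then pass to (homotopy) limits for the $A^{\kr}$ statement. Your added remarks on completeness/regularity/boundedness and the two-out-of-three step are fine elaborations but do not change the argument.
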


\begin{proof}
Recall that the $\cdh$-topology on $\Sm_k$ is generated by smooth blow-up and smooth Nisnevich squares by Corollary \ref{cor:cdprhtoponsm}.
As $A_n^{\kr}$ satisfies the Mayer--Vietoris property for smooth blow-up squares and for smooth Nisnevich squares by the Propositions \ref{prop: MV blow-up} and \ref{prop: MV Zariski}, 
the first quasi-isomorphism follows again from \cite[Thm.\,3.7]{CHSW_2008}.
The second quasi-isomorphism follows by taking (homotopy) limits.
\end{proof}

\begin{corollary}\label{cor: cdp-rh}
For $(X,\overline{X})\in \Var_k^{nc}$, we have quasi-isomorphisms
	\begin{align*}
	A^{\kr}_n(X,\overline{X})\cong A^{\kr}_n(X) \cong R\Gamma_{\cdp}(X, a^\ast_{\cdp} A_n^{\kr})  \cong R\Gamma_{\cdh}(X, a^\ast_{\cdh} A_n^{\kr}),\\
	R\varprojlim A^{\kr}_n(X,\overline{X}) \cong A^{\kr}(X) \cong R\Gamma_{\cdp}(X, a^\ast_{\cdp} A^{\kr})  \cong R\Gamma_{\cdh}(X, a^\ast_{\cdh} A^{\kr})
	\end{align*}
if we assume Hypotheses  \ref{strong resolutions}, \ref{embedded resolutions}, \ref{embedded resolution with boundary} and \ref{weak factorisation}.
\end{corollary}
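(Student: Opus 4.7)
The plan is to assemble this corollary directly from the results that have already been established in the previous subsections; essentially, it is a concatenation of Corollaries \ref{cor: iso}, \ref{cor: cdp-descent} and \ref{cor: rh descent}, together with a passage to the limit.

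First I would address the first line of quasi-isomorphisms, working with fixed $n$. The leftmost quasi-isomorphism $A^{\kr}_n(X,\overline{X})\simeq A^{\kr}_n(X)$ is precisely the content of Corollary \ref{cor: iso}, applied to the given normal crossing compactification $\overline{X}$ of the smooth $k$-variety $X$; this uses Hypotheses \ref{strong resolutions}, \ref{embedded resolutions} and \ref{weak factorisation}. The next quasi-isomorphism $A^{\kr}_n(X)\simeq R\Gamma_{\cdp}(X,a^\ast_{\cdp} A_n^{\kr})$ is Corollary \ref{cor: cdp-descent}, which rests on the Mayer--Vietoris property for smooth blow-up squares proved in Proposition \ref{prop: MV blow-up} together with the description of the $\cdp$-topology on $\Sm_k$ in terms of the smooth blow-up $\cd$-structure (Proposition \ref{cor:cdprhtoponsm}(i)). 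Finally, $R\Gamma_{\cdp}(X,a^\ast_{\cdp} A_n^{\kr})\simeq R\Gamma_{\cdh}(X,a^\ast_{\cdh} A_n^{\kr})$ is obtained by combining this with Corollary \ref{cor: rh descent}, which additionally invokes Proposition \ref{prop: MV Zariski} for smooth Nisnevich squares and Proposition \ref{cor:cdprhtoponsm}(ii).

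For the second line, I would simply apply $R\varprojlim_n$ to each term of the first line. Since $A^{\kr}(X)$ and $a^\ast_{\cdp}A^{\kr}$, $a^\ast_{\cdh}A^{\kr}$ are defined as $R\varprojlim$ of the level $n$ versions (Definition \ref{def:acdparhsm}), and since $R\Gamma_{\cdp}$ and $R\Gamma_{\cdh}$ commute with $R\varprojlim$, one obtains the corresponding chain of quasi-isomorphisms at the level of $A^{\kr}$. The outermost identification $R\varprojlim_n A^{\kr}_n(X,\overline{X})\simeq A^{\kr}(X)$ is then immediate from the definition of $A^{\kr}(X)$.

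There is no real obstacle here: all the hard work has been done in the preceding propositions, in particular in Proposition \ref{prop: quasi-iso} (independence of the normal crossing compactification under weak factorisation), Proposition \ref{prop: MV blow-up} (Mayer--Vietoris for smooth blow-up squares), Proposition \ref{prop: good simplicial Zariski} (reduction of Nisnevich squares to good ones via a $\cdp$-hypercovering), and Proposition \ref{prop: MV Zariski} (Mayer--Vietoris for smooth Nisnevich squares). The only mild point to mention is that the input $(X,\overline{X})\in\Var_k^{nc}$ forces $X\in\Sm_k$, so that the previous corollaries apply verbatim.
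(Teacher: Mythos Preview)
Your proposal is correct and matches the paper's approach: the corollary is stated without an explicit proof precisely because it is an immediate concatenation of Corollaries \ref{cor: iso}, \ref{cor: cdp-descent} and \ref{cor: rh descent}, followed by applying $R\varprojlim_n$ for the second line.
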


\subsection{Extension to $k$-varieties}

In this section we extend the integral $p$-adic cohomology theory defined above to general $k$-varieties via sheafification.
As in the previous subsection, we assume Hypotheses  \ref{strong resolutions}, \ref{embedded resolutions}, \ref{embedded resolution with boundary} and \ref{weak factorisation}.

From the descent results for smooth $k$-varieties obtained previously, we deduce the following statement:

\begin{proposition}\label{prop:cdprhvar}
Under Hypotheses  \ref{strong resolutions}, \ref{embedded resolutions}, \ref{embedded resolution with boundary} and \ref{weak factorisation}, for $X\in\Var_k$
	\begin{align*}
	R\Gamma_{\cdp}(X, a^\ast_{\cdp} A_n^{\kr}) & \cong R\Gamma_{\cdh}(X, a^\ast_{\cdh} A_n^{\kr}),\\
	R\Gamma_{\cdp}(X, a^\ast_{\cdp} A^{\kr})  & \cong R\Gamma_{\cdh}(X, a^\ast_{\cdh} A^{\kr}).
	\end{align*}
\end{proposition}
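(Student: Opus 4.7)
The strategy is to reduce to the smooth case already handled in Corollary \ref{cor: rh descent} by means of a smooth $\cdp$-hypercovering of $X$. Concretely, first I would construct a split hypercovering $X_{\bullet} \to X$ with respect to the $\cdp$-topology in $\Var_k$ with $X_{\bullet}$ levelwise smooth. This is possible because, as observed in the proof of Proposition \ref{prop: base1}, Hypothesis \ref{strong resolutions} implies that every $k$-variety admits a $\cdp$-covering by a smooth $k$-variety; iterating this construction on the coskeleta (as in the proof of Proposition \ref{prop: good simplicial Zariski}) yields the desired smooth split $\cdp$-hypercovering.

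Next I would combine this with descent on both sides. Since $R\Gamma_{\cdp}(-, a^\ast_{\cdp} A_n^{\kr})$ satisfies $\cdp$-descent by construction (Definition \ref{def:acdparhsm}), we obtain the quasi-isomorphism
\[
R\Gamma_{\cdp}(X, a^\ast_{\cdp} A_n^{\kr}) \xrightarrow{\simeq} R\Gamma_{\cdp}(X_{\bullet}, a^\ast_{\cdp} A_n^{\kr}).
\]
Because the $\cdh$-topology refines the $\cdp$-topology, the map $X_\bullet \to X$ is also a hypercovering for the $\cdh$-topology, and $\cdh$-descent for $R\Gamma_{\cdh}(-, a^\ast_{\cdh} A_n^{\kr})$ gives
\[
R\Gamma_{\cdh}(X, a^\ast_{\cdh} A_n^{\kr}) \xrightarrow{\simeq} R\Gamma_{\cdh}(X_{\bullet}, a^\ast_{\cdh} A_n^{\kr}).
\]

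Now on each level $X_i$ of the hypercovering, $X_i$ is smooth, so Corollary \ref{cor: rh descent} supplies a quasi-isomorphism $R\Gamma_{\cdp}(X_i, a^\ast_{\cdp} A_n^{\kr}) \simeq R\Gamma_{\cdh}(X_i, a^\ast_{\cdh} A_n^{\kr})$, functorial in $X_i$. Assembling these over the simplicial object $X_\bullet$ yields a quasi-isomorphism of simplicial complexes, and hence a quasi-isomorphism on the associated total complexes $R\Gamma_{\cdp}(X_{\bullet}, a^\ast_{\cdp} A_n^{\kr}) \simeq R\Gamma_{\cdh}(X_{\bullet}, a^\ast_{\cdh} A_n^{\kr})$. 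Chaining the three quasi-isomorphisms above gives the desired statement for $A_n^{\kr}$, and passing to $R\varprojlim_n$ yields the statement for $A^{\kr}$.

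The only delicate point is the construction of the smooth split $\cdp$-hypercovering in $\Var_k$; this is essentially a standard simplicial resolution argument using coskeleta, exactly parallel to the construction carried out in Proposition \ref{prop: good simplicial Zariski}, with the simpler input that Hypothesis \ref{strong resolutions} alone suffices to $\cdp$-cover any $k$-variety by a smooth one. Everything else is formal descent combined with the smooth case already proved.
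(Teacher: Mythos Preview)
Your proposal is correct and follows essentially the same argument as the paper: choose a smooth $\cdp$-hypercovering $X_\bullet \to X$, use $\cdp$-descent on the left and $\cdh$-descent on the right to reduce to $X_\bullet$, and then invoke the smooth case (Corollary \ref{cor: rh descent}/\ref{cor: cdp-rh}) levelwise. The paper's proof is slightly terser (it does not separately spell out the $\cdh$-descent step or the construction of the hypercovering), but the logic is identical.
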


\begin{proof}
Choose a $\cdp$-hypercovering $X_{\bullet} \rightarrow X$ such that each $X_i$ is smooth over $k$. 
Then the canonical map 
\[R\Gamma_{\cdp}(X, a^\ast_{\cdp} A_n^{\kr})  \rightarrow R\Gamma_{\cdh}(X, a^\ast_{\cdh} A_n^{\kr})\] 
is identified with the map 
\[R\Gamma_{\cdp}(X_{\bullet}, a^\ast_{\cdp} A_n^{\kr}) \rightarrow 
R\Gamma_{\cdh}(X_{\bullet}, a^\ast_{\cdh} A_n^{\kr})\] 
by $\cdp$-descent, 
and the latter map is a quasi-isomorphism 
because, for each $i$, the map 
\[ R\Gamma_{\cdp}(X_i, a^\ast_{\cdp} A_n^{\kr}) \rightarrow 
R\Gamma_{\cdh}(X_i, a^\ast_{\cdh} A_n^{\kr}) \] 
is a quasi-isomorphism by Corollary \ref{cor: cdp-rh}. So we have shown the former quasi-isomorphism 
of the proposition. 
The latter quasi-isomorphism can be proven in the same way.
\end{proof}

For a $k$-variety $X$, we define 
\begin{align*}
& H^\ast_{\cdp}(X,a^\ast_{\cdp} A^{\kr}):= H^\ast R\Gamma_{\cdp}(X,a^\ast_{\cdp} A^{\kr}), \\ 
& H^\ast_{\cdh}(X,a^\ast_{\cdh} A^{\kr}):= H^\ast R\Gamma_{\cdh}(X,a^\ast_{\cdh} A^{\kr}). 
\end{align*}
They are isomorphic by Proposition \ref{prop:cdprhvar}:
\begin{equation}\label{equ: rh-cdp}
H^\ast_{\cdp}(X,a^\ast_{\cdp} A^{\kr}) \cong H^\ast_{\cdh}(X,a^\ast_{\cdh} A^{\kr}).
\end{equation}
Also, by Corollary \ref{cor: cdp-rh}, 
we have the functorial isomorphisms 
\begin{equation}\label{equ: cris-A}
H^\ast_{\cris}((X,\overline{X})/W(k)) \cong 
H^\ast_{\cdp}(X,a^\ast_{\cdp} A^{\kr}) \cong H^\ast_{\cdh}(X,a^\ast_{\cdh} A^{\kr})
\end{equation}
for an $nc$-pair $(X,\overline{X})$. 
\begin{remark}
While for several of the proofs in the remainder of this section and in the subsequent section,
it suffices to work in the $\cdp$-topology, 
there are some arguments where it is indispensable to work in the $\cdh$-topology. 
Therefore, as well as for philosophical reasons 
(the $\cdh$-topology is generated by the $\cdp$- and the Nisnevich topology, and Nisnevich descent should be quite essential for a `good' cohomology theory in this context),
we formulate the statements in terms of the $\cdh$-topology.
\end{remark}

We show that this cohomology gives a good integral $p$-adic cohomology theory, 
by proving the following two theorems. 

\begin{theorem}\label{thm: fin gen}
Assume Hypotheses  \ref{strong resolutions}, \ref{embedded resolutions}, \ref{embedded resolution with boundary} and \ref{weak factorisation}.
Then, for a $k$-variety $X$ of dimension $d$, the cohomology groups $H^\ast_{\cdh}(X,a^\ast_{\cdh} A^{\kr})$ are finitely generated $W(k)$-modules and we have 
\[
	H^n_{\cdh}(X,a^\ast_{\cdh} A^{\kr}) =0,
\]
for $n<0$ or $n>2d$.
\end{theorem}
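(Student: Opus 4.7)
The plan is to prove the theorem by Noetherian induction on $\dim X$, using Hypothesis \ref{strong resolutions}(i) to reduce a general variety to a smooth one via a Mayer--Vietoris triangle, and then handling the smooth case through Corollary \ref{cor: cdp-rh} and the classical finiteness of log crystalline cohomology. The lower bound $n \geqslant 0$ is immediate from the fact that each $A_n^{\kr}(X,\overline{X})$ is a representative in non-negative degrees of the \'etale hypercohomology of $W_n\omega^{\kr}_{(X,\overline{X})/k}$, and this property is preserved under the colimit, sheafification, and derived inverse limit defining $a_{\cdh}^\ast A^{\kr}$; so I focus on finite generation together with the upper bound $n \leqslant 2d$.

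For the base case $\dim X = 0$, $X$ is a finite disjoint union of spectra of finite field extensions of $k$, hence smooth and proper and its own normal crossing compactification. Then Corollary \ref{cor: cdp-rh} reduces the claim to the well-known fact that $H^0_{\cris}(X/W(k))$ is a finitely generated $W(k)$-module concentrated in degree $0$. For the inductive step, assume the conclusion for all $k$-varieties of dimension $< d$ and let $\dim X = d$. By Hypothesis \ref{strong resolutions}(i), applied componentwise, there is a proper birational morphism $\pi \colon \widetilde{X} \to X$ with $\widetilde{X}$ smooth of dimension $\leqslant d$, which is an isomorphism above a dense open subscheme of $X_{\sm}$. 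Let $T \subsetneq X$ be a proper closed subvariety outside of which $\pi$ is an isomorphism, and set $\widetilde{T} := (\pi^{-1}(T))_{\rm red}$; then $\dim T, \dim \widetilde{T} \leqslant d-1$, and the Cartesian square
\begin{equation*}
\xymatrix{
\widetilde{T} \ar[r] \ar[d] & \widetilde{X} \ar[d]^-{\pi} \\
T \ar[r] & X
}
\end{equation*}
is a blow-up $\cd$-square. Since $a^\ast_{\cdh} A^{\kr}$ is by definition a complex of $\cdh$-sheaves, $\cdh$-descent yields the distinguished triangle
\begin{equation*}
R\Gamma_{\cdh}(X, a_{\cdh}^\ast A^{\kr}) \to R\Gamma_{\cdh}(\widetilde{X}, a_{\cdh}^\ast A^{\kr}) \oplus R\Gamma_{\cdh}(T, a_{\cdh}^\ast A^{\kr}) \to R\Gamma_{\cdh}(\widetilde{T}, a_{\cdh}^\ast A^{\kr}) \xrightarrow{+1}.
\end{equation*}
By the inductive hypothesis, the last two terms have finitely generated cohomology vanishing outside $[0, 2(d-1)] \subset [0, 2d]$.

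It remains to control $R\Gamma_{\cdh}(\widetilde{X}, a_{\cdh}^\ast A^{\kr})$. By Lemma \ref{lem: nc compactification}, the smooth variety $\widetilde{X}$ admits a normal crossing compactification $(\widetilde{X}, \overline{\widetilde{X}})$ with $\overline{\widetilde{X}}$ proper smooth of dimension $\leqslant d$. Corollary \ref{cor: cdp-rh} then identifies
\begin{equation*}
R\Gamma_{\cdh}(\widetilde{X}, a_{\cdh}^\ast A^{\kr}) \simeq R\Gamma_{\cris}((\widetilde{X}, \overline{\widetilde{X}})/W(k)) \simeq R\varprojlim_n R\Gamma_{\et}(\overline{\widetilde{X}}, W_n\Omega^{\kr}_{\overline{\widetilde{X}}/k}(\log D)),
\end{equation*}
with $D = \overline{\widetilde{X}} \setminus \widetilde{X}$. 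The log de Rham--Witt complex has form-degrees in $[0, d]$ and lives on a proper scheme of dimension $\leqslant d$, so the \'etale hypercohomology vanishes outside $[0, 2d]$, each cohomology group is finitely generated over $W_n(k)$, and the transition maps form a Mittag--Leffler system, ensuring finite generation of the limit over $W(k)$. Feeding this back into the long exact sequence of the Mayer--Vietoris triangle gives the desired finiteness and vanishing for $X$.

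The main obstacle is invoking the classical finiteness and the $[0, 2d]$-range for log crystalline cohomology of a proper log smooth $k$-scheme of dimension $\leqslant d$; once this input is granted, the rest is a formal induction built on the blow-up descent triangle and Hypothesis \ref{strong resolutions}(i). A minor point to check is that the derived inverse limit does not introduce $R^1\varprojlim$ contributions, which follows from the Mittag--Leffler property of the tower of finitely generated $W_n(k)$-modules $H^i_{\cris}((\widetilde{X}, \overline{\widetilde{X}})/W_n(k))$.
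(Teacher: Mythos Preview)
Your proof is correct, and the overall inductive architecture (blow-up Mayer--Vietoris triangle plus reduction to the smooth case via Hypothesis \ref{strong resolutions}(i)) matches the paper's. There are, however, two genuine differences worth noting.

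First, for finite generation the paper argues separately: it takes a $\cdp$-hypercovering $X_\bullet\to X$ with each $X_i$ smooth admitting a normal crossing compactification, and then reads off finiteness from the descent spectral sequence $E_1^{ij}=H^j_{\cris}((X_i,\overline{X}_i)/W(k))\Rightarrow H^{i+j}_{\cdh}(X,a^\ast_{\cdh}A^{\kr})$. You instead fold finite generation into the same dimension induction as the vanishing bound. Both routes ultimately rest on the classical finiteness of log crystalline cohomology of a proper log smooth scheme, so neither is deeper; yours is more streamlined.

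Second, and more interestingly, for the bound $H^n=0$ for $n>2d$ in the smooth case the paper invokes the \emph{weight} spectral sequence
\[
E_1^{-k,n+k}=H^{n-k}_{\cris}(D^{(k)}/W(k))\Longrightarrow H^n_{\cris}((X,\overline{X})/W(k)),
\]
and applies the induction hypothesis to the lower-dimensional strata $D^{(k)}$. You use instead the \emph{Hodge} (slope) filtration: since $W_n\Omega^j_{\overline{\widetilde{X}}/k}(\log D)=0$ for $j>d$ and each of these is a quasi-coherent sheaf on a scheme of dimension $d$ (so $H^i_{\et}=H^i_{\Zar}=0$ for $i>d$), the hypercohomology vanishes above degree $2d$ without any appeal to induction. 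This is a cleaner and more elementary route; it avoids the weight machinery entirely. The paper's argument, on the other hand, has the virtue of making the weight structure visible, which is useful elsewhere in the paper (e.g.\ in the comparison with rigid cohomology and in Section~2).

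One small remark: your claim that $\dim\widetilde{T}\leqslant d-1$ deserves a word. It holds because $\pi$ is birational, so every irreducible component of $\widetilde{X}$ dominates a component of $X$; hence $\widetilde{T}=\pi^{-1}(T)_{\rm red}$ contains no generic point of $\widetilde{X}$ and is therefore of strictly smaller dimension. This is implicit in your write-up but worth stating.
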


\begin{proof}
Let us first prove that the cohomology groups are finitely generated. 
Choose a $\cdp$-hypercovering $X_{\bullet} \rightarrow X$ such that each $X_i$ is smooth over $k$ and has a normal crossing compactification $\overline{X}_i$ which exists by our hypothesis on embedded resolutions of singularities. (This is a special case of Proposition \ref{prop: good simplicial Zariski}.)
By definition of $a^\ast_{\cdh} A^{\kr}$  one obtains a descent spectral sequence
	$$
	E_1^{ij}=H^j_{\cdh}(X_i, a^\ast_{\cdh} A^{\kr}) \Longrightarrow H^{i+j}_{\cdh}(X_{\bullet}, a^\ast_{\cdh} A^{\kr}) \cong H^{i+j}_{\cdh}(X, a^\ast_{\cdh} A^{\kr}).
	$$
For each $i,j$, we have 
the natural isomorphism $H^j_{\cris}((X_i,\overline{X}_i)/W(k) ) \xrightarrow{\cong} 
H^j_{\cdh}(X_i,a^\ast_{\cdh} A^{\kr})$ by Corollary \ref{cor: cdp-rh}.
This shows that all the $E_1$-terms are finitely generated over $W(k)$, and hence the same is true for the abutment $ H^\ast_{\cdh}(X, a^\ast_{\cdh} A^{\kr})$. 

Let us now prove that the cohomology is concentrated in degrees $0\leqslant n\leqslant 2d$, by induction on the dimension. 
For $d=0$, $X$ is a finite union of points. In this case, the vanishing for $n\neq 0$ follows from the comparison with crystalline cohomology $H^n_{\cdh}(X,a^\ast_{\cdh} A^{\kr}) \cong H^n_{\mathrm{cris}}(X/W(k))$. For $d>0$, let us write $X$ as the union of its irreducible components $X=\bigcup_{i=1}^m X_i$. The blow-up along $X_1$ yields the blow-up square
\[
	\xymatrix{
		X'\times_X X_1 \ar[d] \ar[r] & X' \ar[d] \\
		X_1 \ar[r] & X,
	}
\] 
with $X'=\bigcup_{i=2}^m X'_i$, where $X_i'$ is the blow-up of $X_i$ along $X_i\cap X_1$. By $\cdp$-descent, we obtain the following distinguished triangle:
\[
	R\Gamma_{\cdh}(X,a^{\ast}_{\cdh} A^{\kr}) \rightarrow R\Gamma_{\cdh}(X',a^{\ast}_{\cdh} A^{\kr})\oplus R\Gamma_{\cdh}(X_1,a^{\ast}_{\cdh} A^{\kr}) \rightarrow R\Gamma_{\cdh}(X'\cap X_1,a^{\ast}_{\cdh} A^{\kr}) \rightarrow
\]
Since $\dim X'\cap X_1\leqslant d-1$ we may reduce by induction to the case $m=1$, i.e., we may without loss of generality reduce to the case that $X$ is integral of dimension $d$. By resolution of singularity and $\cdp$-descent, we may assume that $X$ is smooth. Indeed, each blow-up square appearing in such a resolution
\[
	\xymatrix{
		Z' \ar[d] \ar[r] & X' \ar[d] \\
		Z \ar[r] & X
	}
\] 
induces a distinguished triangle
\[
	R\Gamma_{\cdh}(X,a^{\ast}_{\cdh} A^{\kr}) \rightarrow R\Gamma_{\cdh}(X',a^{\ast}_{\cdh} A^{\kr})\oplus R\Gamma_{\cdh}(Z,a^{\ast}_{\cdh} A^{\kr}) \rightarrow R\Gamma_{\cdh}(Z',a^{\ast}_{\cdh} A^{\kr}) \rightarrow.
\]
with $\dim Z,\dim Z'\leqslant d-1$. Thus, we may assume that $X$ is smooth of dimension $d$. Let us choose a normal crossing compactification $(X,\overline{X})$. We obtain by Corollary \ref{cor: cdp-rh} an isomorphism
\[
	H^n_{\cdh}(X, a^\ast_\cdh A^{\kr})\cong H^n_{\mathrm{cris}}((X,\overline{X})/W(k)).
\]
For the log crystalline cohomology, we have the weight spectral sequence, see \cite[(2.6.0.1)]{N_2012},
\[
	E_1^{-k,n+k}=H_{\mathrm{cris}}^{n-k}(D^{(k)}) \Rightarrow H_{\mathrm{cris}}^n((X,\overline{X})/W(k)),
\]
where $D^{(k)}$ denotes the disjoint union of all $k$-fold intersections of distinct irreducible components of $D$. By the induction hypothesis, we have $H^{n-k}_{\mathrm{cris}}(D^{(k)}/W(k))=0$ for $2(d-k)<n-k$. Now, the weight spectral sequence implies the desired vanishing $H^n_{\cdh}(X, a^\ast_\cdh A^{\kr})\cong H^n_{\mathrm{cris}}((X,\overline{X})/W(k))=0$ for $n>2d$.
\end{proof}

\begin{theorem}\label{thm:comparison}
Assume Hypotheses  \ref{strong resolutions}, \ref{embedded resolutions}, \ref{embedded resolution with boundary} and \ref{weak factorisation}. Then, 
for a $k$-variety $X$, there is a canonical quasi-isomorphism
	$$
	R\Gamma_{\rig}(X/K)\cong R\Gamma_{\cdh}(X,a^\ast_{\cdh} A^{\kr}) \otimes_{\Z} \Q,
	$$
hence the isomorphism 
	$$
    H_{\rig}^\ast(X/K)\cong H_{\cdh}^{\ast}(X,a^\ast_{\cdh} A^{\kr}) \otimes_{\Z} \Q.
    $$
\end{theorem}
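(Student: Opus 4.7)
The strategy is to reduce the comparison to the case of a smooth variety equipped with a normal crossing compactification, where both sides have well-known descriptions, and then invoke the standard comparison between log crystalline cohomology and rigid cohomology.

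First, I would construct a $\cdh$-hypercovering $X_\bullet\rightarrow X$ with each $X_i$ smooth and admitting a normal crossing compactification $(X_i,\overline{X}_i)$. Its existence is guaranteed by Hypotheses \ref{strong resolutions} and \ref{embedded resolutions} together with Lemma \ref{lem: nc compactification}, via the same kind of iterated refinement used in Proposition \ref{prop: good simplicial Zariski}. By the very definition of $a^\ast_{\cdh}A^{\kr}$, the right-hand side satisfies $\cdh$-descent, so
$$R\Gamma_{\cdh}(X,a^\ast_{\cdh}A^{\kr})\otimes_\Z\Q\simeq R\Gamma_{\cdh}(X_\bullet,a^\ast_{\cdh}A^{\kr})\otimes_\Z\Q.$$
For the left-hand side, I would appeal to the fact that rigid cohomology satisfies $\cdh$-descent for $k$-varieties: proper descent for rigid cohomology is a theorem of Chiarellotto--Tsuzuki, and Nisnevich (in particular Zariski) descent is classical. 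Hence
$$R\Gamma_{\rig}(X/K)\simeq R\Gamma_{\rig}(X_\bullet/K).$$

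Next, in each simplicial degree $i$, combining \eqref{equ: cris-A} with the fact that $(X_i,\overline{X}_i)$ is an $nc$-pair gives a functorial quasi-isomorphism
$$R\Gamma_{\cdh}(X_i,a^\ast_{\cdh}A^{\kr})\simeq R\Gamma_{\cris}((X_i,\overline{X}_i)/W(k)).$$
Rationally, the log crystalline cohomology of a smooth proper $k$-variety with a simple normal crossing divisor is canonically isomorphic to the rigid cohomology of the complement: this is the comparison theorem (due to Baldassarri--Chiarellotto and Shiho, see also \cite{NS_2008}),
$$R\Gamma_{\cris}((X_i,\overline{X}_i)/W(k))\otimes_\Z\Q\simeq R\Gamma_{\rig}(X_i/K).$$
These quasi-isomorphisms are functorial in the $nc$-pair $(X_i,\overline{X}_i)$, so they assemble into a quasi-isomorphism of simplicial objects
$$R\Gamma_{\cdh}(X_\bullet,a^\ast_{\cdh}A^{\kr})\otimes_\Z\Q\simeq R\Gamma_{\rig}(X_\bullet/K).$$

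Combining the three displays yields the desired quasi-isomorphism, from which the isomorphism on cohomology groups follows by taking $H^\ast$. The main obstacle I anticipate is ensuring that the comparison quasi-isomorphism between log crystalline and rigid cohomology is sufficiently functorial in morphisms of $nc$-pairs to glue over the simplicial index (so that one really gets a simplicial quasi-isomorphism, not merely a termwise one up to non-canonical zig-zags); this is the content that must be extracted carefully from the Baldassarri--Chiarellotto--Shiho comparison, and is where one has to be slightly careful about choosing explicit functorial representatives, exactly as in Definition \ref{def: quasi-iso}. Once functoriality is in hand, the cdh-descent argument gives the result immediately.
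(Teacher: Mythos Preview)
Your overall strategy---reduce to a hypercovering by smooth varieties with normal crossing compactifications, then compare termwise---is the same as the paper's. However, there is a genuine gap in the execution, and it is not quite the one you flag.

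The problem is not the functoriality of the log-crystalline/rigid comparison for morphisms of $nc$-pairs; that is fine. The problem is that you have only produced a simplicial smooth variety $X_\bullet$ together with an $nc$-compactification $(X_i,\overline{X}_i)$ \emph{chosen separately for each $i$}. There is no reason for the face and degeneracy maps $X_j\to X_i$ to extend to morphisms of $nc$-pairs $(X_j,\overline{X}_j)\to(X_i,\overline{X}_i)$, so the middle terms $R\Gamma_{\cris}((X_i,\overline{X}_i)/W(k))$ do not form a simplicial object at all, and the comparison zig-zag does not assemble. This is exactly why the paper constructs the hypercovering in the category $\Var_k^{geo}$ of geometric pairs from the outset: starting from a compactification $(X,\overline{X})$, one builds a split $\cdp$-hypercovering $(X_\bullet,\overline{X}_\bullet)\to(X,\overline{X})$ that is a \emph{simplicial} $nc$-pair (see Example~\ref{example:varkgeocdp} and the appendix). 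Once the compactifications are simplicial, the comparison $R\Gamma_{\rig}(X/K)\simeq R\Gamma_{\cris}((X_\bullet,\overline{X}_\bullet)/W(k))\otimes\Q$ is a single theorem of Nakkajima \cite[Cor.~11.7]{N_2012}, and the chain of identifications with $R\Gamma_{\cdh}(X,a^\ast_{\cdh}A^{\kr})$ goes through Corollary~\ref{cor: cdp-rh} and $\cdp$-descent.

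A secondary difference: by quoting Nakkajima directly, the paper never needs $\cdh$-descent for rigid cohomology as a separate input. Your route via Chiarellotto--Tsuzuki proper descent would also work, but only after the simplicial compactification issue above is resolved; at that point it is redundant with Nakkajima's result. The paper then spends the rest of the proof on what you do not address at all: checking that the resulting comparison map is independent of all the auxiliary choices (the compactification $\overline{X}$ and the simplicial $nc$-pair over it) and is functorial in $X$, using the cofiltered homotopy categories of Appendix~\ref{appendixB'}.
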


\begin{proof}
Take a compactification $X \hookrightarrow \overline{X}$ of $X$ and take a simplicial $nc$-pair $(X_{\bullet},\overline{X}_{\bullet}) \to (X,\overline{X})$ such that $X_{\bullet} \to X$ is a split $\cdp$-hypercovering. The existence of such a simplicial $nc$-pair follows from 
Lemma \ref{lem:domhypercover1'} (or the nonemptyness of the category 
${\rm Ho}\{(X_{\bullet},\overline{X}_{\bullet})/(X,\overline{X})\}$ in 
Definition \ref{def:homotopycat} proven in Corollary \ref{cor:homotopycat}) 
and Example \ref{example:varkgeocdp}. (The proof in Proposition \ref{prop: good simplicial Zariski} also implies it.) 
Then, by \cite[Cor.\,7.7, Cor.\,11.7]{N_2012}, we have the canonical quasi-isomorphism 
\begin{equation*}
R\Gamma_{\rig}(X/K)\cong 
R\Gamma_{\et} (\overline{X}_{\bullet}, W\Omega^{\kr}_{\overline{X}_\bullet}(\log D_{\bullet})) 
\otimes_{\Z} \Q,  
\end{equation*}
where $D_{\bullet} = \overline{X}_{\bullet} \setminus X_{\bullet}$. 
Because we have the quasi-isomorphisms 
\begin{align*}
R\Gamma_{\et} (\overline{X}_{\bullet}, W\Omega^{\kr}_{\overline{X}_\bullet}(\log D_{\bullet}))
& \cong A(X_{\bullet}, \overline{X}_{\bullet}) 
\cong A(X_{\bullet}) 
\cong R\Gamma_{\cdh}(X_{\bullet}, a_{\cdp}^*A^{\kr}) 
\cong R\Gamma_{\cdh}(X, a_{\cdh}^*A^{\kr}), 
\end{align*}
we obtain the 
quasi-isomorphism 
\begin{equation}\label{eq:rigid-comparison}
	R\Gamma_{\rig}(X/K)\cong R\Gamma_{\cdh}(X,a^\ast_{\cdh} A^{\kr}) \otimes_{\Z} \Q
\end{equation}
as desired. By construction, this quasi-isomorphism is functorial with respect to the diagram 
$(X_{\bullet},\overline{X}_{\bullet}) \to (X,\overline{X})$. 

We prove that the quasi-isomorphism \eqref{eq:rigid-comparison} depends only on $X$. If we take another simplicial $nc$-pair $(X'_{\bullet},\overline{X}'_{\bullet}) \to (X,\overline{X})$ as above, we have the third one $(X''_{\bullet},\overline{X}''_{\bullet}) \to (X,\overline{X})$ covering the two by Proposition \ref{prop:domhypercover'} (or the connectedness  of the category ${\rm Ho}\{(X_{\bullet},\overline{X}_{\bullet})/(X,\overline{X})\}$ in Definition \ref{def:homotopycat} proven in Corollary \ref{cor:homotopycat}) 
and Example \ref{example:varkgeocdp}. Then, by functoriality,  the quasi-isomorphism 
\eqref{eq:rigid-comparison} induced by $(X_{\bullet},\overline{X}_{\bullet}) \to (X,\overline{X})$ is the same as that induced by $(X''_{\bullet},\overline{X}''_{\bullet}) \to (X,\overline{X})$ and then the same as that for $(X'_{\bullet},\overline{X}'_{\bullet}) \to (X,\overline{X})$. 
Thus the quasi-isomorphism \eqref{eq:rigid-comparison} is independent of the simplicial 
 $nc$-pair $(X_{\bullet},\overline{X}_{\bullet}) \to (X,\overline{X})$ once we fix $(X,\overline{X})$. Next, if we take another compactification $\overline{X}'$ of $X$, we have the third compactification $\overline{X}''$ of $X$ covering the two. If we take a simplicial $nc$-pair $(X_{\bullet},\overline{X}_{\bullet}) \to (X,\overline{X}'')$ such that $X_{\bullet} \to X$ is a split $\cdp$-hypercovering, it is regarded also as a simplicial $nc$-pair over $(X,\overline{X})$ and over $(X,\overline{X}')$. Hence  the quasi-isomorphism \eqref{eq:rigid-comparison} is independent of the compactification $\overline{X}$ of $X$ as well. 

We prove the functoriality of the quasi-isomorphism \eqref{eq:rigid-comparison} with respect to a morphism $f:X' \to X$ in $\Var_k$. Take a morphism $\overline{f}:(X',\overline{X}') \to (X,\overline{X})$ in $\Var_k^{geo}$ extending $f$, 
and take a commutative diagram
\begin{equation}\label{eq:funcoriality-rigid-comparison}
\xymatrix{
(X'_{\bullet},\overline{X}'_{\bullet}) \ar[r]^{\overline{f}_{\bullet}} \ar[d] & 
(X_{\bullet},\overline{X}_{\bullet}) \ar[d] \\ 
(X',\overline{X}') \ar[r]^{\overline{f}} & (X,\overline{X})
}
\end{equation}
such that $(X_{\bullet},\overline{X}_{\bullet}), (X'_{\bullet},\overline{X}'_{\bullet})$ are simplicial $nc$-pairs and $X_{\bullet} \to X$, $X'_{\bullet} \to X'$ are split $\cdp$-hypercoverings. The existence of such a diagram follows from Lemma \ref{lem:domhypercover1'}, Proposition \ref{prop:domhypercover'} (or the nonemptyness of the category ${\rm Ho}\{\overline{f}_{\bullet}/\overline{f}\}$ in Definition \ref{def:homotopycat} proven in Corollary \ref{cor:homotopycat}) and Example \ref{example:varkgeocdp}. By construction, the quasi-isomorphism \eqref{eq:rigid-comparison} is functorial with respect to the diagram 
\eqref{eq:funcoriality-rigid-comparison}. We need to prove that this functoriality 
depends only on $f$. 
For a fixed $\overline{f}$, if we take another diagram as in \eqref{eq:funcoriality-rigid-comparison}, we have the third one covering the two up to homotopy by Proposition \ref{prop:domhypercover'} (or the connectedness of the category ${\rm Ho}\{\overline{f}_{\bullet}/\overline{f}\}$ in Definition \ref{def:homotopycat} proven in Corollary \ref{cor:homotopycat}) and Example \ref{example:varkgeocdp}. Thus the functoriality of 
the quasi-isomorphism \eqref{eq:rigid-comparison} depends only on $\overline{f}$. 
Moreover, if we take another morphism $\overline{f}'$ in $\Var_k^{geo}$ extending $f$, we have the third one $\overline{f}''$ covering the two. Then, since a diagram \eqref{eq:funcoriality-rigid-comparison} for $\overline{f}''$ induces that for $\overline{f}$ and that for $\overline{f}'$ with the same top horizontal arrow, we see that the functoriality of 
the quasi-isomorphism \eqref{eq:rigid-comparison} is independent of the choice of $\overline{f}$ and so depends only on $f$. So we are done. 
\end{proof}

\begin{remark}\label{rem:rhexplicit}
It is possible to find an explicit complex $A^{\kr}_{\cdh}(X)$ representing 
$$R\Gamma_{\cdh}(X,a_{\cdh}^{\ast}A^{\kr}) =  
R\varprojlim_n R\Gamma_{\cdh}(X,a_{\cdh}^{\ast}A_n^{\kr})$$ in such a way that 
it is functorial in $X \in \Var_k$, because we can take Godement resolution 
on the $\cdh$-site. (For  a geometric description 
of the points of the $\cdh$-site, see \cite[Thm.\,1.6, 2.3, 2.6]{GK_2015}.)

Then, if we denote by $C^{\geqslant 0}(W(k))$ 
the category of complexes of $W(k)$-modules of non-negative degree, 
we obtain a functor 
$$ A_{\cdh}^{\kr}: \Var_k \rightarrow C^{\geqslant 0}(W(k))$$
satisfying the following conditions: 
\begin{enumerate}
\item
It gives a good integral $p$-adic cohomology theory in the following sense: 
\begin{enumerate}
\item
For any $X \in \Var_k$, $A^{\kr}_{\cdh}(X)$ has finitely generated cohomologies over $W(k)$. 
\item 
For any $nc$-pair $(X, \overline{X})$, there exists a functorial quasi-isomorphism 
$$A^{\kr}_{\cdh}(X) \simeq R\Gamma_{\cris}((X,\overline{X})/W(k)).$$ 
\item 
For any $X \in \Var_k$, there exists a functorial quasi-isomorphism 
$$A^{\kr}_{\cdh}(X)_{\Q} \simeq R\Gamma_{\rig}(X/K).$$ 
\end{enumerate}
\item
It satisfies  $\cdh$-descent, that is, for any $\cdh$-hypercovering
$X_{\bullet} \rightarrow X$, the induced morphism 
$$A^{\kr}_{\cdh}(X) \rightarrow A^{\kr}_{\cdh}(X_{\bullet})$$ 
is a quasi-isomorphism. 
\end{enumerate}
\end{remark}

The torsion in our integral $p$-adic cohomology behaves differently from $l$-adic cohomology, as the following example shows.
\begin{example}
	Let $C\subseteq \mathbb{P}_{\mathbb{F}_p}^2$ be a smooth curve of genus $g>0$. We define $X$ as the push-out along the Frobenius morphism in the following diagram 
	\begin{equation}\label{eq:ex_torsion}
		\xymatrix{
			C \ar[r]\ar[d]^{F} & \mathbb{P}_{\mathbb{F}_p}^2 \ar[d]\\
			C \ar[r] & X.
		}
	\end{equation}
	It follows from  
	\cite[\href{https://stacks.math.columbia.edu/tag/0ECH}{Tag 0ECH}]{StacksProject} that this diagram is a blow-up square, hence it induces a distinguished triangle
	\[
		R\Gamma_{
\cdh}(X,a^{\ast}_{\cdh} A^{\kr}) \rightarrow R\Gamma_{\cdh}(C,a^{\ast}_{\cdh} A^{\kr})\oplus R\Gamma_{\cdh}(\mathbb{P}_{\mathbb{F}_p}^2,a^{\ast}_{\cdh} A^{\kr}) \rightarrow R\Gamma_{\cdh}(C,a^{\ast}_{\cdh} A^{\kr}) \rightarrow.
	\]
	Using the comparison to crystalline cohomology gives
	\[
		\xymatrix{ \dots \ar[r] & H^1_{\mathrm{cris}}(C) \ar[r]^{F^*} & H^1_{\mathrm{cris}}(C) \ar[r] & H^2_{\cdh}(X,a^\ast_\cdh A^{\kr}) \ar[r] & H^2_{\mathrm{cris}}(\mathbb{P}^2)\oplus H^2_{\mathrm{cris}}(C)  \ar[r] & \dots,}
	\]
	and we deduce $ H^2_{\cdh}(X, a^\ast_{\cdh} A^{\kr})_{\mathrm{tors}}\cong \mathrm{coker}( F^* \colon H^1_{\mathrm{cris}}(C) \to H^1_{\mathrm{cris}}(C))$. Since $C$ is a smooth curve of genus $g>0$, the Frobenius endomorphism has a positive slope and we deduce $H^2_{\cdh}(X, a^\ast_{\cdh} A^{\kr})_{\mathrm{tors}}\neq 0$. 
On the other hand, a similar argument implies that $H^2_{\et}(X_{\overline{\mathbb{F}}_p}, {\mathbb{Z}}_l)_{\mathrm{tors}}=0$ for a prime $l \not= p$ 
because the Frobenius 
$F^* \colon H^1_{\et}(C_{\overline{\mathbb{F}}_p}, {\mathbb{Z}}_l) \to 
H^1_{\et}(C_{\overline{\mathbb{F}}_p}, {\mathbb{Z}}_l)$ is an isomorphism.
\end{example}

\subsection{Further properties}

In this part we want to discuss further properties of the integral $p$-adic 
cohomology theory introdued for $k$-varieties in the last subsection. 
We continue to assume Hypotheses  \ref{strong resolutions}, \ref{embedded resolutions}, \ref{embedded resolution with boundary} and \ref{weak factorisation}.

Let us start with the K\"unneth formula:
\begin{proposition}\label{prop: kuenneth}
	Let $X_1,X_2\in \Var_k$ and assume the Hypotheses \ref{strong resolutions}, \ref{embedded resolutions}, \ref{embedded resolution with boundary} and \ref{weak factorisation}, then
	\[
		R\Gamma_{\cdh}(X_1, a_{\cdh}^\ast A^{\kr})\otimes_{W(k)}^L R\Gamma_{\cdh}(X_2, a_{\cdh}^\ast A^{\kr})\cong R\Gamma_{\cdh}(X_1\times X_2, a_{\cdh}^\ast A^{\kr}).
	\]
\end{proposition}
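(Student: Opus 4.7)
The plan is to reduce the statement to a Künneth formula for log crystalline cohomology on products of normal crossing pairs via $\cdh$-descent, and then pass to the limit over $n$.

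First, I would prove the mod $p^n$ version:
\[
R\Gamma_{\cdh}(X_1, a_{\cdh}^\ast A_n^{\kr})\otimes_{W_n(k)}^L R\Gamma_{\cdh}(X_2, a_{\cdh}^\ast A_n^{\kr})\cong R\Gamma_{\cdh}(X_1\times X_2, a_{\cdh}^\ast A_n^{\kr}).
\]
By Hypotheses \ref{strong resolutions} and \ref{embedded resolutions} one can choose split $\cdp$-hypercoverings $f_i: X_{i,\bullet}\to X_i$ ($i=1,2$) such that each $X_{i,j}$ is smooth and admits a normal crossing compactification $(X_{i,j},\overline{X}_{i,j})$; this is the same construction used in the proof of Theorem \ref{thm: fin gen}. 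The diagonal bisimplicial object $(X_{1,\bullet}\times X_{2,\bullet})_n := X_{1,n}\times X_{2,n}$ is then a split $\cdp$-hypercovering of $X_1\times X_2$, so $\cdh$-descent (via Proposition \ref{prop:cdprhvar}) identifies $R\Gamma_{\cdh}(X_1\times X_2, a_{\cdh}^\ast A_n^{\kr})$ with $R\Gamma_{\cdh}(X_{1,\bullet}\times X_{2,\bullet}, a_{\cdh}^\ast A_n^{\kr})$, and similarly for each factor.

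Next, for each pair of indices $(i,j)$, the product $(X_{1,i}\times X_{2,j},\,\overline{X}_{1,i}\times\overline{X}_{2,j})$ is itself a normal crossing pair (a local coordinate check shows that the complement $(\overline{X}_{1,i}\setminus X_{1,i})\times\overline{X}_{2,j}\cup\overline{X}_{1,i}\times(\overline{X}_{2,j}\setminus X_{2,j})$ is a simple normal crossing divisor in the product). By Corollary \ref{cor: cdp-rh} the three terms in the conjectured equality on $X_{1,i}\times X_{2,j}$, $X_{1,i}$, $X_{2,j}$ are identified with log crystalline cohomology complexes modulo $p^n$. The assertion on each component therefore reduces to the Künneth formula for log crystalline cohomology of a product of $nc$-pairs, which in turn can be proved on explicit de~Rham--Witt models: if $\mathfrak{X}_i$ is a $W_n(k)$-lift of $\overline{X}_{1,i}$ with lifted divisor $\mathfrak{D}_i$ (as in the proof of Proposition \ref{prop: quasi-iso}), then locally one has a natural isomorphism
\[
\Omega^{\kr}_{\mathfrak{X}_1/W_n(k)}(\log\mathfrak{D}_1)\boxtimes\Omega^{\kr}_{\mathfrak{X}_2/W_n(k)}(\log\mathfrak{D}_2)\;\xrightarrow{\sim}\;\Omega^{\kr}_{\mathfrak{X}_1\times\mathfrak{X}_2/W_n(k)}(\log(\mathfrak{D}_1\times\mathfrak{X}_2\cup\mathfrak{X}_1\times\mathfrak{D}_2)),
\]
and the Künneth quasi-isomorphism follows (as for smooth proper schemes) from the flat base change for $R\Gamma_{\et}(-,-)$ applied to the external tensor product. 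Naturality of this Künneth map in each factor ensures that it glues over the bisimplicial resolution to produce the desired quasi-isomorphism for $X_1\times X_2$ modulo $p^n$.

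Finally, I would derive the integral statement from the mod $p^n$ version by taking $R\varprojlim_n$. Here the crucial input is Theorem \ref{thm: fin gen}: each $H^\ast_{\cdh}(X_i,a_{\cdh}^\ast A^{\kr})$ is finitely generated over $W(k)$, and the pro-system $\{H^\ast_{\cdh}(X_i,a_{\cdh}^\ast A_n^{\kr})\}_n$ is Mittag--Leffler (indeed essentially constant modulo torsion), so that both sides commute with $R\varprojlim$. The main obstacle in executing this plan is precisely the interaction of $\otimes_{W(k)}^L$ with $R\varprojlim_n$; one has to verify that the derived tensor product of two $R\varprojlim$-complexes with uniformly finitely generated cohomology computes the $R\varprojlim$ of the termwise derived tensor product. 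This is where finite generation of the cohomology over the Noetherian ring $W(k)$ is essential, and one may equivalently phrase the argument by observing that the complexes $R\Gamma_{\cdh}(X_i,a_{\cdh}^\ast A^{\kr})$ lie in $D^b_{\mathrm{coh}}(W(k))$ and so are represented by bounded complexes of finitely generated modules, for which the usual Künneth spectral sequence converges.
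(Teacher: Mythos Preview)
Your approach is essentially the same as the paper's: reduce to a mod $p^n$ K\"unneth statement via $\cdp$-hypercoverings by simplicial $nc$-pairs, then pass to the limit using perfectness. Two minor differences are worth noting. First, rather than reducing to individual pairs $(i,j)$ and arguing locally on lifts, the paper simply invokes Nakkajima's K\"unneth formula for log crystalline cohomology of simplicial schemes \cite[Thm.\,5.10, 2)]{N_2012} directly on the product simplicial $nc$-pair $(X_{3,\bullet},\overline{X}_{3,\bullet}) := (X_{1,\bullet},\overline{X}_{1,\bullet}) \times (X_{2,\bullet},\overline{X}_{2,\bullet})$. Second, for the passage to the limit the paper uses the identity
\[
A \otimes^L_{W_n(k)} B \;\cong\; (A' \otimes^L_{W(k)} B') \otimes^L_{W(k)} W_n(k)
\]
(where $A',B'$ are the $W(k)$-complexes with $A = A' \otimes^L W_n(k)$, etc.), so that $R\varprojlim_n$ computes the derived $p$-completion of $A' \otimes^L_{W(k)} B'$, which agrees with $A' \otimes^L_{W(k)} B'$ itself because the factors are perfect over $W(k)$; this is a slightly cleaner packaging of the finite-generation argument you outline.
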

\begin{proof}
	Let us choose compactifications $(X_1,\overline{X}_1),(X_2,\overline{X}_2)\in \Var_k^{geo}$. 
	By our assumptions on resolution of singularities, we can find
simplicial $nc$-pairs $(X_{i,\bullet}, \overline{X}_{i,\bullet})$ over $(X_i,\overline{X}_i)$ for $i=1,2$ such that $X_{i,\bullet} \to X_i \,(i=1,2)$ are split $\cdp$-hypercoverings, as explained in the proof of Theorem \ref{thm:comparison}.
Let us denote by 
	\[
		(X_{3,\bullet}, \overline{X}_{3,\bullet})\rightarrow (X_{3}, \overline{X}_{3})
	\]
their product. 
	By $\cdp$-descent and Corollary \ref{cor: cdp-rh}, 
	we have for $i=1,2,3$ the following isomorphism in the derived category of $W_n(k)$-modules
	\[
		R\Gamma_{\cdh}(X_i,a_{\cdh}^\ast A^{\kr}_n)\cong R\Gamma_{\mathrm{cris}}((X_{i,\bullet},\overline{X}_{i,\bullet})/W_n(k)):=R\Gamma_{\et}(\overline{X}_{i,\bullet}, W_n\Omega^{\kr}_{\overline{X}_{i,\bullet}}(\log D_{i,\bullet})),
	\]
	where $D_{i,\bullet}:=\overline{X}_{i,\bullet}\setminus X_{i,\bullet}$.
	These complexes are perfect complexes, since they are bounded and their cohomology groups are finitely generated $W_n(k)$-modules. 
	By \cite[Thm.\,5.10, 2)]{N_2012}, 
	we have the following K\"unneth formula for the log crystalline cohomology of simplicial schemes:
	\begin{equation}\label{eq:Kuenneth_n}
		R\Gamma_{\mathrm{cris}}((X_{1,\bullet},\overline{X}_{1,\bullet})/W_n(k))\otimes_{W_n(k)}^L R\Gamma_{\mathrm{crys}}((X_{2,\bullet},\overline{X}_{2,\bullet})/W_n(k))\cong R\Gamma_{\mathrm{cris}}((X_{3,\bullet},\overline{X}_{3,\bullet})/W_n(k)).
	\end{equation}
	Furthermore, we have the isomorphism 
	\begin{multline*}
	R\Gamma_{\mathrm{cris}}((X_{1,\bullet},\overline{X}_{1,\bullet})/W_n(k))\otimes_{W_n(k)}^L R\Gamma_{\mathrm{cris}}((X_{2,\bullet},\overline{X}_{2,\bullet})/W_n(k))\\
	\cong R\Gamma_{\mathrm{cris}}((X_{1,\bullet},\overline{X}_{1,\bullet})/W(k))\otimes_{W(k)}^L R\Gamma_{\mathrm{cris}}((X_{2,\bullet},\overline{X}_{2,\bullet})/W(k))\otimes^L_{W(k)} W_n(k).
	\end{multline*}
	Applying $R\varprojlim_{n}$ to \eqref{eq:Kuenneth_n} gives
	\[
		R\Gamma_{\mathrm{cris}}((X_{1,\bullet},\overline{X}_{1,\bullet})/W(k)) \otimes_{W(k)}^L R\Gamma_{\mathrm{cris}}((X_{2,\bullet},\overline{X}_{2,\bullet})/W(k))\cong R\Gamma_{\mathrm{cris}}((X_{3,\bullet},\overline{X}_{3,\bullet})/W(k)).
	\]
	By $\cdp$-descent and Corollary \ref{cor: cdp-rh}, we have
	\[
		R\Gamma_{\mathrm{cris}}((X_{i,\bullet},\overline{X}_{i,\bullet})/W(k))\cong R\Gamma_{\cdh}(X_i, a^\ast_\cdh A^{\kr})
	\]	
	and we obtain the desired K\"unneth formula
	\[
		R\Gamma_{\cdh}(X_1, a_\cdp^\ast A^{\kr})\otimes_{W(k)}^L R\Gamma_{\cdh}(X_2, a_\cdp^\ast A^{\kr})\cong R\Gamma_{\cdh}(X_1\times X_2, a^\ast_\cdh A^{\kr}).
	\]
\end{proof}

\begin{corollary}\label{cor:homotopyinvariance}
Let $X\in\Var_k$ and $f:\mathbb{A}^1_X\rightarrow X$ the natural projection from the affine line over $X$ to $X$. 
Then the pull-back map
$$
f^\ast: R\Gamma_{\cdh}(X,a^\ast_{\cdh}A^{\kr}) \rightarrow R\Gamma_{\cdh}(\mathbb{A}^1_X,a^\ast_{\cdh}A^{\kr})
$$
is a quasi-isomorphism.
\end{corollary}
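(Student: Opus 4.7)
The plan is to combine the Künneth formula of the preceding proposition with a direct computation of the integral $p$-adic cohomology of the affine line.

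First, applying the preceding Künneth formula to $X_1=X$ and $X_2=\mathbb{A}^1_k$ yields a natural quasi-isomorphism
$$R\Gamma_{\cdh}(\mathbb{A}^1_X,a^\ast_{\cdh}A^{\kr}) \simeq R\Gamma_{\cdh}(X,a^\ast_{\cdh}A^{\kr}) \otimes^L_{W(k)} R\Gamma_{\cdh}(\mathbb{A}^1_k,a^\ast_{\cdh}A^{\kr}).$$
Under this identification, $f^\ast$ corresponds to the tensor product of the identity on $R\Gamma_{\cdh}(X,a^\ast_{\cdh}A^{\kr})$ with the structural morphism $W(k)\simeq R\Gamma_{\cdh}(\Spec k,a^\ast_{\cdh}A^{\kr})\to R\Gamma_{\cdh}(\mathbb{A}^1_k,a^\ast_{\cdh}A^{\kr})$ coming from the projection $\mathbb{A}^1_k\to\Spec k$. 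It therefore suffices to show that this structural map is a quasi-isomorphism.

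Second, I would compute $R\Gamma_{\cdh}(\mathbb{A}^1_k,a^\ast_{\cdh}A^{\kr})$ directly. The pair $(\mathbb{A}^1_k,\mathbb{P}^1_k)$ is a normal crossing pair with boundary divisor the point at $\infty$, so by Corollary \ref{cor: cdp-rh} and \eqref{equ: cris-A} it is quasi-isomorphic to $R\Gamma_{\cris}((\mathbb{A}^1_k,\mathbb{P}^1_k)/W(k))$. This log crystalline cohomology can be computed from the smooth $W(k)$-lift $\mathbb{P}^1_{W(k)}$ (with log structure at $\infty$) via the log de Rham complex
$$\co_{\mathbb{P}^1_{W(k)}}\to \Omega^1_{\mathbb{P}^1_{W(k)}/W(k)}(\log\infty)\cong \co_{\mathbb{P}^1_{W(k)}}(-1),$$
whose hypercohomology vanishes in positive degrees and equals $W(k)$ in degree $0$, with the structural morphism being the canonical identification $W(k)\simeq H^0(\mathbb{P}^1_{W(k)},\co)$.

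Combining the two steps gives
$$R\Gamma_{\cdh}(\mathbb{A}^1_X,a^\ast_{\cdh}A^{\kr}) \simeq R\Gamma_{\cdh}(X,a^\ast_{\cdh}A^{\kr})\otimes^L_{W(k)}W(k)\simeq R\Gamma_{\cdh}(X,a^\ast_{\cdh}A^{\kr}),$$
and by the compatibility from the first step this identification coincides with $f^\ast$. The only potential subtlety is the naturality of the Künneth quasi-isomorphism with respect to the commutative square $\mathbb{A}^1_X\to\mathbb{A}^1_k$, $X\to\Spec k$, which is immediate from the construction of the Künneth map through log crystalline cohomology of simplicial $nc$-pairs; the genuine geometric content lies in the classical log de Rham calculation for $(\mathbb{A}^1_k,\mathbb{P}^1_k)$, which is not the hard part either.
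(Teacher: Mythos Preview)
Your proposal is correct and follows exactly the route the paper intends: the corollary is stated without proof immediately after the K\"unneth formula, so the implied argument is precisely the reduction via K\"unneth to the case $X=\Spec k$ together with the elementary log de Rham computation for $(\mathbb{A}^1_k,\mathbb{P}^1_k)$ that you spell out.
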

Our next aim is to prove the invariance of $H^{\ast}_{\cdh}(X, a^{\ast}_{\cdh}A^{\kr})$ under semi-normalisation. 
Let us first recall the definition of the semi-normalisation:
\begin{definition}
	The semi-normalisation $X^{\mathrm{sn}}\rightarrow X$ of $X\in \Var_k$ is the initial object in the category of universal homeomorphisms $Y\rightarrow X$ inducing isomorphisms on residue fields. 
\end{definition}
\begin{remark}
The semi-normalisation exists by \cite[\href{https://stacks.math.columbia.edu/tag/0EUS}{Lemma 0EUS}]{StacksProject}. 
Since the morphism 
$X^{\mathrm{sn}}\rightarrow X$ is integral by 
\cite[\href{https://stacks.math.columbia.edu/tag/04DF}{Lemma 04DF}]{StacksProject} and birational, it is covered by the normalisation $X^{\mathrm{n}} \to X$. 
Because the normalisation of a variety is a finite morphism, 
$X^{\mathrm{sn}}$ is finite over $X$.
\end{remark}

\begin{proposition}
	Let us assume the Hypotheses \ref{strong resolutions}, \ref{embedded resolutions}, \ref{embedded resolution with boundary} and \ref{weak factorisation}.
	Let $X,X'\in \Var_k$ and $f\colon X'\to X$ be a universal homeomorphism inducing isomorphisms on residue fields, then
	\[
		H^\ast_{\cdh}(X,a^\ast_{\cdh} A^{\kr})\cong H^\ast_{\cdh}(X',a^\ast_{\cdh} A^{\kr}).
	\]
	In particular, $H^\ast_{\cdh}(-,a^\ast_{\cdh} A^{\kr})$ is invariant under passing to the semi-normalisation.
\end{proposition}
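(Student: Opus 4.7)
The plan is to exhibit $f$ as a $\cdp$-covering whose associated \v{C}ech nerve in $\Var_k$ degenerates to the constant simplicial object $X'$, and then invoke $\cdh$-descent. First, I observe that a universal homeomorphism between $k$-varieties that induces isomorphisms on residue fields is automatically integral and of finite type, hence finite, so in particular proper; moreover by hypothesis every point of $X$ has a (unique) preimage in $X'$ with trivial residue field extension. By Lemma~\ref{lem:cdpvardescription}, the singleton family $\{f\}$ is therefore a $\cdp$-covering of $X$ in $\Var_k$, hence also a $\cdh$-covering.

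Next, I compute the \v{C}ech nerve $X'_\bullet \to X$ of $f$ in $\Var_k$. Recall from the Conventions that fibre products in $\Var_k$ are obtained by passing to the maximal reduced closed subscheme of the scheme-theoretic fibre product. Since $f$ is in particular radicial, the scheme-theoretic diagonal $\Delta\colon X' \to X' \times_X X'$ is surjective on points; because $X' \to X$ is separated it is moreover a closed immersion, so after reducing we obtain an isomorphism $X' \cong (X' \times_X X')_{\mathrm{red}}$ in $\Var_k$. Any higher self-product projection is again a universal homeomorphism with trivial residue field extensions by base change, so iterating the same argument shows that $X'_\bullet$ is the constant simplicial $k$-variety with value $X'$, all of whose face and degeneracy maps are identities.

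Finally, $\cdh$-descent for $a^\ast_{\cdh} A^{\kr}$ along the $\cdh$-hypercovering $X'_\bullet \to X$ produces
\[
R\Gamma_{\cdh}(X, a^\ast_{\cdh} A^{\kr}) \simeq R\Gamma_{\cdh}(X'_\bullet, a^\ast_{\cdh} A^{\kr}) \simeq R\Gamma_{\cdh}(X', a^\ast_{\cdh} A^{\kr}),
\]
since the totalisation of a constant cosimplicial complex with identity coface maps collapses to the underlying complex. Taking cohomology yields the stated isomorphism, and the ``in particular'' clause follows because the semi-normalisation $X^{\mathrm{sn}} \to X$ satisfies the hypothesis on $f$ by its very definition. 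The only nontrivial geometric input is the collapse of the \v{C}ech nerve in $\Var_k$, which rests on the combination of radicial plus separated plus passing to the reduced structure; everything else is a formal consequence of the $\cdh$-descent established earlier, so I anticipate no serious obstacle.
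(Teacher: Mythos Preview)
Your argument is correct and takes a genuinely different route from the paper's proof. The paper first reduces to the case where $f$ is the semi-normalisation map $X^{\mathrm{sn}}\to X$, then observes that this map is finite birational, so there is a proper closed subset $Z\subset X$ over whose complement $f$ is an isomorphism. This yields a blow-up square with $Z'=Z\times_X X^{\mathrm{sn}}$, and the resulting distinguished triangle reduces the claim for $f$ to the claim for $Z'\to Z$, which is again a universal homeomorphism inducing isomorphisms on residue fields; Noetherian induction finishes the argument.

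Your approach instead exploits directly that $f$ is a $\cdp$-covering (finite, completely decomposed) whose \v{C}ech nerve in $\Var_k$ is constant, because the iterated diagonal $X'\hookrightarrow (X')^{\times_X (n+1)}$ is a surjective closed immersion from a reduced scheme. This bypasses the induction entirely and gives the result in one stroke from $\cdh$-descent. Your argument is cleaner and more conceptual; the paper's argument is more in keeping with the blow-up square machinery developed throughout the section and makes the dependence on the $\cdp$-structure (rather than full hypercover descent) slightly more explicit. Both are valid.
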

\begin{proof}
	By the universal property of the semi-normalisation, the upper horizontal arrow in the diagram
	\[
		\xymatrix{
			(X')^{\mathrm{sn}} \ar[r] \ar[d] &  X^{\mathrm{sn}}\ar[d] \\
			X' \ar[r] & X
		}
	\]
	is an isomorphism. Thus, it is enough to prove the statement for $f\colon X^{\mathrm{sn}}\to X$, which is a finite birational morphism. In particular, it is proper and there exists a closed subset $Z\subseteq X$ such that $f$ is an isomorphism on $X\setminus Z$. Thus, we obtain a blow-up square
	\[
		\xymatrix{
			Z'\ar[r]\ar[d] & X^\mathrm{sn}\ar[d] \\
			Z \ar[r] & X,
		}
	\]
	with $Z':=Z\times_{X} X^{\mathrm{sn}}$. By $\cdp$-descent, we have a distinguished triangle
	 \[
	R\Gamma_{\cdh}(X,a^{\ast}_{\cdh} A^{\kr}) \rightarrow R\Gamma_{\cdh}(X^{\mathrm{sn}},a^{\ast}_{\cdh} A^{\kr})\oplus R\Gamma_{\cdh}(Z,a^{\ast}_{\cdh} A^{\kr}) \rightarrow R\Gamma_{\cdp}(Z',a^{\ast}_{\cdp} A^{\kr}) \rightarrow,
	\]
	and it suffices to prove the claim for $Z'\rightarrow Z$. 
	The statement follows now by Noetherian induction. 
\end{proof}

Finally, we construct a theory of Chern classes for our integral $p$-adic cohomology theory which are compatible with that for log crystalline cohomology and that for rigid cohomology.

 First we recall the Chern classes for rigid cohomology constructed by Petrequin \cite{P_2003}.

\begin{proposition}\label{prop: rig Chern}
For $X\in \Var_k$ there exists a unique theory of rigid Chern classes
$$
c_i^{\rig}: K_0(X) \rightarrow H^{2i}_{\rig}(X/K). 
$$
More precisely, for any (Zariski) locally free $\mathcal{O}_X$-module $\mathcal{E}$ of finite rank there are elements $c_i^{\rig}(\mathcal{E})\in H^{2i}_{\rig}(X/K)$, $i\geqslant 0$, characterised by the following properties:
\begin{enumerate}
\item \textbf{Normalisation.}
We have $c_0^{\rig}(\mathcal{E})=1$, $c_i^{\rig}(\mathcal{E})=0$ for $i>\mathrm{rk}(\mathcal{E})$, and if $\mathcal{E}$ is invertible, $c^{\rig}_1(\mathcal{E})\in H^2_{\rig}(X/K)$ can be described in terms of \v{C}ech cocycles associated to the image of the class of $\mathcal{E}$ in $\mathrm{Pic}(X)$. 

\item \textbf{Functoriality.}
For every morphism $f:X'\rightarrow X$ in $\Var_k$, 
we have $c_i^{\rig}(f^\ast\mathcal{E})=f^\ast c_i^{\rig}(\mathcal{E})$ for all $i\geqslant 0$.

\item \textbf{Whitney sum formula.}
For a short exact sequence 
$0\rightarrow \mathcal{E}' \rightarrow \mathcal{E} \rightarrow \mathcal{E}''\rightarrow 0$
 of locally free $\mathcal{O}_X$-modules of finite rank, 
we have
$$
c_i^{\rig}(\mathcal{E}) = \sum_{j+k=i}c_j^{\rig}(\mathcal{E}') c_k^{\rig}(\mathcal{E}'').
$$
\end{enumerate}
\end{proposition}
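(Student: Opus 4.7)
The plan is to reproduce Petrequin's construction, which follows the classical Grothendieck-style axiomatic approach via the projective bundle formula. First I would define the first Chern class $c_1^{\rig}(\mathcal{L}) \in H^2_{\rig}(X/K)$ of a line bundle $\mathcal{L}$ using the identification $\mathrm{Pic}(X) \cong H^1(X_{\Zar}, \mathcal{O}_X^\ast)$ together with a $d\log$-type homomorphism $\mathcal{O}_X^\ast \to \Omega^1_{\rig}[1]$ in the appropriate derived category of sheaves on the rigid (or tubular) site; this produces a map $\mathrm{Pic}(X) \to H^2_{\rig}(X/K)$ compatible with the cup product and functorial in $X$. For proper smooth $X$ this is nothing but the crystalline $d\log$ under the comparison $H^\ast_{\rig}(X/K) \cong H^\ast_{\cris}(X/W(k))\otimes_{\Z}\Q$, and the general case is obtained by the functorial extension to open and singular varieties available for $H^\ast_{\rig}$.

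Second, I would establish the projective bundle formula: for a locally free $\mathcal{O}_X$-module $\mathcal{E}$ of rank $r$, the pullback along $\pi\colon \mathbb{P}(\mathcal{E}) \to X$ makes $H^\ast_{\rig}(\mathbb{P}(\mathcal{E})/K)$ a free $H^\ast_{\rig}(X/K)$-module with basis $1, \xi, \ldots, \xi^{r-1}$, where $\xi := c_1^{\rig}(\mathcal{O}_{\mathbb{P}(\mathcal{E})}(1))$. This is the technical heart of the proof. For $X$ proper and smooth it reduces to the analogous statement in crystalline/de~Rham cohomology. To descend to arbitrary $X \in \Var_k$, one uses proper hypercoverings by pairs (or alterations \textit{à la} de Jong, as in the definition of rigid cohomology for singular varieties) together with a spectral-sequence argument reducing the formula on the base to the formula on each simplicial piece.

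Third, I would define the higher Chern classes by the Grothendieck recipe: thanks to the projective bundle formula, the element $\xi^r \in H^{2r}_{\rig}(\mathbb{P}(\mathcal{E})/K)$ admits a unique expression
\[
\xi^r \;=\; \sum_{i=1}^{r} (-1)^{i+1} \pi^{\ast} c_i^{\rig}(\mathcal{E}) \cdot \xi^{r-i},
\]
and I declare the coefficients $c_i^{\rig}(\mathcal{E}) \in H^{2i}_{\rig}(X/K)$ to be the Chern classes, with $c_0^{\rig}(\mathcal{E}) = 1$ and $c_i^{\rig}(\mathcal{E}) = 0$ for $i > r$. The normalisation axiom is immediate from this definition and the construction in the line-bundle case. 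Functoriality follows from the compatibility of the construction of $\mathbb{P}(\mathcal{E})$ with pullback and from the functoriality of $H^\ast_{\rig}$.

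Finally, the Whitney sum formula is obtained by the splitting principle: for a short exact sequence $0 \to \mathcal{E}' \to \mathcal{E} \to \mathcal{E}'' \to 0$, the flag bundle $p \colon Y \to X$ associated to $\mathcal{E}$ is a successive tower of projective bundles, hence $p^\ast$ is injective on rigid cohomology and $p^\ast \mathcal{E}$ admits a complete filtration by line bundles that respects the given quotient; the Whitney formula on $Y$ follows from the line-bundle case (where it reduces to $c_1^{\rig}(\mathcal{L}_1 \otimes \mathcal{L}_2) = c_1^{\rig}(\mathcal{L}_1) + c_1^{\rig}(\mathcal{L}_2)$, itself a direct consequence of the $d\log$ description), and then descends to $X$ by injectivity of $p^\ast$. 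The main obstacle throughout is the projective bundle formula for singular/open $X$; granting this, the rest is formal. In practice we simply invoke Petrequin's work, where all of the above steps are carried out in detail.
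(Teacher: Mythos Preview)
Your proposal is correct and matches the paper's approach: both simply invoke Petrequin \cite{P_2003}, with the paper giving pinpoint citations (\S3 for $c_1$, Def.\,4.5 and Prop.\,4.3 for higher classes via the projective bundle formula, Prop.\,4.7 for functoriality, Cor.\,5.24 for Whitney) while you sketch the underlying Grothendieck-style argument. One minor remark: Petrequin's construction of $c_1^{\rig}$ is phrased directly in terms of \v{C}ech cocycles on tubes rather than via hypercoverings, and the projective bundle formula is proved directly in rigid cohomology rather than by reduction to the proper smooth case, but this does not affect the correctness of your outline.
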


\begin{proof}
The construction of the first Chern class is given in \cite[Def.\,4.1]{P_2003} based on computations in \cite[\S\,3]{P_2003}. 
The total Chern class is then defined in \cite[Def.\,4.5]{P_2003} via the projective bundle formula \cite[Prop.\,4.3]{P_2003}. 
Normalisation follows by definition \cite[Rem.\,4.6]{P_2003}. 
Functoriality is proved in \cite[Prop.\,4.7]{P_2003}. 
Finally, the Whitney sum formula is shown in \cite[Cor.\,5.24]{P_2003}. 
Now the projective bundle formula \cite[Cor.\,4.4]{P_2003} implies the first splitting principle \cite[Exp.\,XVI, Prop.\,1.4]{ILO_2014}. (We note that the second splitting principle \cite[Exp.\,XVI, Prop.\,1.5]{ILO_2014}
 also holds for rigid cohomology by the fact that rigid cohomology satisfies Zariski descent 
and homotopy invariance.)  
Thus by a standard argument  \cite[Thm.\,1]{G_1958} the three properties (i)--(iii) characterise the theory of Chern classes entirely, proving therefore uniqueness.
\end{proof}

The existence of the theory of log crystalline Chern classes
$$
c_i^{\cris}: K_0(X) \rightarrow H^{2i}_{\cris}((X,\overline{X})/W(k)), 
i\geqslant 0,
$$
for an $nc$-pair $(X,\overline{X})$ over $k$ 
seems to be well-known to the community 
but not well-documented in the literature. 
For the benefit of the reader we write this out here.

\begin{construction}
The construction of Chern classes that we explain here is built upon results 
in \cite[\S\,12 and \S\,13]{AS_2019}. 
We first remark that the category $\Var_k$ clearly satisfies the condition ($\ast_3$) at the beginning of \cite[\S\,13]{AS_2019}, that is, the blow-up of a smooth scheme along a smooth centre is a morphism in $\Var_k$ \cite[Ex.\,13.3]{AS_2019}. 
Let $\Var_k^{\log}$ be the category consisting of pairs $(U,X)$ of $k$-varieties such that $X$ is smooth, $j:U\hookrightarrow X$ is an open immersion such that $D:=X\backslash U$ is a simple normal crossing divisor.
Again every pair $(U,X)$ gives rise to a log scheme which we denote again by $(U,X)$ where the underlying scheme is $X$ and the log structure is induced by the divisor $D$. 
A morphism $f:(U_1,X_1)\rightarrow (U_2,X_2)$ in $\Var_k^{\log}$  
is a morphism $f:X_1\rightarrow X_2$ in $\Var_k$ such that $f(U_1)\subset U_2$.
(Beware that in the notation of \cite[Def.\,12.1]{AS_2019} $\Var_k^{\log}$ would rather designate the category of pairs $(X,D)$, 
such that $X$ is smooth and $D\hookrightarrow X$ a simple normal crossing divisor.
But these two categories are clearly equivalent.)
By definition the category $\Var_k^{nc}$ is a full subcategory of $\Var_k^{\log}$. 
Thus if we define Chern classes on $\Var_k^{\log}$ we automatically obtain Chern classes on $\Var_k^{nc}$. 


According to \cite[Ex.\,12.6(2)]{AS_2019} logarithmic Hodge--Witt cohomology 
can be regarded as an admissible cohomology theory with logarithmic poles on $\Var_k^{\log}$ in the sense of \cite[Def.\,12.5]{AS_2019} and therefore admits a theory of Chern classes.
We recall the key points here.
Let $(U,X) \in\Var_k^{\log}$. 
For $n\in\N$, consider the natural morphisms of abelian sheaves  on the \'etale site 
$$
d\log: \co^\times_{U, \et} \rightarrow W_n\Omega^1_{U/k}
$$
defined by
$d\log(u) =\frac{d[u]}{[u]}$, where $[u]$ denotes the Teichm\"uller lift of $u$  
\cite[I.(1.1.1)]{G_1985}. 
Then for $i\geqslant 1$, define $W_n\Omega^i_{U,\log}$ as the abelian subsheaf of $W_n\Omega^i_{U}$ generated \'etale locally by sections of the form
$d\log u_1\cdots d\log u_i$ with $u_1,\ldots,u_i\in \co^\times_{U,\et}$. 
Also, one sets $W_n\Omega^0_{U,\log}=\Z/p^n \Z$. 
For $n\in\N$ fixed, the graded cohomology theory $\Gamma(\ast)^{\log}$ on $\Var_k^{\log}$, defined for  $(U,X)\in \Var_k^{\log}$ as a complex of Zariski sheaves by
$$
\Gamma(i)_{(U,X)}^{\log} :=\begin{cases} \varepsilon_\ast\mathrm{Gd}_{\et} (j_\ast W_n\Omega_{U,\log}^i)[-i] & \text{ for } i\geqslant 0,\\ 0 & \text{ otherwise,}\end{cases}
$$
where $j$ denotes the open immersion $U \hookrightarrow X$,
$\mathrm{Gd}_{\et}$ denotes the Godement resolution with respect to \'etale points and $\varepsilon: X_{\et}\rightarrow X_{\Zar}$ is the natural morphism of sites, 
is an admissible cohomology theory with logarithmic poles. 
In particular, the map $ j_*\co^\times_U \rightarrow R\varepsilon_*j_*W_n\Omega^1_{U,\log}$ for $(U,X)\in \Var_k^{\log}$ induced by $d\log$ plays the role of the first Chern class map of \cite[Def.\,12.5(1)]{AS_2019} and induces a morphism
$$
c_1^{HW}: R\Gamma_{\Zar}(U,\co^\times_{U})\rightarrow R\Gamma_{\Zar}(X,\Gamma(1)^{\log}_{(U,X)}[1]) = R\Gamma_{\et}(X,j_\ast W_n\Omega^1_{U,\log}) 
$$
in the derived category. 
Therefore, as carried out in \cite[\S\,13]{AS_2019} 
(see in particular \cite[Ex.\,13.3]{AS_2019}), 
it admits a theory of Chern classes 
\begin{equation}\label{equ: Chern HW}
c_i^{HW}:K_0(U) \rightarrow H_{\Zar}^{2i}(X,\Gamma(i)^{\log}_{(U,X)})=H^{2i}_{\et}(X,j_\ast W_n\Omega^i_{U,\log}[-i])= H^i_{\et}(X,j_\ast W_n\Omega^i_{U,\log}), i\geqslant 0,
\end{equation}
satisfying the properties (L1) -- (L4) of \cite[\S\,13]{AS_2019} of which (L1) -- (L3) correspond to normalisation, functoriality and the Whitney sum formula. 
As $n\in\N$ varies these maps are naturally compatible. 

Now recall that, for $u \in {\mathcal{M}}_X = 
j_\ast({\mathcal{O}}_{U,\et}^{\times} \cap {\mathcal{O}}_{X,\et})$, 
$d\log(u)=\frac{d[u]}{[u]} \in j_\ast W_n\Omega^1_{U/k}$ is in fact contained in 
$W_n\Omega^1_{X/k}(\log D)$ by Remark \ref{rem:matsuue}, and that 
$d(d\log u) = 0$. So one obtains a morphism of complexes
\begin{equation}\label{equ: log to D}
j_\ast W_n\Omega^i_{U,\log}[-i]\rightarrow W_n\Omega^{\kr}_{X/k}(\log D).
\end{equation}
They induce morphisms in the derived category  
\begin{equation}\label{equ: log to D on coh}
R\Gamma_{\et}(X,j_\ast W_n\Omega^i_{U,\log}[-i]) \rightarrow  
R\Gamma_{\et}(X,W_n\Omega^{\kr}_{X/k}(\log D)) \cong R\Gamma_{\cris}((U,X)/W_n(k)),
\end{equation}
which are (contravariantly) functorial in $(U,X)$. 
Composing these in the correct degree with the Chern class maps (\ref{equ: Chern HW}) we obtain maps
$$
c_i^{\cris}: K_0(U) \rightarrow H^{2i}_{\cris}((U,X)/W_n(k)), i\geqslant 0,
$$
which are again compatible as $n\in\N$ varies, and hence induce maps
$$
c_i^{\cris}: K_0(U) \rightarrow H^{2i}_{\cris}((U,X)/W(k)), i\geqslant 0.
$$
\begin{remark}\label{rem: first log crys Chern class}
Note that the first Chern class is given explicitly by the morphism
$$
j_\ast \co^\times_U \rightarrow 
 R\varepsilon_\ast
j_\ast W_n\Omega^1_{U,\log} \rightarrow 
 R\varepsilon_\ast
W_n\Omega^{\kr}_X(\log D)[1]
$$
induced by \eqref{equ: log to D}, which we also denote by $d\log$.
\end{remark}
\end{construction}

\begin{proposition}\label{prop: cris Chern}
Let $(U,X)\in\Var_k^{\log}$, $D:= X\backslash U$ and $j:U\hookrightarrow X$.
There exists a unique theory of log-crystalline Chern classes
$$
c_i^{\cris}: K_0(U) \rightarrow H^{2i}_{\cris}((U,X)/W(k)). 
$$
More precisely, for any (Zariski) locally free $\mathcal{O}_U$-module $\mathcal{E}$ of finite rank there are elements $c_i^{\cris}(\mathcal{E})\in H^{2i}_{\cris}((U,X)/W(k))$, $i\geqslant 0$, characterised by the following properties:
\begin{enumerate}
\item \textbf{Normalisation.}
We have $c_0^{\cris}(\mathcal{E})=1$, $c_i^{\cris}(\mathcal{E})=0$ for $i>\mathrm{rk}(\mathcal{E})$, and if $\mathcal{E}$ is invertible, $c^{\cris}_1(\mathcal{E})\in H^2_{\cris}((U,X)/W(k))\cong H^2_{\et}(X,W\Omega^{\kr}_{X/k}(\log D))$  is the image of the class of $\mathcal{E}$ in $\mathrm{Pic}(U)$ under the morphism induced on cohomology by the map 
$ d\log:  j_\ast \mathcal{O}_U^\times \rightarrow  
R\varepsilon_*W\Omega^{\kr}_{X/k}(\log D)[1]$.

\item \textbf{Functoriality.}
For every morphism $f:(U',X')\rightarrow (U,X)$ in $\Var_k^{nc}$, 
we have $c_i^{\cris}(f^\ast\mathcal{E})=f^\ast c_i^{\cris}(\mathcal{E})$ for all $i\geqslant 0$.

\item \textbf{Whitney sum formula.}
For a short exact sequence 
$0\rightarrow \mathcal{E}' \rightarrow \mathcal{E} \rightarrow \mathcal{E}''\rightarrow 0$
 of locally free $\mathcal{O}_U$-modules of finite rank, 
we have
$$
c_i^{\cris}(\mathcal{E}) = \sum_{j+k=i}c_j^{\cris}(\mathcal{E}') c_k^{\cris}(\mathcal{E}'').
$$
\end{enumerate}
\end{proposition}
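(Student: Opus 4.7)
The plan is to verify each of the three properties for $c_i^{\cris}$ by combining the corresponding property of the Hodge--Witt Chern classes $c_i^{HW}$ of \eqref{equ: Chern HW} (supplied by \cite[\S13]{AS_2019} as properties (L1)--(L3)) with the naturality and multiplicativity of the transition map \eqref{equ: log to D on coh}, then passing to the inverse limit over $n$.

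For the normalisation property, the identities $c_0^{\cris}(\mathcal{E}) = 1$ and $c_i^{\cris}(\mathcal{E}) = 0$ for $i > \mathrm{rk}(\mathcal{E})$ follow directly from (L1) applied to $c_i^{HW}$ together with the fact that \eqref{equ: log to D on coh} is unital on $H^0$. For an invertible sheaf $\mathcal{E}$, the construction of $c_1^{HW}(\mathcal{E}) \in H^1_{\et}(\overline{X}, j_*W_n\Omega^1_{X,\log})$ identifies it with the image of $[\mathcal{E}] \in \mathrm{Pic}(X) = H^1_{\Zar}(X,\co_X^{\times})$ under the map induced by $d\log$; composing with \eqref{equ: log to D on coh} in degree one and taking the inverse limit yields precisely the description of $c_1^{\cris}(\mathcal{E})$ claimed in (i).

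For functoriality, a morphism $f: (X',\overline{X}') \rightarrow (X,\overline{X})$ in $\Var_k^{nc}$ is in particular a morphism in $\Var_k^{\log}$, so (L2) gives $c_i^{HW}(f^\ast \mathcal{E}) = f^\ast c_i^{HW}(\mathcal{E})$. The contravariant functoriality of \eqref{equ: log to D on coh} in $(U,X) \in \Var_k^{\log}$, noted just after its definition, transports this compatibility through the composition and then through $\varprojlim_n$, giving (ii). For the Whitney sum formula, (L3) provides the analogous formula for $c_i^{HW}$ using the cup product on logarithmic Hodge--Witt cohomology. The essential point is that the morphism of complexes \eqref{equ: log to D} is multiplicative: the product in $\bigoplus_i j_\ast W_n\Omega^i_{X,\log}[-i]$ is, by definition, the restriction of the product in $W_n\Omega^{\kr}_{\overline{X}/k}(\log D)$, since sections of $W_n\Omega^i_{X,\log}$ are generated by wedges of $d\log$-forms inside the ambient de~Rham--Witt complex. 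Hence \eqref{equ: log to D on coh} is a ring homomorphism on graded cohomology, and applying it to (L3) yields (iii) at level $n$, from which the formula over $W(k)$ follows by passing to the limit.

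The main obstacle is the multiplicative compatibility needed for the Whitney sum formula: one must check that the cup product structure used by \cite[\S13]{AS_2019} on the admissible cohomology theory $\Gamma(\ast)^{\log}$ really does match, under \eqref{equ: log to D}, the graded-commutative product on the log de Rham--Witt complex, and that this identification is compatible with the crystalline comparison $R\Gamma_\et(\overline{X},W_n\Omega^{\kr}_{\overline{X}/k}(\log D)) \cong R\Gamma_{\cris}((X,\overline{X})/W_n(k))$ as a ring isomorphism. Once this multiplicative compatibility is in place, the three properties follow formally, and the passage from $W_n(k)$- to $W(k)$-coefficients is automatic since each $H^{2i}_{\cris}((X,\overline{X})/W(k))$ is the inverse limit of the corresponding $W_n(k)$-cohomology groups.
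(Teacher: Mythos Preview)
Your proposal is correct and follows essentially the same approach as the paper: both deduce (i)--(iii) for $c_i^{\cris}$ from the corresponding properties (L1)--(L3) of the Hodge--Witt Chern classes $c_i^{HW}$ together with the naturality of the transition map \eqref{equ: log to D on coh}, then restrict from $\Var_k^{\log}$ to $\Var_k^{nc}$. Your treatment is in fact more explicit than the paper's on the multiplicativity of \eqref{equ: log to D} needed for the Whitney sum formula, which the paper handles with the single sentence ``Similarly, the Whitney sum formula for $c_i^{\cris}$ follows from the Whitney sum formula for $c_i^{HW}$.''
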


\begin{proof}
The normalisation is satisfied by Remark \ref{rem: first log crys Chern class}.
It is also clear that the maps $c_i^{\cris}$ are contravariantly functorial for morphisms in $\Var_k^{\log}$ as this is true for Hodge--Witt Chern classes (\ref{equ: Chern HW}) and for the morphism (\ref{equ: log to D on coh}). 
Similarly, the Whitney sum formula for $c_i^{\cris}$ follows from the Whitney sum formula for $c_i^{HW}$.

To show uniqueness, we first show a logarithmic version of the projective bundle formula. 
Let again $(U,X)\in\Var_k^{\log}$ and $\mathcal{E}$ a locally free $\co_X$-module of rank $r\geqslant 1$. 
Consider the associated projective bundle $\pi:\mathbb{P}(\mathcal{E}) \rightarrow X$, and $\pi: \mathbb{P}(\mathcal{E}|_U) \rightarrow U$ its restriction to $U$, which together give a morphism $\pi: (\mathbb{P}(\mathcal{E}|_U), \mathbb{P}(\mathcal{E})) \rightarrow (U,X)$ in $\Var_k^{\log}$. 
The first Chern class map defined above induces a natural morphism
\begin{equation}\label{equ: bundle cris}
\bigoplus_{i=0}^{r-1} c_1^{\cris}(\co_{\mathbb{P}(\mathcal{E})}(1))^i\cup \pi^\ast: \bigoplus_{i=0}^{r-1}R\Gamma_{\et}(X,W\Omega^{\kr}_X(\log D))[-2i] \rightarrow R\pi_\ast 
R\Gamma_{\et}(\mathbb{P}(\mathcal{E}),W\Omega^{\kr}_{\mathbb{P}(\mathcal{E})}(\log D')),
\end{equation}
with $D'=\pi^{-1}(D)$.
We claim that this is a quasi-isomorphism. 

Working Zariski locally to prove the claim, 
we may assume that $\mathcal{E}\vert_{X}$ is trivial. 
and it therefore suffices to show that
the natural morphism
\begin{equation}\label{equ: proj bundle formula}
\bigoplus_{i=0}^{r-1} c_1^{\cris}(\co_{\mathbb{P}_X^{r-1}}(1))^i\cup \pi^\ast: \bigoplus_{i=0}^{r-1} H^{\ast-2i}_{\et}(X,W\Omega^{\kr}_{X/k}(\log D)) \rightarrow H^\ast_{\et}(\mathbb{P}^{r-1}_X,W\Omega^{\kr}_{\mathbb{P}^{r-1}_X/k}(\log \mathbb{P}^{r-1}_{D})),
\end{equation}
is an isomorphism. 
But by the Künneth formula for log crystalline cohomology (see the proof of  Proposition \ref{prop: kuenneth}), the term on the right hand side is given by 
\begin{equation}\label{equ: kuenneth P}
H^\ast_{\et}(\mathbb{P}^{r-1}_X,W\Omega^{\kr}_{\mathbb{P}^{r-1}_X/k}(\log \mathbb{P}^{r-1}_{D}))\cong
 \bigoplus_{m=0}^{2(r-1)}  H^{m}_{\et}(\mathbb{P}^{r-1}_k,W\Omega^{\kr}_{\mathbb{P}^{r-1}_k})  \otimes H^{\ast-m}_{\et}(X,W\Omega^{\kr}_{X/k}(\log D)).
\end{equation}
It follows from the computation of finite length Hodge--Witt cohomology in \cite[I, Cor.\,4.2.15]{G_1985} that $H^i_{\et}(\mathbb{P}_k^{r-1},W\Omega^j_{\mathbb{P}_k^{r-1}})$ is non-zero if and only if $0\leqslant j=i\leqslant r-1$ and in that case it is a free $W(k)$-module of rank 1 generated by the ($i$\textsuperscript{th} product of the) first Hodge--Witt Chern class $c_1^{HW}(\co_{\mathbb{P}^{r-1}_k}(1))^i$. 
It follows that the slope spectral sequence degenerates at $E_1$ \cite[II, Thm.\,3.7]{I_1979}, and we see that $H^{m}_{\et}(\mathbb{P}^{r-1}_k,W\Omega^{\kr}_{\mathbb{P}^{r-1}_k})$ is a free $W(k)$-module of rank 1 if and only if $m=2i$, $0\leqslant i\leqslant r-1$ and zero otherwise. 
Thus (\ref{equ: kuenneth P}) becomes
$$
H^\ast_{\et}(\mathbb{P}^{r-1}_X,W\Omega^{\kr}_{\mathbb{P}^{r-1}_X/k}(\log \mathbb{P}^{r-1}_{D}))\cong
 \bigoplus_{r=0}^{r-1}  H^{2i}_{\et}(\mathbb{P}^{r-1}_k,W\Omega^{\kr}_{\mathbb{P}^{r-1}_k})  \otimes H^{\ast-2i}_{\et}(X,W\Omega^{\kr}_{X/k}(\log D)). 
$$
Moreover, by the compatibility of the first Hodge--Witt Chern class with the crystalline Chern class \cite[I, Thm.\,5.1.5]{G_1985}  and the compatibility of the crystalline Chern class and the log crystalline Chern class which is implicit in \cite[\S\,13]{AS_2019}, it follows that $c_1^{\cris}(\co_{\mathbb{P}_k^{r-1}}(1))^i$ is a generator of $H^{2i}_{\et}(\mathbb{P}^{r-1}_k,W\Omega^{\kr}_{\mathbb{P}^{r-1}_k})$. 
And this shows that (\ref{equ: proj bundle formula}) is indeed an isomorphism. 

Now the logarithmic projective bundle formula implies a logarithmic version of the first splitting principle \cite[Exp.\,XVI, Props.\,1.4]{ILO_2014}, because the flag bundle can be written as an iteration of projective bundles:
Let $\mathcal{E}$ be a locally free $\mathcal{O}_X$-module of finite rank 
and consider the flag bundle $\pi: \mathrm{Fl}(\mathcal{E}) \rightarrow X$ and its restriction $\pi:\mathrm{Fl}|(\mathcal{E}|_U ) \rightarrow U$ to $U$, which together give a morphism in $\Var^{\log}_k$ 
$(\mathrm{Fl}|(\mathcal{E}|_U ), \mathrm{Fl}(\mathcal{E})) \rightarrow (U,X)$. 
Then we have injections
$$
H^{2\ast}_{\et}(X,W\Omega^{\kr}_X(\log D)) \hookrightarrow H^{2\ast}_{\et}(\mathrm{Fl}(\mathcal{E}), W\Omega^{\kr}_{\mathrm{Fl}(\mathcal{E})}( \log D'))
$$
with $D'=\pi^{-1}(D)$. 
Using this, the same argument as in \cite[Thm.\,1]{G_1958} mutatis mutandis shows that (i)--(iii) characterise the theory of Chern classes.
\end{proof}

\begin{remark}
It would be also possible to construct log crystalline Chern classes more directly, 
i.e.,\,without passing through logarithmic Hodge--Witt cohomology. 
To this end, one can for example verify that it is an 
admissible cohomology theory with logarithmic poles
on $\Var_k^{\log}$ in the sense of \cite[Def.\,12.5]{AS_2019}.
Then \cite[\S\,13]{AS_2019} implies that we obtain a theory of Chern classes 
for log crystalline cohomology on $\Var_k^{\log}$, 
and hence by restriction also on $\Var_k^{nc}$.
While the properties (0), (1), and (2) of \cite[Def.\,12.5]{AS_2019} are rather straight forward to verify, 
the properties (3) and (4) are more elaborate.
\end{remark}

Now we give the definition of Chern classes for our integral $p$-adic cohomology theory.

\begin{theorem}\label{theorem: A-Chern clases}
For $X\in \Var_k$, there exists a unique theory of Chern classes
$$
c_i^A: K_0(X) \rightarrow H^{2i}_{\cdh}(X,a_{\cdh}^\ast A^{\kr}). 
$$
More precisely, for any (Zariski) locally free $\mathcal{O}_X$-module $\mathcal{E}$ of finite rank there are elements $c_i^{A}(\mathcal{E})\in H^{2i}_{\cdh}(X,a_{\cdh}^\ast A^{\kr})$, $i\geqslant 0$, characterised by the following properties:
\begin{enumerate}
\item \textbf{Normalisation.}
We have $c_0^{A}(\mathcal{E})=1$, $c_i^{A}(\mathcal{E})=0$ for $i>\mathrm{rk}(\mathcal{E})$, and if $\mathcal{E}$ is invertible, $c^{A}_1(\mathcal{E})\in 
H^{2}_{\cdh}(X,a_{\cdh}^\ast A^{\kr})$ is the image of the class of $\mathcal{E}$ in $\mathrm{Pic}(X)$ under the morphism induced on cohomology by the map 
\[
 R\Gamma(X,\co_X^\times) \rightarrow R\Gamma(X_\bullet,\co^\times_{X_\bullet})  
\rightarrow 
R\Gamma_{\et}(\overline{X}_{\bullet}, W\Omega^{\kr}_{\overline{X}_{\bullet}}(\log D_{\bullet})) 
\cong A^{\kr}(X_\bullet)[1] 
\]
induced by $d\log$ on $X_{\bullet}$, where 
$(X_\bullet,\overline{X}_\bullet)$ is a simplicial $nc$-pair 
 over $(X,\overline{X})$ with $\overline{X}$ a compactification of $X$ 
such that $X_\bullet \rightarrow X$ is a split $\cdp$-hypercovering in $\Var_k$ and 
$D_{\bullet} = \overline{X}_{\bullet} \setminus X_{\bullet}$.
\item \textbf{Functoriality.}
For every morphism $f:X'\rightarrow X$ in $\Var_k$, 
we have $c_i^{A}(f^\ast\mathcal{E})=f^\ast c_i^{A}(\mathcal{E})$ for all $i\geqslant 0$.

\item \textbf{Whitney sum formula.}
For a short exact sequence 
$0\rightarrow \mathcal{E}' \rightarrow \mathcal{E} \rightarrow \mathcal{E}''\rightarrow 0$
of locally free $\mathcal{O}_X$-modules of finite rank, 
we have
$$
c_i^{A}(\mathcal{E}) = \sum_{j+k=i}c_j^{A}(\mathcal{E}') c_k^{A}(\mathcal{E}'').
$$
\end{enumerate}
\end{theorem}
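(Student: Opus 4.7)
The plan is to bootstrap from the log crystalline Chern classes of Proposition \ref{prop: cris Chern} via a choice of simplicial $nc$-resolution, then descend along the $\cdp$-topology using the comparison of Corollary \ref{cor: cdp-rh}. Given $X \in \Var_k$, I would first choose a compactification $X \hookrightarrow \overline{X}$ and then a simplicial $nc$-pair $(X_\bullet,\overline{X}_\bullet) \to (X,\overline{X})$ with $X_\bullet \to X$ a split $\cdp$-hypercovering in $\Var_k$; the existence is guaranteed by Hypotheses \ref{strong resolutions}--\ref{weak factorisation}, cf.\ the proof of Theorem \ref{thm:comparison}. Pulling back $\mathcal{E}$ gives a locally free sheaf $\mathcal{E}_\bullet$ on $X_\bullet$, and the face/degeneracy maps of $(X_\bullet,\overline{X}_\bullet)$ are morphisms in $\Var_k^{nc}$, so by the functoriality clause (ii) in Proposition \ref{prop: cris Chern} the log crystalline Chern classes $c_i^{\cris}(\mathcal{E}_n) \in H^{2i}_{\cris}((X_n,\overline{X}_n)/W(k))$ assemble into a class in the abutment of the descent spectral sequence and hence give a well-defined element
\[
c_i^{\cris}(\mathcal{E}_\bullet) \in H^{2i}_{\cris}((X_\bullet,\overline{X}_\bullet)/W(k)).
\]
Via the zig-zag $R\Gamma_{\cris}((X_\bullet,\overline{X}_\bullet)/W(k)) \simeq A^{\kr}(X_\bullet) \simeq R\Gamma_{\cdh}(X_\bullet,a_{\cdh}^\ast A^{\kr}) \simeq R\Gamma_{\cdh}(X,a_{\cdh}^\ast A^{\kr})$ from Corollary \ref{cor: cdp-rh} and $\cdp$-descent, I would define $c_i^A(\mathcal{E})$ as the image of $c_i^{\cris}(\mathcal{E}_\bullet)$.

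The next step is to show independence of the auxiliary data $(X_\bullet,\overline{X}_\bullet) \to (X,\overline{X})$. The argument is the same two-step cofinality argument used in the proof of Theorem \ref{thm:comparison}: any two choices of simplicial $nc$-pair over a fixed compactification $\overline{X}$ admit a common refinement, and any two compactifications are dominated by a third one (here I would invoke Lemma \ref{lem:domhypercover1'}, Proposition \ref{prop:domhypercover'} and Example \ref{example:varkgeocdp} as in Theorem \ref{thm:comparison}). The functoriality of $c_i^{\cris}$ on $\Var_k^{nc}$ and the resulting compatibility of the identifications in Corollary \ref{cor: cdp-rh} then force the resulting classes $c_i^A(\mathcal{E})$ to agree. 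The same refinement argument, applied to a morphism $f : X' \to X$ lifted to a morphism of simplicial $nc$-pairs $\overline{f}_\bullet : (X'_\bullet,\overline{X}'_\bullet) \to (X_\bullet,\overline{X}_\bullet)$, yields the contravariant functoriality $c_i^A(f^\ast \mathcal{E}) = f^\ast c_i^A(\mathcal{E})$, because it holds termwise by Proposition \ref{prop: cris Chern}(ii).

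The normalisation and Whitney sum properties then follow termwise from Proposition \ref{prop: cris Chern}(i),(iii): the simplicial $d\log$ map computing $c_1^{\cris}(\mathcal{E}_\bullet)$ is, under the identification of Corollary \ref{cor: cdp-rh}, precisely the map displayed in the statement of (i); and the Whitney sum formula is an identity in $H^{\ast}_{\cris}((X_\bullet,\overline{X}_\bullet)/W(k))$ transported to $H^{\ast}_{\cdh}(X,a_{\cdh}^\ast A^{\kr})$.

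The main obstacle I expect is verifying that the assembled classes $c_i^{\cris}(\mathcal{E}_\bullet)$ are genuinely well-defined, i.e.\ that the degree-wise classes $c_i^{\cris}(\mathcal{E}_n)$ are truly compatible under the face and degeneracy maps at the level of the derived category and not merely on cohomology groups. Since Chern classes for log crystalline cohomology are constructed through a projective bundle formula, compatibility under face maps follows from applying Proposition \ref{prop: cris Chern}(ii) to each face morphism. A clean way to package this is to observe that the entire construction takes place in the derived category of sheaves of $W(k)$-modules on the simplicial site, where the $d\log$ maps and the associated projective bundle formula fit together by naturality. Once this is carried out, the remaining verifications reduce to invoking the already established properties of $c_i^{\cris}$ and the descent identifications from Corollary \ref{cor: cdp-rh}.
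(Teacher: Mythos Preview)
Your plan correctly identifies the simplicial $nc$-resolution as the bridge from log crystalline Chern classes to the new theory, and the independence and functoriality arguments via the cofiltered homotopy categories are exactly right. However, there is a genuine gap in the step where you ``assemble'' the termwise classes $c_i^{\cris}(\mathcal{E}_n)$ into an element of $H^{2i}_{\cris}((X_\bullet,\overline{X}_\bullet)/W(k))$.

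Compatibility of the $c_i^{\cris}(\mathcal{E}_n)$ under face maps, which you obtain from Proposition \ref{prop: cris Chern}(ii), only produces a class in $E_2^{0,2i}$ of the descent spectral sequence $E_1^{p,q}=H^q_{\cris}((X_p,\overline{X}_p)/W(k))\Rightarrow H^{p+q}_{\cris}((X_\bullet,\overline{X}_\bullet)/W(k))$. The edge map runs $H^{2i}\twoheadrightarrow E_\infty^{0,2i}\hookrightarrow E_2^{0,2i}$, so an $E_2^{0,2i}$-class need not lie in the image, and even when it does the lift is not unique. Your final paragraph recognises this but does not resolve it: the higher $c_i^{\cris}$ are defined via the projective bundle formula, which is an identity of \emph{cohomology groups}, not a morphism of complexes; saying that ``the projective bundle formula fits together by naturality'' in the derived category of the simplicial site is not a proof. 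The same problem affects your transport of the Whitney sum formula: products in $H^\ast_{\cris}((X_\bullet,\overline{X}_\bullet)/W(k))$ are computed via an Alexander--Whitney-type map on the total complex, not termwise, so a termwise identity does not automatically descend.

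The paper avoids this by only using your assembly step for $c_1$, where it is unproblematic because $d\log$ is an honest morphism of complexes $j_{\bullet\ast}\mathcal{O}_{X_\bullet}^\times \to W\Omega^{\kr}_{\overline{X}_\bullet}(\log D_\bullet)[1]$ on the simplicial site, hence induces a genuine map on $H^\ast$ of the total complex. The higher classes are then defined \emph{intrinsically} in $H^\ast_{\cdh}(-,a^\ast_{\cdh}A^{\kr})$: one first proves a projective bundle formula for this cohomology theory (reducing via $\cdh$-descent and weight filtration to Gros's non-logarithmic case), then defines $c_i^A(\mathcal{E})$ by the usual relation $\xi^r-c_1^A(\mathcal{E})\xi^{r-1}+\cdots=0$ in $H^{2r}_{\cdh}(\mathbb{P}(\mathcal{E}),a^\ast_{\cdh}A^{\kr})$, and finally deduces the Whitney sum formula by splitting principles using the flag bundle and the affine-bundle trick (the latter requiring the homotopy invariance of Corollary \ref{cor:homotopyinvariance}). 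This sidesteps the lifting problem entirely: nothing beyond $c_1$ is ever transported across the descent spectral sequence.
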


\begin{proof}
To obtain Chern classes for our integral $p$-adic cohomology theory 
we follow the argument in \cite[Expos\'e XVI]{ILO_2014}, where the \'etale Chern classes are defined.

We start by constructing the first Chern class map of a line bundle. 
For $X\in\Var_k$, take $\overline{X}$, 
$(X_\bullet,\overline{X}_\bullet)$, $D_{\bullet}$ as in the statement of (i) and  
denote by $j_\bullet: X_\bullet \hookrightarrow \overline{X}_\bullet$ the simplicial open immersion. Then the maps $j_{l,\ast} \mathcal{O}_{X_l}^\times \rightarrow 
R\varepsilon_* W\Omega^{\kr}_{\overline{X}_l/k}(\log D_l)[1]$ in Proposition \ref{prop: cris Chern} for $X_l \, (l \geqslant 0)$ induce a map 
$ j_{\bullet,\ast} \mathcal{O}_{X_\bullet}^\times \rightarrow R\varepsilon_* W\Omega^{\kr}_{\overline{X}_\bullet/k}(\log D_\bullet)[1]$ on $X_{\bullet}$ and so a map
\begin{equation}\label{equ: Chern 1 simplicial}
R\Gamma(X_\bullet,\co^\times_{X_\bullet}) = R\Gamma(\overline{X}_\bullet,j_{\bullet,\ast}\co^\times_{\overline{X}_\bullet}) \rightarrow 
R\Gamma_{\et}(\overline{X}_{\bullet}, W\Omega^{\kr}_{\overline{X}_{\bullet}}(\log D_{\bullet})).
\end{equation}
As $(X_\bullet,\overline{X}_{\bullet})$ is a (simplicial) $nc$-pair we have by definition
$R\Gamma_{\et}(\overline{X}_{\bullet}, W\Omega^{\kr}_{\overline{X}_{\bullet}}(\log D_{\bullet})) \cong A^{\kr}(X_\bullet).$
Thus by Corollary \ref{cor: cdp-rh} and cohomological $\cdp$-descent the above map induces the first Chern class map
\begin{equation}\label{eq:Chern 1 A}
c_1^A: \mathrm{Pic}(X)=H^1_{\Zar}(X,\co^\times_X)  \rightarrow H^1_{\Zar}(X_\bullet,\co^\times_{X_\bullet}) \rightarrow H^2_{\cdh}(X_\bullet,a_{\cdh}^\ast A^{\kr}) \cong H^2_{\cdh}(X,a_{\cdh}^\ast A^{\kr}).
\end{equation}
By construction, this map is functorial with respect to the diagram $(X_{\bullet},\overline{X}_{\bullet}) \to (X,\overline{X})$. 

To see that the map \eqref{eq:Chern 1 A} depends only on $X$ and is functorial with respect to morphisms $f:X' \to X$ in $\Var_k$, 
one repeats verbatim the argument given in the proof of Theorem \ref{thm:comparison}. 

Next we show the projective bundle formula.
Let again $X\in\Var_k$ and $\mathcal{E}$ a locally free $\co_X$-module of rank $r\geqslant 1$. 
Consider the associated projective bundle $\pi:\mathbb{P}(\mathcal{E}) \rightarrow X$.
Then the first Chern class map above induces a natural morphism
\begin{equation}\label{equ: bundle}
\bigoplus_{i=0}^{r-1} c_1^A(\co_{\mathbb{P}(\mathcal{E})}(1))^i\cup \pi^\ast: \bigoplus_{i=0}^{r-1}R\Gamma_{\cdh}(X,a^\ast_{\cdh}A^{\kr})[-2i] \rightarrow R\pi_\ast R\Gamma_{\cdh}(\mathbb{P}(\mathcal{E}),a^\ast_{\cdh}A^{\kr}).
\end{equation}
We claim that this is a quasi-isomorphism.

Since we may work Zariski locally on $X$ to prove the claim, 
we may assume that $\mathcal{E}\vert_{X}$ is trivial. 
Furthermore, since we may work $\cdp$-locally to prove the claim, we may assume that 
$X$ can be compactified into an $nc$-pair 
$(X,\overline{X})$, and the statement follows from the 
the natural isomorphism
$$
\bigoplus_{i=0}^{r-1} c_1^{\cris}(\co_{\mathbb{P}^{r-1}_{\overline{X}}}(1))^i\cup \pi^\ast: \bigoplus_{i=0}^{r-1} H^{\ast-2i}_{\et}({\overline{X}},W\Omega^{\kr}_{{\overline{X}}/k}(\log D)) \xrightarrow{\sim} H^\ast_{\et}(\mathbb{P}^{r-1}_{\overline{X}},W\Omega^{\kr}_{\mathbb{P}^{r-1}_{\overline{X}}/k}(\log \mathbb{P}^{r-1}_{D}))
$$
proved in Proposition \ref{prop: cris Chern}.

By the quasi-isomorphism \eqref{equ: bundle}, we have the isomorphism 
\begin{equation*}
\bigoplus_{i=0}^{r-1} \xi^i\cup \pi^\ast: \bigoplus_{i=0}^{r-1} H^{2(r-i)}_{\cdh}(X,a^\ast_{\cdh}A^{\kr}) \overset{\cong}{\rightarrow} H^{2r}_{\cdh}(\mathbb{P}(\mathcal{E}),a^\ast_{\cdh}A^{\kr}) 
\end{equation*}
for a locally free $\co_X$-module $\mathcal{E}$ of rank $r$, where 
$\xi := c_1^A(\co_{\mathbb{P}(\mathcal{E})}(1))$.
Using this, we define the $i$-th Chern class $c_i^A(\mathcal{E}) \in H^{2i}_{\cdh}(X,a^{\ast}_{\cdh}A^{\kr})$ of $\mathcal{E}$ for $1 \leqslant i \leqslant r$ as the unique element satisfying the equality 
$$ \xi^r - c_1^A({\mathcal{E}}) \xi^{r-1} + c_2^A({\mathcal{E}}) \xi^{r-2} + \cdots + (-1)^rc_r({\mathcal{E}}) = 0 $$ 
in $H^{2r}_{\cdh}({\mathbb{P}}({\mathcal{E}}),a^{\ast}_{\cdh}A^{\kr}).$ 
Also, we put $c_0^A(\mathcal{E}) = 1, c_i^A({\mathcal{E}}) = 0$ for $i > r$. 

When $\mathcal{E}$ is of rank $1$, we have ${\mathbb{P}}({\mathcal{E}}) \cong X$, $\co_{\mathbb{P}(\mathcal{E})}(1) \cong {\mathcal{E}}$ and so the definition of $c_1^A$ for such $\mathcal{E}$ is compatible with the one defined in the beginning. In particular, our Chern classes satisfy the normalisation. Also, the functoriality of our Chern classes follows from that for the first Chern classes and the functoriality of the formation of the projective bundle. 

We prove that our Chern classes satisfy the Whitney sum formula. Note that we have the following two splitting principles (cf. \cite[Expos\'e XVI, 1.4, 1.5]{ILO_2014}): 
\begin{enumerate}
\item[(A)] For a locally free $\co_X$-module $\mathcal{E}$ of finite rank on $X$, 
the pullback of $\mathcal{E}$ to the flag bundle ${\rm Fl}({\mathcal{E}}) \to X$ is by definition an iterated extension of locally free $\co_X$-modules of rank $1$, and we have injections 
$$  H^{2\ast}_{\cdh}(X,a^{\ast}_{\cdh}A^{\kr}) \hookrightarrow 
H^{2\ast}_{\cdh}({\rm Fl}({\mathcal{E}}),a^{\ast}_{\cdh}A^{\kr}) $$
because the flag bundle can be written as an iteration of projective bundles. 
\item[(B)] When we are given an exact sequence 
$ {\rm E} := [0 \to \mathcal{E}' \to \mathcal{E} \to \mathcal{E}'' \to 0]$ 
of locally free $\co_X$-modules of finite rank, we can define `the variety of sections of ${\rm E}$' ${\rm Sect}({\rm E}) \to X$, which is an affine space bundle. By definition, the pullback of the exact sequence ${\rm E}$ to 
${\rm Sect}({\rm E})$ splits, and we have  isomorphisms  
$$ H^{2\ast}_{\cdh}(X,a^{\ast}_{\cdh}A^{\kr}) \overset{\cong}{\to} 
H^{2\ast}_{\cdh}({\rm Sect}({\rm E}),a^{\ast}_{\cdh}A^{\kr}), $$
which follows fom the Zariski descent and the homotopy invariance 
of our integral $p$-adic cohomology (Corollary \ref{cor:homotopyinvariance}). 
\end{enumerate}

Using the splitting principles (A) and (B), the proof of Whitney sum formula is reduced to the case where ${\mathcal{E}} = {\mathcal{E}}' \oplus {\mathcal{E}}''$ and ${\mathcal{E}}', {\mathcal{E}}''$ are direct sums of locally free $\co_X$-module of rank $1$. Hence it suffices to prove that, for 
${\mathcal{E}} = \bigoplus_{i=1}^r {\mathcal{L}}_i$ with each 
${\mathcal{L}}_i$ invertible, the equality 
$$ \prod_{i=1}^r(\xi - c_1^A({\mathcal{L}}_i)) = 0 $$
holds in $H^{2r}_{\cdh}({\mathbb{P}}({\mathcal{E}}),a^{\ast}_{\cdh}A^{\kr})$, 
where $\xi = c_1^A(\co_{{\mathbb{P}}({\mathcal{E}})}(1))$. 
If we denote the projection ${\mathbb{P}}({\mathcal{E}}) \to X$ by $\pi$ and the relative hyperplane in ${\mathbb{P}}({\mathcal{E}})$ defined by the inclusion ${\mathcal{L}}_i \hookrightarrow {\mathcal{E}}$ by $H_i$, the injection 
$\pi^*{\mathcal{L}}_i \hookrightarrow \co_{{\mathbb{P}}({\mathcal{E}})}(1)$ is an isomorphism on ${\mathbb{P}}({\mathcal{E}}) \setminus H_i$. Thus 
the image of $\xi - c_1^A({\mathcal{L}}_i)$ in 
$H^{2}_{\cdh}({\mathbb{P}}({\mathcal{E}}) \setminus H_i,a^{\ast}_{\cdh}A^{\kr})$ is zero and so it comes from some element in 
$H^{2}_{\cdh,H_i}({\mathbb{P}}({\mathcal{E}}),a^{\ast}_{\cdh}A^{\kr})$. 
Therefore the product $\prod_{i=1}^r(\xi - c_1^A({\mathcal{L}}_i))$ comes from 
an element in $H^{2r}_{\cdh,\bigcap_{i=1}^rH_i}({\mathbb{P}}({\mathcal{E}}),a^{\ast}_{\cdh}A^{\kr})$ and it is zero since $\bigcap_{i=1}^rH_i$ is empty. 
Hence the proof of the Whitney sum formula is finished. 

Finally, by the same standard argument explained in  \cite[Thm.\,1]{G_1958} using the splitting principle (A) the three properties (i)--(iii) characterise the theory of Chern classes entirely, proving therefore uniqueness.
\end{proof}

\begin{corollary}
Let $(X,\overline{X})\in\Var_k^{nc}$. 
Then the Chern class maps $c_i^{\cris}$ and $c_i^A$ are compatible
in the sense that there are commutative diagrams
$$
\xymatrix{
 &  H^{2i}_{\cris}((X,\overline{X})/W(k)) \ar@{=}[dd]^\sim\\
K_0(X) \ar[ru]^{c_i^{\cris}} \ar[rd]_{c_i^A} &\\
 & H^{2i}_{\cdh}(X,a_{\cdh}^\ast A^{\kr})
 }
$$
for all $i\geqslant 0$.
\end{corollary}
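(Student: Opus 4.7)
The plan is to reduce the claimed compatibility to the case $i=1$ on line bundles, via a splitting-principle argument, and then to verify that case by direct inspection of the definitions. As a preliminary step I will check that $c_i^{\cris}$ satisfies the two splitting principles (A) and (B) used in the proof of Theorem \ref{theorem: A-Chern clases}: principle (A) follows from the projective bundle formula for log crystalline cohomology, which can be proved by exactly the weight-spectral-sequence argument used in the proof of the projective bundle formula for $A^{\kr}$; principle (B) follows from the well-known homotopy invariance of log crystalline cohomology. Combined with the normalisation, functoriality, and Whitney sum formula of Proposition \ref{prop: cris Chern}, this forces $c_i^{\cris}$ to be uniquely determined by the values of $c_1^{\cris}$ on line bundles, exactly as is the case for $c_i^A$.

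It therefore remains to prove the compatibility for $c_1$ of an invertible sheaf $\mathcal{L}$ on $X$. Since $(X,\overline{X})$ is already an $nc$-pair, the constant simplicial object $(X_\bullet,\overline{X}_\bullet):=(X,\overline{X})$, with all face and degeneracy maps the identity, is a split $\cdp$-hypercovering. By the independence of $c_1^A$ from the choice of simplicial $nc$-pair (established in the proof of Theorem \ref{theorem: A-Chern clases}), I may use this trivial refinement to compute $c_1^A$. With this choice, the map \eqref{equ: Chern 1 simplicial} defining $c_1^A$ becomes the $d\log$ map
$$
R\Gamma(X,\co_X^\times) = R\Gamma(\overline{X},j_*\co_X^\times) \xrightarrow{d\log} R\Gamma_{\et}(\overline{X},W\Omega^{\kr}_{\overline{X}/k}(\log D))[1],
$$
which is literally the map defining $c_1^{\cris}$ in Proposition \ref{prop: cris Chern}, followed by the canonical identification \eqref{equ: cris-A}. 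Hence $c_1^A(\mathcal{L})$ and $c_1^{\cris}(\mathcal{L})$ correspond under the comparison isomorphism.

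The higher-rank case then follows from the splitting principles: both $c_i^A(\mathcal{E})$ and $c_i^{\cris}(\mathcal{E})$ are characterised by the same equation $\xi^r - c_1(\mathcal{E})\xi^{r-1}+\cdots+(-1)^r c_r(\mathcal{E})=0$ on $\mathbb{P}(\mathcal{E})$, where $\xi$ is the first Chern class of $\co_{\mathbb{P}(\mathcal{E})}(1)$. Since the canonical isomorphism of Corollary \ref{cor: cdp-rh} is induced by a chain of quasi-isomorphisms at the level of complexes of étale sheaves, all coming from natural morphisms of logarithmic de Rham--Witt complexes, it respects cup products (the product being inherited from the differential graded algebra structure on $W\Omega^{\kr}_{\overline{X}/k}(\log D)$) and pullbacks. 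The defining equations for $c_i^A$ and $c_i^{\cris}$ therefore correspond to each other, and so do their unique solutions.

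The main technical point requiring care is the multiplicativity of the comparison isomorphism with respect to cup products, which must be tracked through the sheafification in the $\cdp$- and $\cdh$-topologies. This should be routine but delicate; once it is established, the rest of the argument is formal.
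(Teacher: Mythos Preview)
Your proposal is correct and follows essentially the same approach as the paper: reduce to $c_1$ on line bundles via uniqueness of Chern class theories, then verify the $c_1$ case by unwinding the definition of \eqref{equ: Chern 1 simplicial}. The paper's proof is much terser (two sentences), simply asserting that both theories are unique with respect to normalisation, functoriality, and the Whitney sum formula, and that the $c_1$ case is ``clear by the definition of \eqref{equ: Chern 1 simplicial}''; your constant-simplicial-hypercovering argument is a clean way to make that clarity explicit, and your flagged concern about multiplicativity of the comparison isomorphism is a genuine (if routine) point that the paper leaves implicit.
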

\begin{proof}
Both theories of Chern classes are unique with respect to the three defining properties 
-- normalisation, functoriality, Whitney sum formula.
Thus it suffices to compare the first Chern class maps for line bundles. 
In other words, we have to show that the diagram
$$
\xymatrix{
 &  H^2_{\cris}((X,\overline{X})/W(k)) \ar@{=}[dd]^\sim\\
\mathrm{Pic}(X)=H^1_{\Zar}(X,\co^\times_X) \ar[ru]^{c_1^{\cris}} \ar[rd]_{c_1^A} &\\
 & H^2_{\cdh}(X,a_{\cdh}^\ast A^{\kr})
}
$$
commutes. 
But this is clear by the definition of (\ref{equ: Chern 1 simplicial}).
\end{proof}

\begin{proposition}
Let  $X\in\Var_k$. 
Then the Chern class maps $c_i^{\rig}$ and $c_i^A$ are compatible 
in the sense that there are commutative diagrams
$$
\xymatrix{
 &  H^{2i}_{\rig}(X/K) \ar@{=}[dd]^\sim\\
K_0(X) \ar[ru]^{c_i^{\rig}} \ar[rd]_{c_i^A} &\\
 & H^{2i}_{\cdh}(X,a_{\cdh}^\ast A^{\kr}) \otimes_{\Z}\Q
 }
$$
for all $i\geqslant 0$.
\end{proposition}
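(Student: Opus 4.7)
The plan is to mimic the strategy used inside the proof of Theorem \ref{theorem: A-Chern clases}: any theory of Chern classes which satisfies normalisation, functoriality and the Whitney sum formula is completely determined by its values on line bundles, by virtue of the splitting principles (A) and (B) used there (which for the rigid theory rest on the projective bundle formula \cite[Prop.\,4.3]{P_2003} and homotopy invariance of rigid cohomology, both well known, and for our theory were established in the proof of Theorem \ref{theorem: A-Chern clases} and in Corollary \ref{cor:homotopyinvariance}). Since the comparison isomorphism of Theorem \ref{thm:comparison} is functorial and multiplicative, it suffices to show that for every $X \in \Var_k$ and every line bundle $\mathcal{L}$ on $X$, the classes $c_1^{\rig}(\mathcal{L})$ and $c_1^{A}(\mathcal{L})$ agree under the canonical isomorphism $H^2_{\rig}(X/K)\cong H^2_{\cdh}(X,a^{\ast}_{\cdh}A^{\kr})\otimes_{\Z}\Q$.

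Next, I would reduce to the $nc$-case. Choose a compactification $(X,\overline{X})$ and a simplicial $nc$-pair $(X_\bullet,\overline{X}_\bullet)\to (X,\overline{X})$ such that $X_\bullet \to X$ is a split $\cdp$-hypercovering; such a datum exists as in the proof of Theorem \ref{thm:comparison} and of Theorem \ref{theorem: A-Chern clases}, and is exactly what is used to define $c_1^A$. By $\cdp$-descent in rigid cohomology and in our theory, together with the functoriality of both first Chern classes, the question is reduced to checking the compatibility of $c_1^{\rig}$ and $c_1^A$ on each component $X_n \in \Var_k^{nc}$ equipped with the compactification $\overline{X}_n$. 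On such components, the previous corollary identifies $c_1^A$ with the log crystalline first Chern class $c_1^{\cris}$ via the canonical isomorphism $H^{2}_{\cris}((X_n,\overline{X}_n)/W(k))\otimes_{\Z}\Q \cong H^{2}_{\cdh}(X_n,a^\ast_{\cdh}A^{\kr})\otimes_{\Z}\Q$. Hence the only remaining point is to verify, for $(U,Y)\in\Var_k^{nc}$ and $\mathcal{L}\in\mathrm{Pic}(U)$, the equality $c_1^{\rig}(\mathcal{L})=c_1^{\cris}(\mathcal{L})$ in $H^2_{\cris}((U,Y)/W(k))\otimes_\Z \Q \cong H^2_{\rig}(U/K)$.

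The main obstacle will be this last compatibility between the rigid and the log crystalline first Chern class. By construction, $c_1^{\cris}(\mathcal{L})$ is the image of $[\mathcal{L}]\in H^1(U,\co_U^\times)$ under the $d\log$ map $\co_U^\times \to W\Omega^1_{Y/k}(\log D)$, while Petrequin's $c_1^{\rig}(\mathcal{L})$ is the image of the same class under an analogous $d\log$ map computed on a dagger frame of $(U,Y)$ (cf.\,\cite[\S 3]{P_2003}). To see that these agree under the comparison isomorphism, one chooses Zariski-locally an $nc$-embedding of $(U,Y)$ into a smooth $p$-adic (formal) frame $(\mathfrak{U},\mathfrak{Y})$ lifting $(U,Y)$ together with its boundary, and compares the two \v Cech cocycles for $\mathcal{L}$: both unravel to the same class $d\log(\tilde u_{ij})$, where $\tilde u_{ij}$ are (Teichm\"uller or dagger) lifts of the transition functions of $\mathcal{L}$. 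The comparison quasi-isomorphism of \cite[Cor.\,7.7, Cor.\,11.7]{N_2012} that underlies Theorem \ref{thm:comparison} intertwines these two $d\log$ constructions; this is the analogue in the log setting of Petrequin's comparison \cite[Prop.\,4.2]{P_2003}, and is the technical core of the proposed proof. Once this identification of the two $d\log$-maps modulo torsion is established, the compatibility of all higher Chern classes follows formally from the first paragraph.
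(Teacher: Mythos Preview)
Your reduction to first Chern classes of line bundles matches the paper's, but from that point the two arguments diverge. The paper does \emph{not} reduce to $nc$-pairs $(U,Y)$ with $U$ open; instead it first handles the case where $X$ is \emph{proper}, invoking Petrequin's compatibility \cite[Cor.\,5.29]{P_2003} between $c_1^{\rig}$ and the Berthelot--Illusie crystalline $c_1^{\cris,BI}$, and then identifying $c_1^{\cris,BI}$ with $c_1^{\cris}$ (via Gros) and with $c_1^A$ through a split $\cdp$-hypercovering by proper smooth varieties. For an arbitrary $X$, the paper uses the extension trick \cite[Prop.\,3.21]{P_2003}: any class $\zeta\in\mathrm{Pic}(X)$ extends to some compactification $\overline{X}$, so a diagram chase using functoriality of both $c_1$'s and the already established proper case yields $c_1^A(\zeta)=c_1^{\rig}(\zeta)$.

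Your route, by contrast, lands on the comparison of $c_1^{\rig}$ and the \emph{log} crystalline $c_1^{\cris}$ on an $nc$-pair $(U,Y)$ with $U$ genuinely open. You correctly identify this as the technical core, but you do not prove it: the assertion that Nakkajima's comparison quasi-isomorphism intertwines Petrequin's dagger-frame $d\log$ with the log de Rham--Witt $d\log$ is precisely a log analogue of \cite[Cor.\,5.29]{P_2003} that is not in the literature, and your sketch (lifting transition functions to a frame and comparing \v Cech cocycles) glosses over the real work of tracking the two constructions through Nakkajima's chain of quasi-isomorphisms. The paper's extension trick is designed exactly to sidestep this: by pushing the line bundle to a proper variety, it never has to leave the non-log setting where Petrequin's result applies directly. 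So while your strategy is plausible in outline, the step you defer is a genuine gap, and the paper's argument is both shorter and rests only on results already available.
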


\begin{proof}
As in the above corollary, it suffices to compare the first Chern class maps of line bundles, 
that means we have to show that the diagram
\begin{equation}\label{commutative diagram c_1}
\xymatrix{
 &  H^2_{\rig}(X/K) \ar@{=}[dd]^\sim\\
\mathrm{Pic}(X)=H^1_{\Zar}(X,\co^\times_X) \ar[ru]^{c_1^{\rig}} \ar[rd]_{c_1^A} &\\
 & H^2_{\cdh}(X,a_{\cdh}^\ast A^{\kr}) \otimes_{\Z}\Q
}
\end{equation}
commutes.

We first treat the case $X=\overline{X}$ is already proper.
In this case Petrequin showed in \cite[Cor.\,5.29]{P_2003}
that the first rigid Chern class and the first crystalline Chern class $c_1^{\cris,BI}$ of Berthelot--Illusie \cite{BI_1970}  (compare \cite[Proof of I.Thm.\,5.1.5]{G_1985} for the construction) 
are compatible, namely, there is a commutative diagram 
\begin{equation}\label{commutative diagram c_1 a}
\xymatrix{
 &  H^2_{\rig}(X/K) \ar[dd]\\
\mathrm{Pic}(X)=H^1_{\Zar}(X,\co^\times_X) \ar[ru]^{c_1^{\rig}} \ar[rd]_{c_1^{\cris,BI}} &\\
 & H^2_{\cris}(X/W(k)) \otimes_{\Z}\Q, 
}
\end{equation}
where the vertical map is induced by the canonical map $R\Gamma_{\rig}(X/K) \rightarrow R\Gamma_{\cris}(X/W(k)) \otimes_{\Z}\Q$ constructed in the proof of \cite[Prop.\,1.9]{B_1997}. 

Now take a split $\cdp$-hypercovering $X_\bullet\rightarrow X$ by proper smooth $k$-varieties. 
By functoriality of the first crystalline Chern class map $c_1^{\cris,BI}$ 
there is a commutative diagram
$$
\xymatrix{
R\Gamma(X,\co^\times_{X}) \ar[d] \ar[r]^-{c_1^{\cris,BI}}&  R\Gamma_{\cris}(X/W(k))[1] \ar[d]\\
R\Gamma(X_\bullet,\co^\times_{X_\bullet})  \ar[r]^-{c_1^{\cris,BI}} & R\Gamma_{\cris}(X_\bullet/W(k))[1]. 
}
$$

Note that, for smooth $k$-varieties, the crystalline first Chern class map
$c_1^{\cris,BI}$ defined by Berthelot--Illusie via the crystalline site 
coincides with  the crystalline first Chern class map
$c_1^{\cris}$ defined by Gros and Asakura--Sato via the de Rham--Witt complex by the proof of \cite[I, Thm.\,5.1.5]{G_1985}: it follows from the commutative diagram \cite[(5.1.10)]{G_1985}. The same holds for split simplicial smooth $k$-varieties via the canonical comparison map \cite[Cor.\,7.7]{N_2012} between crystalline and de Rham--Witt cohomology in the split simplicial case, because the commutative diagram \cite[(5.1.10)]{G_1985} implies the same diagram in the simplicial case by functoriality.

As a consequence we obtain a commutative diagram of the form
\begin{equation}\label{commutative diagram c_1 b}
\xymatrix{
 & H^2_{\cris}(X/W(k))  \ar[d]\\
\mathrm{Pic}(X)=H^1_{\Zar}(X,\co^\times_X) \ar[ru]^{c_1^{\cris,BI}} \ar[r]^{c_1^{\cris,BI}} 
\ar[rd]^{c_1^{\cris}} \ar[rdd]_{c_1^A} & H^2_{\cris}(X_\bullet/W(k))   \ar@{=}[d]^\sim\\
&  H^2_{\et}(X_\bullet, W\Omega^{\kr}_{X_\bullet}) \ar@{=}[d]^\sim\\
 & H^2_{\cdh}(X,a_{\cdh}^\ast A^{\kr}) 
}
\end{equation}
whose rational version we combine with (\ref{commutative diagram c_1 a}). 
Thus to show the commutativity of (\ref{commutative diagram c_1}), it remains to show that the non-logarithmic version of the isomorphism from Theorem \ref{thm:comparison} factors as 
$$
H^2_{\rig}(X/K)  \rightarrow H^2_{\cris}(X/W(k)) \otimes_{\Z}\Q  \rightarrow  H^2_{\cris}(X_\bullet/W(k)) \otimes_{\Z}\Q \xleftarrow{\sim} H^2_{\et}(X_\bullet, W\Omega^{\kr}_{X_\bullet})\otimes_{\Z}\Q \cong H^2_{\cdh}(X,a_{\cdh}^\ast A^{\kr}) \otimes_{\Z}\Q.
$$
But the third morphism is the isomorphism from \cite[Cor.\,7.7]{N_2012} and the last morphism is the isomorphism induced by construction of our cohomology, both of which have been used in Theorem \ref{thm:comparison}. 
Hence we have to see that the isomorphism $H^2_{\rig}(X/K) \xrightarrow{\sim} H^2_{\cris}(X_\bullet/W(k)) \otimes_{\Z}\Q$ from \cite[Cor.\,11.7]{N_2012} in the case that (simplicial) horizontal divisor is empty factors through the canonical morphism $H^2_{\rig}(X/K) \rightarrow H^2_{\cris}(X/W(k)) \otimes_{\Z}\Q$. 
By functoriality there is a commutative diagram
$$
\xymatrix{
H^2_{\rig}(X/K)\cong H^2_{\mathrm{conv}}(X/K)  \ar[r] \ar[d]^\sim & H^2_{\cris}(X/W(k)) \otimes_{\Z}\Q \ar[d]\\
H^2_{\rig}(X_\bullet/K) \cong H^2_{\mathrm{conv}}(X_\bullet/K)  \ar[r]^\sim & H^2_{\cris}(X_\bullet/W(k)) \otimes_{\Z}\Q
}
$$
where rigid and convergent cohomology are identified by construction since $X$ and $X_\bullet$ are proper \cite[Thm.\,0.6.7]{O_1990}. 
That the lower horizontal map is an isomorphism follows from \cite[Thm.\,0.7.7]{O_1990}. 
But the isomorphism from \cite[Cor.\,11.7]{N_2012}, which was originally constructed in \cite[Cor.\,2.3.9, Thm.\,3.1.1]{S_2002}, is precisely the log version of that one. 
Since the log structure in the case at hand is trivial (as the horizontal divisor is empty), these morphisms coincide and this shows the desired factorisation. 
This finishes the proof of the compatibility of Chern classes in the proper case.

Next we show how to deduce the general case from the proper case.
Let $X\in\Var_k$ and and $\zeta\in \mathrm{Pic}(X)= H^1_{\Zar}(X,\co^\times_X)$.  
According to \cite[Prop.\,3.21]{P_2003} one may find a compactification 
$j:X\rightarrow \overline{X}$ such that
$\overline{X}\backslash X$ is a divisor and $\zeta$ extends to $\overline{X}$ 
in the sense that there is $\overline{\zeta}\in\mathrm{Pic}(\overline{X})$
with $j^\ast(\overline{\zeta})=\zeta$. 
For this compactification consider the diagram
$$
\xymatrix{
& \mathrm{Pic}(\overline{X})\ar[dr]^{c_1^A} \ar[ddl]^{c_1^{\rig}} \ar[rr] && \mathrm{Pic}(X) \ar[dr]^{c_1^A} \ar[ddl]^{c_1^{\rig}} & \\
&& H^2_{\cdh}(\overline{X},a_{\cdh}^\ast A^{\kr}) \otimes_{\Z}\Q \ar[rr]&& H^2_{\cdh}(X,a_{\cdh}^\ast A^{\kr}) \otimes_{\Z}\Q \\
H^2_{\rig}(\overline{X}/K) \ar[rr] \ar@{=}[urr]^\sim && H^2_{\rig}(X/K) \ar@{=}[urr]^\sim &&
}
$$
where all three squares commute by functoriality.
By what we have seen in the first part of the proof, 
the left triangle commutes. 
As $\zeta$ has a preimage $\overline{\zeta}$ in $\mathrm{Pic}(\overline{X})$, 
a diagram chase shows that
$$
c_1^A(\zeta) =c_1^A(j^\ast \overline{\zeta})= 
j^\ast c_1^A(\overline{\zeta})    = 
j^\ast c_1^{\rig}(\overline{\zeta}) =
c_1^{\rig}(\zeta).
$$
As we can find such a compactification $\overline{X}$ for each $\zeta\in\mathrm{Pic}(X)$, we finally obtain a commutative diagram of the form (\ref{commutative diagram c_1}) as desired.
\end{proof}

\section{The case without resolution of singularities}\label{sec: without resolution}

In this section, we discuss integral $p$-adic cohomology theories without assuming 
resolution of singularities. In this case, instead of resolutions to arrive at a situation of normal crossing pairs, one could envision to use de Jong's alteration theorem. 
In fact, for a $k$-variety $X$ and a  
compactification $X \subseteq \overline{X}$, we can form 
by \cite[Prop.\,9.2]{N_2012} a commutative diagram 

\begin{equation}\label{eq:2-1}
\xymatrix{
X_{\bullet} \ar[r]^{\pi} \ar[d]^{\cap} & X \ar[d]^{\cap} \\ 
\overline{X}_{\bullet} \ar[r]^{\overline{\pi}} & \overline{X},   
}
\end{equation}
where the square is Cartesian, $\pi$ is a split proper hypercovering, $\overline{\pi}$ is proper, and 
$(X_{\bullet}, \overline{X}_{\bullet})$ is a simplicial normal crossing pair which gives rise to a simplicial fine log scheme.
With $D_{\bullet} := \overline{X}_{\bullet} \backslash X_{\bullet}$, its crystalline cohomology $H^i_{\rm cris}((X_\bullet,\overline{X}_{\bullet})/W(k)) \cong 
H^i_{\et}(\overline{X}_{\bullet}, W\Omega^{\kr}_{\overline{X}_\bullet/k}(\log D_{\bullet}))$ is 
a finitely generated $W(k)$-module. 
Moreover, since we have the isomorphism 
$$ 
H^i_{\rm rig}(X/K) \cong H^i_{\rm cris}((X_\bullet,\overline{X}_{\bullet})/W(k)) 
\otimes_{\Z} \Q$$
by \cite[(1.0.17), Cor.\,11.7 1)]{N_2012}, the cohomology 
$H^i_{\rm cris}((X_\bullet,\overline{X}_{\bullet})/W(k))$ is a candidate for a good integral 
$p$-adic cohomology theory. Hence one is tempted to ask whether this is independent 
of the choice of the diagram \eqref{eq:2-1}. In particular, when $(X,\overline{X})$ is also 
a normal crossing pair with $D := \overline{X} \backslash X$, 
we ask whether the natural map 
\begin{equation}\label{eq:2-2}
\overline{\pi}^*: H^i_{\rm cris}((X,\overline{X})/W(k)) \to 
H^i_{\rm cris}((X_\bullet,\overline{X}_{\bullet})/W(k)) 
\end{equation}
is an isomorphism. However, we have the following easy 
counterexample:

\begin{example}
Let $X$ be an elliptic curve over $k = {\bold F}_p$ and let $F: X \to X$ be the absolute 
Frobenius morphism. If we denote the \v{C}ech hypercovering associated to $F$ by 
$\pi: X'_{\bullet} \to X$ and if we set $X_{\bullet} := (X'_{\bullet})_{\rm red}$, 
we see that each $X_i \,(i \in \N)$ is equal to $X$. Moreover, 
$X_{\bullet} \to X$ forms a proper hypercovering: Indeed, for each $i \in \N$, 
we have a commutative diagram 

\begin{equation}\label{eq:cosk}
\xymatrix{
X_{i+1} \ar[r] \ar[d] & X'_{i+1} \ar[d] \\ 
{\rm cosk}_i(X_{\bullet \leqslant i}) \ar[r] & 
{\rm cosk}_i(X'_{\bullet \leqslant i})}
\end{equation}
such that the horizontal arrows are homeomorphic closed immersions 
(because so are the morphisms $X_j \to X'_j \, (j \in \N)$) 
and the right vertical arrow is an isomorphism (because $X'_{\bullet} \to X$ 
is a \v{C}ech hypercovering). So the left vertical arrow is also a 
 homeomorphic closed immersion and so it is proper and surjective. 
Thus $X_{\bullet} \to X$ is a proper hypercovering, as required. 
Also, it is obviously split.  
If we set $\overline{X}_{\bullet} := X_{\bullet}$ and $\overline{X}:=X$, the pair 
$(X_{\bullet}, \overline{X}_{\bullet})$ over $(X,\overline{X})$ forms a 
commutative diagram \eqref{eq:2-1} satisfying the conditions above. 

In this case, we have maps 
$$ H^1_{\rm cris}(X/W(k)) \xrightarrow{\pi^*} 
H^1_{\rm cris}(X_{\bullet}/W(k)) \xrightarrow{\cong} H^1_{\rm cris}(X/W(k)), $$
where the second map is the edge map of the spectral sequence 
$$ 
E^{ij}_1 = H^j_{\rm cris}(X_i/W(k)) \Rightarrow H^{i+j}_{\rm cris}((X_{\bullet}/W(k)), 
$$
such that the composite is equal to $F^*: H^1_{\rm cris}(X/W(k)) \to 
H^1_{\rm cris}(X/W(k))$. Since $H^1_{\rm cris}(X/W(k))$ has non-trivial  
positive slope part, $F^*$ is not an isomorphism and so $\pi^*$ is not 
an isomorphism either. 
\end{example}

Thus we should assume some generic \'etaleness assumption on the hypercovering 
$\pi$.  (Note that, for a given $(X,\overline{X})$, there exists a diagram \eqref{eq:2-1} such that the split proper hypercovering $\pi$ is additionally generically \'etale by Lemma \ref{lem:domhypercover2'} and Corollary \ref{cor:domc}. 
So we would like to consider the following question. 

\begin{question}\label{q:2-1}
Let $X$ be a $k$-variety and let $X \subseteq \overline{X}$ be its 
compactification. Suppose we are given a commutative diagram \eqref{eq:2-1} such that 
the square is Cartesian, $\pi$ is a split proper generically \'etale hypercovering, $\overline{\pi}$ is proper, 
and $(X_{\bullet}, \overline{X}_{\bullet})$ is a simplicial normal crossing pair.
Then, is the crystalline cohomology 
 $H^i_{\rm cris}((X_\bullet,\overline{X}_{\bullet})/W(k) )$ 
independent of the choice of the diagram \eqref{eq:2-1}? 
In particular, when 
$(X,\overline{X})$ is also 
a normal crossing pair,
is the natural map \eqref{eq:2-2}
an isomorphism? 
\end{question}

\subsection{Independence of $H^0$}
In this subsection, we give an affirmative answer to Question \ref{q:2-1} for the zeroth  
crystalline cohomology group. It is certainly well-known but we write down a proof for the completeness of the paper.

Let $X$ be a $k$-variety and let $X \subseteq \overline{X}$ be its 
compactification. Suppose we are given a commutative diagram \eqref{eq:2-1} such that 
the square is Cartesian, $\pi$ is a split proper hypercovering, $\overline{\pi}$ is proper, and 
$(X_{\bullet}, \overline{X}_{\bullet})$ is a simplicial normal crossing pair.  
(We do not assume that $\pi$ is generically \'etale. 
This suffices for us because we consider the zeroth cohomology group here.)

Then we have the following:

\begin{theorem}\label{thm:H^0}
	$H^0_{\rm cris}((X_{\bullet}, \overline{X}_{\bullet})/W(k))$ is independent of the choice 
	of the pair $(X_{\bullet}, \overline{X}_{\bullet})$ as above. 
\end{theorem}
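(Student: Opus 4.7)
The plan is to show that $H^0_{\cris}((X_{\bullet},\overline{X}_{\bullet})/W(k))$ can be canonically identified with the integral closure of $W(k)$ inside $H^0_{\rig}(X/K)$, making independence of the choices immediate.

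First, I would exploit the fact that the degree-$0$ term of the log de Rham--Witt complex $W\omega^{\kr}_{(X_i,\overline{X}_i)/k}$ is simply $W\co_{\overline{X}_i}$, independent of the log structure. Using the spectral sequence $E_1^{ij}=H^j_{\cris}((X_i,\overline{X}_i)/W(k)) \Rightarrow H^{i+j}_{\cris}((X_\bullet,\overline{X}_\bullet)/W(k))$, the term of interest is
$$ H^0_{\cris}((X_\bullet,\overline{X}_\bullet)/W(k)) \cong \ker\bigl(H^0(\overline{X}_0, W\co_{\overline{X}_0}) \rightrightarrows H^0(\overline{X}_1, W\co_{\overline{X}_1})\bigr). $$
Each $\overline{X}_i$ is smooth and proper over $k$, hence geometrically reduced, so the ring $A_i := H^0(\overline{X}_i, \co_{\overline{X}_i})$ is a finite product of finite separable field extensions of $k$, i.e., an étale $k$-algebra. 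Since the sheaf of sets underlying $W_n\co_{\overline{X}_i}$ is $\co_{\overline{X}_i}^n$ with ring structure given by the universal Witt polynomials, one has $H^0(\overline{X}_i, W_n\co) = W_n(A_i)$ as $W_n(k)$-algebras; passing to the inverse limit over $n$ gives $H^0(\overline{X}_i, W\co) = W(A_i)$. Because $W$ commutes with equalizers on underlying sets (and its ring structure is polynomial in coordinates), we obtain
$$ H^0_{\cris}((X_\bullet,\overline{X}_\bullet)/W(k)) \cong W(A), \qquad A := \ker(A_0 \rightrightarrows A_1). $$
The equalizer $A$ is a reduced finite $k$-subalgebra of the étale $k$-algebra $A_0$, and remains étale over $k$ since geometric reducedness is inherited.

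Next, I would invoke the comparison with rigid cohomology recalled at the start of this section: by \cite[(1.0.17), Cor.\,11.7 1)]{N_2012}, there is a canonical isomorphism
$$ H^0_{\cris}((X_\bullet,\overline{X}_\bullet)/W(k)) \otimes_{\Z} \Q \cong H^0_{\rig}(X/K). $$
The right-hand side is intrinsic to $X$, hence does not depend on the choice of the diagram \eqref{eq:2-1}, while the left-hand side equals $W(A)[1/p]$, a finite étale $K$-algebra. Since $W(A)$ is the unique étale $W(k)$-lift of the étale $k$-algebra $A$, it coincides with the integral closure of $W(k)$ inside $W(A)[1/p]$; equivalently, $W(A)$ is recovered from $H^0_{\rig}(X/K)$ as its maximal $W(k)$-order. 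Consequently $H^0_{\cris}((X_\bullet,\overline{X}_\bullet)/W(k)) \cong W(A)$ is determined by $X$ alone, proving the theorem.

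I do not foresee a serious obstacle: the three ingredients -- the identification of sections of $W_n\co$ with Witt vectors of global sections of $\co$, the commutation of $W$ with equalizers, and Nakkajima's comparison between log crystalline cohomology of a simplicial $nc$-pair and rigid cohomology of $X$ -- are each essentially formal given the cited results. The only mildly delicate point is to check that the equalizer of étale $k$-algebras remains étale, which follows immediately from geometric reducedness.
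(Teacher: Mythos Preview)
Your argument is correct and takes a genuinely different route from the paper's proof. The paper restricts to a smooth dense open $U\subseteq X$, proves a lemma identifying $H^0_{\cris}((X_i,\overline{X}_i)/W(k))\cong H^0_{\cris}(U_i/W(k))$ and (after enlarging $k$) $H^0_{\cris}(U_i/W(k))\cong W(k)^{\pi_0(U_i)}$, and then invokes cohomological descent for \'etale cohomology of proper hypercoverings to conclude that the equalizer is $H^0_{\et}(U,\Z_p)\otimes_{\Z_p}W(k)$. Your approach bypasses these lemmas entirely: you identify the equalizer as $W(A)$ for an \'etale $k$-algebra $A$, and then pin it down as the integral closure of $W(k)$ inside $H^0_{\rig}(X/K)$ via Nakkajima's comparison theorem.

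Each approach has its advantages. The paper's argument is more self-contained, relying only on an elementary computation of $H^0_{\cris}$ via local lifts and the (classical) proper descent theorem for \'etale cohomology; in particular it does not appeal to rigid cohomology. Your argument is shorter and conceptually clean---it identifies $H^0_{\cris}((X_\bullet,\overline{X}_\bullet)/W(k))$ as \emph{the} canonical $W(k)$-lattice in $H^0_{\rig}(X/K)$---but it imports Nakkajima's comparison theorem, which is a substantial result. One small point you leave implicit: to pass from ``$W\omega^0=W\co$'' to $H^0_{\cris}((X_i,\overline{X}_i)/W(k))=H^0(\overline{X}_i,W\co)$ you need that $d$ vanishes on $W(A_i)$; this holds since $A_i$ is a product of finite separable extensions of $k$, on which the de Rham--Witt differential is zero.
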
 

\begin{lemma}\label{lem:H0}
\begin{enumerate}
	\item Let $Y \hookrightarrow \overline{Y}$ be an open immersion of smooth varieties over $k$ such that $\overline{Y} \setminus Y$ is a simple normal crossing divisor (but $\overline{Y}$ not necessarily proper) and let 
	$(Y,\overline{Y})$ be the resulting log scheme. Also, let $U$ be an open dense subscheme of $Y$. Then the induced morphism $H^0_{\cris}((Y,\overline{Y})/W(k)) \rightarrow H^0_{\cris}(U/W(k))$ is an isomorphism. 
	\item Let $U$ be a connected smooth variety over $k$ with $U(k) \not= \emptyset$. Then $H^0_{\cris}(U/W(k)) = W(k)$.  
\end{enumerate}
\end{lemma}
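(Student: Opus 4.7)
The strategy is to reduce both statements to the following fact, which I will refer to as the constants lemma: for a smooth connected $k$-variety $V$, the kernel of $d: W\co_V(V) \to W\Omega^1_V(V)$ equals $W(k_V)$, where $k_V$ denotes the algebraic closure of $k$ in $\co_V(V)$.

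For (ii), I would first observe that the hypotheses force $k_U = k$. Indeed, any $f \in \co_U(U)$ algebraic over $k$ generates a finite $k$-subalgebra $k[f] \subseteq \co_U(U)$; since $k$ is perfect, $k[f]$ is a product $\prod_j k_j$ of finite field extensions of $k$, and since $U$ is connected, the morphism $U \to \Spec k[f]$ lands in a single point $\Spec k_j$, so $k[f] \hookrightarrow \co_U(U)$ factors through $k_j$. Evaluating at the given $k$-point of $U$ yields a $k$-algebra morphism $k_j \to k$, forcing $k_j = k$ and $f \in k$. The crystalline cohomology of $U$ is computed by the de Rham--Witt complex, so $H^0_{\cris}(U/W(k)) = H^0(U_{\et}, \ker(d: W\co_U \to W\Omega^1_U))$, which by the constants lemma equals $W(k_U) = W(k)$.

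For (i), I would first reduce to the case where $\overline{Y}$ (and therefore $Y$ and $U$) is connected, using that $\overline{Y}$ is smooth and hence locally irreducible, so its connected components are irreducible and their intersections with $Y$ and $U$ remain irreducible and connected. Next I would use the injection $W\Omega^1_{\overline{Y}} \hookrightarrow W\omega^1_{(Y,\overline{Y})/k}$, which follows from the local description of the logarithmic de Rham--Witt complex along a simple normal crossings divisor (the classical complex $W\Omega^{\kr}_{\overline{Y}/k}$ is a direct summand via the Poincar\'e residues). Since the log differential $d$ on $W\co_{\overline{Y}}$ factors through $W\Omega^1_{\overline{Y}}$, this gives
\[
H^0_{\cris}((Y,\overline{Y})/W(k)) = H^0\bigl(\overline{Y}_{\et}, \ker(d: W\co_{\overline{Y}} \to W\Omega^1_{\overline{Y}})\bigr) = H^0_{\cris}(\overline{Y}/W(k)).
\]
Both $\overline{Y}$ and $U$ are smooth connected with the same function field, so $k_{\overline{Y}} = k_U$; the constants lemma then identifies both $H^0_{\cris}(\overline{Y}/W(k))$ and $H^0_{\cris}(U/W(k))$ with $W(k_{\overline{Y}})$, and the restriction map respects this identification.

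The main technical input is the constants lemma itself. Zariski-locally $V$ admits a smooth $p$-adic formal lift $\widehat{V'}$ over $\Spf W(k)$, and crystalline cohomology is then computed by $\Omega^{\kr}_{\widehat{V'}/W(k)}$; differentiating in formal coordinates identifies the kernel of $d$ with the algebraic closure of $W(k)$ inside $\co_{\widehat{V'}}(\widehat{V'})$, which reduces modulo $p$ to $W(k_{V'})$. Globalizing to the non-affine case via a \v{C}ech cover and invoking the irreducibility of $V$ to patch the local kernels is the delicate step and the main obstacle to the proof.
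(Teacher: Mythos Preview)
Your approach is broadly correct and similar in spirit to the paper's—both compute $H^0$ as the kernel of $d$ on global sections and pin it down via lifts and formal power series—but the organization differs and your exposition has a few wobbles.

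The paper proves (i) first and then deduces (ii) from it; you go in the opposite direction via a ``constants lemma''. The key simplification the paper uses, and which you miss, is \emph{base change} to a finite extension of $k$: crystalline cohomology commutes with such base change, so one may enlarge $k$ until $U$ acquires a $k$-rational point. This forces the field of constants to be $k$, and then the embedding into $W(k)[[t_1,\dots,t_d]]$ (via formal completion at the lifted rational point) shows directly that both sides of the map in (i) equal $W(k)$. This trick completely sidesteps tracking $k_V$ and makes the globalization you flag as ``the main obstacle'' disappear. In fact that globalization is easy anyway: all Zariski opens and their intersections in an irreducible $V$ share the same field of constants $k_V$, so the \v{C}ech computation gives $W(k_V)$ immediately once you know the affine case.

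Two local corrections. First, Poincar\'e residue does \emph{not} exhibit $W\Omega^{\kr}_{\overline{Y}}$ as a direct summand of the logarithmic complex; it gives a short exact sequence. Fortunately you only need the injection $W\Omega^1_{\overline{Y}} \hookrightarrow W\omega^1_{(Y,\overline{Y})/k}$, which is clear from the local description, and that suffices for your argument. Second, your sketch of the constants lemma is garbled: the kernel of $d$ in $W(k)[[t_1,\dots,t_d]]$ is exactly $W(k)$, not ``the algebraic closure of $W(k)$'', and the phrase ``reduces modulo $p$ to $W(k_{V'})$'' is backwards. What actually happens is that for an affine open $V'$ with field of constants $k_{V'}$, one may choose the lift $\widehat{V'}$ to factor through $\Spf W(k_{V'})$ (as $W(k_{V'})/W(k)$ is \'etale), and then the formal power series argument over $W(k_{V'})$ gives $\ker d = W(k_{V'})$.
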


\begin{proof}
First we prove (i). 
Set $D := \overline{Y} \setminus Y$. 
By Zariski descent for crystalline cohomology, we may shrink $\overline{Y}$  
and so we may suppose the following: $\overline{Y}$ is affine, connected and liftable to a smooth $p$-adic formal scheme $\overline{\mathfrak{Y}}$ 
over $\Spf W(k)$, and $D$ is liftable to a relative simple normal crossing divisor $\mathfrak{D}$ of $\overline{\mathfrak{Y}}$. 
Then $U$ admits a lift $\mathfrak{U}$ which is an open formal subscheme of $\overline{\mathfrak{Y}}$. 
Also, by the base change property of crystalline cohomology, we may replace $k$ by a finite extension and so we may assume that 
$U$ admits a $k$-rational point. Then $\mathfrak{U}$ admits a $\Spf W(k)$-valued point $x$. 
Let $t_1,\dots,t_d$ be local coordinates of $\mathfrak{U}$ at $x$. 
Then the map $H^0_{\cris}((Y,\overline{Y})/W(k)) \rightarrow H^0_{\cris}(U/W(k))$ is equal to the map 
\begin{equation}\label{eq:H0}
	{\rm Ker}(\Gamma(\overline{\mathfrak{Y}},\mathcal{O}_{\overline{\mathfrak{Y}}}) \to 
	\Gamma(\overline{\mathfrak{Y}},\Omega^1_{\overline{\mathfrak{Y}}}(\log \mathfrak{D}))) 
	\longrightarrow 
	{\rm Ker}(\Gamma(\mathfrak{U},\mathcal{O}_{\mathfrak{U}}) \to 
	\Gamma(\mathfrak{U},\Omega^1_{\mathfrak{U}})). 
\end{equation}
Both sides of the map contain $W(k)$. Also, there exist injective maps 
\begin{align*}
& {\rm Ker}(\Gamma(\overline{\mathfrak{Y}},\mathcal{O}_{\overline{\mathfrak{Y}}}) \to 
\Gamma(\overline{\mathfrak{Y}},\Omega^1_{\overline{\mathfrak{Y}}}(\log \mathfrak{D}))) 
= {\rm Ker}(\Gamma(\overline{\mathfrak{Y}},\mathcal{O}_{\overline{\mathfrak{Y}}}) \to 
\Gamma(\overline{\mathfrak{Y}},\Omega^1_{\overline{\mathfrak{Y}}})) \\ 
& \phantom{{\rm Ker}(\Gamma(\overline{\mathfrak{Y}},\mathcal{O}_{\overline{\mathfrak{Y}}}) \to 
	\Gamma(\overline{\mathfrak{Y}},\Omega^1_{\overline{\mathfrak{Y}}}(\log \mathfrak{D})))} 
\hookrightarrow 
{\rm Ker}\left(W(k)[[t_1,\dots,t_d]] \to \bigoplus_{i=1}^d W(k)[[t_1,\dots,t_d]]dt_i \right) = W(k), \\
& {\rm Ker}(\Gamma(\mathfrak{U},\mathcal{O}_{\mathfrak{U}}) \to 
\Gamma(\mathfrak{U},\Omega^1_{\mathfrak{U}})) 
\hookrightarrow 
{\rm Ker}\left(W(k)[[t_1,\dots,t_d]] \to \bigoplus_{i=1}^d W(k)[[t_1,\dots,t_d]]dt_i\right) = W(k) 
\end{align*}
induced by injective maps 
\[ \Gamma(\overline{\mathfrak{Y}},\mathcal{O}_{\overline{\mathfrak{Y}}}) 
\hookrightarrow \Gamma(\mathfrak{U},\mathcal{O}_{\mathfrak{U}})  
\hookrightarrow \hat{\mathcal{O}}_{\mathfrak{U},x} = W(k)[[t_1,\dots,t_d]]. 
\]
Thus both sides of the map \eqref{eq:H0} are equal to $W(k)$ and so the assertion (i) is proved. 

Next we prove (ii). Take a finite Zariski covering $U = \bigcup_{i \in I} U_i$ such that each 
$U_i$ is affine, connected and liftable to a smooth $p$-adic formal scheme. Also, since we may replace $k$ by a finite extension to prove the assertion, we may assume that 
$V := \bigcap_{i \in I}U_i$ has a $k$-rational point. 
 (Note that $U$ is geometrically connected by the assumption $U(k) \not= \emptyset$ \cite[\href{https://stacks.math.columbia.edu/tag/056R}{Lemma 056R}]{StacksProject}.)
Then 
\begin{align*}
H^0_{\cris}(U/W(k)) & = 
{\rm Ker}\left(\bigoplus_{i \in I}H^0_{\cris}(U_i/W(k)) \to \bigoplus_{i,j \in I} H^0_{\cris}(U_i \cap U_j/W(k)) \right)	\\
& = 
{\rm Ker}\left(\bigoplus_{i \in I}H^0_{\cris}(V/W(k)) \to \bigoplus_{i,j \in I} H^0_{\cris}(V/W(k)) \right) = H^0_{\cris}(V/W(k)) = W(k), 
\end{align*}
where the second equality follows from (i), the third equality follows from the connectedness of $U$ and the last equality follows from the computation in the proof of (i). 
\end{proof}

\begin{proof}[Proof of Theorem \ref{thm:H^0}]
Take an open dense subscheme $U \subseteq X$ such that $U$ is smooth over $k$, and set $U_{\bullet} := X_{\bullet} \times_X U$. Then the induced map $U_{\bullet} \to U$ is a proper hypercovering. To prove the theorem, we may replace $k$ by a finite extension and so we may assume that each connected component of $U, U_0, U_1$ has a $k$-rational point. 
	
By Lemma \ref{lem:H0}(i), we have isomorphisms 
\[ 
H^0_{\cris}((X_{\bullet}, \overline{X}_{\bullet})/W(k)) \cong H^0_{\cris}(U_{\bullet}/W(k)) \cong {\rm Ker}(H^0_{\cris}(U_0/W(k)) \to H^0_{\cris}(U_1/W(k))). 
\]
So, to prove the theorem, it suffices to prove the exactness of the sequence 
\[
0 \rightarrow H^0_{\cris}(U/W(k)) \rightarrow H^0_{\cris}(U_0/W(k)) \rightarrow H^0_{\cris}(U_1/W(k)). 
\]
By Lemma \ref{lem:H0}(ii), the above sequence is equal to the sequence 
\[
0 \rightarrow W(k)^{\oplus \pi_0(U)} \rightarrow W(k)^{\oplus \pi_0(U_0)} \rightarrow W(k)^{\oplus \pi_0(U_1)}, 
\]
hence the sequence 
\[
0 \rightarrow H^0_{\et}(U,\Z_p) \otimes_{\Z_p} W(k) \to  H^0_{\et}(U_0,\Z_p) \otimes_{\Z_p} W(k) \to 
 H^0_{\et}(U_1,\Z_p) \otimes_{\Z_p} W(k). 
\]
The exactness of this sequence follows from the cohomological descent of \'etale cohomology for proper hypercoverings. 
\end{proof}

\subsection{Independence of $H^1$}

In this subsection, we prove our first main result in this section, 
which gives an affirmative answer to Question \ref{q:2-1} for the first 
crystalline cohomology group. 

We fix our setting. 
Let $X$ be a $k$-variety and let $X \subseteq \overline{X}$ be its 
compactification. Suppose we are given a commutative diagram \eqref{eq:2-1} such that 
the square is Cartesian, $\pi$ is a split proper hypercovering, $\overline{\pi}$ is proper, and 
$(X_{\bullet}, \overline{X}_{\bullet})$ is a simplicial normal crossing pair 
with $D_{\bullet} := \overline{X}_{\bullet} \backslash X_{\bullet}$. 
Suppose moreover that $X_0 \to X$ is generically \'etale. 
(This assumption is slightly weaker than that in Question \ref{q:2-1}, but it is the natural one 
because we only consider the first cohomology.) 

Then we have the following:

\begin{theorem}\label{thm:H^1}
$H^1_{\rm cris}((X_{\bullet}, \overline{X}_{\bullet})/W(k))$ is independent of the choice 
of the pair $(X_{\bullet}, \overline{X}_{\bullet})$ as above. 
\end{theorem}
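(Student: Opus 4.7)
The plan is to identify $H^1_{\cris}((X_\bullet,\overline{X}_\bullet)/W(k))$ with a canonical $p$-adic invariant of $X$ by separately analysing the two pieces of the simplicial descent spectral sequence
\[
E_1^{ij}=H^j_{\cris}((X_i,\overline{X}_i)/W(k))\Longrightarrow H^{i+j}_{\cris}((X_\bullet,\overline{X}_\bullet)/W(k))
\]
in total degree $1$. This yields a short exact sequence
\[
0\to E_2^{1,0}\to H^1_{\cris}((X_\bullet,\overline{X}_\bullet)/W(k))\to E_3^{0,1}\to 0,
\]
where $E_3^{0,1}\subseteq E_2^{0,1}=\operatorname{Ker}\bigl(H^1_{\cris}((X_0,\overline{X}_0)/W(k))\to H^1_{\cris}((X_1,\overline{X}_1)/W(k))\bigr)$, and my task becomes to show that both $E_2^{1,0}$ and $E_3^{0,1}$ depend only on $X$.

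For $E_2^{1,0}$, I would invoke the canonical identification established in the proof of Theorem \ref{thm:H^0}: after shrinking, $H^0_{\cris}((X_i,\overline{X}_i)/W(k))\cong H^0_{\et}(U_i,\Z_p)\otimes_{\Z_p} W(k)$ for a sufficiently small smooth dense open $U_i\subseteq X_i$. The term $E_2^{1,0}$ then reduces, by flatness of $-\otimes_{\Z_p}W(k)$, to $H^1$ of the \v{C}ech complex of $\Z_p$ along $U_\bullet\to U$ for a smooth dense open $U\subseteq X$; generic \'etaleness of $X_\bullet\to X$ ensures that (after further shrinking) $U_\bullet\to U$ is a genuinely \'etale proper hypercovering of $U$, so cohomological descent identifies this $H^1$ with $H^1_{\et}(U,\Z_p)\otimes W(k)$, which is intrinsic to $X$.

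For $E_3^{0,1}$, I would give a canonical description of each $H^1_{\cris}((X_i,\overline{X}_i)/W(k))$ in terms of $X_i$ alone: it is the contravariant Dieudonn\'e module of the $p$-divisible group of the semi-abelian variety $G(X_i,\overline{X}_i)$ sitting in
\[
0\to T_{D_i}\to G(X_i,\overline{X}_i)\to \operatorname{Pic}^0(\overline{X}_i)\to 0,
\]
where $T_{D_i}$ is the torus with character group $\Z^{\pi_0(D_i)}$; by a classical theorem of Serre this semi-abelian variety is the generalised Albanese of $X_i$ and thus depends only on $X_i$, not on the $nc$-compactification. The kernel $E_2^{0,1}$ then becomes the Dieudonn\'e module of an equaliser $p$-divisible group built from $G(X_0)$ and $G(X_1)$, and proper generically \'etale descent for the generalised Albanese along $X_\bullet\to X$ identifies this (together with the image of $d_2$) with a canonical object attached to $X$, namely the ``weight $-1$'' part of a Deligne-type $1$-motive of $X$.

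The main obstacle is the final ingredient: establishing the relevant descent for the generalised Albanese, or equivalently for the associated $p$-divisible group, along a proper generically \'etale hypercovering, uniformly in the characteristic. This last point is what distinguishes the proof from that of Andreatta--Barbieri-Viale, whose argument uses a Dieudonn\'e-theoretic duality requiring $p\ne 2$; I would circumvent this by carrying out the argument entirely in terms of the contravariant Dieudonn\'e module and the log de~Rham--Witt complex, in which case the descent can be verified directly from the $\cdp$-descent properties already exploited for $H^0$ and extended through the spectral sequence above.
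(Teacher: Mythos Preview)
Your argument for $E_2^{1,0}$ contains a genuine error. Cohomological descent for the \'etale proper hypercovering $U_\bullet\to U$ says that the \emph{full} spectral sequence $E_1^{ij}=H^j_{\et}(U_i,\Z_p)\Rightarrow H^{i+j}_{\et}(U,\Z_p)$ converges; it does \emph{not} identify the bottom row alone with $H^\ast_{\et}(U,\Z_p)$. Your $E_2^{1,0}$ is only the $E_2^{1,0}$ of that \'etale spectral sequence, a proper piece of $H^1_{\et}(U,\Z_p)$ depending on $U_\bullet$. Worse, the proposed target cannot be right: for smooth affine $U$ in characteristic~$p$, Artin--Schreier theory shows $H^1_{\et}(U,\Z/p)$ is infinite-dimensional, so $H^1_{\et}(U,\Z_p)\otimes_{\Z_p}W(k)$ is not finitely generated, whereas your $E_2^{1,0}$ sits inside the finitely generated $H^1_{\cris}((X_\bullet,\overline{X}_\bullet)/W(k))$. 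And even granting some corrected identification, the open $U$ must shrink with the hypercovering, so nothing intrinsic to $X$ emerges. The treatment of $E_3^{0,1}$ is likewise only a gesture: ``proper generically \'etale descent for the generalised Albanese'' is exactly the nontrivial input, and you have not indicated how to prove it.

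The paper proceeds along an entirely different axis. It never decomposes via the descent spectral sequence; instead it analyses the non-logarithmic $H^1_{\cris}(\overline{X}_\bullet/W(k))$ through the \emph{slope} spectral sequence, extending arguments of Serre and Illusie to the simplicial setting to show that $H^0_{\et}(\overline{X}_\bullet,W\Omega^1)$ and $H^1_{\et}(\overline{X}_\bullet,W\mathcal{O})$ are finite free and are the covariant Dieudonn\'e modules of the \'etale and connected parts of $\mathrm{Pic}^{0,\mathrm{red}}_{\overline{X}_\bullet}[p^\infty]$. The passage from non-log to log $H^1$ is via the weight filtration, yielding an exact sequence with third term $\mathrm{Div}^0_{D_\bullet}(\overline{X}_\bullet)\otimes W(k)$; this step requires proving the injectivity of $\mathrm{NS}(\overline{X}_\bullet)\otimes W(k)\hookrightarrow H^2_{\cris}(\overline{X}_\bullet/W(k))$. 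Independence is then read off from Andreatta--Barbieri-Viale's theorem that the Picard $1$-motive $[\mathrm{Div}^0_{D_\bullet}(\overline{X}_\bullet)\to\mathrm{Pic}^{0,\mathrm{red}}_{\overline{X}_\bullet}]$ is independent of the hypercovering, which holds for all~$p$. That theorem is the substitute for the descent statement you left open, and the paper uses it as a black box rather than reproving it.
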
 

When $p \geqslant 3$, this is already proven by  
Andreatta--Barbieri-Viale \cite[Cor.\,7.5.4]{ABV_2005}. 
We will give a different proof which works also in the case $p=2$. 
Moreover, in Appendix \ref{appendixB}, we will give a description of the cohomology $H^1_{\rm cris}((X_{\bullet}, \overline{X}_{\bullet})/W(k))$ in the style of the previous section,
and discuss the functoriality of it.\\

First we explain briefly the proof of Andreatta--Barbieri-Viale. 
We take another diagram 
\begin{equation*}
\xymatrix{
X'_{\bullet} \ar[r] \ar[d]^{\cap} & X \ar[d]^{\cap} \\ 
\overline{X}'_{\bullet} \ar[r] & \overline{X}  
}
\end{equation*}
with $D'_{\bullet} := \overline{X}'_{\bullet} \backslash X'_{\bullet}$
satisfying the same condition as $(X_{\bullet}, \overline{X}_{\bullet})$. 
To prove Theorem \ref{thm:H^1}, it suffices to prove that, when 
we have a morphism $(X_{\bullet}, \overline{X}_{\bullet}) \to 
(X'_{\bullet}, \overline{X}'_{\bullet})$, the induced morphism of crystalline cohomologies 
\begin{equation}\label{eq:indep?}
H^1_{\rm cris}((X_{\bullet}, \overline{X}_{\bullet})/W(k)) \to 
H^1_{\rm cris}((X'_{\bullet}, \overline{X}'_{\bullet})/W(k))
\end{equation} 
is an isomorphism,  by Proposition \ref{prop:domhypercover'} \ref{prop:domhypercover_iii'} and Corollary \ref{cor:domc}. 

In this situation, we have the morphism between Picard $1$-motives  
\begin{equation}\label{eq:1-motive}
{\rm Pic}^+(X_{\bullet}, \overline{X}_{\bullet}) \to 
 {\rm Pic}^+(X'_{\bullet}, \overline{X}'_{\bullet})
\end{equation}
associated to $(X_{\bullet}, \overline{X}_{\bullet})$ and 
$(X'_{\bullet}, \overline{X}'_{\bullet})$. Then Theorem \ref{thm:H^1} 
in the case $p \geqslant 3$ is a consequence of the following two theorems.

\begin{theorem}[{\cite[Prop.\,A.2.8]{ABV_2005}}] \label{thm:ABV1}
The morphism \eqref{eq:1-motive} is an isomorphism \textup{(}for any $p)$. 
\end{theorem}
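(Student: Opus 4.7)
The plan is to establish the isomorphism by exploiting the fact that the Picard $1$-motive $\mathrm{Pic}^+(Y_\bullet,\overline{Y}_\bullet)$ of a simplicial normal crossing pair is governed entirely by its $2$-skeleton (it is a $[L\to G]$ where $L$ is a lattice and $G$ a semi-abelian variety, and both are computable from the datum at levels $0,1,2$). Since \eqref{eq:1-motive} is induced from a morphism of split proper generically étale hypercoverings of a fixed $(X,\overline{X})$, one reduces to checking the isomorphism statement after replacing $\bullet$ by $\bullet\leqslant 2$.

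For the semi-abelian part $G$, I would proceed in two sub-steps. First, unwind the construction to identify $G$ as the identity component of the equalizer of the two restriction maps $\mathrm{Pic}^+(X_0,\overline{X}_0)\rightrightarrows\mathrm{Pic}^+(X_1,\overline{X}_1)$ (and similarly for $G'$). Second, observe that since $X_0\to X$ is generically étale and proper surjective, the pullback along $\pi_0$ is an isogeny onto its image on the semi-abelian level, and the Čech-style identification at level $1$ precisely trivializes the kernel and cokernel; this is a variant of classical proper-étale descent for semi-abelian Picard schemes. The same argument applies to $(X'_\bullet,\overline{X}'_\bullet)$, so by functoriality of the construction applied to the given morphism of hypercoverings (which is proper and generically étale over $(X,\overline{X})$), one deduces that $G\to G'$ is an isogeny whose kernel injects into itself under the diagonal, hence is trivial.

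For the lattice part $L$, I would use the explicit description via combinatorics of irreducible components of the boundary divisors $D_\bullet, D'_\bullet$ together with the divisor classes which become trivial at higher levels. Since both hypercoverings extend $(X,\overline{X})$ with the same boundary data $\overline{X}\setminus X$, and since the morphism $(X_\bullet,\overline{X}_\bullet)\to(X'_\bullet,\overline{X}'_\bullet)$ is compatible with the structure maps to $(X,\overline{X})$, the resulting free abelian groups $L,L'$ are canonically identified. Applying the five-lemma to the defining exact sequences $0\to G\to \mathrm{Pic}^+\to L\to 0$ of both $1$-motives then yields the desired isomorphism.

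The main obstacle will be the careful handling of the semi-abelian part in characteristic $p=2$ and the verification that generic étaleness (as opposed to genuine étaleness) suffices: one must rule out inseparable contributions to the kernel of $G\to G'$. I expect this to follow from the fact that $1$-motives over a perfect field, viewed through their $\ell$-adic and crystalline realizations for $\ell\neq p$ and via Dieudonné theory for $p$, are rigid enough that a morphism inducing isomorphisms on all realizations is itself an isomorphism; and the realizations can be compared by proper cohomological descent, which only requires generic étaleness to identify the top-degree terms.
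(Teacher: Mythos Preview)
The paper does not supply its own proof of this statement; it is quoted verbatim from \cite[Prop.\,A.2.8]{ABV_2005} and used as a black box (via Corollary~\ref{cor:ABV}) in the proof of Theorem~\ref{thm:H^1}. So there is no in-paper argument to compare against. What I can do is assess whether your outline would stand on its own.

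The overall shape---treat the lattice part and the semi-abelian part separately and then conclude---is the right one, and is in fact how Andreatta--Barbieri-Viale proceed. But several steps in your sketch are not yet arguments.

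First, a $1$-motive $[L \xrightarrow{u} G]$ is a two-term complex, not the middle term of a short exact sequence $0 \to G \to \mathrm{Pic}^+ \to L \to 0$; there is no five-lemma to apply. A morphism of $1$-motives is an isomorphism precisely when it is so on $L$ and on $G$ separately (compatibly with $u$), which is tautological. This is a minor slip, but it indicates that the structure has not been unpacked carefully.

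Second, and more seriously, your treatment of the lattice part is circular. The group $\mathrm{Div}^0_{D_\bullet}(\overline{X}_\bullet)$ is built from divisors on $\overline{X}_0$ supported on $D_0$ and satisfying a compatibility on $\overline{X}_1$; it is \emph{not} a function of the base $(X,\overline{X})$ alone. Saying that $L$ and $L'$ are ``canonically identified'' because both hypercoverings ``extend $(X,\overline{X})$ with the same boundary data'' is precisely the assertion to be proved, not a justification for it. In \cite{ABV_2005} this step goes through a comparison with an intrinsic object attached to $X$ (essentially the relevant piece of the N\'eron--Severi/divisor data of a smooth compactification of $X$), which is where the generic \'etaleness of $X_0 \to X$ enters.

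Third, the argument for the semi-abelian part is too vague to evaluate. The phrase ``an isogeny whose kernel injects into itself under the diagonal, hence is trivial'' does not parse. Note also that $\mathrm{Pic}^{0,\mathrm{red}}_{\overline{X}_\bullet}$ is not simply an equalizer at levels $0,1$: as Remark~\ref{rem:ABV} in the paper explains, its description involves the auxiliary groups $\mathbb{T}$, $\mathbb{K}$, $\widetilde{\mathbb{K}}$ and a genuinely delicate d\'evissage. Any direct proof must engage with this.

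Finally, the fallback via realizations is circular in this paper's logic: Theorem~\ref{thm:ABV1} is an \emph{input} to the computation of $H^1_{\mathrm{cris}}$ (Theorem~\ref{thm:H^1}), so you cannot invoke the crystalline realization to prove it. An $\ell$-adic argument alone would not suffice either, since a morphism of semi-abelian varieties over a perfect field of characteristic $p$ can have infinitesimal kernel invisible to all $\ell \neq p$.
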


\begin{theorem}[{\cite[Thm.\,B']{ABV_2005}}]\label{thm:ABV2}
If $p \geqslant 3$, there exists a functorial isomorphism 
${\bold T}_{\rm cris}({\rm Pic}^+(X_{\bullet}, \overline{X}_{\bullet})) \xrightarrow{\cong}
H^1_{\rm cris}((X_{\bullet}, \overline{X}_{\bullet})/W(k))$ which is compatible with 
weight filtration and Frobenius action, where ${\bold T}_{\rm cris}$ 
is the covariant Dieudonn\'e functor \textup{(}the contravariant Dieudonn\'e functor of 
the $p$-divisible group associated to the dual\textup{)}. 
\end{theorem}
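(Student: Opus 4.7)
The plan is to construct a functorial map between the two sides and verify it is an isomorphism by checking compatibility on weight graded pieces. Both objects carry a canonical three-step weight filtration. On the $1$-motive side, $M := {\rm Pic}^+(X_{\bullet},\overline{X}_{\bullet}) = [L \to G]$ sits in Deligne's weight filtration $W_{-2}M \subseteq W_{-1}M \subseteq W_0M = M$, where $W_{-2}M$ is the maximal toric subgroup $T \subseteq G$, $W_{-1}M = G$, and ${\rm Gr}^W_0 M = L$. Since the covariant crystalline Dieudonn\'e functor ${\bold T}_{\cris}$ is exact on $1$-motives, this induces a filtration on ${\bold T}_{\cris}(M)$ whose graded pieces are the cocharacter module of $T$ (tensored with $W(k)$ and Tate-twisted), the Dieudonn\'e module of the abelian quotient $A = G/T$, and $L \otimes_{\Z} W(k)$. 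On the crystalline side, the standard weight filtration on the log de Rham--Witt complex $W\Omega^{\kr}_{\overline{X}_{\bullet}/k}(\log D_{\bullet})$ (see \cite{M_1993}, \cite{M_2017}), combined with the Poincar\'e residue and the $E_1$-spectral sequence for the simplicial structure, induces a weight filtration on $H^1_{\cris}((X_\bullet,\overline{X}_\bullet)/W(k))$ with analogous pieces.

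First, I would construct the map at the level of the Picard $1$-motive via the $d\log$ homomorphism $j_{\bullet,*}\mathcal{O}_{X_\bullet}^\times \to W\Omega^{\kr}_{\overline{X}_\bullet/k}(\log D_{\bullet})[1]$ that already appears in Theorem \ref{theorem: A-Chern clases}. Combined with the simplicial spectral sequence, this yields a natural map from the Picard complex of $(X_\bullet,\overline{X}_\bullet)$ to $H^1_{\cris}((X_\bullet,\overline{X}_\bullet)/W(k))$. To promote this to a map of Dieudonn\'e modules (respecting $F$ and $V$), I would invoke the Mazur--Messing crystalline interpretation via the universal vector extension: the universal $\mathbb{G}_a$-extension $M^{\natural}$ of $M$, evaluated on $W_n(k)$-thickenings, computes ${\bold T}_{\cris}(M)/p^n$, and this evaluation is identified with logarithmic de Rham--Witt cohomology through the comparison isomorphism of Mazur--Messing and Berthelot--Breen--Messing extended from abelian varieties to $1$-motives. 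The construction is functorial in $(X_\bullet,\overline{X}_\bullet)$ because $M^{\natural}$ and $d\log$ are both intrinsic to the log scheme structure.

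Second, I would verify the map is an isomorphism by inspection on each weight-graded piece. The weight-$0$ piece corresponds to the contribution of $H^0$ of the simplicial $D_\bullet^{(1)}$ stratum (modulo the boundary from the simplicial $H^0$ of $\overline{X}_\bullet$) and matches the lattice $L$ of divisorial components, with Frobenius acting as multiplication by $1$ on both sides. The middle piece reduces to the classical theorem identifying $H^1_{\cris}$ of a smooth proper $k$-variety with the covariant Dieudonn\'e module of the $p$-divisible group associated to the Picard scheme; by $E_1$-degeneration this extends to the simplicial abelian quotient $A$. The weight-$2$ piece is dual to the weight-$0$ one and matches the cocharacter lattice of the toric part $T$ via the residue of $d\log$ of local equations of the boundary divisor, contributing the expected Tate twist. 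Compatibility with $F$ and $V$ on each piece is then a direct check using the functoriality of Frobenius on both the log de Rham--Witt complex and the universal vector extension.

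The main obstacle, and the reason for the hypothesis $p \geq 3$, lies in the middle (abelian) step. In characteristic $2$ the Mazur--Messing--Berthelot--Breen crystalline Dieudonn\'e functor from $p$-divisible groups to Dieudonn\'e crystals fails to be fully faithful in general, so one cannot directly interpret ${\bold T}_{\cris}(G)$ as a de Rham--Witt cohomology of a scheme without additional input (such as Zink's displays or Lau's $2$-frames). Circumventing this subtlety would require either a refinement of the Dieudonn\'e formalism valid for $p=2$, or an alternative construction bypassing the $1$-motive interpretation; the latter is precisely the route taken by the authors in the subsequent subsection to prove Theorem \ref{thm:H^1} uniformly at all primes.
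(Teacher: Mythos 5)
Your proposal essentially retraces Andreatta--Barbieri-Viale's own strategy (Mazur--Messing universal vector extension, comparison on the three-step \emph{weight} graded pieces of the $1$-motive), whereas the paper's Appendix~\ref{appendixA} proof is a genuinely different, non-constructive argument that works with the \emph{slope} filtration instead. The paper first identifies the two ends of the slope exact sequence $0 \to H^0(\overline{X}_{\bullet},W\Omega^1) \to H^1_{\cris}(\overline{X}_{\bullet}/W(k)) \to H^1(\overline{X}_{\bullet},W\mathcal{O}_{\overline{X}_{\bullet}}) \to 0$ with the Dieudonn\'e modules of the \'etale and connected parts of ${\rm Pic}^{0,{\rm red}}_{\overline{X}_{\bullet}}[p^\infty]$ (Propositions~\ref{prop:h0}, \ref{prop:h1}), then uses that both short exact sequences split over $W(\overline{\mathbf{F}}_p)$ together with the \emph{uniqueness} of a splitting-compatible isomorphism (no nonzero morphism of $F$-crystals between pieces of distinct slopes, resp.\ distinct weights) to produce the comparison by descent. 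This entirely bypasses the universal vector extension, which is why the paper's argument works for all $p$ including $p=2$ --- but pays by requiring $k\subseteq\overline{\mathbf{F}}_p$ (to have a finite base field where the weight argument in the last step is available).

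There is a concrete gap in your middle-piece step that you should be aware of. You write that the abelian contribution ``reduces to the classical theorem identifying $H^1_{\cris}$ of a smooth proper $k$-variety with the covariant Dieudonn\'e module of the $p$-divisible group associated to the Picard scheme; by $E_1$-degeneration this extends to the simplicial abelian quotient $A$.'' This is precisely the step flagged by the authors in Remark~\ref{rem:ABV}: ABV's proof of Theorem B$'$ uses the claim that the abelian part of ${\rm Pic}^{0,{\rm red}}_{\overline{X}_{\bullet}}$ equals ${\mathbb{K}}^{0,{\rm red}}={\rm Ker}^{0,{\rm red}}({\rm Pic}^{0,{\rm red}}_{\overline{X}_0}\to{\rm Pic}^{0,{\rm red}}_{\overline{X}_1})$, but as the paper notes, ``it seems that there is no proof for it'' (the possible discrepancy ${\mathbb{F}}'$ may be nonzero). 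Without that identification, your reduction to the classical abelian-variety comparison via the simplicial $E_1$-page does not go through. The paper's slope-based argument is designed precisely so that one never needs to identify the abelian part of ${\rm Pic}^{0,{\rm red}}_{\overline{X}_{\bullet}}$ in terms of the simplicial structure: Propositions~\ref{prop:h0} and \ref{prop:h1} speak directly about the \'etale and connected parts of the $p$-divisible group of the \emph{simplicial} reduced Picard scheme, with no reference to an abelian-variety quotient. If you want to keep your weight-graded approach, you would need to supply the missing argument identifying the abelian part, or reformulate the middle piece so that it is indexed by the $p$-divisible group of ${\rm Pic}^{0,{\rm red}}_{\overline{X}_{\bullet}}$ directly rather than by its torus/abelian decomposition.
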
 

In our proof of Theorem \ref{thm:H^1}, we will use Theorem \ref{thm:ABV1} but 
we will not use Theorem \ref{thm:ABV2}. Thus our proof is different from that in 
\cite{ABV_2005} even when $p \geqslant 3$.

\begin{remark}\label{rem: ABV2}
In Appendix \ref{appendixA} we give a proof of Theorem \ref{thm:ABV2} for any prime $p$, based on the results in this section.
\end{remark}

First we recall the definition of the
Picard $1$-motive ${\rm Pic}^+(X_{\bullet}, \overline{X}_{\bullet})$. 
Recall that a $1$-motive over $k$ is a complex 
${\mathbb{M}} := [{\mathbb{X}} \xrightarrow{u} {\mathbb{G}}]$ sitting in degrees $-1, 0$ consisting of 
a finite free ${\bold Z}$-module $\mathbb{X}$ endowed with continuous ${\rm Gal}(\overline{k}/k)$-action 
(regarded as a locally constant group scheme over $k$), 
a semiabelian variety ${\mathbb{G}}$ over $k$ and a homomorphism of $k$-group schemes $u: {\mathbb{X}} \to {\mathbb{G}}$. 

Let $(X_{\bullet}, \overline{X}_{\bullet})$ be a simplicial proper normal crossing 
pair with $D_{\bullet} = \overline{X}_{\bullet} \backslash X_{\bullet}$. 
Then the Picard fppf-sheaf 
$$ T \mapsto {\rm Pic}_{\overline{X}_{\bullet}}(T) := H^0_{\rm fppf}(T,
R^1f_{T,*}{\mathbb{G}}_{m,\overline{X}_{\bullet} \times_k T}), $$
where $f_T: \overline{X}_{\bullet,T} := \overline{X}_{\bullet} \times_{\Spec k} T \to T$ is the projection, 
is representable by a group scheme ${\rm Pic}_{\overline{X}_{\bullet}}$ over $k$ 
 \cite[Lem.\,4.1.2]{BVS_2001} and its reduced identity component 
${\rm Pic}^{0,{\rm red}}_{\overline{X}_{\bullet}}$ is a semiabelian variety over $k$  (see Remark \ref{rem:ABV} below).
\begin{remark}[{cf. \cite[p.54]{BVS_2001}}]\label{rem:BVS}
For a $k$-variety $T$ the Leray spectral sequence 
\[ E_2^{i,j} = H^i_{\rm fppf}(T, R^jf_{T*}\mathbb{G}_m) \,\Longrightarrow\, H^{i+j}_{\rm fppf}(\overline{X}_{\bullet,T},\mathbb{G}_m) \]
induces the exact sequence 
\begin{align*}
0 & \to H^1_{\rm fppf}(T,f_{T*}\mathbb{G}_m) \to H^1_{\rm fppf}(\overline{X}_{\bullet,T}, \mathbb{G}_m) 
\to H^0_{\rm fppf}(T, R^1f_{T*}\mathbb{G}_m) \to H^2_{\rm fppf}(T, f_{T*}\mathbb{G}_m). 
\end{align*}
Suppose $k = \overline{k}$. Then we have an isomorphism 
$f_*\mathcal{O}_{\overline{X}_{\bullet}} \cong k^N$ for some $N \geqslant 1$ and then 
$f_{T*}\mathbb{G}_m \cong (\mathcal{O}_T^{\times})^N$. Hence, if $T = \Spec R$ for some Artinian local $k$-algebra $R$, $H^i_{\rm fppf}(T, f_{T*}\mathbb{G}_m) = 0$ for $i=1,2$ and so 
\[ {\rm Pic}_{\overline{X}_{\bullet}}(T) \cong H^1_{\rm fppf}(\overline{X}_{\bullet,T}, \mathbb{G}_m) 
\cong H^1_{\et}(\overline{X}_{\bullet,T}, \mathbb{G}_m). \]
\end{remark}

Next, let ${\rm Div}_{D_{\bullet}}(\overline{X}_{\bullet})$ be the group of Weil divisors $E$ on  $\overline{X}_0 \times_{\Spec k} \Spec \overline{k}$ supported on $D_0 \times_{\Spec k} \Spec \overline{k}$ with $d_1^*E = d_0^*E$, where $d_i: \overline{X}_1 \times_{\Spec k} \Spec \overline{k} \to \overline{X}_0 \times_{\Spec k} \Spec \overline{k} \,(i=0,1)$ are projections. This is a finite free $\Z$-module  
endowed with continuous ${\rm Gal}(\overline{k}/k)$-action, and there is the canonical 
morphism 
${\rm Div}_{D_{\bullet}}(\overline{X}_{\bullet}) \to {\rm Pic}_{\overline{X}_{\bullet}}$. 
If we define ${\rm Div}^0_{D_{\bullet}}(\overline{X}_{\bullet})$ to 
be the kernel of the composition 
$$ {\rm Div}_{D_{\bullet}}(\overline{X}_{\bullet}) \to {\rm Pic}_{\overline{X}_{\bullet}} 
\twoheadrightarrow \pi_0({\rm Pic}_{\overline{X}_{\bullet}} ) =: 
{\rm NS}(\overline{X}_{\bullet}), $$
we have the induced morphism 
$u: {\rm Div}^0_{D_{\bullet}}(\overline{X}_{\bullet}) \to 
{\rm Pic}^{0,{\rm red}}_{\overline{X}_{\bullet}}.$ 
Then the Picard $1$-motive ${\rm Pic}^+(X_{\bullet}, \overline{X}_{\bullet})$ is defined by 
$$ {\rm Pic}^+(X_{\bullet}, \overline{X}_{\bullet}) := [ 
{\rm Div}^0_{D_{\bullet}}(\overline{X}_{\bullet}) \to {\rm Pic}^{0,{\rm red}}_{\overline{X}_{\bullet}}].  $$
By Theorem \ref{thm:ABV1}, we obtain the following corollary.

\begin{corollary}\label{cor:ABV}
The natural morphisms ${\rm Div}^0_{D_{\bullet}}(\overline{X}_{\bullet}) \to 
{\rm Div}^0_{D'_{\bullet}}(\overline{X}'_{\bullet})$, 
${\rm Pic}^{0,{\rm red}}_{\overline{X}_{\bullet}} \to 
{\rm Pic}^{0,{\rm red}}_{\overline{X}'_{\bullet}}$ induced by 
\eqref{eq:1-motive} are isomorphisms \textup{(}for any $p)$. 
\end{corollary}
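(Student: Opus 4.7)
The plan is to derive this as an immediate consequence of Theorem \ref{thm:ABV1} by unpacking what it means for a morphism of $1$-motives to be an isomorphism. First I will recall that in Deligne's category of $1$-motives, an object is a two-term complex $[\mathbb{X} \xrightarrow{u} \mathbb{G}]$ and a morphism $(\mathbb{X} \to \mathbb{G}) \to (\mathbb{X}' \to \mathbb{G}')$ is a pair $(f,g)$ of morphisms of group schemes over $k$ making the evident square commute. This is an additive category (not a derived category), and an isomorphism in it is tautologically a pair of isomorphisms of components. This is the category in which Andreatta--Barbieri-Viale formulate and prove Theorem \ref{thm:ABV1}.

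Given this, the proof will consist of a single step: apply Theorem \ref{thm:ABV1} to the morphism of Picard $1$-motives
$$ [{\rm Div}^0_{D_{\bullet}}(\overline{X}_{\bullet}) \to {\rm Pic}^{0,{\rm red}}_{\overline{X}_{\bullet}}] \longrightarrow [{\rm Div}^0_{D'_{\bullet}}(\overline{X}'_{\bullet}) \to {\rm Pic}^{0,{\rm red}}_{\overline{X}'_{\bullet}}] $$
induced functorially by the morphism $(X_{\bullet},\overline{X}_{\bullet}) \to (X'_{\bullet},\overline{X}'_{\bullet})$ between the two simplicial $nc$-pairs. The theorem asserts that this is an isomorphism of $1$-motives, and by the categorical observation above, both the lattice component ${\rm Div}^0_{D_{\bullet}}(\overline{X}_{\bullet}) \to {\rm Div}^0_{D'_{\bullet}}(\overline{X}'_{\bullet})$ and the semiabelian component ${\rm Pic}^{0,{\rm red}}_{\overline{X}_{\bullet}} \to {\rm Pic}^{0,{\rm red}}_{\overline{X}'_{\bullet}}$ must then be isomorphisms, which is exactly the content of the corollary. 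The only point requiring care -- and certainly not a substantive obstacle -- is to confirm from \cite[Prop.\,A.2.8]{ABV_2005} that the word ``isomorphism'' is meant in this strict categorical sense rather than as a quasi-isomorphism of complexes in a derived category; under the latter interpretation one would also have to trace through the kernels and cokernels of $u$ and $u'$ separately, but that is not how Deligne's formalism of $1$-motives is set up and so no such analysis is needed.
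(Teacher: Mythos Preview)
Your proposal is correct and matches the paper's approach exactly: the paper states the corollary immediately after Theorem~\ref{thm:ABV1} with no proof, treating it as an obvious consequence, and your unpacking of why an isomorphism of $1$-motives is componentwise an isomorphism is precisely the one-line justification implicit there.
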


\begin{remark}\label{rem:ABV}
We have by a standard argument the exact sequence 
\begin{equation}\label{eq:rem1}
0 \to {\mathbb{T}} \to {\rm Pic}_{\overline{X}_{\bullet}} \to {\mathbb{K}} 
\to {\mathbb{T}}' 
\end{equation}
with
\begin{align*}
& {\mathbb{T}} := \frac{{\rm Ker}\left((f_1)_*{\mathbb{G}}_{m,\overline{X}_1} \to 
(f_2)_*{\mathbb{G}}_{m,\overline{X}_2}\right)}{{\rm Im}\left((f_0)_*{\mathbb{G}}_{m,\overline{X}_0} \to 
(f_1)_*{\mathbb{G}}_{m,\overline{X}_1}\right)}, 
\quad 
{\mathbb{T}}' := \frac{{\rm Ker}\left((f_2)_*{\mathbb{G}}_{m,\overline{X}_2} \to 
(f_3)_*{\mathbb{G}}_{m,\overline{X}_3}\right)}{{\rm Im}\left((f_1)_*{\mathbb{G}}_{m,\overline{X}_1} \to 
(f_2)_*{\mathbb{G}}_{m,\overline{X}_2}\right)}, \\ 
& {\mathbb{K}} := {\rm Ker}({\rm Pic}_{\overline{X}_0} \to {\rm Pic}_{\overline{X}_1}),  
\end{align*} 
where $f_i\colon\overline{X}_i \to {\rm Spec}\,k \, (i=0,1,2)$ are structure morphisms. 
The reduced identity components ${\mathbb{T}}^{0, {\rm red}},  {{\mathbb{T}}'}^{0,{\rm red}}$ 
of ${\mathbb{T}},  {\mathbb{T}}'$ are tori, 
and the reduced identity component 
$$ {\mathbb{K}}^{0,{\rm red}} =
 {\rm Ker}^{0,{\rm red}}
({\rm Pic}_{\overline{X}_0} \to {\rm Pic}_{\overline{X}_1}) = 
 {\rm Ker}^{0,{\rm red}}
({\rm Pic}^{0,{\rm red}}_{\overline{X}_0} \to {\rm Pic}^{0,{\rm red}}_{\overline{X}_1})$$ of 
${\mathbb{K}}$ is an abelian variety. Therefore, the composite 
${\mathbb{K}}^{0,{\rm red}} \to {\mathbb{K}} \to  {\mathbb{T}}'$ is zero. 
So, if we set ${\rm Pic}'_{\overline{X}_{\bullet}} := 
{\rm Pic}_{\overline{X}_{\bullet}} \times_{{\mathbb{K}}} {\mathbb{K}}^{0,{\rm red}}$, 
we obtain an exact sequence 
\begin{equation}\label{eq:rem2}
0 \to {\mathbb{T}} \to {\rm Pic}'_{\overline{X}_{\bullet}} \to {\mathbb{K}}^{0,{\rm red}} 
\to 0 
\end{equation}
by pulling back the sequence \eqref{eq:rem1} by ${\mathbb{K}}^{0,{\rm red}} \to {\mathbb{K}}$.  
If we set ${\mathbb{F}} := {\mathbb{T}}/{\mathbb{T}}^{0, {\rm red}}$, 
it is a finite group scheme and if we denote the quotient 
${\rm Pic}'_{\overline{X}_{\bullet}} /{\mathbb{T}}^{0, {\rm red}}$ by 
$\widetilde{\mathbb{K}}$, we obtain exact sequences 
\begin{equation}\label{eq:rem3}
0 \to {\mathbb{T}}^{0, {\rm red}} \to {\rm Pic}'_{\overline{X}_{\bullet}} \to 
\widetilde{\mathbb{K}}
\to 0, \quad 0 \to {\mathbb{F}} \to \widetilde{\mathbb{K}} \to 
 {\mathbb{K}}^{0,{\rm red}} \to 0. 
\end{equation}
From the second sequence of \eqref{eq:rem3}, we see that 
$\widetilde{\mathbb{K}}$ is proper and so its reduced identity component $\widetilde{\mathbb{K}}^{0, {\rm red}}$ is an abelian variety. Thus, by 
pulling back the first sequence of \eqref{eq:rem3} by 
$\widetilde{\mathbb{K}}^{0, {\rm red}} \to \widetilde{\mathbb{K}}$, 
we obtain the exact sequence 
\begin{equation}\label{eq:rem4}
0 \to {\mathbb{T}}^{0, {\rm red}} \to {\rm Pic}^{0,{\rm red}}_{\overline{X}_{\bullet}} \to 
\widetilde{\mathbb{K}}^{0, {\rm red}} 
\to 0, 
\end{equation}
which describes ${\rm Pic}^{0,{\rm red}}_{\overline{X}_{\bullet}}$ 
as an extension of an abelian variety by a torus. 

By construction, there exists an exact sequence 
\[ 0 \to {\mathbb{F}}' \to \widetilde{\mathbb{K}}^{0, {\rm red}} \to 
{\mathbb{K}}^{0,{\rm red}} \to 0 \] 
for some closed subgroup scheme ${\mathbb{F}}'$ of ${\mathbb{F}}$. 
In \cite[Rem.\,5.1.2]{ABV_2005}, Andreatta--Barbieri-Viale claim 
that ${\mathbb{K}}^{0,{\rm red}}$ is the abelian part of 
${\rm Pic}^{0,{\rm red}}_{\overline{X}_{\bullet}}$ (hence ${\mathbb{F}}' = 0$), 
but it seems that there is no proof for it. Because they use this description of the abelian part 
in the proof of Theorem \ref{thm:ABV2}, some supplementary argument would be 
required. In this article, we do not use Theorem \ref{thm:ABV2}. 
\end{remark}

\begin{remark}
Let the notations be as above and assume that $k$ is algebraically closed. 
Then we have ${\rm NS}(\overline{X}_{\bullet}) := \pi_0({\rm Pic}_{\overline{X}_{\bullet}})
= {\rm Pic}_{\overline{X}_{\bullet}}(k)/ {\rm Pic}^{0,{\rm red}}_{\overline{X}_{\bullet}}(k)$. 
By the commutative diagrams with exact horizontal lines 
\[ 
\xymatrix{
0 \ar[r] & {\mathbb{T}}^{0,{\rm red}}(k) \ar[r] \ar@{^{(}->}[d] & 
{\rm Pic}^{0,{\rm red}}_{\overline{X}_{\bullet}}(k) \ar[r] \ar@{^{(}->}[d] & 
\widetilde{\mathbb{K}}^{0,{\rm red}}(k) \ar[r] \ar[d] & 0 \\ 
0 \ar[r] & {\mathbb{T}}(k) \ar[r] & 
{\rm Pic}_{\overline{X}_{\bullet}}(k) \ar[r] & 
 \mathbb{K}(k), 
}\] 
\[
\xymatrix{
0 \ar[r] & {\mathbb{F}}'(k) \ar[r] \ar[d] & 
\widetilde{\mathbb{K}}^{0,{\rm red}}(k) \ar[r] \ar[d] & 
{\mathbb{K}}^{0,{\rm red}}(k) \ar[r] \ar@{^{(}->}[d] & 0 \\   
0 \ar[r] &  0 \ar[r] & 
 \mathbb{K}(k) \ar[r] & 
\mathbb{K}(k)\ar[r] & 0,}  
\]
\[
\xymatrix{
0 \ar[r] & {\mathbb{K}}^{0,{\rm red}}(k) \ar[r] \ar@{^{(}->}[d] & 
{\mathbb{K}}'(k) \ar[r] \ar@{^{(}->}[d] & 
{\mathbb{K}}'(k)/{\mathbb{K}}^{0,{\rm red}}(k) \ar[r] \ar[d] 
& 0 \\ 
0 \ar[r] & {\mathbb{K}}(k) \ar[r] & 
{\mathbb{K}}(k) \ar[r] & 0 \ar[r] & 0 
}  
\]
(where ${\mathbb{K}}' := {\rm Ker}
({\rm Pic}^{0,{\rm red}}_{\overline{X}_0} \to {\rm Pic}^{0,{\rm red}}_{\overline{X}_1})$), 
\[
\xymatrix{
0 \ar[r] & {\mathbb{K}'(k)} \ar[r] \ar@{^{(}->}[d] & 
{\rm Pic}^{0,{\rm red}}_{\overline{X}_0}(k) \ar[r] \ar@{^{(}->}[d] & 
{\rm Pic}^{0,{\rm red}}_{\overline{X}_1}(k) \ar@{^{(}->}[d] \\ 
0 \ar[r] & {\mathbb{K}}(k) \ar[r] & 
{\rm Pic}_{\overline{X}_0}(k) \ar[r] & 
{\rm Pic}_{\overline{X}_1}(k),
}  
\]
we obtain the exact sequences 

\begin{align*}
& {\mathbb{T}}(k)/ {\mathbb{T}}^{0,{\rm red}}(k) \to 
{\rm Pic}_{\overline{X}_{\bullet}}(k)/
{\rm Pic}^{0,{\rm red}}_{\overline{X}_{\bullet}}(k) \to 
 \mathbb{K}(k)/{\rm Im}(\widetilde{\mathbb{K}}^{0,{\rm red}}(k) \to \mathbb{K}(k)), \\ 
&  0 \to \mathbb{K}(k)/{\rm Im}(\widetilde{\mathbb{K}}^{0,{\rm red}}(k) \to \mathbb{K}(k)) \to 
{\mathbb{K}}(k)/{\mathbb{K}}^{0,{\rm red}}(k) \to 0, \\ 
& 
{\mathbb{K}}'(k)/{\mathbb{K}}^{0,{\rm red}}(k) \to 
{\mathbb{K}}(k)/{\mathbb{K}}^{0,{\rm red}}(k) \to 
{\mathbb{K}}(k)/{\mathbb{K}}'(k),  \\ 
& 0 \to {\mathbb{K}(k)/{\mathbb{K}}'(k)} \to 
{\rm Pic}_{\overline{X}_0}(k)/{\rm Pic}^{0,{\rm red}}_{\overline{X}_0}(k). 
\end{align*}
Because ${\mathbb{T}}(k)/ {\mathbb{T}}^{0,{\rm red}}(k)$ is a finite group, ${\mathbb{K}}'(k)/{\mathbb{K}}^{0,{\rm red}}(k) = 
{\mathbb{K}}'(k)/{{\mathbb{K}}'}^{0,{\rm red}}(k)$ is a finite group 
(because ${\mathbb{K}}'$ is proper) and 
${\rm Pic}_{\overline{X}_0}(k)/{\rm Pic}^{0,{\rm red}}_{\overline{X}_0}(k) = 
{\rm NS}(\overline{X}_0)$ is a finitely generated $\Z$-module, 
we conclude that ${\rm NS}(\overline{X}_{\bullet})$ is also a finitely 
generated $\Z$-module. 
\end{remark}

Now we start the proof of Theorem \ref{thm:H^1}. 
Note that, to prove that the map \eqref{eq:indep?} is an isomorphism, 
we can reduce to the case where $k$ is algebraically closed. 
So we assume that $k$ is algebraically closed and 
we generalise a part of the theory of 
de Rham--Witt cohomology in \cite{I_1979} to the case of 
proper smooth simplicial schemes. 

First, we consider the slope spectral sequence 
\begin{equation}\label{eq:slspseq}
E_1^{ij} = 
H^j_{\et}(\overline{X}_{\bullet}, W\Omega^i_{\overline{X}_{\bullet}/k}) \Longrightarrow  
H^{i+j}_{\et}(\overline{X}_{\bullet}, W\Omega^{\kr}_{\overline{X}_{\bullet}/k}) 
\cong H^{i+j}_{\rm cris}(\overline{X}_{\bullet}/W(k)).  
\end{equation}
(The last isomorphism is proven in \cite[Cor.\,7.7]{N_2012}.) 
This degenerates at $E_1$ modulo torsion by \cite[Prop.\,7.3]{N_2012} 
(see also \cite[II.\,Thm.\,3.2]{I_1979}). 
Since the sheaf $W\Omega^i_{\overline{X}_0/k}$ is $p$-torsion free \cite[I.,Cor. 3.6]{I_1979}, 
so are $H^0_{\et}(\overline{X}_0, W\Omega^i_{\overline{X}_0/k})$ and its sub-module $H^0_{\et}(\overline{X}_{\bullet}, W\Omega^i_{\overline{X}_{\bullet}/k})$. Consequently, in the spectral sequence \eqref{eq:slspseq}, we obtain
\begin{equation}\label{eq:Ei0}
H^0_{\et}(\overline{X}_{\bullet}, W\Omega^i_{\overline{X}_{\bullet}/k}) 
= E^{i,0}_{\infty} \subseteq H^i_{\rm cris}(\overline{X}_{\bullet}/W(k)). 
\end{equation}
Because $H^i_{\rm cris}(\overline{X}_{\bullet}/W(k))$ is a finite $W(k)$-module, 
we see that $H^0_{\et}(\overline{X}_{\bullet}, W\Omega^i_{\overline{X}_{\bullet}/k})$ is a 
finite free $W(k)$-module. 

Next we study $H^1_{\et}(\overline{X}_{\bullet}, W{\mathcal{O}}_{\overline{X}_{\bullet}})$. 
We will need a simplicial version of several results in \cite{I_1979}.
For the convenience of the reader we provide the arguments which can be adapted from the non-simplicial case with minor modifications.

\begin{lemma}[{cf. \cite[II.\,(2.1.2)]{I_1979}}]\label{lem:projlimwomega}
For any $i,j$, there exists an isomorphism 
\begin{equation}\label{eq:projlim}
	H^i_{\et}(\overline{X}_{\bullet}, W\Omega^j_{\overline{X}_{\bullet}}) 
	\xrightarrow{\cong} \varprojlim_n H^i_{\et}(\overline{X}_{\bullet}, W_n\Omega^j_{\overline{X}_{\bullet}}). 
\end{equation}
\end{lemma}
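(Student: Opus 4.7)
The plan is to adapt Illusie's original argument \cite[II.(2.1.2)]{I_1979} for a single smooth proper scheme to the simplicial setting. By construction, $W\Omega^j_{\overline{X}_\bullet} = \varprojlim_n W_n\Omega^j_{\overline{X}_\bullet}$, and the transition maps $W_{n+1}\Omega^j_{\overline{X}_\bullet} \twoheadrightarrow W_n\Omega^j_{\overline{X}_\bullet}$ are surjective on each simplicial component by Illusie's non-simplicial result. Hence $R\varprojlim_n W_n\Omega^j_{\overline{X}_\bullet} = W\Omega^j_{\overline{X}_\bullet}$, and the Milnor tower short exact sequence produces
\[
0 \to R^1\varprojlim_n H^{i-1}_{\et}(\overline{X}_\bullet, W_n\Omega^j_{\overline{X}_\bullet}) \to H^i_{\et}(\overline{X}_\bullet, W\Omega^j_{\overline{X}_\bullet}) \to \varprojlim_n H^i_{\et}(\overline{X}_\bullet, W_n\Omega^j_{\overline{X}_\bullet}) \to 0.
\]
The lemma therefore reduces to verifying the Mittag--Leffler condition for the inverse system $\{H^{i-1}_{\et}(\overline{X}_\bullet, W_n\Omega^j_{\overline{X}_\bullet})\}_n$.

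The natural strategy for Mittag--Leffler is to show that, for each fixed $n$ and $a$, the group $H^a_{\et}(\overline{X}_\bullet, W_n\Omega^j_{\overline{X}_\bullet})$ is a finitely generated $W_n(k)$-module. Granting this, for each fixed $n$ the descending chain of $W_n(k)$-submodules
\[
\bigl\{\mathrm{im}\bigl(H^{i-1}_{\et}(\overline{X}_\bullet, W_m\Omega^j_{\overline{X}_\bullet}) \to H^{i-1}_{\et}(\overline{X}_\bullet, W_n\Omega^j_{\overline{X}_\bullet})\bigr)\bigr\}_{m\geqslant n}
\]
of a Noetherian $W_n(k)$-module must stabilise, yielding Mittag--Leffler at level $n$ and hence the desired vanishing of $R^1\varprojlim$. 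For the finite generation at level $n$, I would invoke the simplicial spectral sequence
\[
E^{p,q}_1 = H^q_{\et}(\overline{X}_p, W_n\Omega^j_{\overline{X}_p}) \Longrightarrow H^{p+q}_{\et}(\overline{X}_\bullet, W_n\Omega^j_{\overline{X}_\bullet}),
\]
whose $E_1$-terms are finitely generated $W_n(k)$-modules by Illusie's non-simplicial theorem.

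The main obstacle will be ensuring convergence/boundedness of this simplicial spectral sequence in each total degree, since the simplicial scheme $\overline{X}_\bullet$ need not have uniformly bounded cohomological dimension a priori. To circumvent this, I would argue indirectly: finite generation of the log crystalline cohomology $H^{\ast}_{\cris}((X_\bullet,\overline{X}_\bullet)/W_n(k))$ is known \cite{N_2012}, and combining this with the slope spectral sequence \eqref{eq:slspseq} together with its degeneration modulo torsion (already invoked in the paragraph preceding the lemma) should let one propagate finite generation from the total log crystalline cohomology to each individual Hodge--Witt piece $H^i_{\et}(\overline{X}_\bullet, W_n\Omega^j_{\overline{X}_\bullet})$ as a subquotient. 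Once finite generation is established for every $(i,n)$, Mittag--Leffler is automatic as above and the isomorphism \eqref{eq:projlim} follows from the Milnor sequence.
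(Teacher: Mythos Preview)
Your main line of argument is correct and is exactly what the paper does: use $W\Omega^j = R\varprojlim_n W_n\Omega^j$ to get the Milnor sequence, then kill the $R^1\varprojlim$ term by showing each $H^a_{\et}(\overline{X}_\bullet, W_n\Omega^j_{\overline{X}_\bullet})$ has finite length over $W(k)$ (equivalently, is a finitely generated $W_n(k)$-module).

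Your worry about the simplicial spectral sequence, however, is misplaced, and the detour you propose to fix it is both unnecessary and broken. The spectral sequence
\[
E_1^{p,q} = H^q_{\et}(\overline{X}_p, W_n\Omega^j_{\overline{X}_p}) \Longrightarrow H^{p+q}_{\et}(\overline{X}_\bullet, W_n\Omega^j_{\overline{X}_\bullet})
\]
is a first-quadrant spectral sequence (both indices $\geqslant 0$), so it converges automatically: on each antidiagonal $p+q=a$ there are only $a+1$ spots, and at each spot the incoming and outgoing differentials vanish for $r$ large. No uniform bound on $\dim\overline{X}_p$ is needed. Since each $\overline{X}_p$ is proper smooth, each $E_1^{p,q}$ is a $W(k)$-module of finite length by the non-simplicial case, and hence so is the abutment. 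This is precisely the paper's argument (stated in one line). Your alternative route through the slope spectral sequence does not work as written: the $E_1$-terms of a spectral sequence are not subquotients of the abutment, only the $E_\infty$-terms are, so finiteness of $H^\ast_{\cris}$ does not directly give finiteness of the individual Hodge--Witt groups in that way.
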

\begin{proof}
	Because $W\Omega^j_{\overline{X}_{\bullet}} = R\varprojlim_n W_n\Omega^j_{\overline{X}_{\bullet}}$ (which follows from the case for usual 
	smooth schemes), we have the isomorphism 
	$$ R\Gamma_{\et}(\overline{X}_{\bullet}, W\Omega^j_{\overline{X}_{\bullet}}) 
	\xrightarrow{\cong} R\varprojlim_n 
	R\Gamma_{\et}(\overline{X}_{\bullet}, W_n\Omega^j_{\overline{X}_{\bullet}}). $$ 
	Also, since 
	$H^i_{\et}(\overline{X}_{\bullet}, W_n\Omega^j_{\overline{X}_{\bullet}})$ is 
	a $W(k)$-module  of finite length (because this is the case for usual proper smooth schemes), 
	$$ R^1\varprojlim_n H^i_{\et}(\overline{X}_{\bullet}, W_n\Omega^j_{\overline{X}_{\bullet}}) = 0. $$
	Thus we obtain the isomorphism \eqref{eq:projlim}. 
\end{proof}

\begin{lemma}\label{lem:v}
The group $H^1_{\et}(\overline{X}_{\bullet}, W{\mathcal{O}}_{\overline{X}_{\bullet}})$ 
is $V$-adically separated and complete. Also, for any $n$, 
$H^1_{\et}(\overline{X}_{\bullet}, W{\mathcal{O}}_{\overline{X}_{\bullet}})/V^n H^1_{\et}(\overline{X}_{\bullet}, W{\mathcal{O}}_{\overline{X}_{\bullet}})$ is a $W(k)$-module of finite length. 
\end{lemma}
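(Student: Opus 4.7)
The plan is to adapt the strategy of \cite[II.2]{I_1979} to the simplicial setting. I begin with the short exact sequence of \'etale sheaves on $\overline{X}_{\bullet}$,
\begin{equation*}
0 \to W{\mathcal{O}}_{\overline{X}_{\bullet}} \xrightarrow{V^n} W{\mathcal{O}}_{\overline{X}_{\bullet}} \to W_n{\mathcal{O}}_{\overline{X}_{\bullet}} \to 0,
\end{equation*}
obtained termwise from the injectivity of Verschiebung on each smooth component together with the identification $W{\mathcal{O}}/V^nW{\mathcal{O}} \cong W_n{\mathcal{O}}$. Setting $M := H^1_{\et}(\overline{X}_{\bullet}, W{\mathcal{O}}_{\overline{X}_{\bullet}})$ and $M_n := H^1_{\et}(\overline{X}_{\bullet}, W_n{\mathcal{O}}_{\overline{X}_{\bullet}})$, the associated long exact sequence in \'etale hypercohomology produces
\begin{equation*}
0 \to M/V^n M \xrightarrow{\alpha_n} M_n \to H^2_{\et}(\overline{X}_{\bullet}, W{\mathcal{O}}_{\overline{X}_{\bullet}})[V^n] \to 0,
\end{equation*}
and by construction the composition $M \twoheadrightarrow M/V^n M \xrightarrow{\alpha_n} M_n$ coincides with the natural reduction map induced by $W{\mathcal{O}}_{\overline{X}_{\bullet}} \twoheadrightarrow W_n{\mathcal{O}}_{\overline{X}_{\bullet}}$.

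For the finite length assertion, the simplicial spectral sequence
\[ E_1^{p,q} = H^q_{\et}(\overline{X}_p, W_n{\mathcal{O}}_{\overline{X}_p}) \Longrightarrow H^{p+q}_{\et}(\overline{X}_{\bullet}, W_n{\mathcal{O}}_{\overline{X}_{\bullet}}) \]
involves only the terms $E_1^{0,1}$ and $E_1^{1,0}$ in total degree one, each of finite length over $W_n(k)$ by the usual proper smooth case recalled in Lemma \ref{lem:projlimwomega}. Hence $M_n$ has finite length over $W(k)$, and so does its submodule $M/V^nM$.

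For $V$-adic separation and completeness, Lemma \ref{lem:projlimwomega} gives $M \cong \varprojlim_n M_n$. The surjections $M \twoheadrightarrow M/V^n M$ assemble into a map $f \colon M \to \varprojlim_n M/V^n M$, and the injections $\alpha_n$ assemble into an injection $g \colon \varprojlim_n M/V^n M \hookrightarrow \varprojlim_n M_n = M$. The compatibility noted above yields $g \circ f = \mathrm{id}_M$, so $f$ is injective and $g$ is surjective; since $g$ is also injective, both are isomorphisms, which is precisely the claim that $M$ is $V$-adically separated and complete. The only step that requires any care is the compatibility statement identifying $\alpha_n \circ (M \twoheadrightarrow M/V^n M)$ with the natural map $M \to M_n$; this is a diagram chase comparing the Verschiebung short exact sequence with the obvious one $0 \to V^n W{\mathcal{O}}_{\overline{X}_{\bullet}} \to W{\mathcal{O}}_{\overline{X}_{\bullet}} \to W_n{\mathcal{O}}_{\overline{X}_{\bullet}} \to 0$, and should be the main technical point to write out carefully.
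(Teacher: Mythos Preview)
Your argument is correct and follows essentially the same route as the paper: obtain an injection $M/V^nM \hookrightarrow M_n$, invoke Lemma~\ref{lem:projlimwomega} to identify $M$ with $\varprojlim_n M_n$, and then observe that the composite $M \to \varprojlim_n M/V^nM \hookrightarrow \varprojlim_n M_n = M$ is the identity. The only difference is in how the injection is produced: the paper works with the truncated exact sequences $0 \to W_m\mathcal{O} \xrightarrow{V^n} W_{n+m}\mathcal{O} \to W_n\mathcal{O} \to 0$ and passes to the limit in $m$ using that inverse limits of finite-length $W(k)$-modules are exact, whereas you use the sheaf sequence $0 \to W\mathcal{O} \xrightarrow{V^n} W\mathcal{O} \to W_n\mathcal{O} \to 0$ directly; both yield the same conclusion.
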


\begin{proof}
 We follow the argument in \cite[pp.\,30--31]{S_1958}.
Because the inverse limit of a projective system of exact sequences of 
$W(k)$-modules of finite length is exact \cite[4.\,Lem.\,1]{S_1958}, the exact sequences 
$$ H^1_{\et}(\overline{X}_{\bullet}, W_m{\mathcal{O}}_{\overline{X}_{\bullet}}) 
\xrightarrow{V^n} H^1_{\et}(\overline{X}_{\bullet}, W_{n+m}{\mathcal{O}}_{\overline{X}_{\bullet}}) 
\to H^1_{\et}(\overline{X}_{\bullet}, W_n{\mathcal{O}}_{\overline{X}_{\bullet}}) \quad (m \in \N) $$
induce the diagram 
\begin{equation}\label{eq:vn}
H^1_{\et}(\overline{X}_{\bullet}, W{\mathcal{O}}_{\overline{X}_{\bullet}}) \to 
H^1_{\et}(\overline{X}_{\bullet}, W{\mathcal{O}}_{\overline{X}_{\bullet}})/
V^nH^1_{\et}(\overline{X}_{\bullet}, W{\mathcal{O}}_{\overline{X}_{\bullet}}) 
\hookrightarrow 
H^1_{\et}(\overline{X}_{\bullet}, W_n{\mathcal{O}}_{\overline{X}_{\bullet}}) 
\end{equation}
by Lemma \ref{lem:projlimwomega}. 
Taking the inverse limit with respect to $n$ and using  Lemma \ref{lem:projlimwomega} again, 
we obtain maps 
$$ 
H^1_{\et}(\overline{X}_{\bullet}, W{\mathcal{O}}_{\overline{X}_{\bullet}}) \to 
\varprojlim_n (H^1_{\et}(\overline{X}_{\bullet}, W{\mathcal{O}}_{\overline{X}_{\bullet}})/
V^nH^1_{\et}(\overline{X}_{\bullet}, W{\mathcal{O}}_{\overline{X}_{\bullet}}) )
\hookrightarrow 
H^1_{\et}(\overline{X}_{\bullet}, W{\mathcal{O}}_{\overline{X}_{\bullet}})
$$ 
whose composition is the identity. 
Thus the first morphism is an isomorphism 
$$ 
H^1_{\et}(\overline{X}_{\bullet}, W{\mathcal{O}}_{\overline{X}_{\bullet}}) \xrightarrow{\cong} 
\varprojlim_n (H^1_{\et}(\overline{X}_{\bullet}, W{\mathcal{O}}_{\overline{X}_{\bullet}})/
V^nH^1_{\et}(\overline{X}_{\bullet}, W{\mathcal{O}}_{\overline{X}_{\bullet}}) ).
$$
In particular, $H^1_{\et}(\overline{X}_{\bullet}, W{\mathcal{O}}_{\overline{X}_{\bullet}})$ 
is $V$-adically separated and complete. 
Moreover, the inclusion in the diagram 
\eqref{eq:vn} implies that 
$H^1_{\et}(\overline{X}_{\bullet}, W{\mathcal{O}}_{\overline{X}_{\bullet}})/V^n H^1_{\et}(\overline{X}_{\bullet}, W{\mathcal{O}}_{\overline{X}_{\bullet}})$ is of finite length. 
\end{proof}

\begin{remark}\label{rem:topH1WO}
The inclusion in the diagram \eqref{eq:vn} shows that the projective limit topology on 
$H^1_{\et}(\overline{X}_{\bullet}, W{\mathcal{O}}_{\overline{X}_{\bullet}}) \allowbreak = \varprojlim_n 
H^1_{\et}(\overline{X}_{\bullet}, W_n{\mathcal{O}}_{\overline{X}_{\bullet}})$
coincides with the $V$-adic topology. 
\end{remark}

\begin{proposition}[{cf.\,\cite[II.\,Prop.\,2.19]{I_1979}, \cite[7.\,Prop.\,4]{S_1958}}]\label{prop:finfree}
The cohomology group $H^1_{\et}(\overline{X}_{\bullet}, W{\mathcal{O}}_{\overline{X}_{\bullet}})$ 
is a finite free $W(k)$-module. 
\end{proposition}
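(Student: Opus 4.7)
The plan is to extend the classical arguments of Serre (\cite[7.\,Prop.\,4]{S_1958}) and Illusie (\cite[II.\,Prop.\,2.19]{I_1979}) for a single smooth proper variety to the simplicial setting, using the preparations already established: Lemma \ref{lem:v} provides that $M := H^1_{\et}(\overline{X}_{\bullet}, W{\mathcal{O}}_{\overline{X}_{\bullet}})$ is $V$-adically separated and complete, with each $M/V^nM$ of finite length over $W(k)$, and Remark \ref{rem:topH1WO} identifies the projective-limit topology on $M$ with the $V$-adic topology.

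First, I would apply the short exact sequence of \'etale sheaves
\[ 0 \to W\mathcal{O}_{\overline{X}_{\bullet}} \xrightarrow{V} W\mathcal{O}_{\overline{X}_{\bullet}} \to \mathcal{O}_{\overline{X}_{\bullet}} \to 0 \]
on $\overline{X}_{\bullet}$ and pass to the associated long exact sequence on \'etale cohomology, obtaining an injection $M/VM \hookrightarrow H^1_{\et}(\overline{X}_{\bullet}, \mathcal{O}_{\overline{X}_{\bullet}})$. The target is finite-dimensional over $k$, as it is computed by the first-page simplicial spectral sequence $E_1^{pq} = H^q_{\et}(\overline{X}_p, \mathcal{O}_{\overline{X}_p}) \Rightarrow H^{p+q}_{\et}(\overline{X}_\bullet, \mathcal{O}_{\overline{X}_\bullet})$ whose $E_1$-terms are finite-dimensional (each $\overline{X}_p$ being proper smooth). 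Lifting a $k$-basis of $M/VM$ to elements $m_1, \ldots, m_r \in M$ and invoking a Nakayama-type argument in the $V$-adically complete setting (as in \cite[7.\,Prop.\,4]{S_1958}, and using that multiplication by $p$ factors as $VF$), one concludes that $m_1, \ldots, m_r$ generate $M$ as a $W(k)$-module, so $M$ is finitely generated.

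Next, I would show that $M$ is $p$-torsion free. For a single smooth proper $X$, Illusie and Serre deduce torsion-freeness from the identification of $H^1(X, W\mathcal{O}_X)$ with the contravariant Dieudonn\'e module of the formal Picard scheme $\widehat{\mathrm{Pic}}_X$, which is $p$-divisible and thus has finite free Dieudonn\'e module over $W(k)$. In the simplicial setting, the analogous identification relates $M$ to the Dieudonn\'e module of the formal completion of $\mathrm{Pic}^{0,\mathrm{red}}_{\overline{X}_{\bullet}}$, which is a semiabelian variety by Remark \ref{rem:ABV}; its associated $p$-divisible group has finite free Dieudonn\'e module. Alternatively, one can use the slope spectral sequence \eqref{eq:slspseq}, which degenerates at $E_1$ modulo torsion by \cite[Prop.\,7.3]{N_2012}, to identify $M \otimes \Q$ with a subspace of $H^1_{\cris}(\overline{X}_{\bullet}/W(k)) \otimes \Q$, and then rule out $p$-torsion in $M$ by combining the relation $pM = V(FM) \subseteq VM$ (which forces any $p$-torsion element to lie in $\bigcap_n V^n M$) with the $V$-adic separatedness from Lemma \ref{lem:v}. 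A finitely generated torsion-free module over the DVR $W(k)$ is finite free, concluding the proof.

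The hard part will be the torsion-freeness step: while finite-freeness of $H^1(W\mathcal{O})$ for a single smooth proper scheme is classical, the passage to a simplicial scheme can a priori introduce torsion through the spectral-sequence differentials involving $H^q(\overline{X}_p, W\mathcal{O}_{\overline{X}_p})$ for $q \geqslant 2$ (where the individual cohomology groups can themselves have torsion). Carefully ruling out such contributions -- either via the Picard 1-motive route (exploiting that $\mathrm{Pic}^{0,\mathrm{red}}_{\overline{X}_{\bullet}}$ is semiabelian, hence has finite free Dieudonn\'e module) or via direct slope-theoretic bookkeeping using $V$-adic completeness -- is where the bulk of the technical work lies.
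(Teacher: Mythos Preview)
Your finite-generation step has a genuine gap. From $\dim_k(M/VM) < \infty$ together with $V$-adic completeness you cannot conclude that $M$ is finitely generated over $W(k)$: the Dieudonn\'e module $k[[V]]$ of $\widehat{\mathbb{G}}_a$ (with $F=0$, so $p=VF=0$) is $V$-adically complete with $M/VM \cong k$, yet is an infinite-dimensional $k$-vector space. The relation $p=VF$ only yields $pM \subseteq VM$, i.e.\ $M/VM$ is a \emph{quotient} of $M/pM$; it does not bound $M/pM$. What is missing is control of $M/FM$. The paper supplies this via the injection $F^{n-1}d \colon W_n\mathcal{O}_{\overline{X}_\bullet}/F\,W_n\mathcal{O}_{\overline{X}_\bullet} \hookrightarrow \Omega^1_{\overline{X}_\bullet/k}$ of \cite[I.(3.11.4)]{I_1979}, giving $\dim_k(M/FM) \leqslant \dim_k H^0_{\et}(\overline{X}_\bullet,\Omega^1_{\overline{X}_\bullet/k})$ uniformly in $n$; combined with the finiteness of $M/VM$ from Lemma~\ref{lem:v} this shows $M/pM$ is finite, after which a density/closedness argument (lifts of a $k$-basis of $M/pM$ span a $W(k)$-submodule that is both $V$-adically dense and $V$-adically closed in $M$) gives finite generation.

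Both of your torsion-freeness arguments are also problematic. The Picard route is circular here: the identification of $M$ with a Dieudonn\'e module attached to $\mathrm{Pic}^{0,\mathrm{red}}_{\overline{X}_\bullet}$ is Proposition~\ref{prop:h1}, whose proof explicitly invokes Proposition~\ref{prop:finfree}. Your alternative (``$pM \subseteq VM$ forces any $p$-torsion element into $\bigcap_n V^nM$'') conflates the image of multiplication by $p$ with its kernel and does not hold as stated. The paper argues differently: the Teichm\"uller section makes $H^0_{\et}(\overline{X}_\bullet, W_{n+1}\mathcal{O}) \to H^0_{\et}(\overline{X}_\bullet,\mathcal{O})$ surjective, so $V$ is \emph{injective} on $M$; once $M$ is known to be finitely generated, its torsion submodule $T$ has finite length, $V|_T$ is then bijective, and $T = \bigcap_n V^nT \subseteq \bigcap_n V^nM = 0$ by $V$-adic separatedness.
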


\begin{proof}
We follow the proof in \cite[7.\,Prop.\,4]{S_1958}. 
Since there exists an injection $F^{n-1}d: W_n{\mathcal{O}}_{\overline{X}_{\bullet}}/ FW_n{\mathcal{O}}_{\overline{X}_{\bullet}} \hookrightarrow \Omega^1_{\overline{X}_{\bullet}/k}$ \cite[I.\,(3.11.4)]{I_1979}, we have 
$$\dim_k H^0_{\et}( \overline{X}_{\bullet}, 
W_n{\mathcal{O}}_{\overline{X}_{\bullet}}/ FW_n{\mathcal{O}}_{\overline{X}_{\bullet}}) \leqslant 
\dim_k H^0_{\et}(\overline{X}_{\bullet}, 
\Omega^1_{\overline{X}_{\bullet}/k}) =: c. $$
Then, since we have an exact sequence 
$$ 0 \to H^0_{\et}( \overline{X}_{\bullet}, 
W_n{\mathcal{O}}_{\overline{X}_{\bullet}}/ FW_n{\mathcal{O}}_{\overline{X}_{\bullet}}) \to 
H^1_{\et}(\overline{X}_{\bullet}, W_n{\mathcal{O}}_{\overline{X}_{\bullet}}) 
\xrightarrow{F} 
H^1_{\et}(\overline{X}_{\bullet}, W_n{\mathcal{O}}_{\overline{X}_{\bullet}}) $$
and $H^1_{\et}(\overline{X}_{\bullet}, W_n{\mathcal{O}}_{\overline{X}_{\bullet}})$ is a 
$W(k)$-module of finite length, we obtain the inequality
$$ 
\dim_k (H^1_{\et}(\overline{X}_{\bullet}, W_n{\mathcal{O}}_{\overline{X}_{\bullet}}) /
F H^1_{\et}(\overline{X}_{\bullet}, W_n{\mathcal{O}}_{\overline{X}_{\bullet}}) ) 
= \dim_k H^0_{\et}( \overline{X}_{\bullet}, 
W_n{\mathcal{O}}_{\overline{X}_{\bullet}}/ FW_n{\mathcal{O}}_{\overline{X}_{\bullet}}) 
\leqslant c. 
$$
Taking inverse limit, we obtain the inequality 
$ 
\dim_k (H^1_{\et}(\overline{X}_{\bullet}, W{\mathcal{O}}_{\overline{X}_{\bullet}}) /
F H^1_{\et}(\overline{X}_{\bullet}, W{\mathcal{O}}_{\overline{X}_{\bullet}}) ) 
\leqslant c$. 
Together with Lemma \ref{lem:v} using the equality $p=FV$, 
we see that 
$\dim_k (H^1_{\et}(\overline{X}_{\bullet}, W{\mathcal{O}}_{\overline{X}_{\bullet}}) /
p H^1_{\et}(\overline{X}_{\bullet}, W{\mathcal{O}}_{\overline{X}_{\bullet}})) $ is finite. 

To ease the notation, we denote $H^1_{\et}(\overline{X}_{\bullet}, W{\mathcal{O}}_{\overline{X}_{\bullet}})$ 
by $H$ in what follows. 
Take a finite $W(k)$-submodule $N$ of $H$ which surjects to $H/pH$. 
Then $N$ is dense in 
$H$ with respect to the $V$-adic topology: 
Indeed, if we set $M := H/(N + V^nH)$, then $M = pM$ by definition of $N$, 
and $M$ is of finite length by the latter assertion of Lemma \ref{lem:v}. 
Thus $M=0$ and hence $N$ is dense in $H$. 
On the other hand, $N$ is closed in 
$H$ with respect to $V$-adic topology: Indeed, 
if we set $N_n := N \cap V^nH$ for $n \in \N$, $\{N_n\}_n$ is 
a family of submodules of a finite $W(k)$-module $N$ with 
$N/N_n$ finite length and $\bigcap_n N_n = 0$. 
By the argument of 
\cite[Ch.\,1, 3. Prop.\,2]{S_1953}, these conditions imply that 
the topology defined by the family $\{N_n\}_n$ is the same as 
the $p$-adic topology. 
Thus $N$ is complete with respect to the topology defined by $\{N_n\}_n$ and so $N$ is closed in $H$. 
Hence $N = H$ and thus $H$ is a finite $W(k)$-module. 

Finally we prove the freeness of $H$.  
Since the map 
$W_{n+1}\mathcal{O}_{\overline{X}_{\bullet}} \to \mathcal{O}_{\overline{X}_{\bullet}}$ 
admits a section $a \mapsto (a,0,\dots,0)$ as a map of sheaves of sets, it induces the surjection 
$H^0_{\et}(\overline{X}_{\bullet}, W_{n+1}\mathcal{O}_{\overline{X}_{\bullet}}) 
\to H^0_{\et}(\overline{X}_{\bullet}, \mathcal{O}_{\overline{X}_{\bullet}})$. 
Hence  the exact sequence 
$$ 0 \to W_n{\mathcal{O}}_{\overline{X}_{\bullet}}  \xrightarrow{V}
W_{n+1}{\mathcal{O}}_{\overline{X}_{\bullet}} \to 
{\mathcal{O}}_{\overline{X}} \to 0 $$ 
induces the injection 
$V: H^1_{\et}(\overline{X}_{\bullet}, W_n{\mathcal{O}}_{\overline{X}_{\bullet}}) 
\hookrightarrow H^1_{\et}(\overline{X}_{\bullet}, W_{n+1}{\mathcal{O}}_{\overline{X}_{\bullet}})$, thus 
the injection $V: H \hookrightarrow H$. If we denote the torsion submodule of $H$ 
by $T$, $V: T \to T$ is injective and $T$ is of finite length. Thus 
$V(T) = T$ and so $T = \bigcap_n V^n(T) \subseteq \bigcap_n V^n(H) = 0$. 
Hence $H$ is free, as required. 
\end{proof}

\begin{remark}\label{rem:topH1WO-2}
The above proof shows that the $V$-adic topology on $H^1_{\et}(\overline{X}_{\bullet}, W{\mathcal{O}}_{\overline{X}_{\bullet}})$ coincides with the $p$-adic topology. In particular, 
the action of $V$ on $H^1_{\et}(\overline{X}_{\bullet}, W{\mathcal{O}}_{\overline{X}_{\bullet}})$ 
is topologically nilpotent with respect to the $p$-adic topology. 
\end{remark}

\begin{proposition}[{cf. \cite[II.\,Prop.\,3.11]{I_1979}}]\label{prop:sl01}
In the slope spectral sequence \eqref{eq:slspseq}, we have the equality 
$H^1_{\et}(\overline{X}_{\bullet}, W{\mathcal{O}}_{\overline{X}_{\bullet}}) = 
E^{0,1}_{\infty}$. 
Consequently, there is an exact sequence 
of finite free $W(k)$-modules 
\begin{equation}\label{eq:slexseq}
0 \to H^0_{\et}(\overline{X}_{\bullet}, W\Omega^1_{\overline{X}_{\bullet}/k}) 
\to H^1_{\rm cris}(\overline{X}_{\bullet}/W(k)) \to 
H^1_{\et}(\overline{X}_{\bullet}, W{\mathcal{O}}_{\overline{X}_{\bullet}})
\to 0. 
\end{equation}
\end{proposition}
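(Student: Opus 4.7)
The proposition is the simplicial analogue of Illusie's \cite[II.\,Prop.\,3.11]{I_1979} (compare also \cite[7.\,Prop.\,4]{S_1958}). Since the slope spectral sequence \eqref{eq:slspseq} is first-quadrant there are no incoming differentials at $(0,1)$, so the plan is to establish that the outgoing differentials $d_r^{0,1}\colon E_r^{0,1}\to E_r^{r,2-r}$ vanish for $r=1,2$ (for $r\geqslant 3$ the target has $j=2-r<0$ and the assertion is automatic).

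I would first dispatch the case $r=2$. By the same inductive argument that gives \eqref{eq:Ei0}, applied along the bottom row, one has $E_2^{2,0}=E_1^{2,0}=H^0_{\et}(\overline{X}_\bullet, W\Omega^2_{\overline{X}_\bullet/k})$, which is $p$-torsion free. The differential $d_2^{0,1}$ is zero modulo torsion by the mod-torsion degeneration recalled just before the proposition, so its image lies in the torsion of a torsion-free module, whence $d_2^{0,1}=0$.

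The heart of the proof is the vanishing of
\[
d_1^{0,1}\colon H^1_{\et}(\overline{X}_\bullet, W\mathcal{O}_{\overline{X}_\bullet}) \longrightarrow H^1_{\et}(\overline{X}_\bullet, W\Omega^1_{\overline{X}_\bullet/k}).
\]
Mod-torsion degeneration again forces the image into the torsion subgroup of the target, but now the target may genuinely carry non-trivial torsion, so a finer argument is required. The plan is to adapt Illusie's approach: exploit the de Rham--Witt operators $F$ and $V$ and their commutation relations $dF=pFd$ and $Vd=pdV$ with the differential on $W\Omega^{\kr}_{\overline{X}_\bullet/k}$ (these hold simplicially because they hold levelwise, cf.\ \cite[I.\,Prop.\,2.18]{I_1979}). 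Combined with the finite freeness of $H^1_{\et}(\overline{X}_\bullet, W\mathcal{O}_{\overline{X}_\bullet})$ (Proposition \ref{prop:finfree}), its $V$-adic completeness (Lemma \ref{lem:v}), and the finite generation of the target over $W(k)$, a careful iteration of these relations should drive every element $d_1^{0,1}\omega$ into $\bigcap_n p^n H^1_{\et}(\overline{X}_\bullet, W\Omega^1_{\overline{X}_\bullet/k})=0$. The transfer from Illusie's setting to the simplicial one should be formal, since all the structural inputs have been set up simplicially in Lemmas \ref{lem:projlimwomega}, \ref{lem:v} and Proposition \ref{prop:finfree}.

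With $d_r^{0,1}=0$ for $r=1,2$, we obtain $E_\infty^{0,1}=E_1^{0,1}=H^1_{\et}(\overline{X}_\bullet, W\mathcal{O}_{\overline{X}_\bullet})$. The resulting two-step filtration on $H^1_{\cris}(\overline{X}_\bullet/W(k))$ has graded pieces $E_\infty^{1,0}=H^0_{\et}(\overline{X}_\bullet, W\Omega^1_{\overline{X}_\bullet/k})$ (by \eqref{eq:Ei0}) and $E_\infty^{0,1}$, which is exactly the short exact sequence \eqref{eq:slexseq}. Freeness of the two outer terms, combined with projectivity, implies that the sequence splits and the middle term is finite free as well. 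The main obstacle is plainly the vanishing of $d_1^{0,1}$: the mod-torsion information only confines the image to the torsion subgroup, and the step from torsion-valued to zero demands the delicate Witt-operator argument outlined above, rather than a purely formal dimension count.
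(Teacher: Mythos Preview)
Your treatment of $d_2^{0,1}$ is correct and matches the paper: since $E_2^{2,0}=E_1^{2,0}$ is torsion free and the image of $d_2^{0,1}$ is torsion, the differential vanishes.

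The gap is in your plan for $d_1^{0,1}$. You invoke ``the finite generation of the target over $W(k)$'' and the vanishing of $\bigcap_n p^n H^1_{\et}(\overline{X}_\bullet,W\Omega^1_{\overline{X}_\bullet/k})$, but neither of these is available. The $E_1$-terms $H^j_{\et}(\overline{X}_\bullet,W\Omega^i_{\overline{X}_\bullet/k})$ for $i,j\geqslant 1$ are in general \emph{not} finitely generated $W(k)$-modules (this already fails in the non-simplicial case), and no $p$-adic separatedness of $H^1_{\et}(\overline{X}_\bullet,W\Omega^1_{\overline{X}_\bullet/k})$ has been established --- Lemma~\ref{lem:projlimwomega} only gives the projective limit topology, which need not coincide with the $p$-adic one on this group. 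So the ``drive everything into $\bigcap_n p^n$'' strategy cannot be carried out as stated.

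The paper's argument gets around this by never working with the full target. From $FdV=d$ one forms the increasing chains $\{\operatorname{Ker}(F^n d)\}_n$ and $\{\operatorname{Im}(F^n d)\}_n$ and applies Nygaard's lemma \cite[II.\,Lem.\,3.8]{I_1979}, which yields $d=0$ once both chains are stationary. Stationarity of the first chain follows from Proposition~\ref{prop:finfree} (finiteness of the source), as you anticipated. For the second chain the key observation --- which is missing from your plan --- is that $\operatorname{Im}(F^n d)\subseteq \ker(d_1^{1,1})$ (since $d\circ F^n d=p^nF^n d\circ d=0$), so the quotients $\operatorname{Im}(F^n d)/\operatorname{Im}(d)$ live in $E_2^{1,1}$. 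One then shows $E_2^{1,1}=E_\infty^{1,1}$ by using \eqref{eq:Ei0} at $(3,0)$ to kill $d_2\colon E_2^{1,1}\to E_2^{3,0}$; hence $E_2^{1,1}$ is a subquotient of $H^2_{\cris}(\overline{X}_\bullet/W(k))$ and therefore a finite $W(k)$-module, forcing the chain to stabilise. Thus the finiteness that makes the argument run is that of $E_2^{1,1}$, not of $H^1_{\et}(\overline{X}_\bullet,W\Omega^1_{\overline{X}_\bullet/k})$ itself.
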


\begin{proof}
Consider the map 
$d: H^1_{\et}(\overline{X}_{\bullet}, W{\mathcal{O}}_{\overline{X}_{\bullet}}) \to 
H^1_{\et}(\overline{X}_{\bullet}, W\Omega^1_{\overline{X}_{\bullet}/k})$. 
Note that $FdV = d$. So we have increasing sequences of $W(k)$-modules 
$\{{\rm Ker}(F^nd)\}_{n \in \N}, \{{\rm Im}(F^nd)\}_{n \in \N}$. 
In order to show that $d=0$, 
it suffices, by a lemma due to Nygaard \cite[II.\,Lem.\,3.8]{I_1979}, to 
prove that the above sequences are both stationary.
(Note that, to apply \cite[II.\,Lem.\,3.8]{I_1979}, we need to check that 
the map $d$ above is continuous with respect to (a topology coarser than) the $V$-adic topology on 
$H^1_{\et}(\overline{X}_{\bullet}, W{\mathcal{O}}_{\overline{X}_{\bullet}})$ and 
some separated topology 
on
$H^1_{\et}(\overline{X}_{\bullet}, W\Omega^1_{\overline{X}_{\bullet}/k})$. 
This is true if we take the projective limit topology in both cases defined by \eqref{eq:projlim}, which on 
$H^1_{\et}(\overline{X}_{\bullet}, W{\mathcal{O}}_{\overline{X}_{\bullet}})$ 
is equal to the $V$-adic topology by Remark \ref{rem:topH1WO}.)
The sequence $\{{\rm Ker}(F^nd)\}_{n \in \N}$ is stationary because 
$H^1_{\et}(\overline{X}_{\bullet}, W{\mathcal{O}}_{\overline{X}_{\bullet}})$ is 
a finite $W(k)$-module. On the other hand, 
$\{{\rm Im}(F^nd)/{\rm Im}(d)\}_{n \in \N}$ is an increasing sequence of 
$W(k)$-modules in $E_2^{1,1}$ of the spectral sequence
\eqref{eq:slspseq}. Because $E_2^{3,0} = E_{\infty}^{3,0}$ by 
\eqref{eq:Ei0}, the map $E_2^{1,1} \to E_2^{3,0}$ in the spectral sequence 
is zero, and so $E_2^{1,1} = E_{\infty}^{1,1}$. Thus $E_2^{1,1}$ is a subquotient of 
$H^2_{\rm cris}(\overline{X}_{\bullet}/W(k))$ and so a finite $W(k)$-module. 
Thus the sequence $\{{\rm Im}(F^nd)/{\rm Im}(d)\}_{n \in \N}$ is stationary and so 
$\{{\rm Im}(F^nd)\}_{n \in \N}$ is also stationary. Hence we have proved that $d=0$.
Then, using this fact and \eqref{eq:Ei0}, we see that 
$H^1(\overline{X}_{\bullet}, W{\mathcal{O}}_{\overline{X}_{\bullet}}) = 
E^{0,1}_{\infty}$. 
\end{proof}

We relate the cohomology groups in the exact sequence 
\eqref{eq:slexseq} to the reduced Picard scheme. 
To achieve this, but also at later parts in this section, we will need an isomorphism of certain
cohomology groups that we explain now: 
Recall that the exact sequence on the fppf-site of $\overline{X}_{\bullet}$ 
\[ 1 \to \mu_{p^n} \to \mathbb{G}_m \xrightarrow{p^n} \mathbb{G}_m \to 1\] 
and the exact sequence on the \'etale site of $\overline{X}_{\bullet}$ 
\[ 1 \to \mathcal{O}_{\overline{X}_{\bullet}}^{\times} \xrightarrow{p^n} 
\mathcal{O}_{\overline{X}_{\bullet}}^{\times} \to 
\mathcal{O}_{\overline{X}_{\bullet}}^{\times}/(\mathcal{O}_{\overline{X}_{\bullet}}^{\times})^{p^n} \to 1 
\]
induce isomorphisms 
\begin{align}
\varprojlim_nH^i_{\et}(\overline{X}_{\bullet}, 
\mathcal{O}_{\overline{X}_{\bullet}}^{\times}/(\mathcal{O}_{\overline{X}_{\bullet}}^{\times})^{p^n})
& \cong 
\varprojlim_nH^i_{\et}(\overline{X}_{\bullet}, \mathbb{G}_m \xrightarrow{p^n} \mathbb{G}_m) 
\label{eq:etfppf} \\ 
& \cong 
\varprojlim_nH^i_{\rm fppf}(\overline{X}_{\bullet}, \mathbb{G}_m \xrightarrow{p^n} \mathbb{G}_m) 
\cong 
H^{i+1}_{\rm fppf}(\overline{X}_{\bullet}, \Z_p(1)). \nonumber 
\end{align}

\begin{lemma}
Assume that $k$ is algebraically closed. Then there exists a canonical isomorphism 
$$ H^1_{\rm fppf}(\overline{X}_{\bullet}, \Z_p(1)) \otimes_{\Z_p} W(k)
\xrightarrow{\cong} H^0_{\et}(\overline{X}_{\bullet}, W\Omega^1_{\overline{X}_{\bullet}/k}). $$
\end{lemma}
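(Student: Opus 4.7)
The plan is to factor the desired isomorphism through the logarithmic de Rham--Witt sheaf, aiming for
$$
H^1_{\rm fppf}(\overline{X}_{\bullet}, \Z_p(1)) \otimes_{\Z_p} W(k)
\;\cong\; H^0_{\et}(\overline{X}_{\bullet}, W\Omega^1_{\overline{X}_{\bullet}, \log}) \otimes_{\Z_p} W(k)
\;\cong\; H^0_{\et}(\overline{X}_{\bullet}, W\Omega^1_{\overline{X}_{\bullet}/k}),
$$
where $W_n\Omega^1_{\overline{X}_{\bullet}, \log} \subseteq W_n\Omega^1_{\overline{X}_{\bullet}/k}$ is the étale subsheaf generated locally by symbols $d\log[u]$, $u \in \mathcal{O}_{\overline{X}_{\bullet}}^{\times}$, and $W\Omega^1_{\overline{X}_{\bullet}, \log} := \varprojlim_n W_n\Omega^1_{\overline{X}_{\bullet}, \log}$. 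The first isomorphism will come from Illusie's $d\log$ theorem, and the second from Dieudonné theory for pure-slope-one $F$-crystals.

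For the first step I invoke the theorem of Bloch--Gabber--Kato recorded in \cite[I, Th\'eor\`eme 5.7.2]{I_1979}: for any smooth $k$-variety $Y$, the Teichm\"uller $d\log$ map induces an isomorphism of étale sheaves
$$
\mathcal{O}_Y^{\times}/(\mathcal{O}_Y^{\times})^{p^n} \xrightarrow{\cong} W_n\Omega^1_{Y, \log}.
$$
Applying this componentwise to $\overline{X}_{\bullet}$ and combining with the chain of isomorphisms in \eqref{eq:etfppf} (for $i = 0$), one obtains
$$
H^1_{\rm fppf}(\overline{X}_{\bullet}, \Z_p(1)) \;\cong\; \varprojlim_n H^0_{\et}(\overline{X}_{\bullet}, W_n\Omega^1_{\overline{X}_{\bullet}, \log})
\;\cong\; H^0_{\et}(\overline{X}_{\bullet}, W\Omega^1_{\overline{X}_{\bullet}, \log}),
$$
the last isomorphism following from vanishing of the relevant $R^1\varprojlim$ at $H^0$-level (the finite-length reasoning of Lemma \ref{lem:projlimwomega} carries over).

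For the second step I observe that $H^0_{\et}(\overline{X}_{\bullet}, W\Omega^1_{\overline{X}_{\bullet}/k})$ is a finite free $W(k)$-module by an argument analogous to Proposition \ref{prop:finfree}, and sits as an $F$-stable submodule of the finite free $W(k)$-module $H^0_{\et}(\overline{X}_0, W\Omega^1_{\overline{X}_0/k})$. The latter is pure of slope $1$ by Illusie's slope theorem \cite[II, Th\'eor\`eme 3.2]{I_1979}, so that on it Frobenius factors as $F = p\phi$ with $\phi$ a $\sigma$-semilinear bijection. An equalizer diagram-chase, using that all face maps are $F$-equivariant, shows that $\phi$ preserves and acts bijectively on the sub-$W(k)$-module $H^0(\overline{X}_{\bullet}, W\Omega^1_{\overline{X}_{\bullet}/k})$, so this is again pure of slope $1$. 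Classical Dieudonn\'e theory over algebraically closed $k$ then furnishes a canonical isomorphism
$$
H^0(\overline{X}_{\bullet}, W\Omega^1_{\overline{X}_{\bullet}/k})^{F = p} \otimes_{\Z_p} W(k) \xrightarrow{\cong} H^0(\overline{X}_{\bullet}, W\Omega^1_{\overline{X}_{\bullet}/k}).
$$
Since at each component $W\Omega^1_{Y, \log}$ coincides with the $F = p$ fixed subsheaf of $W\Omega^1_{Y/k}$ by \cite[I]{I_1979}, passing to the equalizer identifies the left-hand side with $H^0(\overline{X}_{\bullet}, W\Omega^1_{\overline{X}_{\bullet}, \log}) \otimes_{\Z_p} W(k)$, completing the construction.

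The main obstacle is the slope-$1$ purity of the simplicial invariant: I must check that the divided Frobenius $\phi$, a priori only semilinear and injective on the equalizer kernel, is also surjective there. Surjectivity of $\phi$ on each $H^0(\overline{X}_i, W\Omega^1_{\overline{X}_i/k})$ together with its $F$-equivariance under the face maps lets me lift an equalizer element to a preimage and then verify, by applying $\phi$ to the difference $d_0^*m' - d_1^*m'$ and using injectivity on $H^0(\overline{X}_1, W\Omega^1)$, that the preimage is itself an equalizer element. Canonicity of the resulting isomorphism is then inherited from the canonicity of both $d\log$ and of the Dieudonn\'e slope decomposition.
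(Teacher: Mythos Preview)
Your proof is correct and follows essentially the same approach as the paper: both identify $H^1_{\rm fppf}(\overline{X}_{\bullet}, \Z_p(1))$ with the de Rham--Witt $F$-fixed part of $H^0_{\et}(\overline{X}_{\bullet}, W\Omega^1)$ (the paper via the $1{-}F$ exact sequence \cite[I.(3.27.1)]{I_1979}, you via the logarithmic subsheaf $W\Omega^1_{\log}$, which is the same thing), then use slope-$1$ purity inherited from $\overline{X}_0$ together with the fixed-points lemma \cite[II.\,Lem.\,6.8.4]{I_1979} (your ``classical Dieudonn\'e theory'') to conclude. Your final paragraph, checking surjectivity of $\phi$ on the equalizer via injectivity on $H^0(\overline{X}_1, W\Omega^1)$, is precisely what underlies the paper's one-line assertion that $F$ is an automorphism on the simplicial $H^0$.
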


\begin{proof}
The exact sequences 
$$ 0 \to 
{\mathcal{O}}^{\times}_{\overline{X}_{\bullet}}/
({\mathcal{O}}_{\overline{X}_{\bullet}}^{\times})^{p^n} \to 
W_n\Omega_{\overline{X}_{\bullet}/k}^1 \xrightarrow{1-F} 
W_n\Omega_{\overline{X}_{\bullet}/k}^1 \to 0  \quad (n \in \N) $$
on the \'etale site of $\overline{X}_{\bullet}$ (see \cite[I.\,(3.27.1)]{I_1979}) 
together with the isomorphism \eqref{eq:etfppf} imply the isomorphism
$$H^1_{\rm  fppf}(\overline{X}_{\bullet}, \Z_p(1)) \xrightarrow{\cong} 
{\rm Ker}(1-F: H^0_{\et}(\overline{X}_{\bullet}, W\Omega^1_{\overline{X}_{\bullet}/k}) \to 
H^0_{\et}(\overline{X}_{\bullet}, W\Omega^1_{\overline{X}_{\bullet}/k})).$$
On the other hand, 
because $H^0_{\et}(\overline{X}_{\bullet}, W\Omega^1_{\overline{X}_{\bullet}/k}) 
\subseteq  H^0_{\et}(\overline{X}_0, W\Omega^1_{\overline{X}_0/k})$ is pure of 
slope $1$ (with respect to the Frobenius on crystalline cohomology), 
the map 
$F: H^0_{\et}(\overline{X}_{\bullet}, W\Omega^1_{\overline{X}_{\bullet}/k}) \to 
H^0_{\et}(\overline{X}_{\bullet}, W\Omega^1_{\overline{X}_{\bullet}/k})$ 
induced by $F$ on $W\Omega^1_{\overline{X}_{\bullet}/k}$ is an automorphism. 
Since $H^0_{\et}(\overline{X}_{\bullet}, W\Omega^1_{\overline{X}_{\bullet}/k})$ is 
a finite $W(k)$-module, the required isomorphism follows from 
\cite[II.\,Lem.\,6.8.4]{I_1979}. 
\end{proof}

\begin{proposition}\label{prop:h0}
Assume that $k$ is algebraically closed. Then 
$H^0_{\et}(\overline{X}_{\bullet}, W\Omega^1_{\overline{X}_{\bullet}/k})$ is 
canonically isomorphic to the covariant Dieudonn\'e module of the \'etale part of 
the $p$-divisible group associated to 
${\rm Pic}^{0,{\rm red}}_{\overline{X}_{\bullet}}$. 
\end{proposition}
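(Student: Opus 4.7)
The plan is to combine the preceding lemma with a Kummer-sequence computation of $H^1_{\rm fppf}(\overline{X}_{\bullet}, \Z_p(1))$ together with standard Dieudonn\'e theory. The preceding lemma identifies $H^0_{\et}(\overline{X}_{\bullet}, W\Omega^1_{\overline{X}_{\bullet}/k})$ with $H^1_{\rm fppf}(\overline{X}_{\bullet}, \Z_p(1)) \otimes_{\Z_p} W(k)$, and the covariant Dieudonn\'e module of an \'etale $p$-divisible group $H$ over the algebraically closed field $k$ is canonically $T_p(H) \otimes_{\Z_p} W(k)$, with $F$ acting as $\id \otimes \sigma$. Thus the task reduces to producing a Frobenius-equivariant isomorphism
\[
H^1_{\rm fppf}(\overline{X}_{\bullet}, \Z_p(1)) \cong T_p({\rm Pic}^{0,{\rm red}}_{\overline{X}_{\bullet}}[p^{\infty}]^{\et}).
\]

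I will first compute the left-hand side. Applying the Kummer sequence $1 \to \mu_{p^n} \to \mathbb{G}_m \xrightarrow{p^n} \mathbb{G}_m \to 1$ on the fppf site of $\overline{X}_{\bullet}$, and noting that each $\overline{X}_i$ is proper so that $H^0_{\rm fppf}(\overline{X}_{\bullet}, \mathbb{G}_m)$ is a product of copies of the $p$-divisible group $k^{\times}$, the long exact sequence -- combined with $H^1_{\rm fppf}(\overline{X}_{\bullet}, \mathbb{G}_m) = {\rm Pic}_{\overline{X}_{\bullet}}(k)$ from Remark \ref{rem:BVS} -- yields $H^1_{\rm fppf}(\overline{X}_{\bullet}, \mu_{p^n}) \cong {\rm Pic}_{\overline{X}_{\bullet}}(k)[p^n]$. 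These $p^n$-torsion groups are finite, so passing to the inverse limit (with vanishing $R^1\varprojlim$) and using $H^1_{\rm fppf}(\overline{X}_{\bullet}, \Z_p(1)) \cong \varprojlim_n H^1_{\rm fppf}(\overline{X}_{\bullet}, \mu_{p^n})$ produces $H^1_{\rm fppf}(\overline{X}_{\bullet}, \Z_p(1)) \cong T_p {\rm Pic}_{\overline{X}_{\bullet}}(k)$.

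Next I will rewrite this Tate module in terms of the semiabelian variety ${\rm Pic}^{0,{\rm red}}_{\overline{X}_{\bullet}}$. The cokernel $Q$ of ${\rm Pic}^{0,{\rm red}}_{\overline{X}_{\bullet}}(k) \hookrightarrow {\rm Pic}_{\overline{X}_{\bullet}}(k)$ embeds into the finitely generated abelian group ${\rm NS}(\overline{X}_{\bullet})$, so $T_p Q = 0$ and therefore $T_p {\rm Pic}_{\overline{X}_{\bullet}}(k) = T_p {\rm Pic}^{0,{\rm red}}_{\overline{X}_{\bullet}}(k)$. As ${\rm Pic}^{0,{\rm red}}_{\overline{X}_{\bullet}}$ is semiabelian by Remark \ref{rem:ABV}, its $p^n$-torsion subgroups assemble into a $p$-divisible group; over the algebraically closed $k$ this splits as a connected-\'etale extension, and since the connected part is infinitesimal it contributes no nontrivial $k$-points. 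Consequently $T_p {\rm Pic}^{0,{\rm red}}_{\overline{X}_{\bullet}}(k) = T_p({\rm Pic}^{0,{\rm red}}_{\overline{X}_{\bullet}}[p^{\infty}]^{\et})$, giving the identification of underlying $W(k)$-modules.

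The hard part will be verifying that the composite isomorphism respects the Frobenius structure, so that it is an isomorphism of Dieudonn\'e modules and not merely of $W(k)$-modules. This amounts to tracking Frobenius through the identifications in the preceding lemma, which exploited the exact sequence $0 \to {\mathcal{O}}^{\times}_{\overline{X}_{\bullet}}/({\mathcal{O}}^{\times}_{\overline{X}_{\bullet}})^{p^n} \to W_n\Omega^1_{\overline{X}_{\bullet}/k} \xrightarrow{1-F} W_n\Omega^1_{\overline{X}_{\bullet}/k} \to 0$ with $d\log$ as leftmost arrow. The statement should then follow from the standard description of the Dieudonn\'e module of a multiplicative $p$-divisible group (Cartier dual to the \'etale $p$-divisible group at issue) via logarithmic de Rham--Witt cohomology, but the bookkeeping of slopes and of the contravariant-versus-covariant conventions is where most of the care is required.
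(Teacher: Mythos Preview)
Your proposal is correct and follows essentially the same path as the paper. Both compute $H^1_{\rm fppf}(\overline{X}_{\bullet},\Z_p(1))$ in terms of the Tate module of ${\rm Pic}_{\overline{X}_{\bullet}}$ (the paper quotes this as \cite[Lem.\,A.2.1]{ABV_2005}, written as ${\rm Hom}(\Q_p/\Z_p,{\rm Pic}_{\overline{X}_{\bullet}})$, whereas you derive it directly from Kummer), then pass to ${\rm Pic}^{0,{\rm red}}_{\overline{X}_{\bullet}}[p^{\infty}]$ by killing the finitely generated quotient, and finally identify the result with the Dieudonn\'e module. The only cosmetic difference is that the paper dualises to the multiplicative part of $({\rm Pic}^{0,{\rm red}}_{\overline{X}_{\bullet}})^{\vee}[p^{\infty}]$ and invokes Oda's definition ${\bold M}(G)\supseteq{\rm Hom}(G,\mathbb{G}_m)\otimes W(k)$ directly, while you stay on the \'etale side and use $T_p(H)\otimes W(k)$; these are the same statement under Cartier duality.

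Your worry about Frobenius is unwarranted and is not ``the hard part''. The preceding lemma realises $H^1_{\rm fppf}(\overline{X}_{\bullet},\Z_p(1))$ as $H^0_{\et}(\overline{X}_{\bullet},W\Omega^1)^{F=1}$, and the isomorphism $(-)^{F=1}\otimes_{\Z_p}W(k)\xrightarrow{\sim}(-)$ from \cite[II.\,Lem.\,6.8.4]{I_1979} is automatically $F$-equivariant when the left-hand side carries $\id\otimes\sigma$. On the other side, the $F$-action on the covariant Dieudonn\'e module of an \'etale $p$-divisible group is by definition $\id\otimes\sigma$ on $T_p(H)\otimes W(k)$. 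So the Frobenius compatibility is immediate from the constructions; no extra bookkeeping is required, and the paper does not perform any.
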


\begin{proof}
By \cite[Lem.\,A.2.1]{ABV_2005} (which is the simplicial version of \cite[III.\,Lem.\,4.17]{M_1980}), there is an isomorphism 
$H^1_{\rm fppf}(\overline{X}_{\bullet}, \Z_p(1)) \xrightarrow{\cong} 
{\rm Hom}(\Q_p/\Z_p, {\rm Pic}_{\overline{X}_{\bullet}})$.  Because
${\rm Hom}(\Q_p/\Z_p,G) = 0$ when $G$ is a finite group scheme or a finitely generated discrete group scheme the right hand side is equal to
$$
{\rm Hom}(\Q_p/\Z_p, {\rm Pic}^{0,{\rm red}}_{\overline{X}_{\bullet}})
= {\rm Hom}(\Q_p/\Z_p, {\rm Pic}^{0,{\rm red}}_{\overline{X}_{\bullet}}[p^{\infty}]).
$$ 
By duality, this is further equal to 
${\rm Hom}(({\rm Pic}^{0,{\rm red}}_{\overline{X}_{\bullet}})^{\vee}[p^{\infty}], \mu_{p^{\infty}}) 
= {\rm Hom}(({\rm Pic}^{0,{\rm red}}_{\overline{X}_{\bullet}})^{\vee}[p^{\infty}], {\mathbb{G}}_m).$ 
Thus
$$H^1_{\rm  fppf}(\overline{X}_{\bullet}, \Z_p(1)) \otimes_{\Z_p} W(k) 
\cong H^0_{\et}(\overline{X}_{\bullet}, W\Omega^1_{\overline{X}_{\bullet}/k})$$
is isomorphic to $${\rm Hom}(({\rm Pic}^{0,{\rm red}}_{\overline{X}_{\bullet}})^{\vee}[p^{\infty}], {\mathbb{G}}_{m}) 
\otimes_{\Z_p} W(k), $$ 
which is the contravariant 
Dieudonn\'e module of the multiplicative part of the dual of 
${\rm Pic}^{0,{\rm red}}_{\overline{X}_{\bullet}}$ (see \cite[Def.\,3.12, Def.\,3.17]{O_1969}),
 namely, 
the covariant Dieudonn\'e module of the \'etale part of 
the $p$-divisible group associated to 
${\rm Pic}^{0,{\rm red}}_{\overline{X}_{\bullet}}$. 
\end{proof}

\begin{proposition}\label{prop:h1}
Assume that $k$ is algebraically closed.
$H^1_{\et}(\overline{X}_{\bullet}, W{\mathcal{O}}_{\overline{X}_{\bullet}})$ is 
canonically isomorphic to the covariant Dieudonn\'e module of the connected part of 
the $p$-divisible group associated to 
${\rm Pic}^{0,{\rm red}}_{\overline{X}_{\bullet}}$. 
\end{proposition}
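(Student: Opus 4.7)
My plan is to adapt Illusie's argument in \cite[II.\S6]{I_1979}, which identifies $H^1(X, W\mathcal{O}_X)$ with the covariant Dieudonn\'e module of the formal Picard group, to the simplicial setting. The proof proceeds in three steps.

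First, I would define the formal Picard scheme $\widehat{\rm Pic}_{\overline{X}_\bullet}$ as the formal functor on local Artinian $k$-algebras $(R,\mathfrak{m})$ with residue field $k$ given by $R \mapsto \ker({\rm Pic}_{\overline{X}_\bullet}(R) \to {\rm Pic}_{\overline{X}_\bullet}(k))$. Using Remark \ref{rem:BVS} and the exponential sequence $1 \to 1 + \mathfrak{m}\mathcal{O}_{\overline{X}_{\bullet,R}} \to \mathcal{O}_{\overline{X}_{\bullet,R}}^\times \to \mathcal{O}_{\overline{X}_\bullet}^\times \to 1$ on the \'etale site, this group is identified with $H^1_{\et}(\overline{X}_\bullet, 1 + \mathfrak{m}\mathcal{O}_{\overline{X}_{\bullet,R}})$. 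Standard deformation theory then shows that $\widehat{\rm Pic}_{\overline{X}_\bullet}$ is pro-representable by a formal Lie group of dimension $\dim_k H^1(\overline{X}_\bullet, \mathcal{O}_{\overline{X}_\bullet})$. By a general principle --- the formal completion of a commutative algebraic $k$-group scheme at the identity coincides with the connected formal part of its associated $p$-divisible group --- this formal Lie group equals the formal completion of ${\rm Pic}^{0,{\rm red}}_{\overline{X}_\bullet}$ at the identity, hence the connected part ${\rm Pic}^{0,{\rm red}}_{\overline{X}_\bullet}[p^\infty]^\circ$.

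Next, I would compute the covariant Dieudonn\'e module of $\widehat{\rm Pic}_{\overline{X}_\bullet}$ via Cartier theory. Using the Artin--Hasse exponential isomorphism $E\colon W_n\mathcal{O}_{\overline{X}_\bullet} \xrightarrow{\cong} 1 + VW_n\mathcal{O}_{\overline{X}_\bullet}[[T]]$ of Illusie \cite[II.(4.1)]{I_1979}, which is compatible with $F$ and $V$, the module of $V$-typical curves in $\widehat{\rm Pic}_{\overline{X}_\bullet}$ is identified functorially in the simplicial diagram with
$$ \varprojlim_n H^1_{\et}(\overline{X}_\bullet, W_n\mathcal{O}_{\overline{X}_\bullet}) = H^1_{\et}(\overline{X}_\bullet, W\mathcal{O}_{\overline{X}_\bullet}), $$
where the last equality is Lemma \ref{lem:projlimwomega}. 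The actions of $F$ and $V$ on typical curves match the natural operators on Witt vector cohomology, yielding the desired Dieudonn\'e module isomorphism.

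The main obstacle will be verifying that Illusie's Artin--Hasse identification, carried out on each smooth proper $\overline{X}_n$ in \cite[II.(6.7)]{I_1979}, patches correctly across the simplicial face and degeneracy maps. This should follow from the functoriality of Cartier theory and of the exponential, but requires careful bookkeeping at the level of the simplicial \v{C}ech complex; the required finiteness inputs are already provided by Proposition \ref{prop:finfree} and Lemma \ref{lem:v}. As a consistency check, combining this proposition with Proposition \ref{prop:h0} and the exact sequence \eqref{eq:slexseq} of Proposition \ref{prop:sl01} should recover the full covariant Dieudonn\'e module $D_*({\rm Pic}^{0,{\rm red}}_{\overline{X}_\bullet}[p^\infty])$ sitting inside $H^1_{\rm cris}(\overline{X}_\bullet/W(k))$, in a manner compatible with the slope filtration (slope $[0,1)$ corresponding to the connected part, pure slope $1$ to the \'etale part).
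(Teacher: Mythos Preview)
Your Cartier-theoretic strategy is a legitimate alternative to the route the paper actually takes, which instead follows Oda \cite[Thm.\,4.4]{O_1969}: the paper computes $H^1_{\et}(\overline{X}_\bullet, {\rm W}_n) \cong \mathrm{Hom}(\mathrm{Spec}\,H, {\rm W}_n)$ (where $H$ is the coordinate bialgebra of the formal neighbourhood of the identity in ${\rm Pic}_{\overline{X}_\bullet}$) via a direct representability argument using Witt covectors, and then extracts the $p$-divisible part by an explicit $\bigcap_m \mathrm{Im}(V^m)$ computation that uses Proposition \ref{prop:finfree} together with \cite[Lem.\,4.6]{O_1969}. Your approach via the Artin--Hasse exponential and typical curves is closer to Illusie's treatment of the non-simplicial case and is more conceptual; both ultimately rest on the same finiteness input.

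There is, however, a genuine gap in your first step. The assertion that ``standard deformation theory shows $\widehat{\rm Pic}_{\overline{X}_\bullet}$ is pro-representable by a formal Lie group'' is not justified: pro-representability is automatic because ${\rm Pic}_{\overline{X}_\bullet}$ is already a group scheme, but formal \emph{smoothness} is exactly the question of whether ${\rm Pic}_{\overline{X}_\bullet}$ is smooth at the identity, and this can fail in characteristic $p$ (already for a single variety, e.g.\ an Enriques surface in characteristic $2$ with ${\rm Pic}^0 \cong \alpha_2$). In such cases $\widehat{\rm Pic}_{\overline{X}_\bullet} \not= \widehat{{\rm Pic}^{0,{\rm red}}_{\overline{X}_\bullet}}$, so your ``general principle'' does not apply. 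This is precisely the step the paper handles with care: it works with the full ${\rm Pic}_{\overline{X}_\bullet}[F^\infty]$, uses the exact sequence $0 \to {\rm Pic}^{0,{\rm red}}_{\overline{X}_\bullet}[F^\infty] \to {\rm Pic}_{\overline{X}_\bullet}[F^\infty] \to N \to 0$ with $N$ finite and $F$-power torsion, and then invokes the finite freeness of $H^1_{\et}(\overline{X}_\bullet, W{\mathcal{O}}_{\overline{X}_\bullet})$ to strip off the contribution of $N$. Your approach can be repaired: the Artin--Hasse identification of the module of typical curves in $\widehat{\rm Pic}_{\overline{X}_\bullet}$ with $H^1_{\et}(\overline{X}_\bullet, W{\mathcal{O}}_{\overline{X}_\bullet})$ does hold without any smoothness assumption, and \emph{then} Proposition \ref{prop:finfree} tells you this Cartier module is finite free over $W(k)$, hence already agrees with that of the maximal smooth formal subgroup $\widehat{{\rm Pic}^{0,{\rm red}}_{\overline{X}_\bullet}}$. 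But this argument must replace your current step~1, not follow it.
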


\begin{proof}
We follow the proof of \cite[Thm.\,4.4]{O_1969}. 

First we fix notations. Let ${\rm W}_n, {\rm W} := \varprojlim_n {\rm W}_n$ 
be the truncated and untruncated Witt ring schemes. We denote the sheaf on a simplicial $k$-scheme associated to 
${\rm W}$ by the same letter. In particular, 
$H^1_{\et}(\overline{X}_{\bullet}, {\rm W})$ is the same as $H^1_{\et}(\overline{X}_{\bullet}, W{\mathcal{O}}_{\overline{X}_{\bullet}})$. 

Let ${\rm C}_{-n} := {\rm W}_n$ and let ${\rm C} := \varinjlim_n {\rm C}_{-n}$, where the limit is 
taken with respect to the maps ${\rm C}_{-n} = {\rm W}_n \xrightarrow{V} {\rm W}_{n+1} = 
{\rm C}_{-(n+1)} \, (n \in \N)$. This is called the module scheme of Witt covectors. 
The operator $F:{\rm C} \to {\rm C}$ is defined as the inductive limit of 
$F:{\rm W}_{n} \to {\rm W}_n \,(n \in \N)$ and the operator $V: {\rm C} \to {\rm C}$ 
is defined as the inductive limit of ${\rm W}_n \xrightarrow{V} {\rm W}_{n+1} 
\xrightarrow{{\rm proj}} {\rm W}_n \, (n \in \N)$. 
$V$ is the same as the inductive limit of 
${\rm W}_{n+1} \xrightarrow{{\rm proj}} {\rm W}_n \, (n \in \N)$.  
We set $C := {\rm C}(k)$. 

For a group scheme $G$ over $k$ or  
an ind-object of finite group schemes $G$ over $k$ whose ranks are powers of $p$, its 
contravariant 
Dieudonn\'e module ${\bold M}(G)$ is defined by 
$$ {\bold M}(G) := {\rm Hom}(G,{\rm C}) \oplus  ({\rm Hom}(G, {\mathbb{G}}_m) \otimes_{\Z} W(k)), $$
endowed with the operators $F, V$ in a suitable way (see \cite[Def.\,3.12, Def.\,3.17]{O_1969}). 
On the first factor, the operators $F,V$ are the ones induced by those on ${\rm C}$. 
Also, note that the second factor vanishes when $G$ is unipotent. 

We have the following compatibilities of the functor ${\bold M}$ with duality: 
First, for a finite $k$-group scheme $G$ whose rank is a power of $p$, 
we have the canonical isomorphism 
${\bold M}{\bold D}(G) \cong {\rm Hom}_{W(k)}({\bold M}(G),C)$, 
where ${\bold D}$ is the Cartier dual and the operators $F,V$ on the right hand side 
are defined by 
	\[ F(\varphi)(x) = F(\varphi(Vx)), \quad V(\varphi)(x) = F^{-1}(\varphi(Fx)) 
	\quad (\varphi \in {\rm Hom}_{W(k)}({\bold M}(G),C), \, x \in {\bold M}(G)) \]
(see \cite[Thm.\,3.19]{O_1969}). 
Second, for a $p$-divisible group $G$, we have the canonical isomorphism 
${\bold M}{\bold D}(G) \cong {\rm Hom}_{W(k)}({\bold M}(G), W(k))$, where 
${\bold D}$ is the Cartier dual of $p$-divisible groups  and the definition of 
$F,V$ on the right hand side is similar to the previous case 
(see \cite[Prop.\,3.22]{O_1969}). 

Now we start the proof. We need to prove that 
$H^1_{\et}(\overline{X}_{\bullet}, {\rm W})$ is 
canonically isomorphic to the contravariant Dieudonn\'e module of the unipotent part of  
$({\rm Pic}^{0,{\rm red}}_{\overline{X}_{\bullet}})^{\vee}[p^{\infty}]$. 
By duality, this is the $W(k)$-linear dual of 
the contravariant Dieudonn\'e module of the connected part of  
${\rm Pic}^{0,{\rm red}}_{\overline{X}_{\bullet}}[p^{\infty}]$. 
Because the connected part of  
${\rm Pic}^{0,{\rm red}}_{\overline{X}_{\bullet}}[p^{\infty}]$ is 
${\rm Pic}^{0,{\rm red}}_{\overline{X}_{\bullet}}[F^{\infty}]$, what we should prove is the 
equality ${\bold M}({\rm Pic}^{0,{\rm red}}_{\overline{X}_{\bullet}}[F^{\infty}]) = 
{\rm Hom}_{W(k)}(H^1_{\et}(\overline{X}_{\bullet}, {\rm W}),W(k))$. 
Applying the Cartier dual, it suffices to prove the equality 
\[\varinjlim_n {\bold M}{\bold D}({\rm Pic}^{0,{\rm red}}_{\overline{X}_{\bullet}}[F^n]) 
= C \otimes_{W(k)} H^1_{\et}(\overline{X}_{\bullet}, {\rm W}).\]

First, we prove that the above equality follows from the equality 
\begin{equation}\label{eq:dm}
\varinjlim_n {\bold M}{\bold D}({\rm Pic}_{\overline{X}_{\bullet}}[F^n]) 
= H^1_{\et}(\overline{X}_{\bullet}, {\rm C}). 
\end{equation}
We have an exact sequence 
$$ 0 \to {\rm Pic}^{0,{\rm red}}_{\overline{X}_{\bullet}}[F^{\infty}] 
\to {\rm Pic}_{\overline{X}_{\bullet}}[F^{\infty}] 
\to N \to 0 $$
for a finite group scheme $N$ killed by a power of $F$. 
In the associated exact sequence of Dieudonn\'e modules 
$$ 0 \to {\bold M}(N) \to {\bold M}({\rm Pic}_{\overline{X}_{\bullet}}[F^{\infty}])
\to {\bold M}({\rm Pic}^{0,{\rm red}}_{\overline{X}_{\bullet}}[F^{\infty}]) \to 0, $$
${\bold M}(N)$ is killed by a power of $F$ by \cite[Prop.\,3.13]{O_1969}. 
On the other hand, ${\bold M}({\rm Pic}^{0,{\rm red}}_{\overline{X}_{\bullet}}[F^{\infty}])$
is a finite free $W(k)$-module because it is the Dieudonn\'e module 
of a $p$-divisible group, and so $F$ is injective on it. 
Thus 
$$ {\bold M}({\rm Pic}^{0,{\rm red}}_{\overline{X}_{\bullet}}[F^{\infty}]) 
= \varinjlim_m \left({\bold M}({\rm Pic}_{\overline{X}_{\bullet}}[F^{\infty}])/{\rm Ker}\left(F^m: 
{\bold M}({\rm Pic}_{\overline{X}_{\bullet}}[F^{\infty}]) \to 
{\bold M}({\rm Pic}_{\overline{X}_{\bullet}}[F^{\infty}])\right)\right). $$ 
Applying the Cartier dual, we see that 
\begin{align*}
\varinjlim_n {\bold M}{\bold D}({\rm Pic}^{0,{\rm red}}_{\overline{X}_{\bullet}}[F^n]) 
& = \bigcap_m {\rm Im}\left(V^m: 
\varinjlim_n {\bold M}{\bold D}({\rm Pic}_{\overline{X}_{\bullet}}[F^n]) 
\to 
\varinjlim_n {\bold M}{\bold D}({\rm Pic}_{\overline{X}_{\bullet}}[F^n])\right). 
\end{align*}
If we have the equality \eqref{eq:dm}, the above module is equal to 

\begin{align*}
 \bigcap_m {\rm Im}\left(V^m: 
H^1_{\et}(\overline{X}_{\bullet}, {\rm C}) \to H^1_{\et}(\overline{X}_{\bullet}, {\rm C})\right) 
\,=\, & 
\bigcap_m \varinjlim_n {\rm Im}\left(V^m: 
H^1_{\et}(\overline{X}_{\bullet}, {\rm C}_{-n-m}) \to H^1_{\et}(\overline{X}_{\bullet}, {\rm C}_{-n})\right) \\ 
\,=\, & 
\bigcap_m \varinjlim_{n,V} {\rm Im}\left({\rm proj}: 
H^1_{\et}(\overline{X}_{\bullet}, {\rm W}_{n+m}) \to H^1_{\et}(\overline{X}_{\bullet}, {\rm W}_{n})\right) \\
\, \xleftarrow{\cong} \, & 
\varinjlim_{n,V} \bigcap_m {\rm Im}\left({\rm proj}: 
H^1_{\et}(\overline{X}_{\bullet}, {\rm W}_{n+m}) \to H^1_{\et}(\overline{X}_{\bullet}, {\rm W}_{n})\right) \\ 
\, = \, & 
\varinjlim_{n,V} {\rm Im}\left({\rm proj}: 
H^1_{\et}(\overline{X}_{\bullet}, {\rm W}) \to H^1_{\et}(\overline{X}_{\bullet}, {\rm W}_{n})\right) \\
\, \xrightarrow{\cong} \, & 
\varinjlim_{n,V} H^1_{\et}(\overline{X}_{\bullet}, {\rm W})/V^n H^1_{\et}(\overline{X}_{\bullet}, {\rm W}) 
\,\cong\,  C \otimes_{W(k)}  H^1_{\et}(\overline{X}_{\bullet}, {\rm W}),
\end{align*}
where $\varinjlim_{n,V}$ denotes the colimit induced by the maps ${\rm W}_n \xrightarrow{V} {\rm W}_{n+1} \, (n \in \N)$.
Here, the isomorphism in the third line follows from the equality
\begin{multline*}
 {\rm Im}\left({\rm proj}: H^1_{\et}(\overline{X}_{\bullet}, {\rm W}_{n+m}) \to H^1_{\et}(\overline{X}_{\bullet}, {\rm W}_{n})\right)\cap 
  {\rm Im}\left({\rm V^{n-n'}}: H^1_{\et}(\overline{X}_{\bullet}, {\rm W}_{n'}) \to H^1_{\et}(\overline{X}_{\bullet}, {\rm W}_{n})\right)\\
  =V^{n-n'}{\rm Im}\left({\rm proj}: H^1_{\et}(\overline{X}_{\bullet}, {\rm W}_{n'+m}) \to H^1_{\et}(\overline{X}_{\bullet}, {\rm W}_{n'})\right)
\end{multline*}
for $m,n,n' \in \N$ with $n \geqslant n'$, which is implied by the exact sequences 
$$ H^1_{\et}(\overline{X}_{\bullet}, {\rm W}_{n'+m}) \xrightarrow{(V^{n-n'}, {\rm proj})} 
H^1_{\et}(\overline{X}_{\bullet}, {\rm W}_{n+m}) \oplus H^1_{\et}(\overline{X}_{\bullet}, {\rm W}_{n'}) 
\xrightarrow{({\rm proj} - V^{n-n'})} H^1_{\et}(\overline{X}_{\bullet}, {\rm W}_{n}). $$
The isomorphism in the fifth line follows from the exact sequence 
$$ H^1_{\et}(\overline{X}_{\bullet}, {\rm W}) \xrightarrow{V^n} 
 H^1_{\et}(\overline{X}_{\bullet}, {\rm W}) 
\xrightarrow{{\rm proj}} H^1_{\et}(\overline{X}_{\bullet}, {\rm W}_{n}) $$
and the last isomorphism follows from  \cite[Lem.\,4.6]{O_1969}. 
(Note that, to apply \cite[Lem.\,4.6]{O_1969}, we use the finite freeness of 
$H^1_{\et}(\overline{X}_{\bullet}, {\rm W})$ as $W(k)$-module which is shown in 
Proposition \ref{prop:finfree} and the topological nilpotence of 
the action of $V$ on $H^1_{\et}(\overline{X}_{\bullet}, {\rm W})$ 
with respect to the $p$-adic topology which is shown in Remark \ref{rem:topH1WO-2}.)
Thus it suffices to prove the equality \eqref{eq:dm} to prove the theorem. 

Now we prove the equality \eqref{eq:dm}. 
We set 
$H_n := ({\mathcal{O}}_{{\rm Pic}_{\overline{X}_{\bullet}},0}/
(F^*)^n({\mathfrak{m}}_{{\rm Pic}_{\overline{X}_{\bullet}},0}){\mathcal{O}}_{{\rm Pic}_{\overline{X}_{\bullet}},0})^*$, 
where ${\mathcal{O}}_{{\rm Pic}_{\overline{X}_{\bullet}},0}$ is the local ring of 
${\rm Pic}_{\overline{X}_{\bullet}}$ at the identity, 
${\mathfrak{m}}_{{\rm Pic}_{\overline{X}_{\bullet}},0}$ is its maximal ideal
and $^*$ denotes the $k$-linear dual. Then 
${\bold D}({\rm Pic}_{\overline{X}_{\bullet}}[F^n]) = {\rm Spec}\,H_n$. 
Since each 
${\bold D}({\rm Pic}_{\overline{X}_{\bullet}}[F^n])$ is unipotent,
\eqref{eq:dm} is equivalent to the equality 
$$ H^1_{\et}(\overline{X}_{\bullet}, {\rm C}) = \varinjlim_n {\rm Hom}({\rm Spec}\,H_n, {\rm C}) $$
and to prove the latter, it suffices to prove the equality 
\begin{equation}\label{eq:h}
H^1_{\et}(\overline{X}_{\bullet}, {\rm W}_n) = {\rm Hom}({\rm Spec}\,H, {\rm W}_n), 
\end{equation} 
where $H = \bigcup_n H_n$. 

We prove the equality \eqref{eq:h}. 
Note that, for an augmented Artinian local $k$-algebra $(R, {\mathfrak{m}}, \pi:R \to k)$, 
$R^* = k \oplus {\mathfrak{m}}^*$ is a $k$-coalgebra and so 
${\rm Sym}({\mathfrak{m}}^*)$ is a $k$-bialgebra. 
If we set ${\rm S}_R := {\rm Spec}\,{\rm Sym}({\mathfrak{m}}^*)$, 
we have the following for a $k$-algebra $B$: 
\begin{align*}
{\rm S}_R(B) & = {\rm Hom}_{\text{$k$-alg.}} ({\rm Sym}({\mathfrak{m}}^*), B) \\ 
& = \{ \varphi \in {\rm Hom}_{\text{$k$-vec.\,sp.}} (k \oplus {\mathfrak{m}}^*, B) \,|\, 
\varphi(1)=1\} \\ 
& = 
\{ \varphi \in {\rm Hom}_{\text{$k$-vec.\,sp.}} (R^*, B) \,|\, 
\varphi({\rm id})=1\} \\ 
& = \{ a \in R \otimes_k B \,|\, 
(\pi \otimes {\rm id})(a) = 1\} = 1 + {\mathfrak{m}} \otimes_k B.  
\end{align*}
We compute ${\rm Ker}\left({\rm Pic}_{\overline{X}_{\bullet}}(R) \to {\rm Pic}_{\overline{X}_{\bullet}}(k)\right)$ 
in two ways. First, we have the equalities 
\begin{align*}
& {\rm Ker}\left({\rm Pic}_{\overline{X}_{\bullet}}(R) \to {\rm Pic}_{\overline{X}_{\bullet}}(k)\right)
= {\rm Ker}\left(H^1_{\et}(\overline{X}_{\bullet}, ({\mathcal{O}}_{\overline{X}_{\bullet}} \otimes_k R)^{\times}) 
\to H^1_{\et}(\overline{X}_{\bullet}, {\mathcal{O}}_{\overline{X}_{\bullet}}^{\times})\right) 
= H^1_{\et}(\overline{X}_{\bullet}, {\rm S}_R).
\end{align*}
(The first equality follows from Remark \ref{rem:BVS}.)
Second, we have the equalities 
\begin{align*}
{\rm Ker}\left({\rm Pic}_{\overline{X}_{\bullet}}(R) \to {\rm Pic}_{\overline{X}_{\bullet}}(k)\right) 
& = {\rm Hom}_{\text{aug.\,$k$-alg.}}({\mathcal{O}}_{{\rm Pic}_{\overline{X}_{\bullet}},0}, R)  
= {\rm Hom}_{\text{aug.\,$k$-coalg.}}(R^*, H) \\ 
& = {\rm Hom}_{\text{aug.\,$k$-bialg.}}({\rm Sym}({\mathfrak{m}}^*), H )
= {\rm Hom}({\rm Spec}\,H, {\rm S}_R). 
\end{align*}
Thus we have 
\begin{equation}\label{eq:sr}
H^1_{\et}(\overline{X}_{\bullet}, {\rm S}_R) =  {\rm Hom}({\rm Spec}\,H, {\rm S}_R).
\end{equation}
It is known (see \cite[p.\,112]{O_1969}) that, when $R = k[t]/(t^{p^n})$, 
$$ {\rm S}_R = {\rm W}_n \times \prod_{1 < i < p^n \atop (i,p)=1} {\rm W}_{r_i}, $$
where $r_i = \min\{r \,|\, p^r \geqslant p^n/i\}$. Using this, we see that 
\eqref{eq:sr} implies the required equality \eqref{eq:h}. So the proof is finished. 
\end{proof}

	The last ingredient for the proof of our first main theorem in this section is the following exact sequence:
	
	\begin{proposition}\label{prop: exact div}
		Assume that $k$ is algebraically closed. Then there exists an exact sequene
		\begin{equation}\label{eq:exseqh2-3}
			0 \to H^1_{\rm cris}(\overline{X}_{\bullet}/W(k)) 
			\to H^1_{\rm cris}((X_{\bullet}, \overline{X}_{\bullet})/W(k))
			\to {\rm Div}^0_{D_{\bullet}}(\overline{X}_{\bullet}) \otimes W(k) \to 0. 
		\end{equation}
	\end{proposition}
	
	\begin{proof}
		Consider the weight filtration 	$P_j W\Omega^{i}_{\overline{X}_\bullet/k}(\log D_{\bullet})$ 
		defined in \cite[1.4]{M_1993}. (One has also the description 
		$$
		P_j W\Omega^{i}_{\overline{X}_\bullet/k}(\log D_{\bullet}) = 
		\left\{
		\begin{aligned}
		& \mathrm{Im}\left( W\Omega^{j}_{\overline{X}_\bullet/k}(\log D_{\bullet})\otimes W\Omega^{i-j}_{\overline{X}_\bullet/k}
		\rightarrow W\Omega^{i}_{\overline{X}_\bullet/k}(\log D_{\bullet})\right) \quad (j \leq i), \\
		&  W\Omega^{i}_{\overline{X}_\bullet/k}(\log D_{\bullet}) \quad (j \geq i), 
		\end{aligned}
		\right. 
		$$
		which is given in \cite[8.2, 9]{M_2017}.) The Poincar\'e residue isomorphism (\cite[1.4.5]{M_1993} or \cite[9]{M_2017}) 
		induces the exact sequence 
		$$ 0 \to W\Omega^{\kr}_{\overline{X}_i/k} \to P_1 W\Omega^{\kr}_{\overline{X}_i/k}(\log D_i) \to 
		a_{i*}^{(1)}W\Omega^{\kr}_{D_i^{(1)}/k}[-1] \to 0 $$
		for each $i$, where $D^{(1)}_i$ is the disjoint union of the irreducible components of $D_i$ and 
		$a^{(1)}_i: D^{(1)}_i \to \overline{X}_i$ is the canonical morphism. Since the map 
		$W\Omega^{\kr}_{\overline{X}_i/k} \to P_1 W\Omega^{\kr}_{\overline{X}_i/k}(\log D_i)$ is compatible with respect 
		to $i$, the above exact sequences for $i \in \N$ induce the exact sequence of the form 
		\begin{equation}\label{eq:drwsimpsheaf}
		0 \to W\Omega^{\kr}_{\overline{X}_\bullet/k} \to P_1 W\Omega^{\kr}_{\overline{X}_\bullet/k}(\log D_\bullet) \to 
		a_{\bullet *}^{(1)}W\Omega^{\kr}_{D_\bullet^{(1)}/k}[-1] \to 0.
		\end{equation}
		Here $a_{\bullet *}^{(1)}W\Omega^{\kr}_{D_\bullet^{(1)}/k}[-1]$ denotes the complex of sheaves on $\overline{X}_{\bullet}$ 
		defined on each $\overline{X}_i$ by $a_{i*}^{(1)}W\Omega^{\kr}_{D_i^{(1)}/k}[-1]$ with the transition maps induced 
		by those on $W\Omega^{\kr}_{\overline{X}_\bullet}$ and $P_1 W\Omega^{\kr}_{\overline{X}_\bullet}(\log D_\bullet)$. 
		(Beware that the schemes $D^{(1)}_i \, (i \in \N)$ do not form a simplicial scheme and the maps $a^{(1)}_i \, (i \in \N)$ do not form a map of simplicial schemes.) This exact sequence induces the long exact sequence 
		\begin{align}
	        0  
			& \rightarrow H^1_{\et}(\overline{X}_{\bullet}, W\Omega^{\kr}_{\overline{X}_{\bullet}/k}) \rightarrow 
			H^1_{\et}(\overline{X}_{\bullet}, P_1W\Omega^{\kr}_{\overline{X}_\bullet/k}(\log D_{\bullet})) \label{eq:longexseqcrys} \\  & \rightarrow 
			H^0_{\et}(\overline{X}_{\bullet}, a^{(1)}_{\bullet *}W\Omega^{\kr}_{D^{(1)}_\bullet/k}) \rightarrow H^2_{\et}(\overline{X}_{\bullet}, W\Omega^{\kr}_{\overline{X}_{\bullet}/k}). \nonumber  
		\end{align}	
		
		We rewrite the second and the third nonzero terms of the sequence. First, by using the spectral sequences 
		\begin{align*}
			& E_i^{ij} = H^j_{\et}(\overline{X}_{\bullet}, P_1W\Omega^{i}_{\overline{X}_{\bullet}/k}(\log D_{\bullet})) 
			\Longrightarrow H^{i+j}_{\et}(\overline{X}_{\bullet}, P_1W\Omega^{\kr}_{\overline{X}_{\bullet}/k}(\log D_{\bullet})), \\
			& E_i^{ij} = H^j_{\et}(\overline{X}_{\bullet}, W\Omega^{i}_{\overline{X}_{\bullet}/k}(\log D_{\bullet})) 
			\Longrightarrow H^{i+j}_{\et}(\overline{X}_{\bullet}, W\Omega^{\kr}_{\overline{X}_{\bullet}/k}(\log D_{\bullet})) 
		\end{align*}
		and noting the facts 
		\begin{align*}
			& H^j_{\et}(\overline{X}_{\bullet}, P_1W\Omega^{i}_{\overline{X}_{\bullet}/k}(\log D_{\bullet})) =  H^j_{\et}(\overline{X}_{\bullet}, W\Omega^{i}_{\overline{X}_{\bullet}/k}(\log D_{\bullet})) \text{ for } i \leqslant 1, \\ 
		    &  H^0_{\et}(\overline{X}_{\bullet}, P_1W\Omega^{i}_{\overline{X}_{\bullet}/k}(\log D_{\bullet})) \subseteq   H^0_{\et}(\overline{X}_{\bullet}, W\Omega^{i}_{\overline{X}_{\bullet}/k}(\log D_{\bullet})) \text{ for } i \in \N, 
		\end{align*}
		we see the equality 
		$H^1_{\et}(\overline{X}_{\bullet}, P_1W\Omega^{\kr}_{\overline{X}_{\bullet}/k}(\log D_{\bullet})) = 
		H^1_{\et}(\overline{X}_{\bullet}, W\Omega^{\kr}_{\overline{X}_{\bullet}/k}(\log D_{\bullet})).$ 
		Next, we have the equality 
		\begin{equation}\label{eq:logcrysH0D}
			H^0_{\et}(\overline{X}_{\bullet}, a^{(1)}_{\bullet *}W\Omega^{\kr}_{D_\bullet/k})
			= {\rm Ker}(d_0^* - d_1^*: H^0_{\et}(\overline{X}_0, a^{(1)}_{0*}W\Omega^{\kr}_{D_0^{(1)}}) \to 
			H^0_{\et}(\overline{X}_1, a^{(1)}_{1*}W\Omega^{\kr}_{D_1^{(1)}})), 
		\end{equation}
		where the maps $d_i^* \, (i=0,1)$ are the ones induced by the maps 
		\[ d_i^*: W\Omega^{\kr}_{\overline{X}_0/k} \to W\Omega^{\kr}_{\overline{X}_1/k}, \quad 
		d_i^*: P_1W\Omega^{\kr}_{\overline{X}_0/k}(\log D^{(1)}_0) \to 
		P_1W\Omega^{\kr}_{\overline{X}_1/k}(\log D^{(1)}_1)
		\]
		which are defined by the projections $d_i: \overline{X}_1 \to \overline{X}_0$. 
		Since $k$ is algebraically closed, $H^0_{\et}(\overline{X}_i, a^{(1)}_{0*}W\Omega^{\kr}_{D_i^{(1)}}) \,\,\allowbreak  (i=0,1)$ is identified with ${\rm Div}_{D_i}(\overline{X}_i) \otimes_{\Z} W(k)$ 
		(where ${\rm Div}_{D_i}(\overline{X}_i)$ is the group of Weil divisors on $\overline{X}_i$ supported on 
		$D_i$).
		With this identification the map $d_i^*$ is, by definition of the Poincar\'e 
		residue isomorphism in \cite[1.4]{M_1993} or \cite[8.2, 9]{M_2017},  
		equal to the pullback map of divisors by the projection $d_i$. 
		Thus the group in \eqref{eq:logcrysH0D} is identified with 
		${\rm Div}_{D_{\bullet}}(\overline{X}_{\bullet})\otimes_{\Z} W(k)$.  Consequently, we have shown that the long exact sequence \eqref{eq:longexseqcrys} is rewritten as follows: 
		\begin{align}
			0  
			& \rightarrow H^1_{\et}(\overline{X}_{\bullet}, W\Omega^{\kr}_{\overline{X}_{\bullet}/k}) \rightarrow 
			H^1_{\et}(\overline{X}_{\bullet}, W\Omega^{\kr}_{\overline{X}_\bullet/k}(\log D_{\bullet})) \label{eq:longexseqcrys2} \\  & \rightarrow 
			{\rm Div}_{D_{\bullet}}(\overline{X}_{\bullet})\otimes_{\Z} W(k) 
			\rightarrow H^2_{\et}(\overline{X}_{\bullet}, W\Omega^{\kr}_{\overline{X}_{\bullet}/k}). \nonumber 
		\end{align}	
		
		We prove that the last map factors through ${\rm NS}(\overline{X}_\bullet) \otimes_{\Z} W(k)$. 
		Consider the non-logarithmic and logarithmic
		first Chern class maps of the de Rham--Witt complexes  
		$$d\log: \mathcal{O}_{\overline{X}_{\bullet}}^\times \rightarrow W\Omega^{\kr}_{\overline{X}_{\bullet}/k}[1], 
		\quad 
		d \log: j_{\bullet \ast}\mathcal{O}_{\overline{X}_{\bullet}\backslash D_{\bullet}}^\times \rightarrow  
		W\Omega^{\kr}_{\overline{X}_\bullet/k}(\log D_{\bullet})[1],$$
		where $j_{\bullet}: \overline{X}_{\bullet}\backslash D_{\bullet} \rightarrow \overline{X}_{\bullet}$ is 
		the natural open immersion (compare \cite{G_1985}). The exact sequences 
		$$
		0 \rightarrow \mathcal{O}_{\overline{X}_i}^\times \rightarrow j_{i\ast}\mathcal{O}^\times_{\overline{X}_i \setminus D_i } \rightarrow a_{i*}^{(1)}\Z_{D^{(1)}_i}\rightarrow 0 
		$$
		for $i \in \N$ induce the exact sequence of the form 
		$$
		0 \rightarrow \mathcal{O}_{\overline{X}_{\bullet}}^\times \rightarrow j_{\bullet \ast}\mathcal{O}^\times_{\overline{X}_{\bullet} \setminus D_\bullet } \rightarrow a_{\bullet *}^{(1)}\Z_{D^{(1)}_\bullet} \rightarrow 0 
		$$
		which makes the following diagram commutative: 
		\[
		\xymatrix{
		0 \ar[r] & \mathcal{O}^{\times}_{\overline{X}_{\bullet}} \ar[r] \ar[d]^{d\log} & 
		j_{\bullet \ast} \mathcal{O}^{\times}_{\overline{X}_\bullet \setminus D_{\bullet}} \ar[r] \ar[d]^{d\log} & 
		a_{\bullet *}^{(1)}\Z_{D^{(1)}_\bullet} \ar[d] \ar[r] & 0 \\
		0 \ar[r] & W\Omega^{\kr}_{\overline{X}_\bullet}[1] \ar[r] & P_1W\Omega^{\kr}_{\overline{X}_\bullet}(\log D_{\bullet})[1] 
		\ar[r] & a^{(1)}_{\bullet \ast}W\Omega^{\kr}_{D^{(1)}_{\bullet}} \ar[r] & 0. 	
		}
		\]
		Considering the connecting homomorphisms, we obtain the commutative diagram 
		$$
		\xymatrix{
			{\rm Div}_{D_{\bullet}}(\overline{X}_{\bullet}) \ar[d] \ar[r] & 
			H^1_{\et}(\overline{X}_\bullet, \mathcal{O}_{\overline{X}_{\bullet}}^\times) \ar[d] \\
			{\rm Div}_{D_{\bullet}}(\overline{X}_{\bullet}) \otimes_{\Z} W(k) \ar[r] & 
			H^2_{\et}(\overline{X}_{\bullet}, W\Omega^{\kr}_{\overline{X}_{\bullet}/k}).  
		}
		$$
		Thus the map ${\rm Div}_{D_{\bullet}}(\overline{X}_{\bullet}) \otimes_{\Z} W(k) \to  
		H^2_{\et}(\overline{X}_{\bullet}, W\Omega^{\kr}_{\overline{X}_{\bullet}/k})$ factors through 
		$(\varprojlim_n H^1_{\et}(\overline{X}_{\bullet}, \mathcal{O}_{\overline{X}_{\bullet}}^\times)/p^n) 
		\otimes_{\Z_p} W(k).$ Since we have isomorphisms 
		\[\varprojlim_n H^1_{\et}(\overline{X}_{\bullet}, \mathcal{O}_{\overline{X}_{\bullet}}^\times)/p^n 
		= \varprojlim_n {\rm Pic}(\overline{X}_{\bullet})(k)/p^n 
		\cong \varprojlim_n {\rm NS}(\overline{X}_{\bullet})/p^n
		\cong {\rm NS}(\overline{X}_{\bullet}) \otimes_{\Z} \Z_p, 
		\]
		we see that the map ${\rm Div}_{D_{\bullet}}(\overline{X}_{\bullet}) \otimes_{\Z} W(k) \to  
		H^2_{\et}(\overline{X}_{\bullet}, W\Omega^{\kr}_{\overline{X}_{\bullet}/k})$ 
		factors as 
		\begin{equation}\label{eq:h2-3}
			{\rm Div}_{D_{\bullet}}(\overline{X}_{\bullet}) \otimes_{\Z} W(k) \to 
			{\rm NS}(\overline{X}_{\bullet}) \otimes_{\Z} W(k) \to 
			H^2_{\et}(\overline{X}_{\bullet}, W\Omega^{\kr}_{\overline{X}_{\bullet}/k}),
		\end{equation}
		as required. 
		
		In the subsequent lemma, we will see that the latter map in \eqref{eq:h2-3} is injective.
		Hence we obtain from \eqref{eq:longexseqcrys2} the exact sequence
		$$
		0 \to H^1_{\rm cris}(\overline{X}_{\bullet}/W(k)) 
		\to H^1_{\rm cris}((X_{\bullet}, \overline{X}_{\bullet})/W(k))
		\to {\rm Div}^0_{D_{\bullet}}(\overline{X}_{\bullet}) \otimes_{\Z} W(k) \to 0, 
		$$
		as desired.
	\end{proof}

\begin{lemma}[{cf. \cite[II.\,Rem.\,6.8.5]{I_1979}}]\label{lem:nsinj}
 Keep the assumption that $k$ is algebraically closed. Then the
map ${\rm NS}(\overline{X}_{\bullet}) \otimes_{\Z} W(k) \to 
H^2_{\rm cris}(\overline{X}_{\bullet}/W(k))$ in \eqref{eq:h2-3} 
is injective. 
\end{lemma}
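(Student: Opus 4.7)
The strategy is to adapt the argument of \cite[II.\,Rem.\,6.8.5]{I_1979} to the simplicial setting, using the ingredients already established in this subsection. Recall that by construction of \eqref{eq:h2-3}, the map in question is induced by the crystalline first Chern class $c_1: {\rm Pic}(\overline{X}_{\bullet}) \to H^2_{\rm cris}(\overline{X}_{\bullet}/W(k))$, and the latter factors through logarithmic de Rham--Witt cohomology via the $d\log$ map:
$${\rm Pic}(\overline{X}_{\bullet}) \xrightarrow{c_1^{\log}} H^1_{\et}(\overline{X}_{\bullet}, W\Omega^1_{\overline{X}_{\bullet}/k, \log}) \xrightarrow{\iota} H^2_{\rm cris}(\overline{X}_{\bullet}/W(k)).$$
The plan is to show the lemma by factoring ${\rm NS}(\overline{X}_{\bullet}) \otimes_\Z W(k) \to H^2_{\rm cris}$ through $H^1_{\et}(\overline{X}_{\bullet}, W\Omega^1_{\overline{X}_{\bullet}/k, \log}) \otimes_{\Z_p} W(k)$ and proving injectivity of each factor.

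First, I would establish the injection ${\rm NS}(\overline{X}_{\bullet}) \otimes_\Z \Z_p \hookrightarrow H^1_{\et}(\overline{X}_{\bullet}, W\Omega^1_{\overline{X}_{\bullet}/k, \log})$ via Kummer theory. The Kummer sequence $1 \to \mu_{p^n} \to \mathbb{G}_m \xrightarrow{p^n} \mathbb{G}_m \to 1$ on the fppf site of $\overline{X}_{\bullet}$ yields injections ${\rm Pic}(\overline{X}_{\bullet})/p^n \hookrightarrow H^2_{\rm fppf}(\overline{X}_{\bullet}, \mu_{p^n})$. Since ${\rm NS}(\overline{X}_{\bullet})$ is finitely generated (as shown in the remark preceding Proposition \ref{prop:h0}) and ${\rm Pic}^{0,{\rm red}}_{\overline{X}_{\bullet}}$ is divisible (as a semiabelian variety over an algebraically closed field), we have $\varprojlim_n {\rm Pic}(\overline{X}_{\bullet})/p^n = {\rm NS}(\overline{X}_{\bullet}) \otimes_\Z \Z_p$, and Mittag--Leffler holds for the projective system. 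Passing to the limit and using the identification $H^2_{\rm fppf}(\overline{X}_{\bullet}, \Z_p(1)) \cong H^1_{\et}(\overline{X}_{\bullet}, W\Omega^1_{\overline{X}_{\bullet}/k, \log})$ (obtained from the exact sequence $0 \to \mathcal{O}^\times/\mathcal{O}^{\times p^n} \to W_n\Omega^1_{\overline{X}_{\bullet}/k} \xrightarrow{1-F} W_n\Omega^1_{\overline{X}_{\bullet}/k} \to 0$ invoked in the proof of Proposition \ref{prop:h0}, but applied in degree one higher) yields the desired injection. Tensoring over $\Z_p$ with the flat module $W(k)$ preserves injectivity, so ${\rm NS}(\overline{X}_{\bullet}) \otimes_\Z W(k) \hookrightarrow H^1_{\et}(\overline{X}_{\bullet}, W\Omega^1_{\overline{X}_{\bullet}/k, \log}) \otimes_{\Z_p} W(k)$.

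The harder second step is to show that the map $\iota \otimes W(k)$ is injective on the image of ${\rm NS}(\overline{X}_{\bullet}) \otimes W(k)$. Rationally this follows from the slope decomposition: $H^1_{\et}(\overline{X}_{\bullet}, W\Omega^1_{\log}) \otimes_{\Z_p} K$ is identified with the slope $1$ part of $H^2_{\rm cris}(\overline{X}_{\bullet}/W(k)) \otimes K$ via the simplicial analogue of \cite[II.5.18.1]{I_1979}, which in turn relies on the degeneracy of the slope spectral sequence modulo torsion and the technique used in the proofs of Propositions \ref{prop:finfree}, \ref{prop:sl01} and \ref{prop:h0}; the image of ${\rm NS}(\overline{X}_{\bullet}) \otimes K$ lands in this slope $1$ piece, which is a direct summand of $H^2_{\rm cris}(\overline{X}_{\bullet}/W(k)) \otimes K$ by the slope decomposition. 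To upgrade this to integral injectivity, I would combine the rational injectivity with the $p$-torsion control coming from compatibility of the finite-level Kummer injections ${\rm Pic}(\overline{X}_{\bullet})/p^n \hookrightarrow H^2_{\rm fppf}(\overline{X}_{\bullet}, \mu_{p^n})$ with the crystalline cycle class (via $d\log$ on $W_n\Omega^1_{\log}$). The main obstacle is precisely this integral refinement: the simplicial framework introduces no new conceptual difficulty because the required inputs (finite generation of ${\rm NS}$, divisibility of ${\rm Pic}^{0,{\rm red}}$, and the log de Rham--Witt/fppf comparison) have all been verified in the simplicial setting in the preceding propositions, but keeping careful track of torsion in the Dieudonné-module identifications is the technically delicate point.
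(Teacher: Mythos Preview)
Your first step---the Kummer-theoretic injection ${\rm NS}(\overline{X}_{\bullet})\otimes_{\Z}\Z_p \hookrightarrow H^2_{\rm fppf}(\overline{X}_{\bullet},\Z_p(1))$ followed by tensoring with the flat $\Z_p$-algebra $W(k)$---is correct and matches the paper's opening move (its equation \eqref{eq:ns1}).

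The gap is in your second step. You correctly identify $H^2_{\rm fppf}(\overline{X}_{\bullet},\Z_p(1))$ with $H^1_{\et}(\overline{X}_{\bullet},W\Omega^1_{\log})$, but the map you call $\iota\otimes W(k)$ is \emph{not} obtained by tensoring a $\Z_p$-linear injection with $W(k)$: the target $H^2_{\rm cris}(\overline{X}_{\bullet}/W(k))$ is already a $W(k)$-module, so flatness of $W(k)$ over $\Z_p$ buys you nothing here. Your sketch (``rational injectivity via slopes, then upgrade using $p$-torsion control'') does not supply the mechanism for this upgrade, and this is exactly the heart of the lemma.

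The paper handles this by routing through $H^2_{\et}(\overline{X}_{\bullet},W\Omega^{\geq 1}_{\overline{X}_{\bullet}/k})$ rather than $H^1_{\et}(W\Omega^1_{\log})$. The advantage is twofold. First, the injection $H^2_{\et}(\overline{X}_{\bullet},W\Omega^{\geq 1})\hookrightarrow H^2_{\rm cris}(\overline{X}_{\bullet}/W(k))$ is \emph{integral} and follows directly from Proposition~\ref{prop:sl01} (the map $H^1_{\et}(W\mathcal{O})\to H^2_{\et}(W\Omega^{\geq 1})$ is zero). Second, the exact sequence $0\to H^2_{\rm fppf}(\Z_p(1))\to H^2_{\et}(W\Omega^{\geq 1})\xrightarrow{1-F'} H^2_{\et}(W\Omega^{\geq 1})\to 0$ identifies the source as the $F'$-invariants of a finite $W(k)$-module. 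The paper then splits into torsion and torsion-free parts: on the torsion part $F'$ is an automorphism (since $F'$ is injective on $H^2_{\et}(W\Omega^{\geq 1})$ by a sheaf-level argument), so \cite[II.\,Lem.\,6.8.4]{I_1979} gives an isomorphism after $\otimes_{\Z_p}W(k)$; on the free part one argues directly that $\Z_p$-linearly independent $F'$-fixed vectors remain $W(k)$-linearly independent. This torsion/free dichotomy, governed by the Frobenius-semilinear operator $F'$, is the missing ingredient in your outline.
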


\begin{proof}
We follow the proof of \cite[II.\,Rem.\,6.8.5]{I_1979}. 
In the proof, we will identify 
$\varprojlim_nH^1_{\et}(\overline{X}_{\bullet}, 
\mathcal{O}_{\overline{X}_{\bullet}}^{\times}/(\mathcal{O}_{\overline{X}_{\bullet}}^{\times})^{p^n})$ and 
$H^2_{\rm fppf}(\overline{X}_{\bullet}, \Z_p(1))$ via \eqref{eq:etfppf}. 
First, the exact sequences on the \'etale site 
\[ 
0 \to \mathcal{O}^{\times}_{\overline{X}_{\bullet}} \xrightarrow{p^n} 
\mathcal{O}^{\times}_{\overline{X}_{\bullet}} \to 
\mathcal{O}^{\times}_{\overline{X}_{\bullet}}/(\mathcal{O}^{\times}_{\overline{X}_{\bullet}})^{p^n} \to 0
\]
for $n \in \N$ induce the injection 
${\rm NS}(\overline{X}_{\bullet}) \otimes_{\Z} \Z_p 
\hookrightarrow H^2_{\rm fppf}(\overline{X}_{\bullet}, \Z_p(1))$, hence the injection 
\begin{equation}\label{eq:ns1}
	{\rm NS}(\overline{X}_{\bullet}) \otimes_{\Z} W(k) 
	\hookrightarrow H^2_{\rm fppf}(\overline{X}_{\bullet}, \Z_p(1)) \otimes_{\Z_p} W(k). 
\end{equation}
Next, note that we have the following exact sequence on the  \'etale site 
\begin{equation}\label{eq:oxoxp}
0 \to 
{\mathcal{O}}^{\times}_{\overline{X}_{\bullet}}/
({\mathcal{O}}_{\overline{X}_{\bullet}}^{\times})^{p^n}[-1]  \xrightarrow{d\log} 
W_n\Omega_{\overline{X}_{\bullet}/k}^{\geqslant 1} \xrightarrow{1-F'} 
W_n\Omega_{\overline{X}_{\bullet}/k}^{\geqslant 1} \to 0  \quad (n \in \N) 
\end{equation}
(where  $d\log$ is the map induced by $d\log: {\mathcal{O}}^{\times}_{\overline{X}_{\bullet}} 
\to W_n\Omega_{\overline{X}_{\bullet}/k}^{\kr}[-1]$ and 
$F'$ is the map of the complex
$ W_n\Omega_{\overline{X}_{\bullet}/k}^{\geqslant 1} \to W_n\Omega_{\overline{X}_{\bullet}/k}^{\geqslant 1} $ 
whose degree $i$ part is $p^{i-1}F$), by \cite[I.\,(3.29.2)]{I_1979}.  
From this, we obtain the long exact sequence 
\begin{equation}\label{eq:lesf'}
\cdots \to H^i_{ \rm fppf}(\overline{X}_{\bullet}, \Z_p(1)) \to 
H^i_{\et}(\overline{X}_{\bullet}, W\Omega_{\overline{X}_{\bullet}/k}^{\geqslant 1} ) 
\xrightarrow{1-F'} H^i_{\et}(\overline{X}_{\bullet}, W\Omega_{\overline{X}_{\bullet}/k}^{\geqslant 1} ) \to \cdots 
\end{equation}
(cf. \cite[II.\,(5.5.2)]{I_1979}). By the long exact sequence 
$$ 0 \to H^1_{\et}(\overline{X}_{\bullet}, W\Omega^{\geqslant 1}_{\overline{X}_{\bullet}/k}) 
\to H^1_{\rm cris}(\overline{X}_{\bullet}/W(k)) \to 
 H^1_{\et}(\overline{X}_{\bullet}, W{\mathcal{O}}_{\overline{X}_{\bullet}}) \to 
H^2_{\et}(\overline{X}_{\bullet}, W\Omega^{\geqslant 1}_{\overline{X}_{\bullet}/k}) 
\to H^2_{\rm cris}(\overline{X}_{\bullet}/W(k)) $$
associated to the exact sequence 
$$ 0 \to W\Omega^{\geqslant 1}_{\overline{X}_{\bullet}/k} \to W\Omega^{\kr}_{\overline{X}_{\bullet}/k} 
\to W{\mathcal{O}}_{\overline{X}_{\bullet}} \to 0 $$
and Proposition \ref{prop:sl01}, 
we obtain the equality 
$H^1_{\et}(\overline{X}_{\bullet}, W\Omega^{\geqslant 1}_{\overline{X}_{\bullet}/k}) = 
H^0_{\et}(\overline{X}_{\bullet}, W\Omega^{1}_{\overline{X}_{\bullet}/k})$ and the injection 
\begin{equation}\label{eq:ns2}
H^2_{\et}(\overline{X}_{\bullet}, W\Omega^{\geqslant 1}_{\overline{X}_{\bullet}/k}) 
\hookrightarrow H^2_{\rm cris}(\overline{X}_{\bullet}/W(k)). 
\end{equation}
In particular, $H^i_{\et}(\overline{X}_{\bullet}, W\Omega^{\geqslant 1}_{\overline{X}_{\bullet}/k}) \, (i=1,2)$ 
are finite $W(k)$-modules. Then, by \cite[II.\,Lem.\,5.3]{I_1979}, the long exact sequence 
\eqref{eq:lesf'} induces the exact sequence 
\begin{equation}\label{eq:ns3}
0 \to H^{2}_{ \rm fppf}(\overline{X}_{\bullet}, \Z_p(1)) \to 
H^2_{\et}(\overline{X}_{\bullet}, W\Omega_{\overline{X}_{\bullet}/k}^{\geqslant 1} ) 
\xrightarrow{1-F'} H^2_{\et}(\overline{X}_{\bullet}, W\Omega_{\overline{X}_{\bullet}/k}^{\geqslant 1} ) \to 0.
\end{equation}
Note that $H^{2}_{ \rm fppf}(\overline{X}_{\bullet}, \Z_p(1))$ is a finite $\Z_p$-module 
because so are $H^{j}_{ \rm fppf}(\overline{X}_{i}, \Z_p(1))$ for $j \leqslant 2$ by 
\cite[II.\,(5.8.1), (5.8.2), 5.9]{I_1979}. Let 
$H^{2}_{ \rm fppf}(\overline{X}_{\bullet}, \Z_p(1))_{\rm tor}$, 
$H^2_{\et}(\overline{X}_{\bullet}, W\Omega_{\overline{X}_{\bullet}/k}^{\geqslant 1})_{\rm tor}$ be
the subgroup of torsion elements of 
$H^{2}_{\et}(\overline{X}_{\bullet}, \Z_p(1))$, 
$H^2_{\et}(\overline{X}_{\bullet}, W\Omega_{\overline{X}_{\bullet}/k}^{\geqslant 1})$ respectively and 
set 
\begin{align*}
& H^{2}_{ \rm fppf}(\overline{X}_{\bullet}, \Z_p(1))_{\rm fr} := H^{2}_{ \rm fppf}(\overline{X}_{\bullet}, \Z_p(1))/H^{2}_{ \rm fppf}(\overline{X}_{\bullet}, \Z_p(1))_{\rm tor}, \\  
& H^2_{\et}(\overline{X}_{\bullet}, W\Omega_{\overline{X}_{\bullet}/k}^{\geqslant 1})_{\rm fr} := 
H^2_{\et}(\overline{X}_{\bullet}, W\Omega_{\overline{X}_{\bullet}/k}^{\geqslant 1})
/H^2_{\et}(\overline{X}_{\bullet}, W\Omega_{\overline{X}_{\bullet}/k}^{\geqslant 1})_{\rm tor}. 
\end{align*}
Then we have the following commutative diagram with exact rows: 
\begin{equation}\label{eq:torfr}
\xymatrix{
0 \ar[r] & H^{2}_{ \rm fppf}(\overline{X}_{\bullet}, \Z_p(1))_{\rm tor} \ar[d] \ar[r] & 
H^{2}_{ \rm fppf}(\overline{X}_{\bullet}, \Z_p(1)) \ar[d] \ar[r] & 
H^{2}_{ \rm fppf}(\overline{X}_{\bullet}, \Z_p(1))_{\rm fr} \ar[d] \ar[r] & 0 \\ 
0 \ar[r] & 
H^2_{\et}(\overline{X}_{\bullet}, W\Omega_{\overline{X}_{\bullet}/k}^{\geqslant 1})_{\rm tor} \ar[r] & 
H^2_{\et}(\overline{X}_{\bullet}, W\Omega_{\overline{X}_{\bullet}/k}^{\geqslant 1}) \ar[r] & 
H^2_{\et}(\overline{X}_{\bullet}, W\Omega_{\overline{X}_{\bullet}/k}^{\geqslant 1})_{\rm fr} \ar[r] & 0. 
}
\end{equation}
Moreover, we see by \eqref{eq:ns3} that the middle vertical arrow is injective and 
the left square is Cartesian. Hence the right vertical arrow is an injective morphism such that 
the image is contained in the kernel of $1-F'$ on 
$H^2_{\et}(\overline{X}_{\bullet}, W\Omega_{\overline{X}_{\bullet}/k}^{\geqslant 1})_{\rm fr}$. 
Then we see that this induces the injective morphism 
\begin{equation}\label{eq:ns4}
H^{2}_{\mathrm{fppf}}(\overline{X}_{\bullet}, \Z_p(1))_{\rm fr} \otimes_{ \Z_p}  W(k) \hookrightarrow 
H^2_{\et}(\overline{X}_{\bullet}, W\Omega_{\overline{X}_{\bullet}/k}^{\geqslant 1})_{\rm fr}. 
\end{equation}
Indeed, assume there is an element 
$0 \not= \sum_{i=1}^n a_ie_i$ in the kernel 
with $a_i \in W, e_i \in H^{2}_{\mathrm{fppf}}(\overline{X}_{\bullet}, \Z_p(1))_{\rm fr}$ 
such that $e_1, ..., e_n$ are linearly independent over $\Z_p$. 
Take such an element with minimal $n$. Then we may assume that 
$a_1 = p^m$ for some $m$, and then we see that 
$\sum_{i=1}^n (F(a_i) - a_i)e_i = \sum_{i=2}^n (F(a_i) - a_i)e_i$ is also in the kernel. 
By minimality of $n$, we conclude that all the coefficients $F(a_i) - a_i$ should be zero, but this 
contradicts the linear independence of $e_1, ..., e_n$ over $\Z_p$. 

Next we prove that that the left vertical map in \eqref{eq:torfr} induces an isomorphism 
\begin{equation}\label{eq:ns5}
H^{2}_{\mathrm{fppf}}(\overline{X}_{\bullet}, \Z_p(1))_{\rm tor} \otimes_{ \Z_p} W(k) \xrightarrow{\cong} 
H^2_{\et}(\overline{X}_{\bullet}, W\Omega_{\overline{X}_{\bullet}/k}^{\geqslant 1})_{\rm tor}. 
\end{equation}
Let us consider the short exact sequence
\[
	0\to W\Omega^{\geqslant 1}_{\overline{X}_i/k} \xrightarrow{F'} W\Omega^{\geqslant 1}_{\overline{X}_i/k} \to W\Omega^{\geqslant 1}_{\overline{X}_i/k}/F'W\Omega^{\geqslant 1}_{\overline{X}_i/k}\to 0.
\]
Note that the injectivity of $F'$ in this sequence follows from $V\circ F=p$ and the fact that $W\Omega^n_{\overline{X}_i/k}$ is $p$-torsion free.
By \cite[II.\,Lem.\,6.8.2]{I_1979}, the complex of sheaves 
$W\Omega^{\geqslant 1}_{\overline{X}_i/k}/F'W\Omega^{\geqslant 1}_{\overline{X}_i/k}$ has 
zero cohomology in degree $0, 1$ for any $i$. Thus 
$H^1_{\et}(\overline{X}_{\bullet}, W\Omega^{\geqslant 1}_{\overline{X}_{\bullet}/k}/F'W\Omega^{\geqslant 1}_{\overline{X}_{\bullet}/k}) = 0$ 
and so the map 
$$ F': H^2_{\et}(\overline{X}_{\bullet}, W\Omega^{\geqslant 1}_{\overline{X}_{\bullet}/k}) \to 
H^2_{\et}(\overline{X}_{\bullet}, W\Omega^{\geqslant 1}_{\overline{X}_{\bullet}/k}) $$ 
is injective. Because 
$H^2_{\et}(\overline{X}_{\bullet}, W\Omega_{\overline{X}_{\bullet}/k}^{\geqslant 1})_{\rm tor}$ is of finite length, 
$F'$  induces the automorphism 
$$ F': H^2_{\et}(\overline{X}_{\bullet}, W\Omega^{\geqslant 1}_{\overline{X}_{\bullet}/k})_{\rm tor} \to 
H^2_{\et}(\overline{X}_{\bullet}, W\Omega^{\geqslant 1}_{\overline{X}_{\bullet}/k})_{\rm tor}. $$ 
Then the isomorphism \eqref{eq:ns5} follows from \cite[II.\,(6.8.4)]{I_1979}. 
Now, the injection \eqref{eq:ns4} and the isomorphism \eqref{eq:ns5} imply that 
the middle vertical arrow in \eqref{eq:torfr} induces an injection 
$$ 
H^{2}_{\mathrm{fppf}}(\overline{X}_{\bullet}, \Z_p(1))\otimes_{ \Z_p} W \hookrightarrow 
H^2_{\et}(\overline{X}_{\bullet}, W\Omega_{\overline{X}_{\bullet}/k}^{\geqslant 1}). 
$$ 
Combining it with \eqref{eq:ns1} and \eqref{eq:ns2}, 
we obtain the required injection 
$$ {\rm NS}(\overline{X}_{\bullet}) \otimes_{ \Z} W(k) \hookrightarrow H^2_{\rm cris}(\overline{X}_{\bullet}/W(k)), $$
which by definition is equal to the map in \eqref{eq:h2-3}.
\end{proof}

Now we give a proof of Theorem \ref{thm:H^1}.

\begin{proof}[{Proof of Theorem \ref{thm:H^1}}]
Let $(X_{\bullet}, \overline{X}_{\bullet})$ be as in Theorem \ref{thm:H^1}, 
let $(X'_{\bullet}, \overline{X}'_{\bullet})$  be another simplicial normal crossing pair 
satisfying the same assumption. By Proposition \ref{prop:domhypercover'} \ref{prop:domhypercover_ii'} and Corollary \ref{cor:domc},  
it suffices to prove that, when there is a morphism 
$f: (X_{\bullet}, \overline{X}_{\bullet}) \to 
(X'_{\bullet}, \overline{X}'_{\bullet})$, the induced morphism of crystalline cohomologies 
\eqref{eq:indep?} is an isomorphism. Moreover, we may assume that $k$ is algebraically 
closed. 

Note that the morphism $f$ induces the morphism 
\begin{equation}\label{eq:indep??}
H^1_{\rm cris}(\overline{X}_{\bullet}/W(k)) \to 
H^1_{\rm cris}(\overline{X}'_{\bullet}/W(k)). 
\end{equation} 
Moreover, we have a morphism from the exact sequence 
\eqref{eq:slexseq} to the analogous sequence for $\overline{X}'_{\bullet}$. 
By the description of the terms of the exact sequence \eqref{eq:slexseq} 
given in Propositions \ref{prop:h0} and \ref{prop:h1}, the morphism 
\eqref{eq:indep??} is an isomorphism, because 
the map of reduced Picard schemes 
${\rm Pic}^{0,{\rm red}}_{\overline{X}_{\bullet}} \to {\rm Pic}^{0,{\rm red}}_{\overline{X}'_{\bullet}}$ 
is an isomorphism by Corollary \ref{cor:ABV}. 

Next,  note that the morphism $f$ induces the morphism from the exact sequence 
\eqref{eq:exseqh2-3} to the similar exact sequence for 
 $(X'_{\bullet}, \overline{X}'_{\bullet})$. This is an isomorphism because 
the map \eqref{eq:indep??} is an isomorphism and the map of divisor groups 
${\rm Div}^0_{D_{\bullet}}(\overline{X}_{\bullet}) \to 
{\rm Div}^0_{D'_{\bullet}}(\overline{X}'_{\bullet})$ 
 (where $D'_{\bullet} := \overline{X}'_{\bullet} \setminus X'_{\bullet}$)
is an isomorphism by Corollary \ref{cor:ABV}. 
So we have proven that the morphism \eqref{eq:indep?} is an isomorphism. 
\end{proof}

\subsection{Non-independence of $H^i, i \geqslant 2$}

In this subsection, we give a negative answer to Question 
\ref{q:2-1} for $H^i, i \geqslant 2$ by providing 
counterexamples. 

Let $k$ be a perfect field of characteristic $p>0$ and set  
$\overline{X} := {\mathbb{P}}^1_k$. 
Denote the coordinate of ${\mathbb{A}}^1_k \subseteq {\mathbb{P}}^1_k = \overline{X}$ 
by $x$. Let $r \geqslant 1$, let $a_1, \dots, a_r$ be distinct elements of $k^\times$ and 
let $n_1, \dots, n_r$ be positive integers prime to $p$. 

Let $f: \overline{X}_0 \to \overline{X}$ be the morphism between proper smooth 
curves over $k$ induced by the field extension 
$$ k(\overline{X}) = k(x) \subseteq k(x)[y]/(y^p-y-\dfrac{x^{\sum_{i=1}^r n_i}}{\prod_{i=1}^r (x-a_i)^{n_i}}) =:  
k(\overline{X}_0). $$
This is a finite flat morphism of degree $p$ 
between proper smooth curves such that $k(\overline{X}_0)/k(\overline{X})$
is a Galois extension with Galois group $G = \langle g \rangle \cong \Z/p\Z$. 
If we denote by $P_i \, (1 \leqslant i \leqslant r)$ 
the closed point of $\overline{X}$ 
defined by $x = a_i$, the ramification locus 
$D \subseteq \overline{X}$ of $f$ is equal to the union of 
$P_i$'s.

For $i \in \N$, let $\overline{X}_i$ be the normalisation of 
$\overline{X}_0 \times_{\overline{X}} \overline{X}_0 \cdots 
\times_{\overline{X}} \overline{X}_0$ of $(i+1)$-copies of 
$\overline{X}_0$ over $\overline{X}$. 
Note that $\overline{X}_i$ is equal to $\coprod^{G^i} \overline{X}_0$. 
Then $\overline{X}_{\bullet}$ forms a simplicial scheme over $\overline{X}$. 
We set $X := \overline{X} \backslash D$, $X_{\bullet} := X \times_{\overline{X}} 
\overline{X}_{\bullet}$. 
Then we obtain the commutative diagram \eqref{eq:2-1} 
and it satisfies the assumption imposed in Question \ref{q:2-1}. 
We prove the following.

\begin{proposition}\label{prop:negative}
With the above notation, the map 
$$ H^2_{\rm cris}((X,\overline{X})/W(k)) \to H^2_{\rm cris}((X_\bullet,\overline{X}_{\bullet})/W(k))$$ 
is not an isomorphism. In particular, 
Question \ref{q:2-1} has a negative answer for $H^2$. 
\end{proposition}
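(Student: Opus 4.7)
The plan is to show that the source $H^2_{\rm cris}((X,\overline{X})/W(k))$ vanishes while the target $H^2_{\rm cris}((X_\bullet,\overline{X}_\bullet)/W(k))$ has nonzero $p$-torsion; consequently the natural map cannot be an isomorphism.

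For the source, I would use that $\overline{X} = \mathbb{P}^1_k$ satisfies $H^j(\mathbb{P}^1, W\mathcal{O}_{\mathbb{P}^1}) = 0$ for $j > 0$, so the de~Rham--Witt description of log crystalline cohomology collapses to $H^2_{\rm cris}((X,\overline{X})/W(k)) \cong H^1(\mathbb{P}^1, W\Omega^1_{\mathbb{P}^1/k}(\log D))$. Combining the logarithmic Poincar\'e residue sequence
\[
0 \to W\Omega^1_{\mathbb{P}^1/k} \to W\Omega^1_{\mathbb{P}^1/k}(\log D) \to \bigoplus_{P\in D}(i_P)_\ast W(k) \to 0
\]
with the fact that the induced residue map $\bigoplus_{P\in D} W(k) \twoheadrightarrow H^1(\mathbb{P}^1, W\Omega^1_{\mathbb{P}^1/k}) = W(k)$ is surjective already via any single summand, one concludes $H^1(\mathbb{P}^1, W\Omega^1_{\mathbb{P}^1/k}(\log D)) = 0$, and hence $H^2_{\rm cris}((X,\overline{X})/W(k)) = 0$. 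The same argument applied to the proper smooth curve $\overline{X}_0$ in place of $\mathbb{P}^1$ yields $H^2_{\rm cris}((X_0,\overline{X}_0)/W(k)) = 0$ as well, which I use below.

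For the target, I would analyse the simplicial descent spectral sequence
\[
E_1^{i,j} = H^j_{\rm cris}((X_i,\overline{X}_i)/W(k)) \Longrightarrow H^{i+j}_{\rm cris}((X_\bullet,\overline{X}_\bullet)/W(k)).
\]
Since $\overline{X}_i \cong \coprod_{G^i}\overline{X}_0$ and the face maps assemble into the bar resolution of $G = \mathbf{Z}/p\mathbf{Z}$, the $E_2$-page is identified with the group cohomology $E_2^{i,j} = H^i(G, H^j_{\rm cris}((X_0,\overline{X}_0)/W(k)))$. The preceding paragraph gives $E_2^{i,2} = 0$, so $H^2_{\rm cris}((X_\bullet,\overline{X}_\bullet)/W(k))$ is filtered by $E_\infty^{2,0}$ and $E_\infty^{1,1}$, and the decisive observation is
\[
E_2^{2,0} = H^2(\mathbf{Z}/p\mathbf{Z}, W(k)) = W(k)/pW(k) = k \neq 0,
\]
because $G$ acts trivially on $H^0_{\rm cris}((X_0,\overline{X}_0)/W(k)) = W(k)$.

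The hard part is ruling out that this copy of $k$ is killed by the sole potentially nontrivial differential $d_2 \colon E_2^{0,1} \to E_2^{2,0}$. I plan to argue this via Frobenius-equivariance: crystalline Frobenius acts on the target $E_2^{2,0} = k$ as the Witt-vector Frobenius $\sigma$, which is an automorphism. On the source, $E_2^{0,1} = (H^1_{\rm cris}((X_0,\overline{X}_0)/W(k)))^G$ becomes via $f^{\ast}$ rationally identified with $H^1_{\rm cris}((X,\overline{X})/W(k)) \otimes \mathbf{Q}$, which is spanned by the logarithmic classes $d\log(x-a_i)$ on $\mathbb{P}^1\setminus D$ and is therefore Frobenius-pure of slope $1$; in particular Frobenius on $E_2^{0,1}$ is rationally divisible by $p$, whence Frobenius-equivariance forces any $W(k)$-linear map from $E_2^{0,1}$ to the slope-$0$ torsion module $k$ to vanish modulo $p$, hence to vanish outright. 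Promoting this rational slope statement to an integral statement about $d_2$ is the technical crux, and I expect it will require a careful choice of Frobenius-stable lattice in $E_2^{0,1}$, or a slope-filtration argument on the integral $F$-crystal $H^1_{\rm cris}((X_0,\overline{X}_0)/W(k))$. Once the vanishing $d_2 = 0$ is established, $E_\infty^{2,0} = k \neq 0$ and the natural map in the proposition fails to be surjective on $H^2$, completing the negative answer to Question \ref{q:2-1}.
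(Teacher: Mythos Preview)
Your overall architecture agrees with the paper's: the source $H^2_{\rm cris}((X,\overline{X})/W(k))$ vanishes, and the descent spectral sequence identifies the $E_2$-page with group cohomology $H^i(G,H^j_{\rm cris}((X_0,\overline{X}_0)/W(k)))$, so that $E_2^{2,0}=k$. The divergence is in how you show something survives to $E_\infty$.

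Your Frobenius/slope argument for $d_2\colon E_2^{0,1}\to E_2^{2,0}$ has a genuine gap, and it is not merely ``technical''. Knowing that $E_2^{0,1}\otimes K$ is pure of slope $1$ does \emph{not} force the integral Frobenius on the lattice $E_2^{0,1}$ to be divisible by $p$: for instance, on $W(k)^2$ the semilinear operator $F(e_1)=e_2,\ F(e_2)=p^2e_1$ is pure of slope $1$ rationally but $F(e_1)\notin pW(k)^2$. So a Frobenius-equivariant map from $E_2^{0,1}$ to $k$ (with $F=\sigma$ on the target) need not vanish on these grounds alone. Completing this would require identifying the specific lattice $E_2^{0,1}$ inside $H^1_{\rm cris}((X_0,\overline{X}_0)/W(k))$, which is essentially as hard as the original question.

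The paper avoids this entirely, in two complementary ways. First, invoking Theorem~\ref{thm:H^1} one gets $H^1_{\rm cris}((X_\bullet,\overline{X}_\bullet)/W(k))\cong H^1_{\rm cris}((X,\overline{X})/W(k))\cong W(k)^{r-1}$; since this equals $E_\infty^{0,1}=\ker d_2\subseteq E_2^{0,1}$ and $E_2^{0,1}$ is free, one concludes $E_2^{0,1}\cong W(k)^{r-1}$. For $r=1$ this already gives $E_2^{0,1}=0$, so $d_2=0$ and $E_\infty^{2,0}=k\neq 0$ with no slope argument needed. Second, and for all $r$, the paper computes $E_\infty^{1,1}=E_2^{1,1}$ directly (there are no differentials in or out): $H^1_{\rm cris}((X_0,\overline{X}_0)/W(k))/\ker(1-g)$ is a free $W(k)[\zeta_p]$-module, and combining this with Riemann--Hurwitz for the genus of $\overline{X}_0$ yields ${\rm length}\,E_\infty^{1,1}\geq \sum_i n_i-1\geq 1$. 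Either route gives $H^2_{\rm cris}((X_\bullet,\overline{X}_\bullet)/W(k))\neq 0$ without ever analysing $d_2$ for $r\geq 2$. I would suggest replacing your slope approach by one of these.
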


\begin{proof}
We compute $H^2_{\rm cris}((X_{\bullet}, \overline{X}_{\bullet})/W(k))$
by using the spectral sequence 
\begin{equation}\label{eq:desspseq}
E_1^{i.j} = 
H^j_{\rm cris}((X_i, \overline{X}_i)/W(k)) \Longrightarrow  
H^{i+j}_{\rm cris}((X_{\bullet}, \overline{X}_{\bullet})/W(k)). 
\end{equation}
The complex $E_1^{\bullet,j}$ has the form 
$$ \cdots \to H^{j}_{\rm cris}((X_{i-1}, \overline{X}_{i-1})/W(k)) 
\to H^j_{\rm cris}((X_i, \overline{X}_i)/W(k)) 
\to H^{j}_{\rm cris}((X_{i+1}, \overline{X}_{i+1})/W(k)) \to \cdots, $$
which can be rewritten as 
\begin{align*}
\cdots & \to {\rm Hom}_{\rm sets}(G^{i-1}, H^{j}_{\rm cris}((X_0, \overline{X}_0)/W(k))) 
\to {\rm Hom}_{\rm sets}(G^{i}, H^{j}_{\rm cris}((X_0, \overline{X}_0)/W(k))) \\ & 
\to  {\rm Hom}_{\rm sets}(G^{i+1}, H^{j}_{\rm cris}((X_0, \overline{X}_0)/W(k))) 
\to \cdots. 
\end{align*}
Since this complex is the one computing the group cohomology of $G$, 
we have 
\begin{align}
E_2^{i,j} & = H^i(E_1^{\bullet,j}) = H^i(G,  H^{j}_{\rm cris}((X_0, \overline{X}_0)/W(k))) \label{eq:gpcoh} \\ 
& = \begin{cases}
{\rm Ker}(1-g \text{ on } H^{j}_{\rm cris}((X_0, \overline{X}_0)/W(k))), & (i=0) \\ 
{\rm Ker}(\sum_{i=0}^{p-1}g^i \text{ on } H^{j}_{\rm cris}((X_0, \overline{X}_0)/W(k)))/ 
{\rm Im}(1-g \text{ on } H^{j}_{\rm cris}((X_0, \overline{X}_0)/W(k))), & (\text{$i$: odd}) \\
{\rm Ker}(1-g \text{ on } H^{j}_{\rm cris}((X_0, \overline{X}_0)/W(k)))/
{\rm Im}(\sum_{i=0}^{p-1}g^i \text{ on } H^{j}_{\rm cris}((X_0, \overline{X}_0)/W(k))), & 
(\text{$i > 0$: even})
\end{cases} \nonumber 
\end{align}
where the action of $G$ on $H^{j}_{\rm cris}((X_0, \overline{X}_0)/W(k))$ is the one 
induced by the geometric action of $G$ on $(X_0, \overline{X}_0)$. 

When $j=0$, the action of $G$ on $H^0_{\rm cris}((X_0, \overline{X}_0)/W(k)) = W(k)$ is trivial. 
Thus we see from \eqref{eq:gpcoh} that 
\begin{align}
E_2^{i,0} & = H^i(G,  H^0_{\rm cris}((X_0, \overline{X}_0)/W(k))) = \begin{cases}
W(k), & (i=0) \\ 
0,  & (\text{$i$: odd}) \\
k. & (\text{$i > 0$: even})
\end{cases} \label{eq:gpcoh0} 
\end{align}
Then we have the isomorphisms  
$$ W(k)^{r-1} \cong H^1_{\rm cris}((X, \overline{X})/W(k)) 
\xrightarrow{\cong} H^1_{\rm cris}((X_{\bullet}, \overline{X}_{\bullet})/W(k)) 
= E_{\infty}^{0,1} = E_3^{0,1} = {\rm Ker}(E_2^{0,1} \to E_2^{2,0} = k), $$
where the first isomorphism  
follows from the assumption that 
$X$ is equal to $\overline{X} = {\mathbb{P}}^1_k$ minus $r$ points with $r > 0$ 
and the second isomorphism follows from Theorem \ref{thm:H^1}. 
Since $E_2^{0,1} \subseteq H^1_{\rm cris}((X_0, \overline{X}_0)/W(k))$ 
is a finite free $W(k)$-module, we conclude that 
\begin{equation}\label{eq:e201}
{\rm Ker}(1-g \text{ on } H^{ 1}_{\rm cris}((X_0, \overline{X}_0)/W(k))) = E_2^{0,1} \cong W(k)^{r-1},
\end{equation}
and when $r=1$, 
\begin{equation}\label{eq:e20}
E_{\infty}^{2,0} = E_3^{2,0} = {\rm Coker}(E_2^{0,1} \to E_2^{2,0} = k) = k. 
\end{equation}

We estimate the length of 
$$ E_{\infty}^{1,1} = E_2^{1,1} \cong {\rm Ker}(\textstyle\sum_{i=0}^{p-1}g^i)/{\rm Im}(1-g) \text{ on } 
H^1_{\cris}((X_0, \overline{X}_0)/W(k)). $$
First, since $g^p-1 = 0$, 
$\sum_{i=0}^{p-1}g^i$ acts as zero on 
$H^1_{\rm cris}((X_0, \overline{X}_0)/W(k))/{\rm Ker}(1-g)$.
Also, $\sum_{i=0}^{p-1}g^i$ acts as the multiplication by $p$ on ${\rm Ker}(1-g)$, which is injective. 
So, 
by considering the action of $\sum_{i=0}^{p-1}g^i$ on the exact sequence 
\[ 0 \to {\rm Ker}(1-g) \to H^1_{\rm cris}((X_0, \overline{X}_0)/W(k)) \to 
H^1_{\rm cris}((X_0, \overline{X}_0)/W(k))/{\rm Ker}(1-g) \to 0,\]
we obtain the exact sequence 
\[ 0 \to {\rm Ker}(\textstyle\sum_{i=0}^{p-1}g^i) \to 
H^1_{\rm cris}((X_0, \overline{X}_0)/W(k))/{\rm Ker}(1-g) 
\to {\rm Ker}(1-g)/p{\rm Ker}(1-g),\]
and hence the exact sequence 
\begin{equation}\label{eq:e211exseq}
	0 \to \frac{{\rm Ker}(\textstyle\sum_{i=0}^{p-1}g^i)}{{\rm Im}(1-g)} \to 
	\frac{H^1_{\rm cris}((X_0, \overline{X}_0)/W(k))/{\rm Ker}(1-g)}{(1-g)(H^1_{\rm cris}((X_0, \overline{X}_0)/W(k))/{\rm Ker}(1-g))}  
	\to \frac{{\rm Ker}(1-g)}{p{\rm Ker}(1-g)}. 
\end{equation}
Since $H^1_{\rm cris}((X_0, \overline{X}_0)/W(k))$ is a finite free $W(k)$-module, 
we see that 
$ 
H^1_{\rm cris}((X_0, \overline{X}_0)/W(k))/{\rm Ker}(1-g)
$
is also a finite free $W(k)$-module on which 
$\sum_{i=0}^{p-1}g^i$ acts as zero. Thus 
the action of $g$ defines 
a finite free $W(k)[\zeta_p]$-module structure on $H^1_{\rm cris}((X_0, \overline{X}_0)/W(k))/{\rm Ker}(1-g)$ 
(where $\zeta_p$ is a primitive $p$-th root of unity), and its rank as a $W(k)[\zeta_p]$-module is equal to 
$$\left({\rm rk}_{W(k)} H^1_{\rm cris}((X_0, \overline{X}_0)/W(k))/{\rm Ker}(1-g)\right)/(p-1).$$
Hence the length of 
\[ 
\frac{H^1_{\rm cris}((X_0, \overline{X}_0)/W(k))/{\rm Ker}(1-g)}{(1-g)(H^1_{\rm cris}((X_0, \overline{X}_0)/W(k))/{\rm Ker}(1-g))} =   
\frac{H^1_{\rm cris}((X_0, \overline{X}_0)/W(k))/{\rm Ker}(1-g)}{(1-\zeta_p)(H^1_{\rm cris}((X_0, \overline{X}_0)/W(k))/{\rm Ker}(1-g))}  
\]
is equal to 
$$\left({\rm rk}_{W(k)} H^1_{\rm cris}((X_0, \overline{X}_0)/W(k))/{\rm Ker}(1-g)\right)/(p-1).$$
So, by \eqref{eq:e201} and \eqref{eq:e211exseq}, we obtain an estimate of the length of 
$$
E_{\infty}^{1,1} = E_2^{1,1} \cong {\rm Ker}(\textstyle\sum_{i=0}^{p-1}g^i)/{\rm Im}(1-g)$$
in the form of the following inequality:
\begin{align}
{\rm length} E_{\infty}^{1,1} & \geqslant 
\left({\rm rk}_{W(k)} H^1_{\rm cris}((X_0, \overline{X}_0)/W(k))/{\rm Ker}(1-g)\right)/(p-1) 
- (r-1) \label{eq:e11lengthestimate} \\ 
& = \left({\rm rk}_{W(k)} H^1_{\rm cris}((X_0, \overline{X}_0)/W(k)) - r + 1\right)/(p-1) - (r-1). \nonumber
\end{align}
Also, the above inequality is in fact an equality when $r=1$, because the term 
 $\displaystyle\frac{{\rm Ker}(1-g)}{p{\rm Ker}(1-g)}$ in the short exact sequence \eqref{eq:e211exseq} vanishes in this case by \eqref{eq:e201}.
If we denote by 
${\rm Fil}^iH^2_{\rm cris}((X_{\bullet}, \overline{X}_{\bullet})/W(k))$ 
the filtration induced by the spectral sequence 
\eqref{eq:desspseq}, we deduce from \eqref{eq:e20} and \eqref{eq:e11lengthestimate}  
\begin{align}
& {\rm length}( {\rm Fil}^1H^2_{\rm cris}((X_{\bullet}, \overline{X}_{\bullet})/W(k)) ) \geqslant
({\rm rk}_{W(k)} H^1_{\rm cris}((X_0, \overline{X}_0)/W(k)) - r + 1)/(p-1)  -(r-1), \label{eq:length1} \\ 
& {\rm length}( {\rm Fil}^1H^2_{\rm cris}((X_{\bullet}, \overline{X}_{\bullet})/W(k)) ) = 
({\rm rk}_{W(k)} H^1_{\rm cris}((X_0, \overline{X}_0)/W(k)))/(p-1) + 1 \quad \text{if $r=1$}. 
\label{eq:length1-1}
\end{align}
Let $g_0$ be the genus of $\overline{X}_0$. Then, 
by the Riemann--Hurwitz formula, we have the equality 
$$ 2g_0 - 2 = -2p + \sum_{i=1}^r (n_i+1)(p-1), $$
hence $2g_0 = (p-1)\left( \sum_{i=1}^r (n_i+1) - 2 \right)$. 
Hence 
$$ {\rm rk}_{W(k)} H^1_{\rm cris}((X_0, \overline{X}_0)/W(k))) = (p-1)\left( \sum_{i=1}^r (n_i+1) - 2 \right) 
+r-1 $$ 
and therefore 
\eqref{eq:length1}, \eqref{eq:length1-1} are rewritten as 
\begin{align}
& {\rm length}\left( {\rm Fil}^1H^2_{\rm cris}((X_{\bullet}, \overline{X}_{\bullet})/W(k)) \right)
 \geqslant \sum_{i=1}^r (n_i+1) - 2  - (r-1) \geqslant r-1, \label{eq:length2}
\\ 
& {\rm length}\left( {\rm Fil}^1H^2_{\rm cris}((X_{\bullet}, \overline{X}_{\bullet})/W(k)) \right) = 
(n_1+1) - 1 \geqslant 1 \quad \text{if $r=1$.} 
\label{eq:length2-1}
\end{align}
By \eqref{eq:length2} and \eqref{eq:length2-1}, we conclude that 
$H^2_{\rm cris}((X_\bullet,\overline{X}_{\bullet})/W(k)) \not= 0.$ 
Since $H^2_{\rm cris}((X,\overline{X})/W(k))$ is zero and 
 $H^2_{\rm cris}((X_\bullet,\overline{X}_{\bullet})/W(k))$ is non-zero, 
the map 
$$ H^2_{\rm cris}((X,\overline{X})/W(k)) \to H^2_{\rm cris}((X_\bullet,\overline{X}_{\bullet})/W(k))$$ 
is not an isomorphism. 
\end{proof}

\begin{remark}\label{rem:etexplicit}
The above proof shows that there does not exist a functor 
$$ A_{\et}^{\kr}: \Sm_k \rightarrow C^{\geqslant 0}(W(k)),$$
where $C^{\geqslant 0}(W(k))$ denotes again the category of complexes of $W(k)$-modules of non-negative degree, 
satisfying the following conditions (compare with Remark \ref{rem:rhexplicit}): 
\begin{enumerate}
\item\label{rem:etexplicit (i)}
It gives a good integral $p$-adic cohomology theory in the sense that,  
for any $nc$-pair $(X, \overline{X})$, there exists a functorial quasi-isomorphism 
$$A^{\kr}_{\et}(X) \simeq R\Gamma_{\cris}((X,\overline{X})/W(k)).$$ 
\item\label{rem:etexplicit (ii)}
It satisfies Galois descent in the sense that, for any \v{C}ech hypercovering 
$X_{\bullet} \rightarrow X$ associated to a finite \'etale Galois covering 
$X_0 \rightarrow X$, 
the induced morphism 
$$A^{\kr}_{\et}(X) \rightarrow A^{\kr}_{\et}(X_{\bullet})$$ 
is a quasi-isomorphism. 
\end{enumerate}
Indeed, for the hypercovering $X_{\bullet} \to X$ in the above proof, 
if the map $H^1(A^{\kr}_{\et}(X)) \rightarrow H^1(A^{\kr}_{\et}(X_{\bullet}))$ is not an isomorphism, 
it violates the condition \ref{rem:etexplicit (ii)}, and if it is an isomorphism, the argument of the above proof 
works for the spectral sequence 
$$ E_1^{i,j} =  H^{j}(A^{\kr}_{\et}(X_{i})) \, \Longrightarrow \, H^{i+j}(A^{\kr}_{\et}(X_{\bullet}))$$ 
and we conclude that $H^2(A^{\kr}_{\et}(X)) \rightarrow H^2(A^{\kr}_{\et}(X_{\bullet}))$ is not an isomorphism, 
which again violates the condition \ref{rem:etexplicit (ii)}. 
\end{remark}

\begin{remark}
In the above remark, if we assume moreover 
that $A^{\kr}_{\et}(X_0)$ is a perfect complex of 
$W(k)[G]$-modules (where $G$ is the Galois group of $X_0$ over $X$), 
one can prove the nonexistence of the functor $A^{\kr}_{\et}$ in a simpler way,  which is due to Crew:\\
For the hypercovering $X_{\bullet} \to X$ in the proof of 
Proposition \ref{prop:negative}, $G$ is equal to $\Z/p\Z$. In this case, 
$W(k)[G]$ is a local ring and so every finitely generated projective module is free
(see \cite[Lem.\,1.6]{C_1984}).  Thus $A^{\kr}_{\et}(X_0)$ is quasi-isomorphic to 
a bounded complex of finitely generated free $W(k)[G]$-modules. 
Then, by comparing the cohomologies of $A^{\kr}_{\et}(X_0)  \otimes_{\Z} \Q$ and those of 
$$ A^{\kr}_{\et}(X) \otimes_{\Z} \Q \simeq A^{\kr}_{\et}(X_{\bullet})  \otimes_{\Z} \Q = 
R\Gamma(G, A^{\kr}_{\et}(X_0)  \otimes_{\Z} \Q) = \Gamma(G, A^{\kr}_{\et}(X_0) \otimes_{\Z} \Q), $$
we obtain the equality 
$$ \sum_i (-1)^i \dim_K (H^i(A^{\kr}_{\et}(X_0))  \otimes_{\Z} \Q) = 
p \left(\sum_i (-1)^i \dim_K ( H^i(A^{\kr}_{\et}(X))  \otimes_{\Z} \Q) \right) $$
(see \cite[Thm.\,1.5, Cor.\,1.7]{C_1984}). But this is absurd because the  right hand side 
is equal to 
$$p\left(\sum_i (-1)^i \dim (H^i_{\cris}((X,\overline{X})/W(k))  \otimes_{\Z} \Q) \right) = p(2-r), $$ 
while the left hand side, which is equal to 
$\sum_i (-1)^i \dim (H^i_{\cris}((X_0,\overline{X}_0)/W(k))  \otimes_{\Z} \Q)$, 
can be estimated by
$$ (2-r)-(p-1)(\sum_{i=1}^r(n_i+1) - 2) \leqslant
(2-r)-(p-1)(2r-2) < p(2-r). $$
\end{remark}

\begin{remark}
Let us observe that the above example $(X_{\bullet},\overline{X}_\bullet)\to (X,\overline{X})$ with $r=1$ also shows that the independence of $H^1$ fails if we take mod $p^n$ coefficients. Indeed, we claim that the map 
\[
	H^1_{\rm cris}((X,\overline{X})/W_n(k)) \to H^1_{\rm cris}((X_\bullet,\overline{X}_{\bullet})/W_n(k))
\]
is not an isomorphism. As in the proof of Proposition \ref{prop:negative}, we use the spectral sequence
\[
E_1^{i.j} = 
H^j_{\rm cris}((X_i, \overline{X}_i)/W_n(k)) \Longrightarrow  
H^{i+j}_{\rm cris}((X_{\bullet}, \overline{X}_{\bullet})/W_n(k))
\]
to compute $H^1_{\rm cris}((X_\bullet,\overline{X}_{\bullet})/W_n(k))$. For $j=0$, we get from \eqref{eq:gpcoh}
\begin{equation*}
E_2^{i,0}  = H^i(G,  H^0_{\rm cris}((X_0, \overline{X}_0)/W_n(k)))= k, \quad (i> 0).
\end{equation*}
Hence $H^1_{\rm cris}((X_\bullet,\overline{X}_{\bullet})/W_n(k)) \supseteq E_{\infty}^{1,0} = E_2^{1,0} \not= 0$. 
On the other hand, $H^1_{\rm cris}((X,\overline{X})/W_n(k)) = 0$ because 
$(X,\overline{X}) = ({\mathbb{A}}^1_k, {\mathbb{P}}^1_k)$. This 
proves the non-independence of $H^1$ for mod $p^n$ coefficients.
\end{remark}

We can extend the above counterexample easily to 
the case of $H^i \, (i \geqslant 2)$.

\begin{corollary}\label{cor:negativehighercoh}
For any $i \geqslant 2$, Question 
\ref{q:2-1} has a negative answer for $H^i$. 
\end{corollary}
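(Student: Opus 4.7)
The plan is to bootstrap from Proposition \ref{prop:negative} by taking products with auxiliary smooth proper $k$-varieties and invoking the Künneth formula for log crystalline cohomology. First I would fix the pair $(X,\overline{X}) = (\mathbb{A}^1_k, \mathbb{P}^1_k)$ together with the split proper generically étale hypercovering $(X_\bullet, \overline{X}_\bullet) \to (X,\overline{X})$ from the proof of Proposition \ref{prop:negative} in the case $r = 1$. From that proof we have $H^2_{\rm cris}((X,\overline{X})/W(k)) = 0$ (by a dimension argument, since $\dim X = 1$) while $H^2_{\rm cris}((X_\bullet,\overline{X}_\bullet)/W(k)) \neq 0$.

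For each $i \geq 2$ I would then choose a smooth proper $k$-variety $Y_i$ with torsion-free crystalline cohomology and $H^{i-2}_{\rm cris}(Y_i/W(k)) \neq 0$; concretely $Y_i := \mathbb{P}^{(i-2)/2}_k$ when $i$ is even, and $Y_i := E \times_k \mathbb{P}^{(i-3)/2}_k$ when $i \geq 3$ is odd, where $E$ is any elliptic curve over $k$ (both choices have torsion-free $H^*_{\rm cris}$, the abelian-variety case being classical). Form the product diagram
\[
\xymatrix{
X_\bullet \times Y_i \ar[r] \ar@{^{(}->}[d] & X \times Y_i \ar@{^{(}->}[d] \\
\overline{X}_\bullet \times Y_i \ar[r] & \overline{X} \times Y_i.
}
\]
The outer pair is still an $nc$-pair, since $(\overline{X}\setminus X)\times Y_i$ is a smooth divisor in $\overline{X}\times Y_i$ and similarly at each simplicial level; the top horizontal arrow remains a split proper generically étale hypercovering as a base change of such a morphism by $\mathrm{id}_{Y_i}$. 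Hence this enlarged diagram satisfies the hypotheses of Question \ref{q:2-1}.

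Finally I would invoke the Künneth formula for log crystalline cohomology of simplicial $nc$-pairs (\cite[Thm.\,5.10]{N_2012}), yielding a quasi-isomorphism
\[
R\Gamma_{\rm cris}((Z \times Y_i, \overline{Z}\times Y_i)/W(k)) \simeq R\Gamma_{\rm cris}((Z,\overline{Z})/W(k)) \otimes^L_{W(k)} R\Gamma_{\rm cris}(Y_i/W(k))
\]
functorially for $Z \in \{X, X_\bullet\}$. Because $R\Gamma_{\rm cris}(Y_i/W(k))$ is a perfect complex with torsion-free cohomology, it splits in the derived category as $\bigoplus_b H^b_{\rm cris}(Y_i/W(k))[-b]$, so
\[
H^i_{\rm cris}((Z\times Y_i, \overline{Z}\times Y_i)/W(k)) \cong \bigoplus_{a+b=i} H^a_{\rm cris}((Z,\overline{Z})/W(k)) \otimes_{W(k)} H^b_{\rm cris}(Y_i/W(k))
\]
and the map induced by $X_\bullet \to X$ is the direct sum $\bigoplus_{a+b=i} \phi^a \otimes \mathrm{id}$ with $\phi^a$ the comparison map in degree $a$. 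Isolating the $(a,b) = (2, i-2)$ component gives a map from $0$ to $H^2_{\rm cris}((X_\bullet,\overline{X}_\bullet)/W(k)) \otimes_{W(k)} H^{i-2}_{\rm cris}(Y_i/W(k)) \neq 0$, which cannot be surjective, so the full map on $H^i$ is not an isomorphism.

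The only input from outside the excerpt is the Künneth formula cited above, which is exactly the one already used elsewhere in the paper; the verification of the hypotheses of Question \ref{q:2-1} for the product diagram is routine, so I do not anticipate a serious obstacle.
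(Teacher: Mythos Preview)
Your proof is correct and follows essentially the same strategy as the paper's: both bootstrap from the $i=2$ case of Proposition \ref{prop:negative} by taking a product with a smooth proper auxiliary variety having torsion-free crystalline cohomology and nonvanishing $H^{i-2}$, then apply the K\"unneth formula. The only difference is cosmetic---the paper uses a single $Y$ (a $d$-fold self-product of a curve of positive genus) that works simultaneously for all $2 \leq i \leq 2(d+1)$, whereas you pick a tailored $Y_i$ for each $i$.
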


\begin{proof}
Let $(X_{\bullet}, \overline{X}_{\bullet}) \to (X, \overline{X})$, $D_{\bullet}, D$ 
be as in Proposition \ref{prop:negative}. 
Let $Y$ be a proper smooth connected variety over $k$ of dimension $d$ 
such that, for any $0 \leqslant n \leqslant 2d$, 
$H^n_{\rm cris}(Y/W(k))$ is a non-zero, finite free $W(k)$-module. 
(For example, one can take as $Y$ a $d$-fold fibre product of a proper smooth 
curve of positive genus 
over $k$.) 
Then we have isomorphisms  
\begin{align*}
& H^i_{\rm cris}((X \times_k Y,\overline{X} \times_k Y)/W(k)) \cong \bigoplus_{a+b=i} H^a_{\rm cris}((X,\overline{X})/W(k)) \otimes_{W(k)} H^b_{\rm cris}(Y/W(k)), \\ 
& 
H^i_{\rm cris}((X_{\bullet} \times_k Y, \overline{X}_{\bullet} \times_k Y)/W(k)) \cong \bigoplus_{a+b=i} H^a_{\rm cris}((X_\bullet, \overline{X}_{\bullet})/W(k)) \otimes_{W(k)} 
H^b_{\rm cris}(Y/W(k)). 
\end{align*} 
Consider the morphism $(X_{\bullet} \times_k Y, \overline{X}_{\bullet} \times_k Y) \to 
(X \times_k Y, \overline{X} \times_k Y)$.
Then 
the induced morphism 
$$H^i_{\rm cris}((X \times_k Y,\overline{X} \times_k Y)/W(k)) \to 
H^i_{\rm cris}((X_{\bullet} \times_k Y,\overline{X}_{\bullet} \times_k Y)/W(k)) $$
is not an isomorphism for $2 \leqslant i \leqslant 2(d+1)$, because one of its direct factor 
$$ 
H^2_{\rm cris}((X,\overline{X})/W(k)) \otimes_{W(k)} 
H^{i-2}_{\rm cris}(Y/W(k)) \to 
H^2_{\rm cris}((X_\bullet,\overline{X}_{\bullet})/W(k)) \otimes_{W(k)} 
H^{i-2}_{\rm cris}(Y/W(k))
$$ is not an isomorphism by Proposition \ref{prop:negative}. So we are done. 
\end{proof}

\appendix

\section{The first crystalline cohomology group and the Picard $1$-motive}\label{appendixA}

As for the comparison of the first log crystalline cohomology and 
the Dieudonn\'e module of the associated $1$-motive, we have the 
following result mentioned in Remark \ref{rem: ABV2}.

\begin{theorem}
Let $(X_{\bullet}, \overline{X}_{\bullet})$ be a split simplicial proper strict normal crossing 
pair over $k$. 
Then there exists 
a functorial isomorphism 
$${\bold T}_{\rm cris}({\rm Pic}^+(X_{\bullet}, \overline{X}_{\bullet})) \xrightarrow{\cong}
H^1_{\rm cris}((X_\bullet,\overline{X}_{\bullet})/W(k))$$ which is compatible with 
weight filtration and Frobenius action, where ${\bold T}_{\rm cris}$ 
is the covariant Dieudonn\'e functor \textup{(}the contravariant Dieudonn\'e functor of 
the $p$-divisible group associated to the dual\textup{)}. 
\end{theorem}

\begin{remark}
Compared to Theorem \ref{thm:ABV2} of Andreatta--Barbieri-Viale, our result includes the case $p=2$.
\end{remark}

\begin{proof} 

We denote the covariant Dieudonn\'e functor for $p$-divisible groups 
also by ${\bold T}_{\rm cris}$. 
The natural connected-étale split exact sequence of $p$-divisible groups
$$
0 \rightarrow {\rm Pic}^{0,{\rm red}}_{\overline{X}_{\bullet}}[p^{\infty}]^0 \rightarrow 
{\rm Pic}^{0,{\rm red}}_{\overline{X}_{\bullet}}[p^{\infty}] \rightarrow
{\rm Pic}^{0,{\rm red}}_{\overline{X}_{\bullet}}[p^{\infty}]^{\rm et} \rightarrow 0
$$
induces a similar split exact sequence on the level of Dieudonné modules.
In particular,
${\bold T}_{\rm cris}({\rm Pic}^{0,{\rm red}}_{\overline{X}_{\bullet}}[p^{\infty}]) $ 
 can be written as a direct sum
\begin{equation}\label{eq:tt1}
{\bold T}_{\rm cris}({\rm Pic}^{0,{\rm red}}_{\overline{X}_{\bullet}}[p^{\infty}]) \cong
{\bold T}_{\rm cris}({\rm Pic}^{0,{\rm red}}_{\overline{X}_{\bullet}}[p^{\infty}]^0) \oplus {\bold T}_{\rm cris}({\rm Pic}^{0,{\rm red}}_{\overline{X}_{\bullet}}[p^{\infty}]^{\rm et})
\end{equation}
of $F$-crystals.  Note that the expression \eqref{eq:tt1} as direct sum is unique 
because the slopes of the direct summands are distinct.
On the other hand, by Proposition 
\ref{prop:sl01}, we have a short exact sequence 
\begin{equation}\label{eq:tt2}
0 \to H^0_{\et}(\overline{X}_{\bullet}, W\Omega^1_{\overline{X}_{\bullet}/k}) 
\to H^1_{\rm cris}(\overline{X}_{\bullet}/W) \to 
H^1_{\et}(\overline{X}_{\bullet}, W{\mathcal{O}}_{\overline{X}_{\bullet}})
\to 0. 
\end{equation}

We prove that \eqref{eq:tt2} is also a split exact sequence of $F$-crystals. 
Let $ZW\Omega^1_{\overline{X}_{\bullet}/k}$ be the kernel of the map 
$d: W\Omega^1_{\overline{X}_{\bullet}/k} \to W\Omega^2_{\overline{X}_{\bullet}/k}$, and let 
$\tau_{\leqslant 1}W\Omega^{\kr}_{\overline{X}_{\bullet}/k}$ be the complex 
$W{\mathcal{O}}_{\overline{X}_{\bullet}} \to ZW\Omega^1_{\overline{X}_{\bullet}/k}$, where 
$W{\mathcal{O}}_{\overline{X}_{\bullet}}$ is at degree $0$. Then 
we have the commutative diagram 
\begin{equation}\label{eq:spl1}
\xymatrix{
& H^0_{\et}(\overline{X}_{\bullet},ZW\Omega^1_{\overline{X}_{\bullet}/k}) 
\ar[r] \ar[d] 
& H^1_{\et}(\overline{X}_{\bullet}, \tau_{\leqslant 1}W\Omega^{\kr}_{\overline{X}_{\bullet}/k}) 
\ar[r] \ar[d] & 
H^1_{\et}(\overline{X}_{\bullet}, W{\mathcal{O}}_{\overline{X}_{\bullet}}) 
\ar@{=}[d] \\ 
0 \ar[r] & H^0_{\et}(\overline{X}_{\bullet}, W\Omega^1_{\overline{X}_{\bullet}/k}) 
\ar[r] & H^1_{\rm cris}(\overline{X}_{\bullet}/W) \ar[r] & 
H^1_{\et}(\overline{X}_{\bullet}, W{\mathcal{O}}_{\overline{X}_{\bullet}})
\ar[r] & 0}
\end{equation}
where the horizontal lines are exact. Since 
$H^0_{\et}(\overline{X}_{\bullet}, W\Omega^1_{\overline{X}_{\bullet}/k}) = 
E^{1,0}_{\infty}$
in the spectral sequence \eqref{eq:slspseq} by Proposition 
\ref{prop:sl01}, the left vertical arrow in \eqref{eq:spl1} is an isomorphism. 
Then a diagram chase shows that the exact sequence \eqref{eq:tt2} can be identified with the 
exact sequence 
\begin{equation*}
0 \to 
H^0_{\et}(\overline{X}_{\bullet},ZW\Omega^1_{\overline{X}_{\bullet}/k}) 
\overset{\iota}{\to} 
H^1_{\et}(\overline{X}_{\bullet}, \tau_{\leqslant 1}W\Omega^{\kr}_{\overline{X}_{\bullet}/k}) 
\overset{\pi}{\to} 
H^1_{\et}(\overline{X}_{\bullet}, W{\mathcal{O}}_{\overline{X}_{\bullet}}) 
\to 0. 
\end{equation*}
Let $V':  \tau_{\leqslant 1}W\Omega^{\kr}_{\overline{X}_{\bullet}/k} \to \tau_{\leqslant 1}W\Omega^{\kr}_{\overline{X}_{\bullet}/k}$ 
be the map of complexes of sheaves on $\overline{X}_{\bullet}$ defined in
\cite[III (1.7.1)]{IR_1983}: It is induced by the map 
$V: W{\mathcal{O}}_{\overline{X}_{\bullet}} \to W{\mathcal{O}}_{\overline{X}_{\bullet}} $ and the automorphism  
$V': ZW\Omega^1_{\overline{X}_{\bullet}/k} \to ZW\Omega^1_{\overline{X}_{\bullet}/k}$ defined in 
\cite[III (1.3.2)]{IR_1983}. Also, take an increasing sequence $(N_n)_n$ in $\N$ with $V^{N_n}
H^1_{\et}(\overline{X}_{\bullet}, W{\mathcal{O}}_{\overline{X}_{\bullet}}) \subseteq 
p^nH^1_{\et}(\overline{X}_{\bullet}, W{\mathcal{O}}_{\overline{X}_{\bullet}})$ 
(such a sequence exists by Remark \ref{rem:topH1WO-2}). Then, for an element $a$ in 
$H^1_{\et}(\overline{X}_{\bullet}, \tau_{\leqslant 1}W\Omega^{\kr}_{\overline{X}_{\bullet}/k})$ and $n \in \N$,  
$\pi((V')^{N_n}a) = V^{N_n}\pi(a)\in V^{N_n}H^1_{\et}(\overline{X}_{\bullet}, W{\mathcal{O}}_{\overline{X}_{\bullet}})
\subseteq p^nH^1_{\et}(\overline{X}_{\bullet}, W{\mathcal{O}}_{\overline{X}_{\bullet}})$ and so 
$(V')^{N_n}a \in H^0_{\et}(\overline{X}_{\bullet},ZW\Omega^1_{\overline{X}_{\bullet}/k}) + 
p^n H^1_{\et}(\overline{X}_{\bullet}, \tau_{\leqslant 1}W\Omega^{\kr}_{\overline{X}_{\bullet}/k})$. 
Since $V'$ is an automorphism on $H^0_{\et}(\overline{X}_{\bullet},ZW\Omega^1_{\overline{X}_{\bullet}/k})$, 
we can find an element $b_n$ of $H^0_{\et}(\overline{X}_{\bullet},ZW\Omega^1_{\overline{X}_{\bullet}/k})$ 
such that 
$(V')^{N_n}a - (V')^{N_n}b_n \in p^n H^1_{\et}(\overline{X}_{\bullet}, \tau_{\leqslant 1}W\Omega^{\kr}_{\overline{X}_{\bullet}/k})$. 
Then we see that $(V')^{N_{n+1}}(b_{n+1}-b_n) 
\in 
p^n H^1_{\et}(\overline{X}_{\bullet}, \tau_{\leqslant 1}W\Omega^{\kr}_{\overline{X}_{\bullet}/k}) \cap 
H^0_{\et}(\overline{X}_{\bullet},ZW\Omega^1_{\overline{X}_{\bullet}/k}) 
= p^n H^0_{\et}(\overline{X}_{\bullet},ZW\Omega^1_{\overline{X}_{\bullet}/k}) 
$ and so $b_{n+1}-b_n \in p^n H^0_{\et}(\overline{X}_{\bullet},ZW\Omega^1_{\overline{X}_{\bullet}/k})$. 
Thus $b = \lim_n b_n$ is defined, and one can check that it is independent of the choice of $b_n$'s. Also, 
one sees easily that the map $a \mapsto b$ defines a section of $\iota$. Moreover, 
using the relation $FV' = V'F = p$ \cite[III (1.7.7)]{IR_1983}, we can check that this section is compatible with $F$. 
So we have proven that \eqref{eq:tt2} is a split exact sequence of $F$-crystals, thus we have the decomposition 
\begin{equation}\label{eq:spl2}
H^1_{\rm cris}(\overline{X}_{\bullet}/W) = 
H^1_{\et}(\overline{X}_{\bullet}, W{\mathcal{O}}_{\overline{X}_{\bullet}}) \oplus 
 H^0_{\et}(\overline{X}_{\bullet}, W\Omega^1_{\overline{X}_{\bullet}/k})
\end{equation}
as $F$-crystals. The decomposition \eqref{eq:spl2} is also unique by the slope reason.

Denote the base change of 
$\overline{X}_{\bullet}$ 
to $\overline{k}$ 
by  $\overline{X}'_{\bullet}$
and denote $W(\overline{k})$ 
simply by $W'$. 
By Propositions \ref{prop:h0} and 
\ref{prop:h1},  ${\bold T}_{\rm cris}({\rm Pic}^{0,{\rm red}}_{\overline{X}'_{\bullet}}[p^{\infty}]^{\rm et})$ and ${\bold T}_{\rm cris}({\rm Pic}^{0,{\rm red}}_{\overline{X}'_{\bullet}}[p^{\infty}]^0)$ 
are isomorphic to 
$H^0_{\et}(\overline{X}'_{\bullet}, W\Omega^1_{\overline{X}'_{\bullet}/\overline{k}})$ and $
H^1_{\et}(\overline{X}'_{\bullet}, W{\mathcal{O}}_{\overline{X}'_{\bullet}})$  
respectively, and these isomorphisms are compatible with the action of 
${\rm Gal}(\overline{k}/k)$. 
Thus the isomorphisms descent to the isomorphisms 
$$ 
{\bold T}_{\rm cris}({\rm Pic}^{0,{\rm red}}_{\overline{X}_{\bullet}}[p^{\infty}]^0) 
\cong H^1_{\et}(\overline{X}_{\bullet}, W{\mathcal{O}}_{\overline{X}_{\bullet}}), \quad 
{\bold T}_{\rm cris}({\rm Pic}^{0,{\rm red}}_{\overline{X}_{\bullet}}[p^{\infty}]^{\rm et})
\cong H^0_{\et}(\overline{X}_{\bullet}, W\Omega^1_{\overline{X}_{\bullet}/k}). 
$$

Taking into account the decompositions \eqref{eq:tt1}  and \eqref{eq:spl2},  
we have the canonical isomorphism 
\begin{equation}\label{eq:tt4}
{\bold T}_{\rm cris}({\rm Pic}^{0,{\rm red}}_{\overline{X}_{\bullet}}) 
\cong 
H^1_{\rm cris}(\overline{X}_{\bullet}/W(k)).  
\end{equation}
Next, by definition of the $1$-motive and its Dieudonn\'e module (see \cite{ABV_2005}), 
we have an exact sequence 
\begin{equation}\label{eq:tt5}
0 \to 
{\bold T}_{\rm cris}({\rm Pic}^{0,{\rm red}}_{\overline{X}_{\bullet}}) 
\to 
{\bold T}_{\rm cris}({\rm Pic}^+(X_{\bullet}, \overline{X}_{\bullet})) 
\to 
{\bold T}_{\rm cris}({\rm Div}^0_{D_{\bullet}}(\overline{X}_{\bullet}))  
\to 0. 
\end{equation}
Furthermore, by the proof of Proposition \ref{prop: exact div}
we have the exact sequence
\begin{equation}\label{eq:logcrys-ses}
 0 \rightarrow H^1_{\rm crys}(\overline{X}_{\bullet}/W(k)) \rightarrow 
H^1_{\rm crys}((X_{\bullet}, \overline{X}_{\bullet})/W(k))  
\rightarrow 
H^0_{\et}(\overline{X}_{\bullet}, a^{(1)}_{\bullet *}W\Omega^{\kr}_{D^{(1)}_\bullet/k})^0 
\rightarrow 0, 
\end{equation}
where $H^0_{\et}(\overline{X}_{\bullet}, a^{(1)}_{\bullet *}W\Omega^{\kr}_{D^{(1)}_\bullet/k})^0 := {\rm Ker}(
 H^0_{\et}(\overline{X}_{\bullet}, a^{(1)}_{\bullet *}W\Omega^{\kr}_{D^{(1)}_\bullet/k}) \rightarrow H^2_{\et}(\overline{X}_{\bullet}, W\Omega^{\kr}_{\overline{X}_{\bullet}/k}))$. 
After extension to $W'$, the sequence \eqref{eq:logcrys-ses} is canonically identical to 
\eqref{eq:exseqh2-3} (with $k$ replaced by $\overline{k}$ and 
$X_{\bullet}, \overline{X}_{\bullet}$ replaced by the base changes $X'_{\bullet}, \overline{X}'_{\bullet}$ 
of $X_{\bullet}, \overline{X}_{\bullet}$ here to 
$\overline{k}$) by the proof of Proposition \ref{prop: exact div} and
Lemma \ref{lem:nsinj}, in the sense that the identification 
$ H^0_{\et}(\overline{X}_{\bullet}, a^{(1)}_{\bullet *}W\Omega^{\kr}_{D^{(1)}_\bullet/k})^0 \otimes_{W(k)} W' \cong 
 {\rm Div}^0_{D_{\bullet}}(\overline{X}_{\bullet}) \otimes W'$ is compatible with the action of 
 the Galois group ${\rm Gal}(\overline{k}/k)$. We see from this fact that there is an isomorphism 
\begin{equation}\label{eq:divpartisom}
{\bold T}_{\rm cris}({\rm Div}^0_{D_{\bullet}}(\overline{X}_{\bullet})) \cong
H^0_{\et}(\overline{X}_{\bullet}, a^{(1)}_{\bullet *}W\Omega^{\kr}_{D_\bullet/k})^0 
\end{equation}
of $F$-crystals. To prove the theorem, we need to upgrade canonically the isomorphisms 
\eqref{eq:tt4} and \eqref{eq:divpartisom} to an isomorphism between the exact sequences 
\eqref{eq:tt5}, \eqref{eq:logcrys-ses} of $F$-crystals.

To do so, first we recall the definition of the exact sequence \eqref{eq:tt5} in detail, following 
\cite[\S\,1.3]{ABV_2005}. Put ${\mathbb{G}} := {\rm Pic}^{0,{\rm red}}_{\overline{X}_{\bullet}}$, 
${\mathbb{X}} := {\rm Div}^0_{D_{\bullet}}(\overline{X}_{\bullet})$, 
${\rm Pic}^+(X_{\bullet}, \overline{X}_{\bullet}) := [{\mathbb{X}} \overset{u}{\to} {\mathbb{G}}]$, and 
for $n \in \N$, define ${\mathbb{M}}[p^n]$ by 
$$ 
{\mathbb{M}}[p^n] := 
\frac{{\rm Ker}(u+p^n: {\mathbb{X}} \times_k {\mathbb{G}} \to {\mathbb{G}})}{{\rm Im}((p^n,-u):{\mathbb{X}} \to {\mathbb{X}} \times_k {\mathbb{G}})}.  
$$
Then there exists an exact sequence 
\begin{equation}\label{eq:dieudonne-n}
0 \to {\mathbb{G}}[p^n] \to {\mathbb{M}}[p^n] \to {\mathbb{X}}/p^n{\mathbb{X}} \to 0 
\end{equation}
in which the maps are defined by $g \mapsto (0,g)$, $(x,g) \mapsto x$. By identifying 
${\mathbb{X}}/p^n{\mathbb{X}}$ with $\frac{1}{p^n}{\mathbb{X}}/{\mathbb{X}}$ and taking the inductive limit 
with respect to $n$, we obtain an exact sequence of $p$-divisible groups 
\begin{equation}\label{eq:dieudonne-infty}
0 \to {\mathbb{G}}[p^{\infty}] \to {\mathbb{M}}[p^{\infty}] \to {\mathbb{X}} \otimes_{\Z} \Q_p/\Z_p \to 0. 
\end{equation}
By applying the covariant Dieudonn\'e functor to \eqref{eq:dieudonne-infty}, we obtain 
the exact sequence \eqref{eq:tt5}. 

Let ${\mathbb{X}}', {\mathbb{G}}', {\mathbb{M}}'[p^n]$ be the base change of 
${\mathbb{X}}, {\mathbb{G}}, {\mathbb{M}}[p^n]$ to $\overline{k}$ respectively. Then we can take a 
compatible family of 
homomorphisms
$\widetilde{u}:=\{\widetilde{u}_n: \frac{1}{p^n}{\mathbb{X}}' \to {\mathbb{G}}'\}_n$ which extends the base change 
${\mathbb{X}}' \to {\mathbb{G}}'$ of $u$. Then the map 
$$ \frac{1}{p^n}{\mathbb{X}}'/{\mathbb{X}}' \cong 
{\mathbb{X}}'/p^n{\mathbb{X}}' \to 
{\mathbb{M}}'; \quad x \mapsto p^nx \mapsto (p^nx, -\widetilde{u}(x)) $$
gives a compatible family of splittings of the exact sequences 
\eqref{eq:dieudonne-n} for $n \in \N$ over $\overline{k}$, hence a splitting 
$s: {\mathbb{X}}' \otimes \Q_p/\Z_p \to {\mathbb{M}}'[p^{\infty}]$  
of the exact sequence \eqref{eq:dieudonne-infty} over $\overline{k}$. 
Hence the exact sequence \eqref{eq:dieudonne-infty} is given by the $1$-cocycle
$c \in Z^1({\rm Gal}(\overline{k}/k), 
{\rm Hom}({\mathbb{X}}' \otimes \Q_p/\Z_p, \mathbb{G}'[p^{\infty}]))$ defined by 
$\sigma \mapsto \sigma(s) - s = - \sigma(\widetilde{u}) + \widetilde{u}$. 

Noting that 
${\rm Hom}({\mathbb{X}}' \otimes \Q_p/\Z_p, \mathbb{G}'[p^{\infty}]) = 
{\rm Hom}({\mathbb{X}}' \otimes \Q_p/\Z_p, \mathbb{G}'[p^{\infty}]^{\rm et})
$, $c$ is in fact an element in 
$Z^1({\rm Gal}(\overline{k}/k), 
{\rm Hom}({\mathbb{X}}' \otimes \Q_p/\Z_p, \mathbb{G}'[p^{\infty}]^{\rm et}))$. 
Hence the extension \eqref{eq:dieudonne-infty} is the push-out of the exact sequence  
\begin{equation}\label{eq:dieudonne-infty-et}
	0 \to {\mathbb{G}}[p^{\infty}]^{\rm et} 
	\to E \to {\mathbb{X}} \otimes_{\Z} \Q_p/\Z_p \to 0
\end{equation}
induced by $c$ by ${\mathbb{G}}[p^{\infty}]^{\rm et} \to 
{\mathbb{G}}[p^{\infty}]$ {where for simplicity $E$ denotes the extension of ${\mathbb{G}}[p^{\infty}]^{\rm et}$ by $ {\mathbb{X}} \otimes_{\Z} \Q_p/\Z_p$}.

Recall that, over $\overline{k}$, the covariant Dieudonn\'e functor 
for \'etale $p$-divisible groups is given by 
$$ G \mapsto {\rm Hom}(G^{\vee}, {\mathbb{G}}_m) \otimes_{\Z_p} W'
=  {\rm Hom}(G^{\vee}, \mu_{p^{\infty}}) \otimes_{\Z_p} W' 
= {\rm Hom}(\Q_p/\Z_p, G) \otimes_{\Z_p} W' $$
and over $k$, it is defined by taking the ${\rm Gal}(\overline{k}/k)$-invariant 
part  of this module. Let us consider the exact sequence 
\begin{equation}\label{eq:dieudonne-infty-et-qpzp}
	0 \to {\rm Hom}(\Q_p/\Z_p, {\mathbb{G}}'[p^{\infty}]^{\rm et}) 
	\to {\rm Hom}(\Q_p/\Z_p, E') \to 
	{\mathbb{X}}' \otimes_{\Z} \Z_p \to 0
\end{equation}
(where $E'$ is the base change of $E$ to $\overline{k}$) 
endowed with ${\rm Gal}(\overline{k}/k)$-action which 
we obtain by applying ${\rm Hom}(\Q_p/\Z_p, -)$ to the base change of 
\eqref{eq:dieudonne-infty-et} to $\overline{k}$. 
This corresponds to the $1$-cocycle $\widetilde{c} := {\rm Hom}(\Q_p/\Z_p, c)$ 
in 
\begin{align*}
	& Z^1({\rm Gal}(\overline{k}/k), 
	{\rm Hom}({\rm Hom}(\Q_p/\Z_p, {\mathbb{X}}' \otimes \Q_p/\Z_p), 
	{\rm Hom}(\Q_p/\Z_p, {\mathbb{G}}'[p^{\infty}])) \\ 
	& = 
	Z^1({\rm Gal}(\overline{k}/k), 
	{\rm Hom}({\mathbb{X}}' \otimes \Z_p, 
	\varprojlim_n{\mathbb{G}}'[p^n])).
\end{align*}
Then we see that we can obtain the exact sequence \eqref{eq:tt5} of Dieudonné modules by applying 
$- \otimes_{\Z_p} W'$ to the sequence \eqref{eq:dieudonne-infty-et-qpzp},  
taking ${\rm Gal}(\overline{k}/k)$-invariants and then pushing it out by 
$$ 
({\rm Hom}(\Q_p/\Z_p, {\mathbb{G}}'[p^{\infty}]^{\rm et}) \otimes_{\Z_p} W')^{{\rm Gal}(\overline{k}/k)}
= {\bold T}_{\rm cris}({\rm Pic}^{0,{\rm red}}_{\overline{X}_{\bullet}}[p^{\infty}]^{\rm et}) 
\to {\bold T}_{\rm cris}({\rm Pic}^{0,{\rm red}}_{\overline{X}_{\bullet}}). $$ 

Next we study the exact sequence \eqref{eq:logcrys-ses} in detail.  
It follows from the fact that $W\Omega^0_{\overline{X}'_\bullet/\overline{k}}= W\Omega^0_{\overline{X}'_\bullet/\overline{k}}(\log D'_\bullet)$ 
(see the proof of  Lemma \ref{lem:nsinj}, especially \eqref{eq:drwsimpsheaf}) and \eqref{eq:ns2}
that we have a diagram of short exact sequences 
\begin{equation}\label{eq:diag-logcrys}
\xymatrix{
	0 \ar[r]& 
	H^1_{\et}(\overline{X}'_{\bullet},W\Omega^{\geqslant 1}_{\overline{X}'_\bullet/\overline{k}}) \ar@{^{(}->}[d] \ar[r] & 
	H^1_{\et}(\overline{X}'_{\bullet},W\Omega^{\geqslant 1}_{\overline{X}'_\bullet/\overline{k}}(\log D'_\bullet)) \ar@{^{(}->}[d] \ar[r] &
	{\rm Div}^0_{D_{\bullet}}(\overline{X}'_{\bullet}) \otimes W' \ar@{=}[d] \ar[r] & 0\\
	0 \ar[r]& 
	H^1_{\cris}(\overline{X}'_\bullet/W') \ar[r] & H^1_{\cris}((X'_\bullet,\overline{X}'_\bullet)/W') \ar[r] & 
	{\rm Div}^0_{D'_{\bullet}}(\overline{X}'_{\bullet}) \otimes W' \ar[r] & 0, 
}
\end{equation}
where $D'_{\bullet}$ is the base change of $D_{\bullet}$ to $\overline{k}$. 
The maps of complexes $F': W\Omega^{\geqslant 1}_{\overline{X}'_{\bullet}/\overline{k}} 
\to W\Omega^{\geqslant 1}_{\overline{X}'_{\bullet}/\overline{k}}$, 
$F': W\Omega^{\geqslant 1}_{\overline{X}'_{\bullet}/\overline{k}}(\log D'_{\bullet}) 
\to W\Omega^{\geqslant 1}_{\overline{X}'_{\bullet}/\overline{k}}(\log D'_{\bullet})$
whose degree $i$ parts are $p^{i-1}F$ 
induce the endomorphisms $F'$ on 
$H^1_{\et}(\overline{X}'_{\bullet},W\Omega^{\geqslant 1}_{\overline{X}'_\bullet/\overline{k}})$ and on 
$H^1_{\et}(\overline{X}'_{\bullet},W\Omega^{\geqslant 1}_{\overline{X}'_\bullet/\overline{k}}(\log D'_\bullet))$, 
and we can check 
by the proof of Proposition \ref{prop: exact div}
that they induce the endomorphism $F'$ on  
${\rm Div}^0_{D'_{\bullet}}(\overline{X}'_{\bullet}) \otimes W'$ given by 
$F'=1\otimes F$, where $F$ denotes the Witt vector Frobenius. 

Take the kernels of $1-F'$ of the terms in the top horizontal line in \eqref{eq:diag-logcrys}. 
By the exact sequence \eqref{eq:oxoxp} and \cite[II.Lem.\,5.3]{I_1979}, 
we have the exact sequence 
\[ 
0 \to H^0_{\et}(\overline{X}'_{\bullet}, {\mathcal{O}}_{\overline{X}'_{\bullet}}^{\times}/ 
({\mathcal{O}}_{\overline{X}'_{\bullet}}^{\times})^{p^n}) 
\to  
H^1_{\et}(\overline{X}'_{\bullet},W_n\Omega^{\geqslant 1}_{\overline{X}'_\bullet/\overline{k}})
\overset{1-F'}{\to} 
H^1_{\et}(\overline{X}'_{\bullet},W_n\Omega^{\geqslant 1}_{\overline{X}'_\bullet/\overline{k}}) 
\to 0, 
\]
and so 
$$ {\rm Ker}(1-F') = \varprojlim_n 
H^0_{\et}(\overline{X}'_{\bullet}, {\mathcal{O}}_{\overline{X}'_{\bullet}}^{\times}/ 
({\mathcal{O}}_{\overline{X}'_{\bullet}}^{\times})^{p^n}) 
$$ 
for the first term. For the third term, 
${\rm Ker}(1-F') = {\rm Div}^0_{D'_{\bullet}}(\overline{X}'_{\bullet}) \otimes_{\Z} \Z_p$. 
So, since the map  
$1-F'$ is surjective on $H^1_{\et}(\overline{X}'_{\bullet},W\Omega^{\geqslant 1}_{\overline{X}'_\bullet/\overline{k}})$, 
we obtain the exact sequences 
\begin{equation}\label{eq:ker1-f'}
0 \to 
\varprojlim_n 
H^0_{\et}(\overline{X}'_{\bullet}, {\mathcal{O}}_{\overline{X}'_{\bullet}}^{\times}/ 
({\mathcal{O}}_{\overline{X}'_{\bullet}}^{\times})^{p^n}) 
\to 
{\rm Ker}(1-F') 
\to 
 {\rm Div}^0_{D'_{\bullet}}(\overline{X}'_{\bullet}) \otimes_{\Z} \Z_p 
 \to 0 \quad (n \in \N)
\end{equation}
consisting of the kernels of $1-F'$. On the other hand, we have exact sequences  
\[ 
0 \to 
{\mathcal{O}}_{\overline{X}'_{\bullet}}^{\times}/ 
({\mathcal{O}}_{\overline{X}'_{\bullet}}^{\times})^{p^n} 
\overset{\rm dlog}{\to} 
(j'_{\bullet *}{\mathcal{O}}_{\overline{X}'_{\bullet} \setminus D'_{\bullet}}^{\times})/ 
(j'_{\bullet *}{\mathcal{O}}_{\overline{X}'_{\bullet} \setminus D'_{\bullet}}^{\times})^{p^n} 
\to 
a_{\bullet *}^{(1)}(\Z/p^n\Z)_{D_{\bullet}^{(1)}}
\to 0 \quad (n \in \N)
\]
(where $j'_{\bullet}$ is the open immersion $\overline{X}'_{\bullet} \setminus D'_{\bullet} 
\hookrightarrow \overline{X}'_{\bullet}$) 
and it induces the long exact sequences 
\begin{align*}
0 \to H^0_{\et}(\overline{X}'_{\bullet}, 
{\mathcal{O}}_{\overline{X}'_{\bullet}}^{\times}/ 
({\mathcal{O}}_{\overline{X}'_{\bullet}}^{\times})^{p^n} ) 
& \to 
H^0_{\et}(\overline{X}'_{\bullet}, 
(j'_{\bullet *}{\mathcal{O}}_{\overline{X}'_{\bullet} \setminus D'_{\bullet}}^{\times})/ 
(j'_{\bullet *}{\mathcal{O}}_{\overline{X}'_{\bullet} \setminus D'_{\bullet}}^{\times})^{p^n}) \\ 
& \to {\rm Div}_{D'_{\bullet}}(\overline{X}'_{\bullet}) \otimes_{\Z} \Z/p^n\Z
\to 
H^1_{\et}(\overline{X}'_{\bullet}, 
{\mathcal{O}}_{\overline{X}'_{\bullet}}^{\times}/ 
({\mathcal{O}}_{\overline{X}'_{\bullet}}^{\times})^{p^n}) 
\quad (n \in \N). 
\end{align*}
Since the first term is equal to 
${\rm Ker}(H^1_{\et}(\overline{X}'_{\bullet}, 
{\mathcal{O}}_{\overline{X}'_{\bullet}}^{\times}) 
\overset{p^n}{\to} 
H^1_{\et}(\overline{X}'_{\bullet}, 
{\mathcal{O}}_{\overline{X}'_{\bullet}}^{\times}))
= {\rm Pic}_{\overline{X}'_{\bullet}}(\overline{k})[p^n]$, 
it is finite, and the third term is obviously finite. Thus, by taking inverse limits, we obtain the exact sequence 
\begin{align*}
0 \to \varprojlim_n H^0_{\et}(\overline{X}'_{\bullet}, 
{\mathcal{O}}_{\overline{X}'_{\bullet}}^{\times}/ 
({\mathcal{O}}_{\overline{X}'_{\bullet}}^{\times})^{p^n} ) 
& \to 
\varprojlim_n H^0_{\et}(\overline{X}'_{\bullet}, 
(j'_{\bullet *}{\mathcal{O}}_{\overline{X}'_{\bullet} \setminus D'_{\bullet}}^{\times})/ 
(j'_{\bullet *}{\mathcal{O}}_{\overline{X}'_{\bullet} \setminus D'_{\bullet}}^{\times})^{p^n}) \\ 
& \to {\rm Div}_{D'_{\bullet}}(\overline{X}'_{\bullet}) \otimes_{\Z} \Z_p
\to 
\varprojlim_n H^1_{\et}(\overline{X}'_{\bullet}, 
{\mathcal{O}}_{\overline{X}'_{\bullet}}^{\times}/ 
({\mathcal{O}}_{\overline{X}'_{\bullet}}^{\times})^{p^n}). 
\end{align*}
The last map is identified with 
${\rm Div}_{D'_{\bullet}}(\overline{X}'_{\bullet}) \otimes_{\Z} \Z_p
\to 
H^2_{\rm fppf}(\overline{X}'_{\bullet}, \Z_p(1))
$, which factors through ${\rm NS}(\overline{X}'_{\bullet}) \otimes_{\Z} \Z_p$. 
Hence the above exact sequence induces the exact sequence 
\begin{align}
	0 \to \varprojlim_n H^0_{\et}(\overline{X}'_{\bullet}, 
	{\mathcal{O}}_{\overline{X}'_{\bullet}}^{\times}/ 
	({\mathcal{O}}_{\overline{X}'_{\bullet}}^{\times})^{p^n} ) 
    & \to 
	\varprojlim_n H^0_{\et}(\overline{X}'_{\bullet}, 
	(j'_{\bullet *}{\mathcal{O}}_{\overline{X}'_{\bullet} \setminus D'_{\bullet}}^{\times})/ 
	(j'_{\bullet *}{\mathcal{O}}_{\overline{X}'_{\bullet} \setminus D'_{\bullet}}^{\times})^{p^n}) 
	\label{eq:ker1-f'2} \\ 
	& \to {\rm Div}^0_{D'_{\bullet}}(\overline{X}'_{\bullet}) \otimes_{\Z} \Z_p \to 0.  
	\nonumber 
\end{align}
Because there exists the map 
$$ 
{\rm dlog}: j'_{\bullet *}{\mathcal{O}}_{\overline{X}'_{\bullet} \setminus D'_{\bullet}}^{\times}/ 
(j'_{\bullet *}{\mathcal{O}}_{\overline{X}'_{\bullet} \setminus D'_{\bullet}}^{\times})^{p^n}
\to {\rm Ker}(1-F': W_n\Omega^{1}_{\overline{X}'_\bullet/\overline{k}}(\log D'_\bullet)) \to 
W_n\Omega^{1}_{\overline{X}'_\bullet/\overline{k}}(\log D'_\bullet))), 
$$
we have the canonical map of exact sequences from \eqref{eq:ker1-f'} to \eqref{eq:ker1-f'2} 
whose first and third terms are identity and so the two exact sequences canonically coincide. 
Moreover, if we apply $\otimes_{\Z_p} W'$ to \eqref{eq:ker1-f'2}, we obtain 
the top horizontal arrow in \eqref{eq:diag-logcrys}: Indeed, the first and the third terms are the 
covariant Dieudonn\'e modules associated to certain \'etale $p$-divisible groups and for such modules, 
${\rm Ker}(1-F') \otimes_{\Z_p} W'$ is identical with the original Dieudonné module. Then the isomorphism on the middle term follows from the five lemma. 

By the description of the exact sequences \eqref{eq:tt5} and \eqref{eq:logcrys-ses} above, 
we see that, to prove their coincidence in the category of Dieudonné modules, it suffices to prove the coincidence of 
\eqref{eq:dieudonne-infty-et-qpzp} and \eqref{eq:ker1-f'2} as exact sequences endowed with 
${\rm Gal}(\overline{k}/k)$-action, namely, it suffices to prove that 
the exact sequence \eqref{eq:ker1-f'2} is also constructed by the section 
induced by $-\widetilde{u}$ and the $1$-cocycle  
\begin{align*}
\widetilde{c} & \in 
Z^1({\rm Gal}(\overline{k}/k), 
{\rm Hom}({\mathbb{X}}' \otimes \Z_p, 
\varprojlim_n{\mathbb{G}}'[p^{n}])) \\ & = 
Z^1({\rm Gal}(\overline{k}/k), 
{\rm Hom}({\rm Div}^0_{D'_{\bullet}}(\overline{X}'_{\bullet}) \otimes \Z_p, 
\varprojlim_n{\rm Pic}^{0,{\rm red}}_{\overline{X}'_{\bullet}}(\overline{k})[p^n])) \\ 
& = 
Z^1({\rm Gal}(\overline{k}/k), 
{\rm Hom}({\rm Div}^0_{D'_{\bullet}}(\overline{X}'_{\bullet}) \otimes \Z_p, 
\varprojlim_n{\rm Pic}_{\overline{X}'_{\bullet}}(\overline{k})[p^{\infty}]))
\end{align*}
given by $\sigma \mapsto \varprojlim_n(-\sigma(\widetilde{u}'_n) + \widetilde{u}'_n)$, 
where $\widetilde{u}'_n$ is the composite 
${\mathbb{X}}'/p^n{\mathbb{X}}' \cong 
\frac{1}{p^n}{\mathbb{X}}'/{\mathbb{X}}' \overset{\widetilde{u}_n}{\to} {\mathbb{G}}'.$

Before the calculation, we prepare several notations on 
\v{C}ech complexes. Take a split simplicial scheme 
$\overline{X}''_{\bullet}$ over $\overline{X}'_{\bullet}$ such that, 
for each $n$, $\overline{X}''_{n}$ is a disjoint union 
$\coprod_{i \in I_n} U_{n,i}$ 
of connected affine open subschemes $U_{n,i} \, (i \in I_n)$ of 
$\overline{X}'_n$ which covers $\overline{X}'_{\bullet}$. 
(The existence of such a simplicial scheme follows from the proof of 
\cite[Lem.\,6.1]{N_2012}.) 
Concretely, for each map $\varphi: [n] \to [m]$ of simplicial sets, 
we have a map of index sets $\varphi^{\sharp}: I_m \to I_n$ such that 
$U_{m,i}$ is sent to $U_{n,\varphi^{\sharp}(i)}$ by 
$\varphi^*:\overline{X}''_m \to \overline{X}''_n$. 
If we denote the \v{C}ech hypercovering of $\overline{X}''_n \to \overline{X}'_n$ by 
$\overline{X}''_{n\bullet}$, then $\overline{X}''_{\bullet\bullet}$ forms a double simplicial scheme: 
Concretely,  $\overline{X}''_{nn'}$ is a disjoint union 
$\sqcup_{(i_0,\dots,i_{n'}) \in I_n^{n'}} U_{n,(i_0,\dots,i_{n'})}$, where 
$U_{n,(i_0,\dots,i_{n'})} := \bigcap_{j=0}^{n'} U_{n,i_j}$. 
Then the total complex 
${\rm Tot}(\Gamma(\overline{X}''_{\bullet\bullet}, 
{\mathcal{O}}_{\overline{X}''_{\bullet\bullet}}^{\times}))$
of the double complex $\Gamma(\overline{X}''_{\bullet\bullet}, 
{\mathcal{O}}_{\overline{X}''_{\bullet\bullet}}^{\times})$ is the \v{C}ech complex 
associated to $\overline{X}''_{\bullet}$ with coefficient 
${\mathcal{O}}^{\times}_{\overline{X}'_{\bullet}}$. 
If we take the first cohomology of this complex and take the direct limit over  
all the possible $\overline{X}''_{\bullet}$'s, we obtain the first cohomology 
$H^1(\overline{X}'_{\bullet}, {\mathcal{O}}^{\times}_{\overline{X}'_{\bullet}})$. 
(We worked with the Zariski topology, but the above cohomology is the same as 
the correponding \'etale cohomology.) We also note that we may assume 
each index set $I_n$ admits the action of ${\rm Gal}(\overline{k}/k)$ 
which are compatible with the maps $\varphi^{\sharp}: I_m \to I_n$ for any  
$\varphi: [n] \to [m]$. (Replace $I_n$ by $I_n \times {\rm Gal}(\overline{k}/k)$ and add 
Galois conjugates of $U_{n,i}$'s.) 

A $1$-cocycle of the complex 
${\rm Tot}(\Gamma(\overline{X}''_{\bullet\bullet}, 
{\mathcal{O}}_{\overline{X}''_{\bullet\bullet}}^{\times}))$ is given by 
the data $(v_{ij}, w_{i'})_{i,j \in I_0, i' \in I_1}$ 
with $v_{ij} \in \Gamma(U_{0,(i,j)}, {\mathcal{O}}^{\times})$, 
$w_{i'} \in \Gamma(U_{1,i'}, {\mathcal{O}}^{\times})$ 
which satisfy the compatibility conditions 
on $U_{2,i''} \, (i'' \in I_2)$, $U_{1,(i',j')} \, \allowbreak (i',j' \in I_1)$, 
$U_{0,(i,j,l)} \, (i,j,l \in I_0)$ which we omit to write the details.

Now take $C \in {\rm Div}^0_{D'_{\bullet}}(\overline{X}'_{\bullet})$: This is a divisor 
on $\overline{X}'_0$ with $d_1^*C = d_0^*C$, where $d_i: \overline{X}'_1 \to \overline{X}'_0$ are 
the projections, namely, the transition maps corresponding to 
$e_0: [0] \to [1]; 0 \mapsto 0$, $e_1: [0] \to [1]; 0 \mapsto 1$ respectively. 
Then we can take $\overline{X}''_{\bullet} \to \overline{X}'_{\bullet}$ as above and  
$t_{C,i} \in \Gamma(U_{0,i}, j'_{0*}{\mathcal{O}}_{\overline{X}'_0 \setminus D'_0}^{\times})$ 
for $i \in I_0$ such that 
$C \cap U_{0,i} = {\rm div}(t_{C,i}^{-1})$ on $U_{0,i}$. 
Then take sections 
$v_{C,ij}$ of ${\mathcal{O}}^{\times}$ on $U_{0,(i,j)} \, (i,j \in I_0)$ and sections 
$w_{C,i'}$ of ${\mathcal{O}}^{\times}$ on $U_{1,i'} \, (i' \in I_1)$ 
such that $v_{C,ij}t_{C,i} = t_{C,j}$ on $U_{0,(i,j)}$ and 
$w_{C,i'}d_0^*t_{C,e_0^{\sharp}(i')} = d_1^*t_{C,e_1^{\sharp}(i')}$ on $U_{1,i'}$. 
Note that $(v_{C,ij}, w_{C,i'})_{i,j,i'}$ defines a $1$-cocycle whose class in 
$H^1(\overline{X}'_{\bullet}, {\mathcal{O}}_{\overline{X}'_{\bullet}}^{\times}) = 
{\rm Pic}(\overline{X}'_{\bullet})(\overline{k})$ is equal to $-u(C)$ (the class of $-C$). 

Then we can take, after possibly taking a refinement of 
the morphism $\overline{X}''_{\bullet} \to \overline{X}'_{\bullet}$ and adjusting $t_{C,i}$'s 
by a suitable coboundary, sections 
$\alpha_{C,ij}$ of ${\mathcal{O}}^{\times}$ on $U_{0,(i,j)} \, (i,j \in I_0)$ and sections 
$\beta_{C,i'}$ of ${\mathcal{O}}^{\times}$ on $U_{1,i'} \, (i' \in I_1)$ 
such that $\alpha_{C,ij}^{p^n} = v_{C,ij}$, $\beta_{C,i'}^{p^n} = w_{i'}$ and that 
$(\alpha_{C,ij}, \beta_{C,i'})_{i,j,i'}$ induces the element $-\widetilde{u}_n(C)$ in 
$H^1(\overline{X}'_{\bullet}, {\mathcal{O}}^{\times}_{\overline{X}'_{\bullet}}) = 
{\rm Pic}(\overline{X}'_{\bullet})(\overline{k})$.

So we see that $(t_{C,i})_{i \in I_0}$ defines a well-defined element in 
$H^0(\overline{X}'_{\bullet}, 
(j'_{\bullet *}{\mathcal{O}}_{\overline{X}'_{\bullet} \setminus D'_{\bullet}}^{\times})/
(j'_{\bullet *}{\mathcal{O}}_{\overline{X}'_{\bullet} \setminus D'_{\bullet}}^{\times})^{p^n})$ for $n \in \N$, 
and the maps $C \mapsto (t_{C,i})_i$ for $n \in \N$ 
gives a (non-Galois-equivariant) 
section of 
the exact sequence \eqref{eq:ker1-f'2}. Thus  
\eqref{eq:ker1-f'2} is given by the $1$-cocycle 
\[ 
\sigma \mapsto (C \mapsto (\sigma(t_{\sigma^{-1}(C),\sigma^{-1}(i)})t_{C,i}^{-1})_{i \in I_0}). 
\]
The image of 
$(\sigma(t_{\sigma^{-1}(C),\sigma^{-1}(i)})t_{C,i}^{-1})_i \in 
H^0(\overline{X}'_{\bullet}, 
{\mathcal{O}}_{\overline{X}'_{\bullet}}^{\times}/
({\mathcal{O}}_{\overline{X}'_{\bullet}}^{\times})^{p^n})$ 
by the connecting map 
$$ \delta: H^0(\overline{X}'_{\bullet}, 
{\mathcal{O}}_{\overline{X}'_{\bullet}}^{\times}/
({\mathcal{O}}_{\overline{X}'_{\bullet}}^{\times})^{p^n}) \to 
 H^1(\overline{X}'_{\bullet}, 
{\mathcal{O}}_{\overline{X}'_{\bullet}}^{\times})$$
is computed as follows: first, by lifting sections 
$\sigma(t_{\sigma^{-1}(C),\sigma^{-1}(i)})t_{C,i}^{-1}$ of ${\mathcal{O}}_{\overline{X}'}^{\times}/
({\mathcal{O}}_{\overline{X}'}^{\times})^{p^n}$ to sections of 
${\mathcal{O}}_{\overline{X}'}^{\times}$ and 
applying the differential of \v{C}ech complex 
$$ d: \prod_{i\in I_0} \Gamma(U_{0,i}, {\mathcal{O}}_{\overline{X}'_0}^{\times}) \to 
\prod_{i,j \in I_0} \Gamma(U_{0,(i,j)}, {\mathcal{O}}_{\overline{X}'_0}^{\times}) 
\times 
\prod_{i' \in I_1} \Gamma(U_{1,i'}, {\mathcal{O}}_{\overline{X}'_1}^{\times}),$$ 
we obtain
\begin{align*}
& (\sigma(t_{\sigma^{-1}(C),\sigma^{-1}(i)})t_{C,i}^{-1})^{-1}\sigma(t_{\sigma^{-1}(C),\sigma^{-1}(j)})t_{C,j}^{-1}, 
\, d_0^*(\sigma(t_{\sigma^{-1}(C),\sigma^{-1}(e_0^{\sharp}(i'))})t_{C,e_0^{\sharp}(i')}^{-1})^{-1}
d_1^*(\sigma(t_{\sigma^{-1}(C),\sigma^{-1}(e_1^{\sharp}(i'))})t_{C,e_1^{\sharp}(i')}^{-1}) 
)_{i,j,i'} \\ & = 
((\sigma(\alpha_{\sigma^{-1}(C),\sigma^{-1}(i)\sigma^{-1}(j)})\alpha_{C,ij}, \, 
\sigma(\beta_{\sigma^{-1}(C),\sigma^{-1}(i')})\beta_{C,i'}
)_{i,j,i'})^{p^n}. 
\end{align*}
Then we see that the image of 
$(\sigma(t_{\sigma^{-1}(C),\sigma^{-1}(i)})t_{C,i}^{-1})_i$ by the connecting map 
$\delta$ is equal to 
$$(\sigma(\alpha_{\sigma^{-1}(C),\sigma^{-1}(i)\sigma^{-1}(j)})\alpha_{C,ij}, \, 
\sigma(\beta_{\sigma^{-1}(C),\sigma^{-1}(i')})\beta_{C,i'}
)_{i,j,i'},$$ namely, $-\sigma(\widetilde{u}'_n(C))+\widetilde{u}'_n(C)$. 
Thus the exact sequence \eqref{eq:ker1-f'2} is defined by the $1$-cocycle 
$\sigma \mapsto \varprojlim_n(-\sigma(\widetilde{u}'_n) + \widetilde{u}'_n)$, 
as required. 

Therefore, we obtained an isomorphism between the exact sequences 
\eqref{eq:tt5}, \eqref{eq:logcrys-ses} 
of $F$-crystals. 
Even if we start with another extension $\widetilde{u}$ of $u$, 
the $1$-cocycle changes according to this change and so  
the resulting isomorphism does not change. Thus the isomorphism we obtained is canonical and functorial. 
So the proof of the theorem is finished. 
\end{proof}

\section{Homotopy categories
of certain hypercoverings}\label{appendixB'}

In this appendix, we prove the cofilteredness of the homotopy category of 
certain diagrams involving hypercoverings. The results in this section 
should be well-known, but we review them for the convenience of the reader. 

Throughout this section, let $\cC$ be a category with finite direct sums, 
let $\tau$ be a pretopology on $\cC$ and denote the resulting site by $\cC_{\tau}$.  We call a morphism appearing in the pretopology $\tau$ 
a $\tau$-morphism, and call a morphism $X' \to X$ a $\tau$-covering if the singleton 
$\{X' \to X\}$ is a covering of $X$ with respect to $\tau$.
Let moreover $\cC^{ \mathrm{pf}}$ be a category which has the same objects as $\cC$, but morphisms restricted, such that the following conditions are satisfied:
\begin{itemize}
\item[(a)] if $Z\rightarrow X$ is a morphism in $\cC$ and $Y\rightarrow X$ is a morphism in $\cC^{\mathrm{pf}}$, then there exists a fibre product $Y\times^{\cC}_X Z$ in $\cC$;
\item[(b)] any base change of a $\cC^{\mathrm{pf}}$-morphism is a $\cC^{\mathrm{pf}}$-morphism;
\item[(c)] if a $\cC^{\mathrm{pf}}$-morphism $f$ has a factorisation $f=h\circ g$ in $\cC$, such that $h$ is a $\cC^{\mathrm{pf}}$-morphism, then $g$ is as well;
\item[(d)] $\tau$-morphisms are morphisms in $\cC^{\mathrm{pf}}$.
\end{itemize}
Note that (a), (b) and (c) imply the following property:
\begin{itemize}
\item[(e)] if both morphisms of (a) are in $\cC^{\mathrm{pf}}$, then  $Y\times^{\cC}_X Z \rightarrow X$ is a fibre product in $\cC^{\mathrm{pf}}$. 
\end{itemize}
Indeed, by the base change property (b)  
 the projection $Y\times^{\cC}_X Z \rightarrow Y$ is a 
$\cC^{\mathrm{pf}}$-morphism. Then, by (c) for any pair of $\cC^{\mathrm{pf}}$-morphisms $W\rightarrow Y$ and $W\rightarrow Z$  over $X$ the morphism $W \rightarrow Y\times^{\cC}_X Z$ induced by the universal property of the fibre product in $\cC$ is also a $\cC^{\mathrm{pf}}$-morphism  by (c).

Finally, let $\cC'$ be a full subcategory of $\cC$ closed under finite direct sums satisfying the following condition: 

\medskip
$(*)$: For any object $A \in \cC$, there exists a $\tau$-covering $A' \to A$ 
with $A' \in \cC'$. \label{ast}
\medskip 

\begin{lemma}\label{lem:domhypercover1'}
Let $A$ be an object in $\cC$, and let 
$A_{\bullet} \to A, A'_{\bullet} \to  A$ be hypercoverings in 
$\cC_{\tau}$. 
\begin{enumerate}
	\item The fibre product 
	 $A_{\bullet} \times_A A'_{\bullet}
	 \to A$ in the category of simplicial objects in $\cC$ over $A$ 
	 is also a hypercovering in $\cC_{\tau}$. 
	 \item If we are given two $\cC^{\mathrm{pf}}$-morphisms 
	 $\alpha, \beta: A'_{\bullet} \to A_{\bullet}$ over $A$, 
	 there exists another hypercovering 
     $A''_{\bullet} \to A$ in $\cC_{\tau}$ and 
     a $\cC^{\mathrm{pf}}$-morphism $\gamma: A''_{\bullet} \to A'_{\bullet}$ 
     over $A$ such that $\alpha \circ \gamma$ and $\beta \circ \gamma$ are homotopic.
\end{enumerate}
\end{lemma}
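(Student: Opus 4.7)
For part (i), the plan is short. Since $\cosk_n$ is right adjoint to the $n$-truncation functor on simplicial objects, it commutes with all limits, in particular with fibre products over $A$. This gives a canonical identification
\[
\bigl(\cosk_n(A_\bullet \times_A A'_\bullet)\bigr)_{n+1} \;\cong\; (\cosk_n A_\bullet)_{n+1} \times_A (\cosk_n A'_\bullet)_{n+1}.
\]
The natural map from $A_{n+1} \times_A A'_{n+1}$ to the right-hand side factors as two successive base changes of the hypercover structural maps of $A_\bullet$ and $A'_\bullet$, and $\tau$-coverings are stable under base change and composition. The covering condition for $A_0 \times_A A'_0 \to A$ is handled in exactly the same way, factoring through $A_0 \times_A A'_0 \to A_0 \to A$.

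For part (ii), my plan is to use a path-object construction. Define a simplicial object $A_\bullet^{\Delta^1}$ in $\cC$ whose $n$-th term is the finite limit in $\cC$ parametrising morphisms $\Delta^n \times \Delta^1 \to A_\bullet$. All the structure maps being over $A$, these finite limits are iterated fibre products over $A$, which exist by hypothesis on $\cC$. The two endpoint inclusions $\Delta^0 \rightrightarrows \Delta^1$ induce a canonical evaluation morphism $\mathrm{ev}: A_\bullet^{\Delta^1} \to A_\bullet \times_A A_\bullet$. Since $(\alpha,\beta)$ gives a morphism $A'_\bullet \to A_\bullet \times_A A_\bullet$, I would set
\[
A''_\bullet \;:=\; A'_\bullet \times_{A_\bullet \times_A A_\bullet} A_\bullet^{\Delta^1},
\]
and take $\gamma$ to be the projection to $A'_\bullet$. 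The complementary projection $A''_\bullet \to A_\bullet^{\Delta^1}$ is then, tautologically, a simplicial homotopy from $\alpha\circ\gamma$ to $\beta\circ\gamma$.

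The main obstacle is verifying that $A''_\bullet \to A$ is itself a hypercovering. By part (i), it suffices to show that the evaluation $\mathrm{ev}$ behaves levelwise like a hypercover-type morphism, i.e.\ that at each simplicial level the matching computation for $A''_\bullet$ factors as a composition of base changes of the structural hypercover maps of $A_\bullet$. This reduces to a finite combinatorial bookkeeping exercise, exploiting the decomposition of $\Delta^n \times \Delta^1$ into its $(n+2)$ non-degenerate shuffle $(n+1)$-simplices together with their compatibility relations on faces; the resulting matching object is an iterated fibre product involving $A_{n+1}$, $(\cosk_n A_\bullet)_{n+1}$ and $A_n$, to which the hypercover conditions for $A_\bullet$ apply directly. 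This is the technically most involved step; the argument is classical, being essentially that used to show cofilteredness of the homotopy category of hypercoverings (cf.\ SGA 4 V\,bis \S5 and Artin--Mazur, Chapter 8). Once this level-wise check is in hand, combining with part (i) yields the hypercover property of $A''_\bullet \to A$, completing the proof.
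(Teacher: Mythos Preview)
Your proposal is correct and follows the standard arguments. The paper itself does not give a proof at all: it simply cites the Stacks Project (Tags 01GI and 01GS) for parts (i) and (ii) respectively, so your write-up is in fact more detailed than what appears there. Your argument for (i) via commutation of $\cosk_n$ with fibre products is exactly the classical one, and your path-object construction $A_\bullet^{\Delta^1}$ for (ii) is the same approach carried out in the cited Stacks Project tag (and in SGA~4 V\,bis).
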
	

\begin{proof}
Because $\cC$ and $\cC^{\mathrm{pf}}$ have the same objects and $\tau$-morphisms are $\cC^{\mathrm{pf}}$-morphisms by condition (c), 
any hypercovering in $\cC_\tau$ is a hypercovering in $\cC^{\mathrm{pf}}_\tau$, and we may work in $\cC^{\mathrm{pf}}$ which has fibre products by conditions (a) and (b). 
Then, 
(i) is proven in \cite[\href{https://stacks.math.columbia.edu/tag/01GI}{Tag 01GI}]{StacksProject} and (ii) is proven in 
\cite[\href{https://stacks.math.columbia.edu/tag/01GS}{Tag 01GS}]{StacksProject}. 
\end{proof}

\begin{lemma}\label{lem:domhypercover2'}
Let $A_{\bullet} \to A$ be a hypercovering in $\cC_{\tau}$. Then there exists a split hypercovering $A'_{\bullet} \to A$ in $\cC_{\tau}$ with 
each $A'_i$ in $\cC'$ and a $\cC^{\mathrm{pf}}$-morphism $A'_{\bullet} \to A_{\bullet}$ over $A$. 
\end{lemma}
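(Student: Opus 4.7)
The plan is to build $A'_{\bullet}$ together with the morphism to $A_{\bullet}$ by induction on simplicial dimension, exploiting the fact that a split simplicial object is determined by its non-degenerate pieces $N_n$ ($n \geqslant 0$) and face maps, via
\[
A'_n \;=\; N_n \sqcup L_n,
\]
where the latching object $L_n$ is a finite coproduct of copies of the $N_k$ for $k < n$, one copy for each non-identity surjection $[n] \twoheadrightarrow [k]$ in the simplex category. Since $\cC'$ is closed under finite direct sums and each $N_k$ ($k < n$) will be chosen in $\cC'$, the latching object $L_n$ lies in $\cC'$, and hence so does $A'_n$.

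In the base case $n=0$, condition $(*)$ on page \pageref{ast} provides a $\tau$-covering $A'_0 = N_0 \to A_0$ with $N_0 \in \cC'$; composing with $A_0 \to A$ yields a $\tau$-covering $A'_0 \to A$, as required.

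For the inductive step, assume we have constructed an $(n-1)$-truncated split simplicial object $A'_{\bullet \leqslant n-1}$ in $\cC'$ equipped with a morphism to $A_{\bullet \leqslant n-1}$ over $A$, satisfying the hypercovering condition through level $n-1$. Form the matching object
\[
P_n \;:=\; (\cosk_{n-1} A'_{\bullet \leqslant n-1})_n \times_{(\cosk_{n-1} A_{\bullet \leqslant n-1})_n} A_n,
\]
which exists in $\cC$ since $\cC$ admits fibre products and finite direct sums. Using condition $(*)$, choose a $\tau$-covering $N_n \to P_n$ with $N_n \in \cC'$. Set $A'_n := N_n \sqcup L_n$, with the simplicial structure maps determined by the standard recipe for split simplicial objects: degeneracies map into the tautological copies of $N_k$ inside $L_n$, and the face maps out of $N_n$ are obtained by composing $N_n \to P_n$ with the projection onto $(\cosk_{n-1} A'_{\bullet \leqslant n-1})_n$. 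The morphism $A'_n \to A_n$ is given by $N_n \to P_n \to A_n$ on the first summand and by the appropriate degeneracies combined with the inductively defined map on the latching part. By construction the comparison map $A'_n \to P_n$ equals the $\tau$-covering $N_n \to P_n$ on the first summand and an isomorphism onto the ``degenerate'' part of $P_n$ on the second; it is therefore itself a $\tau$-covering, yielding the hypercovering condition at level $n$.

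The main obstacle is to make the bookkeeping of the split structure compatible with the map to $A_{\bullet}$: one must verify that $L_n(A'_{\bullet})$ lands correctly inside the ``latching part'' of $P_n$, so that only the truly non-degenerate piece $N_n$ needs to be adjoined via $(*)$. This is a combinatorial check using the simplicial identities, carried out exactly as in the construction of split hypercoverings in \cite[Prop.\,5.1.3]{S_1972} and \cite[6.2]{D1974}.
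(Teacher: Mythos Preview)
Your proof is correct and follows essentially the same approach as the paper: inductive construction of the split hypercovering by forming the fibre product $P_n = (\cosk_{n-1} A'_{\bullet \leqslant n-1})_n \times_{(\cosk_{n-1} A_{\bullet \leqslant n-1})_n} A_n$, covering it via condition $(*)$, and invoking the split recipe of \cite[Prop.\,5.1.3]{S_1972}. The paper's proof is terser, deferring the latching/degenerate bookkeeping entirely to the SGA4 reference rather than spelling it out; your phrase ``an isomorphism onto the `degenerate' part of $P_n$'' is imprecise (the fibre product $P_n$ has no canonical such decomposition), but the hypercovering condition still follows since $N_n \to P_n$ is already a $\tau$-covering and hence so is $A'_n = N_n \sqcup L_n \to P_n \to (\cosk_{n-1} A')_n$.
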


\begin{proof}
Suppose that we have constructed an $i$-truncated 
split hypercovering 
 $A'_{\bullet \leqslant i} \to A$ in $\cC_{\tau}$ 
 with each $A'_j \, (j \leqslant i)$ in $\cC'$ and a $\cC^{\mathrm{pf}}$-morphism $A'_{\bullet \leqslant i} \to A_{\bullet \leqslant i}$ 
 over $A$. 
Consider the diagram 
 \[ 
 {\rm cosk}_i(A'_{\bullet \leqslant i})_{i+1} \rightarrow 
 {\rm cosk}_i(A_{\bullet \leqslant i})_{i+1} \leftarrow A_{i+1}
 \]
where both morphisms are $\cC^{\mathrm{pf}}$-morphisms, let $A'''_{i+1}$ be its fibre product
in $\cC^{\mathrm{pf}}$ which exists by conditions (a) and (e) and take a $\tau$-covering $A''_{i+1} \to A'''_{i+1}$
with $A''_{i+1} \in \cC'$, which exists by the condition \hyperref[ast]{$(*)$}. 
Then, by the recipe in \cite[Prop.\,5.1.3]{S_1972}, we can form from $A''_{i+i}$ an $(i+1)$-truncated split hypercovering 
 $A'_{\bullet \leqslant i+1} \to A$ in $\cC_{\tau}$ 
with each $A'_j \, (j \leqslant i+1)$ in $\cC'$ and a $\cC^{\mathrm{pf}}$-morphism $
A'_{\bullet \leqslant i+1} \to A_{\bullet \leqslant i+1}$ 
over $A$. (In this step, we use the assumption that $\cC'$ is closed under finite direct sums.) By induction we obtain the required assertion. 
\end{proof}

\begin{proposition}\label{prop:domhypercover'}
Let $f: A' \to A$ be a morphism in $\cC$. 
\begin{enumerate}
\item
For any split hypercovering $A_{\bullet} \to A$ in $\cC_{\tau}$ 
with each $A_i$ in $\cC'$, 
there exists a split hypercovering $A'_{\bullet} \to A'$ in $\cC_{\tau}$ 
with each $A'_i$ in $\cC'$ and a morphism $A'_{\bullet} \to A_{\bullet}$ 
over $f$. 
\item \label{prop:domhypercover_ii'}
If we are given split hypercoverings 
\[ A_{i,\bullet} \to A, \quad A'_{i,\bullet} \to A' \, (i=1,2) \] 
in $\cC_{\tau}$ with each $A_{i,j}, A'_{i,j}$ in $\cC'$ endowed with morphisms 
$f_i: A'_{i,\bullet} \to A_{i,\bullet} \, (i=1,2)$ over $f$, 
there exist split hypercoverings 
\[ A_{3,\bullet} \to A, \quad A'_{3,\bullet} \to A'  \]
in $\cC_{\tau}$ with each $A_{3,j}, A'_{3,j}$ in $\cC'$ endowed with a morphism $f_3: A'_{3,\bullet} \to A_{3,\bullet}$ over $f$, morphisms 
$g_i: A_{3,\bullet} \to A_{i,\bullet} \, (i=1,2)$ over $A$ and morphisms 
$g'_i: A'_{3,\bullet} \to A'_{i,\bullet} \, (i=1,2)$ over $A'$ such that, 
for $i=1,2$, $f_i \circ g'_i$ and $g_i \circ f_3$ are homotopic.  
\item \label{prop:domhypercover_iii'}
If we are given split hypercoverings 
\[ A_{i,\bullet} \to A, \quad A'_{i,\bullet} \to A' \, (i=1,2) \] 
in $\cC_{\tau}$ with each $A_{i,j}, A'_{i,j}$ in $\cC'$ endowed with morphisms 
$f_i: A'_{i,\bullet} \to A_{i,\bullet} \, (i=1,2)$ over $f$ and morphisms 
\[ \alpha, \beta: A_{2,\bullet} \to A_{1,\bullet}, \quad 
\alpha', \beta': A'_{2,\bullet} \to A'_{1,\bullet} \] 
such that $f_1 \circ \alpha'$ and $\alpha \circ f_2$ are homotopic and that 
$f_1 \circ \beta'$ and $\beta \circ f_2$ are homotopic, 
there exist split hypercoverings 
\[ A_{3,\bullet} \to A, \quad A'_{3,\bullet} \to A'  \]
in $\cC_{\tau}$ with each $A_{3,j}, A'_{3,j}$ in $\cC'$ endowed with a morphism $f_3: A'_{3,\bullet} \to A_{3,\bullet}$ over $f$, a morphism 
$\gamma: A_{3,\bullet} \to A_{2,\bullet}$ over $A$ and a morphism 
$\gamma': A'_{3,\bullet} \to A'_{2,\bullet}$ over $A'$ such that 
$f_2 \circ \gamma'$ and $\gamma \circ f_3$ are homotopic, 
$\alpha \circ \gamma$ and $\beta \circ \gamma$ are homotopic and 
$\alpha' \circ \gamma'$ and $\beta' \circ \gamma'$ are homotopic. 
\end{enumerate}
\end{proposition}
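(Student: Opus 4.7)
For part (i), I would mimic the inductive construction in the proof of Lemma~\ref{lem:domhypercover2'}. Suppose an $i$-truncated split hypercovering $A'_{\bullet\leqslant i}\to A'$ with components in $\cC'$ and a morphism $A'_{\bullet\leqslant i}\to A_{\bullet\leqslant i}$ over $f$ have already been built. I would form the fibre product in $\cC$
\[
B_{i+1}:=A_{i+1}\times_{(\cosk_i A_{\bullet\leqslant i})_{i+1}}(\cosk_i A'_{\bullet\leqslant i})_{i+1},
\]
choose by condition $(*)$ a $\tau$-covering $A''_{i+1}\to B_{i+1}$ with $A''_{i+1}\in\cC'$, and apply the recipe of \cite[Prop.\,5.1.3]{S_1972} (together with the closure of $\cC'$ under finite direct sums) to extend to level $i+1$; the projection $B_{i+1}\to A_{i+1}$ supplies the required morphism in that degree.

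For part (ii), I would form the fibre products of simplicial objects
\[
B_{\bullet}:=A_{1,\bullet}\times_A A_{2,\bullet},\qquad B'_{\bullet}:=A'_{1,\bullet}\times_{A'}A'_{2,\bullet},
\]
which are hypercoverings of $A$ and $A'$ by Lemma~\ref{lem:domhypercover1'}\,(i), equipped with a canonical morphism $B'_{\bullet}\to B_{\bullet}$ over $f$ induced by $(f_1,f_2)$. Lemma~\ref{lem:domhypercover2'} then yields a split hypercovering $A_{3,\bullet}\to B_{\bullet}$ of $A$ with components in $\cC'$; forming $A_{3,\bullet}\times_{B_{\bullet}}B'_{\bullet}\to A'$ (again a hypercovering of $A'$ by Lemma~\ref{lem:domhypercover1'}\,(i), viewed as the fibre product of $A_{3,\bullet}\to A$ and $B'_{\bullet}\to A'$ taken over $B_{\bullet}$) and applying Lemma~\ref{lem:domhypercover2'} a second time produces the desired split $A'_{3,\bullet}$. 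The morphism $f_3$ and projections $g_i,g'_i$ are the canonical ones, and the universal property of the fibre products forces $f_i\circ g'_i=g_i\circ f_3$ to hold strictly.

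For part (iii), I would proceed iteratively. First apply (ii) to the given data to obtain a preliminary common refinement $(C_{\bullet},C'_{\bullet},\varphi)$ together with $h_i\colon C_{\bullet}\to A_{i,\bullet}$ and $h'_i\colon C'_{\bullet}\to A'_{i,\bullet}$ satisfying $f_i\circ h'_i\sim h_i\circ\varphi$. By Lemma~\ref{lem:domhypercover1'}\,(ii) applied to $(\alpha h_2,\beta h_2)\colon C_{\bullet}\to A_{1,\bullet}$ and then Lemma~\ref{lem:domhypercover2'}, I would find a split hypercovering $A_{3,\bullet}\to C_{\bullet}$ of $A$ with components in $\cC'$ on which these two composites become homotopic, and set $\gamma:=h_2\circ(A_{3,\bullet}\to C_{\bullet})$. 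By part (i), lift $A_{3,\bullet}\to A$ to a split hypercovering $R'_{\bullet}\to A'$ with components in $\cC'$ and a morphism $R'_{\bullet}\to A_{3,\bullet}$ over $f$. The product $R'_{\bullet}\times_{A'}C'_{\bullet}\to A'$ is a hypercovering by Lemma~\ref{lem:domhypercover1'}\,(i), which I split-refine via Lemma~\ref{lem:domhypercover2'}; the two composites from this refinement to $C_{\bullet}$ (one through $R'_{\bullet}\to A_{3,\bullet}\to C_{\bullet}$, the other through $C'_{\bullet}\xrightarrow{\varphi}C_{\bullet}$) are then homotoped by another application of Lemma~\ref{lem:domhypercover1'}\,(ii)+Lemma~\ref{lem:domhypercover2'}, and one final such step applied to $(\alpha' h'_2(-),\beta' h'_2(-))$ produces the desired split $A'_{3,\bullet}$. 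Taking $f_3$ to be the composite $A'_{3,\bullet}\to R'_{\bullet}\to A_{3,\bullet}$ and $\gamma':=h'_2\circ(A'_{3,\bullet}\to C'_{\bullet})$, the homotopies $\alpha\gamma\sim\beta\gamma$ and $\alpha'\gamma'\sim\beta'\gamma'$ hold by construction, while $f_2\gamma'\sim\gamma f_3$ follows from $f_2\circ h'_2\sim h_2\circ\varphi$ together with the arranged homotopy between the two composites to $C_{\bullet}$.

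The main obstacle, visible in part (iii), is organisational rather than conceptual: each successive refinement must be split with components in $\cC'$, must respect the ambient morphism $f$, and must not destroy the homotopies arranged at earlier stages. The technical content is essentially tracking which squares commute strictly (thanks to universal properties of fibre products) and which only up to homotopy (coming from Lemma~\ref{lem:domhypercover1'}\,(ii)), and interleaving them in the right order.
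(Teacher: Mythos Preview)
Your outline for (i) is fine and essentially unpacks what the paper does in one line (apply Lemma~\ref{lem:domhypercover2'} to the hypercovering $A'\times_A A_\bullet\to A'$ and compose with the projection).

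In (ii), however, there is a genuine gap. You form $A_{3,\bullet}\times_{B_\bullet}B'_\bullet$ and claim this is a hypercovering of $A'$ ``by Lemma~\ref{lem:domhypercover1'}(i)''. That lemma only treats fibre products of two hypercoverings of $A'$ taken \emph{over $A'$}; it says nothing about fibre products over an intermediate simplicial object $B_\bullet$. Your claim is in fact true, but it requires a different argument: one has to observe that the morphism $A_{3,\bullet}\to B_\bullet$ produced by Lemma~\ref{lem:domhypercover2'} is a \emph{relative} hypercovering (each $A_{3,n+1}\to B_{n+1}\times_{(\cosk_n B)_{n+1}}(\cosk_n A_3)_{n+1}$ is a covering), and then check that this property is stable under base change along $B'_\bullet\to B_\bullet$ and composes with the hypercovering $B'_\bullet\to A'$. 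None of this is in the text as you wrote it. The paper sidesteps the issue entirely: it takes the honest fibre product $\underline{A}'_{3,\bullet}\times_{A'}(A'\times_A A_{3,\bullet})$ over $A'$ (so Lemma~\ref{lem:domhypercover1'}(i) genuinely applies), and pays the price of only getting $f_i\circ g'_i\sim g_i\circ f_3$ up to homotopy via Lemma~\ref{lem:domhypercover1'}(ii)---which is exactly what the statement asks for.

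For (iii) your route is considerably more involved than the paper's. The paper does \emph{not} invoke (ii) at all: it first applies Lemma~\ref{lem:domhypercover1'}(ii) (plus Lemma~\ref{lem:domhypercover2'}) separately on the $A$-side and the $A'$-side to obtain $\gamma$ with $\alpha\gamma\sim\beta\gamma$ and a preliminary $\underline{\gamma}'$ with $\alpha'\underline{\gamma}'\sim\beta'\underline{\gamma}'$, and then uses one fibre product over $A'$ together with one further application of Lemma~\ref{lem:domhypercover1'}(ii) to arrange $f_2\gamma'\sim\gamma f_3$. Your detour through a preliminary common refinement $(C_\bullet,C'_\bullet,\varphi)$ via (ii), followed by several more refinement steps, can be made to work, but it multiplies the bookkeeping and inherits the gap from your (ii).
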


\begin{proof}
First we prove (i). By Lemma \ref{lem:domhypercover2'}, there exists 
a split hypercovering $A'_{\bullet} \to A'$ in $\cC_{\tau}$ with each 
$A'_i$ in $\cC'$ and a $\cC^{\mathrm{pf}}$-morphism 
$A'_{\bullet} \to A' \times_A A_{\bullet}$ over $A'$. The hypercovering 
$A'_{\bullet} \to A'$ and the composite $A'_{\bullet} \to A' \times_A A_{\bullet} \to A_{\bullet}$ satisfy the required properties. 

Next we prove (ii). 
Let $\underline{A}_{3,\bullet} := A_{1,\bullet} \times_A A_{2,\bullet}$, 
$\underline{A}'_{3,\bullet} := A'_{1,\bullet} \times_{A'} A'_{2,\bullet}$ which are by Lemma \ref{lem:domhypercover1'} again $\tau$-hypercoverings and in particular (simplicial) fibre products in $\cC^{\mathrm{pf}}$. 
Then we have natural morphisms 
\[ 
\underline{g}_i: \underline{A}_{3,\bullet} \to A_{i,\bullet} \, (i=1,2), 
\quad 
\underline{g}'_i: \underline{A}'_{3,\bullet} \to A'_{i,\bullet} \, (i=1,2), 
\quad 
\underline{f}_3: \underline{A}'_{3,\bullet} \to \underline{A}_{3,\bullet} 
\]
where the first two are $\cC^{\mathrm{pf}}$-morphisms (even $\tau$-morphisms)  with 
\begin{equation}\label{eq:b10'-1}
f_i \circ \underline{g}'_i = 
\underline{g}_i \circ \underline{f}_3 \,\, (i=1,2). 
\end{equation}
Let $\underline{f}_{3,A'}: \underline{A}'_{3,\bullet} \to A' \times_A 
\underline{A}_{3,\bullet}$ be the morphism of $\tau$-hypercoverings over $A'$ induced by $\underline{f}_3$, which by condition (c) is a $\cC^{\mathrm{pf}}$-morphism.

By Lemma \ref{lem:domhypercover2'}, there exists 
a split hypercovering $A_{3,\bullet} \to A$ in $\cC_{\tau}$ with each 
$A_{3,j}$ in $\cC'$ and a $\cC^{\mathrm{pf}}$-morphism 
$h: A_{3,\bullet} \to \underline{A}_{3,\bullet}$ over $A$. 
Put $g_i = \underline{g}_i \circ h$ for $i=1,2$, and let 
$h_{A'}:  A' \times_A A_{3,\bullet} \to A' \times_A \underline{A}_{3,\bullet}$ be
the base change of $h$, which is again a $\cC^{\mathrm{pf}}$-morphism by condition (b). 

Let $\underline{A}''_{3,\bullet} := 
\underline{A}'_{3,\bullet} \times_{A'} (A' \times_A A_{3,\bullet})$ 
which is again a $\tau$-hypercovering by Lemma \ref{lem:domhypercover1'}
 and let 
\[ 
q: \underline{A}''_{3,\bullet} \to \underline{A}'_{3,\bullet}, \quad 
r: \underline{A}''_{3,\bullet} \to A_{3,\bullet}, \quad 
r_{A'}: \underline{A}''_{3,\bullet} \to A' \times_A A_{3,\bullet} 
\] 
be projections where the first and the last are $\cC^{\mathrm{pf}}$-morphisms. Then we have two $\cC^{\mathrm{pf}}$-morphisms 
\[ 
\underline{f}_{3,A'} \circ q, \, h_{A'} \circ r_{A'}: 
\underline{A}''_{3,\bullet} \to A' \times_A \underline{A}_{3,\bullet}. 
\] 
By Lemma \ref{lem:domhypercover1'} and Lemma \ref{lem:domhypercover2'}, 
there exists a split hypercovering $A'_{3,\bullet} \to A'$ in $\cC_{\tau}$ with each $A'_{3,j}$ in $\cC'$ and a $\cC^{\mathrm{pf}}$-morphism $s: A'_{3,\bullet} \to 
\underline{A}''_{3,\bullet}$ over $A'$ such that 
\[ \underline{f}_{3,A'} \circ q \circ s, \, h_{A'} \circ r_{A'} \circ s: 
A'_{3,\bullet} \to A' \times_A \underline{A}_{3,\bullet}. 
\] 
are homotopic. Composing with the projection to $\underline{A}_{3,\bullet}$, 
we see that 
\[ \underline{f}_3 \circ q \circ s, \, h \circ r \circ s: 
A'_{3,\bullet} \to \underline{A}_{3,\bullet}. 
\] 
are homotopic. Now we put 
\[ g'_i := \underline{g}'_i \circ q \circ s: A'_{3,\bullet} \to 
A'_{i,\bullet} 
 \, (i=1,2), \quad 
f_3 := r \circ s: A'_{3,\bullet} \to A_{3,\bullet}. 
\]
Then, for $i=1,2$, 
\[ 
f_i \circ g'_i = f_i \circ \underline{g}'_i \circ q \circ s 
\overset{\text{\eqref{eq:b10'-1}}}{=} 
\underline{g}_i \circ \underline{f}_3 \circ q \circ s \] 
and 
\[ g_i \circ f_3 = \underline{g}_i \circ h \circ r \circ s \] 
are homotopic. So we have proved the required properties. 

Finally we prove (iii). 
By Lemma \ref{lem:domhypercover1'} and Lemma \ref{lem:domhypercover2'}, 
there exists a split hypercovering $A_{3,\bullet} \to A$ in $\cC_{\tau}$ 
with each $A_{3,j}$ in $\cC'$ and a $\cC^{\mathrm{pf}}$-morphism $\gamma: A_{3,\bullet} \to 
A_{2,\bullet}$ over $A$ 
such that $\alpha \circ \gamma$ and $\beta \circ \gamma$ 
are homotopic. Also, by Lemma \ref{lem:domhypercover1'}, 
there exists a hypercovering $\underline{A}'_{3.\bullet} \to A'$ in 
$\cC_{\tau}$ and a $\cC^{\mathrm{pf}}$-morphism $\underline{\gamma}': \underline{A}'_{3,\bullet} 
\to A'_{2,\bullet}$ such that $\alpha' \circ \underline{\gamma}'$ and 
$\beta' \circ \underline{\gamma}'$ are homotopic. 

Let $\underline{A}''_{3,\bullet} := 
\underline{A}'_{3,\bullet} \times_{A'} (A' \times_A A_{3,\bullet})$ which is again a $\tau$-hypercovering by Lemma \ref{lem:domhypercover1'} and let 
\[ 
q: \underline{A}''_{3,\bullet} \to \underline{A}'_{3,\bullet}, \quad 
r: \underline{A}''_{3,\bullet} \to A_{3,\bullet}, \quad 
r_{A'}: \underline{A}''_{3,\bullet} \to A' \times_A A_{3,\bullet} 
\] 
be projections where the first and the last are $\cC^{\mathrm{pf}}$-morphisms. Then we have two $\cC^{\mathrm{pf}}$-morphisms 
\[ 
f_{2,A'} \circ \underline{\gamma}' \circ q, \, 
\gamma_{A'} \circ r_{A'}: 
\underline{A}''_{3,\bullet} \to A' \times_A A_{2,\bullet},
\] 
where $f_{2,A'}: A'_{2,\bullet} \to A' \times_A A_{2,\bullet}$ 
is the morphism of $\tau$-hypercoverings of $A'$ induced by $f_2$ which by
condition (c) is a $\cC^{\mathrm{pf}}$-morphism, 
and $\gamma_{A'}: A' \times_A A_{3,\bullet} \to A' \times_A A_{2,\bullet}$ 
is the base change  of $\gamma$ which is a $\cC^{\mathrm{pf}}$-morphism by condition (b).
By Lemma \ref{lem:domhypercover1'} and Lemma \ref{lem:domhypercover2'}, 
there exists a split hypercovering $A'_{3,\bullet} \to A'$ in $\cC_{\tau}$ with each $A'_{3,j}$ in $\cC'$ and a $\cC^{\mathrm{pf}}$-morphism $s: A'_{3,\bullet} \to 
\underline{A}''_{3,\bullet}$ over $A'$ such that 
\[ f_{2,A'} \circ \underline{\gamma}' \circ 
q \circ s, \, \gamma_{A'} \circ r_{A'} \circ s: 
A'_{3,\bullet} \to A' \times_A A_{2,\bullet}. 
\] 
are homotopic. Composing with the projection to $A_{2,\bullet}$, 
we see that $f_2 \circ \underline{\gamma}' \circ q \circ s, \, 
\gamma \circ r \circ s$ 
are homotopic. If we put $\gamma' := \underline{\gamma}' \circ q \circ s:A'_{3,\bullet} \to A'_{2,\bullet}$ and 
$f_3 := r \circ s: A'_{3,\bullet} \to A_{3,\bullet}$, 
$f_2 \circ \gamma$ and $\gamma \circ f_3$ 
are homotopic. Also, since $\alpha' \circ \underline{\gamma}'$ and 
$\beta' \circ \underline{\gamma}'$ are homotopic, 
we see that $\alpha' \circ \gamma'$ and 
$\beta' \circ \gamma'$ are homotopic. So we are done. 
\end{proof}

\begin{definition}\label{def:homotopycat}
We define the homotopy category of certain hypercoverings as follows: 
\begin{enumerate}
\item 
For $A \in \cC$, we define the category ${\rm Ho}\{A_{\bullet}/A\}$ as the one in which an object is a split hypercovering $A_{\bullet} \to A$ in $\cC_{\tau}$ with each $A_i$ in $\cC'$ and a morphism from $A_{1,\bullet} \to A$ to $A_{2,\bullet} \to A$ is a homotopy class of morphisms $A_{1,\bullet} \to A_{2,\bullet}$ over $A$. 
\item 
For a morphism $f:A' \to A$ in $\cC$, we define the category 
${\rm Ho}\{f_{\bullet}/f\}$ as the one in which an object is 
a triple $(A_{\bullet} \to A, A'_{\bullet} \to A', 
f_{\bullet})$ consisting of 
split hypercoverings $A_{\bullet} \to A, A'_{\bullet} \to A'$ in $\cC_{\tau}$ with each $A_i, A'_i$ in $\cC'$ and a morphism $f_{\bullet}:A'_{\bullet} \to A_{\bullet}$ over $f$, and a morphism from 
$(A_{1,\bullet} \to A, A'_{1,\bullet} \to A', f_{1,\bullet})$ to 
$(A_{2,\bullet} \to A, A'_{2,\bullet} \to A', f_{2,\bullet})$
is a pair $(g,g')$ of homotopy classes of morphisms 
$g: A_{1,\bullet} \to A_{2,\bullet}$, $g':A'_{1,\bullet} \to A'_{2,\bullet}$ over $A$, $A'$ respectively such that $f_{2,\bullet} \circ g'$ and $g \circ f_{1,\bullet}$ are homotopic. 
\end{enumerate}
\end{definition}

Then Proposition \ref{prop:domhypercover'} immediately implies the following: 

\begin{corollary}\label{cor:homotopycat}
\begin{enumerate}
\item 
With the notation in Definition \ref{def:homotopycat}, the category 
${\rm Ho}\{A_{\bullet}/A\}$, ${\rm Ho}\{f_{\bullet}/f\}$ are cofiltered. 
\item
For a morphism $f:A' \to A$ in $\cC$, there exists a diagram of functors 
\[ {\rm Ho}\{A'_{\bullet}/A'\} \leftarrow 
{\rm Ho}\{f_{\bullet}/f\} \rightarrow {\rm Ho}\{A_{\bullet}/A\} \] 
in which the first arrow (resp. the second arrow) sends $f_{\bullet}: A'_{\bullet} \to A_{\bullet}$ to $A'_{\bullet}$ (resp. $A_{\bullet}$) and the second arrow is surjective on the set of objects. 
\end{enumerate}
\end{corollary}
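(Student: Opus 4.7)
The plan is to derive both assertions directly from Proposition \ref{prop:domhypercover'}, which is tailor-made for this purpose. The work really amounts to unwinding the definitions of the homotopy categories and matching them against the three clauses of that proposition.

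For assertion (i), I would verify the three axioms of a cofiltered category separately for ${\rm Ho}\{A_{\bullet}/A\}$ and then for ${\rm Ho}\{f_{\bullet}/f\}$. The categories are nonempty: condition $(*)$ yields a covering $A_0 \to A$ with $A_0 \in \cC'$, and iteratively applying $(*)$ to the fibre products ${\rm cosk}_i(A_{\bullet \leqslant i})_{i+1}$ (exactly as in the proof of Lemma \ref{lem:domhypercover2'}) produces a split hypercovering with components in $\cC'$; for ${\rm Ho}\{f_{\bullet}/f\}$ one then combines this with Proposition \ref{prop:domhypercover'}(i) to produce a compatible split hypercovering over $A'$. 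For any two objects, Proposition \ref{prop:domhypercover'}\ref{prop:domhypercover_ii'} (applied with $f = {\rm id}_A$ in the first case and with $f$ itself in the second) provides a common refinement together with the required pair of morphisms, whose classes in the homotopy category automatically satisfy the compatibility condition because the proposition only asserts commutativity up to homotopy. For any two parallel morphisms, Proposition \ref{prop:domhypercover'}\ref{prop:domhypercover_iii'} produces an equalising morphism in exactly the same way.

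For assertion (ii), the two functors are the evident projections sending a triple $(A_{\bullet} \to A,\, A'_{\bullet} \to A',\, f_{\bullet})$ to $A_{\bullet} \to A$ and $A'_{\bullet} \to A'$ respectively, and a morphism $(g,g')$ to $g$ or $g'$. These are well-defined on homotopy classes since the homotopy relation is preserved under the projections. Surjectivity of the second functor on objects is precisely Proposition \ref{prop:domhypercover'}(i): given $A_{\bullet} \to A$, that lemma produces a split hypercovering $A'_{\bullet} \to A'$ with components in $\cC'$ and a morphism $f_{\bullet} \colon A'_{\bullet} \to A_{\bullet}$ over $f$, giving an object of ${\rm Ho}\{f_{\bullet}/f\}$ whose image is $A_{\bullet}$.

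There is no real obstacle here; the only thing to be careful about is the bookkeeping for the second category ${\rm Ho}\{f_{\bullet}/f\}$, where morphisms are pairs of homotopy classes subject to a homotopy-commutativity condition. What makes the argument work without friction is that Proposition \ref{prop:domhypercover'}\ref{prop:domhypercover_ii'} and \ref{prop:domhypercover_iii'} are already formulated in the ``up to homotopy'' framework, so the homotopy-commutativity conditions required in the definition of morphisms in ${\rm Ho}\{f_{\bullet}/f\}$ come for free from the conclusions of the proposition. Consequently, no additional homotopical manipulation is needed beyond the statements already in hand.
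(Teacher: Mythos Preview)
Your proposal is correct and follows precisely the approach the paper intends: the paper gives no proof beyond the remark that Proposition~\ref{prop:domhypercover'} ``immediately implies'' the corollary, and your write-up is simply an accurate unpacking of that implication. The only minor addition is that you explicitly invoke Lemma~\ref{lem:domhypercover2'} for nonemptiness, which the paper leaves entirely implicit.
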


\begin{example}\label{example:varkgeocdp}
In this example, we suppose Hypotheses 
\ref{strong resolutions}, \ref{embedded resolutions}. 
We define the $\cdp$-topology on $\Var_k^{geo}$ as the one induced from 
the $\cdp$-topology on $\Var_k$ by the forgetful functor 
\[ \psi: \Var_k^{geo} \to \Var_k; \,\, (X,\overline{X}) \mapsto X, \] 
and we denote the resulting site by $\Var^{geo}_{k,\cdp}$. 
Since $\Var_{k}$ and $\Var_{k}^{geo}$ admit fibre products and the forgetful functor $\psi$ preserves fibre products, a family of morphisms 
$\{(X_i,\overline{X}_i) \to (X,\overline{X})\}_i$ 
is a $\cdp$-covering in $\Var_k^{geo}$ if and only if the family of morphisms 
 $\{X_i \to X\}_i$ is a $\cdp$-covering in $\Var_k$ (\cite[Cor.\,3.3]{V_1972}). 
We prove that, if we put $\cC =\cC^{\mathrm{pf}}  = \Var_k^{geo}$, $\tau$ to be the $\cdp$-topology on $\Var_k^{geo}$ and $\cC' = \Var_k^{nc}$, the condition \hyperref[ast]{$(*)$} is satisfied: 
Indeed, for $(X,\overline{X}) \in \Var_k^{geo}$, Hypotheses 
\ref{strong resolutions}, \ref{embedded resolutions} imply the existence of strict birational morphism $f: (X',\overline{X}') \to (X,\overline{X})$ such that $(X',\overline{X}') \in \Var_k^{nc}$. Let $U \subset \overline{X}$ be an open dense subset on which $f$ is an isomorphism, $\overline{Y}' = \overline{X} \setminus U$ be its complement, $Y := X \cap \overline{Y}'$ and let $\overline{Y}$ be the closure of $Y$ in $\overline{Y}'$. Then $(Y,\overline{Y}) \in \Var_k^{geo}$ and $(X',\overline{X}') \coprod (Y,\overline{Y}) \to (X,\overline{X})$ is a $\cdp$-covering. By Noetherian induction, we may supppse the existence of a $\cdp$-covering $(Y',\overline{Y}') \to (Y,\overline{Y})$ with $(Y',\overline{Y}') \in \Var_k^{nc}$, and then $(X',\overline{X}') \coprod (Y',\overline{Y}') \to (X,\overline{X})$ gives a required $\cdp$-covering. 

So the results in this section is applicable to the case $\cC =\cC^{\mathrm{pf}}  = \Var_k^{geo}$,
$\cC_{\tau} = \Var_{k,\cdp}^{geo}$ and $\cC' = \Var_k^{nc}$. 
\end{example}

\section{A site of separable alterations}\label{appendixB}

The goal of this appendix is to reframe the simplicial construction used in Section \ref{sec: without resolution}
within a setting similar to the first section. 
To this end we consider a version of Gabber's alteration site \cite{O_2014}, \cite[Def.\,6.8]{HKK_2017}.

\begin{definition}
Let $\dom_k$ be the category whose objects are $k$-varieties $X$, but whose morphisms are the maximally dominant morphisms. Recall that a morphism of schemes is called \emph{maximally dominant} if it sends generic points of the source to generic points of the target.

Let moreover $\dom_{k,\mathrm{gf}}$ be the category whose objects are $k$-varieties $X$, and whose morphisms are the generically finite maximally dominant morphisms. 
Here we say that a maximally dominant morphism $f:Y\rightarrow X$ is generically finite if for every generic point $\eta$ of $Y$ the extension of residue fields $k(\eta)/k(f(\eta))$ is finite.
\end{definition}

\begin{lemma}\label{lem:domfiberproduct}
Let $g:X'\rightarrow X$ and $f:Y\rightarrow X$ be morphisms in $\dom_k$ and assume that $f$ is generically finite. 
The fibre product $X'\times_X^{\dom_k} Y$ exists in $\dom_k$.
\end{lemma}

\begin{proof}
Let $X'\times_X^{\dom_k} Y$ be the union of the reduced irreducible components of the fibre product $(Y\times_X Z)_{\mathrm{red}}$ in $\Var_k$ (see \hyperref[conventions]{\S\,Conventions}) which dominate an irreducible component of $X$. 

We first show that the morphisms induced by the projections $X'\times_X^{\dom_k} Y\rightarrow X'$ and $X'\times_X^{\dom_k} Y\rightarrow Y$ are maximally dominant. 
By definition, every generic point $\lambda\in X'\times_X^{\dom_k} Y$ is mapped to a generic point $\xi\in X$. 
After base change along $\xi \rightarrow X$, we reduce to the situation that $X=\Spec(k(\xi))$ is the spectrum of a field. 
In that case, the morphisms $g$ and $f$ are flat and $X'\times_X^{\dom_k} Y$ coincides with the fibre product $(Y\times_X Z)_{\mathrm{red}}$ in $\Var_k$. 
But according to \cite[Prop.\,1.1.5]{O_2014} the base change of a maximally dominant morphism along a flat morphism is again maximally dominant which shows the claim. 

Next we observe that if $\xi\in X$ is a generic point with generic points $\xi'\in X'$ and $\eta\in Y$ mapping to it, then $(\xi'\times_\xi \eta)_{\mathrm{red}}$ is a finite disjoint union of generic points. 
Indeed, by assumption on $f$, $k(\eta)/k(\xi)$ is a finite field extension and therefore  $(\xi'\times_\xi \eta)_{\mathrm{red}}=\bigsqcup_{i=1}^n \lambda_i $ is a finite disjoint union of points. 
We have to show that the $\lambda_i$ are generic points.
For $j$ fixed, let $\widetilde{\lambda}_j$ be a generic point of $(X'\times_X Y)_{\mathrm{red}}$ such that $\lambda_j\in\overline{\{\widetilde{\lambda}_j\}}$.
As $\lambda_j$ maps to $\xi'$ and $\eta$, the same is true for $\widetilde{\lambda}_j$. 
Therefore $\widetilde{\lambda}_j\in \bigsqcup_{i=1}^n \lambda_i$ and hence $\widetilde{\lambda}_j = \lambda_j$. 

We use this to show that $X'\times_X^{\dom_k} Y$ satisfies the universal property of a fibre product in $\dom_k$. 
Let $Z\in \Var_k$ such that there is a commutative diagram 
$$
\xymatrix{Z\ar[r] \ar[d] & Y\ar[d] \\ X'\ar[r]& X}
$$
in $\dom_k$. 
By the universal property of fibre products in $\Var_k$, 
there is a unique morphism $Z\rightarrow (X'\times_X Y)_{\mathrm{red}}$ and we have to show that it factors through $X'\times_X^{\dom_k} Y$ in $\dom_k$. 
Let $\zeta\in Z$ be a generic point, mapping to generic points $\xi'\in X'$ and $\eta\in Y$ over a generic point $\xi \in X$. 
As we have seen, $(\xi'\times_\xi \eta)_{\mathrm{red}}=\bigsqcup_{i=1}^n \lambda_i $ is a finite disjoint union of generic points, and thus
 $\zeta$ maps to one of the $\lambda_i$ under $Z\rightarrow (X'\times_X Y)_{\mathrm{red}}$. 
Consequently, the image of $Z$ is contained in $X'\times_X^{\dom_k} Y$.
\end{proof}

\begin{example}
Note that general fibre products do not exist in $\dom_k$. 
As a counter example, consider $X=\Spec(k)$, $X'=\Spec(k[x])= \mathbb{A}^1_k= Y=\Spec(k[y])$. 
Then clearly $X'\rightarrow X$ and $Y\rightarrow X$ are maximally dominant, but $k(x)/k$ and $k(y)/k$ are not finite. 

We first observe that the usual fibre product  $X'\times_X Y= \Spec(k[x,y])=\mathbb{A}^2_k$ (which coincides with the definition from Lemma \ref{lem:domfiberproduct} because the projections are maximally dominant) does not give a fibre product in $\dom_k$.  
Indeed, let $Z=\Spec(k[z])=\mathbb{A}^1_k$ with morphisms $Z\rightarrow Y, y\mapsto z$ and $Z\rightarrow X', x\mapsto z$ be given. 
Then the morphism $Z\rightarrow X'\times_X Y$  induced by the universal property is the diagonal morphism, which is not maximally dominant, and hence 
 $X'\times_X Y$  does not satisfy the universal property of fibre products in $\dom_k$.

On can even show that in $\dom_k$ a fibre product does not exist. 
Assume now that there exists a fibre product $M$ of $X'\rightarrow X$ and $Y\rightarrow X$ in $\dom_k$. 
Then by the universal property of the usual fibre product 
we have a commutative diagram
$$
\xymatrix{
& M \ar[ddl] \ar[ddr] \ar[d] & \\
&X'\times_X Y= \mathbb{A}^2_k \ar[dr] \ar[dl]& \\
X'= \mathbb{A}^1_k \ar[dr] && Y= \mathbb{A}^1_k \ar[dl]\\
&X= \Spec(k)&
}
$$
We can find infinitely many curves $C$  in $X' \times_X Y$
over $k$ with  the projections 
$C\rightarrow X'$ and $C\rightarrow Y$ maximally dominant. 
On the one hand the universal property $M$ in $\dom_k$ induces a maximally dominant morphism $C\rightarrow M$ 
which in particular maps the generic point of $C$ to a generic point of $M$.
On the other hand the universal property of  
$X'\times_X Y$ in $\Var_k$ induce 
 the canonical inclusion
$C\hookrightarrow X'\times_X Y$ 
which factors through the morphism $C\rightarrow M$. 
Consequently, the map $M\rightarrow  X'\times_X Y$ maps the generic point of $M$ corresponding to $C$ to the generic point of $C$ inside $X'\times_X Y$. 
Varying $C$, we see that the image of the generic points of $M$ under the morphism $M\rightarrow  X'\times_X Y$  contains infinitely many points of transcendental degree 1. This contradicts the fact the $M$ has only finitely many generic points.
\end{example}

\begin{lemma}\label{lem: dom base change}
Let $g:X'\rightarrow X$ and $f:Y\rightarrow X$ be morphisms in $\dom_k$.
Assume in addition that for every generic point $\xi$ of $Y$, the extension of residue fields $k(\xi)/k(f(\xi))$ is finite (respectively finite and separable). 
Then the morphism $f':Y'=Y\times_X^{\dom_k} X' \rightarrow X'$ satisfies the same property.
\end{lemma}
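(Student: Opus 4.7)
My plan is to trace a generic point $\eta$ of $Y'$ through the three canonical projections to $Y$, $X'$, $X$, and to identify $k(\eta)$ as a residue field of a tensor product of generic residue fields. The point $\xi_X := p_X(\eta) \in X$ is automatically a generic point, because by construction the component of $Y'$ containing $\eta$ dominates an irreducible component of $X$. By Lemma~\ref{lem:domfiberproduct} the projection $f' \colon Y' \to X'$ is a morphism in $\dom_k$, so $\xi_{X'} := f'(\eta)$ is a generic point of $X'$.

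The core of the proof will be to show that $\xi_Y := p_Y(\eta) \in Y$ is also a generic point of $Y$. For this I would decompose $Y = \bigcup_i Y_i$ into irreducible components with generic points $\xi_i$, and fix $i$ with $\xi_Y \in Y_i$. Maximal dominance of $f$ gives that $f(\xi_i)$ is a generic point of $X$; since $\xi_X = f(\xi_Y)$ lies in $\overline{\{f(\xi_i)\}}$ and is itself generic, we must have $\xi_X = f(\xi_i)$. The hypothesis that $k(\xi_i)/k(f(\xi_i))$ is finite then yields $\dim Y_i = \dim \overline{\{f(\xi_i)\}}$, so $f|_{Y_i}$ is dominant and generically finite. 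In particular its scheme-theoretic fibre over $\xi_X$ is $\Spec k(\xi_i)$, a single point, forcing $\xi_Y = \xi_i$.

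Once $\xi_Y$ is known to be generic, the hypothesis tells us that $k(\xi_Y)/k(\xi_X)$ is finite and separable. The scheme-theoretic fibre of $Y \times_X X'$ over the pair $(\xi_Y, \xi_{X'})$ is
\[\Spec\bigl(k(\xi_Y) \otimes_{k(\xi_X)} k(\xi_{X'})\bigr),\]
and it contains $\eta$, so $k(\eta)$ is the residue field of some prime of $k(\xi_Y) \otimes_{k(\xi_X)} k(\xi_{X'})$. Finite separability of $k(\xi_Y)/k(\xi_X)$ forces this tensor product to be a finite \'etale $k(\xi_{X'})$-algebra, hence a finite product of finite separable extensions of $k(\xi_{X'}) = k(f'(\eta))$, and each of its residue fields inherits the finite separability.

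The main obstacle is the second paragraph: one has to rule out that $\xi_Y$ could be a non-generic point of $Y$ sitting above the generic point $\xi_X$ of $X$, and this is exactly where the hypothesis on residue fields enters, through the dimension count and the resulting quasi-finiteness of $f|_{Y_i}$ at $\xi_i$. Once this is settled, the residue-field computation via the tensor product is essentially formal, and the separability simply transfers from $f\colon Y\to X$ to $f'\colon Y'\to X'$ factor by factor.
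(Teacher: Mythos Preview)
Your proof is correct, but the second paragraph is an unnecessary detour. You already invoke Lemma~\ref{lem:domfiberproduct} to conclude that the projection $f'\colon Y'\to X'$ is a morphism in $\dom_k$; the very same lemma, applied to the other projection, shows that $p_Y\colon Y'\to Y$ is also a morphism in $\dom_k$, since $Y'$ together with \emph{both} projections is the fibre product in that category. Hence $\xi_Y=p_Y(\eta)$ is automatically a generic point of $Y$, with no need for the dimension count or the finiteness hypothesis at this stage. This is precisely how the paper argues: it notes in one line that all four morphisms in the square are maximally dominant (the legs $f,g$ by hypothesis, the projections from $Y'$ by the preceding lemma), so all four images of $\eta$ are generic. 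After that, your argument and the paper's coincide: $k(\eta)$ is a residue field of $k(\xi_Y)\otimes_{k(\xi_X)}k(\xi_{X'})$ (equivalently, the compositum $k(\xi_Y)\cdot k(\xi_{X'})$), and finite separability of $k(\xi_Y)/k(\xi_X)$ transfers to $k(\eta)/k(\xi_{X'})$ by standard field theory. In particular, the finite-separability hypothesis on $f$ enters only at this final step, not---as your commentary suggests---in establishing that $\xi_Y$ is generic.
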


\begin{proof}
Let $\eta'\in Y'$ be a generic point.
Its images $\eta$ in $Y$, $f'(\eta')$ in $X'$, $g\left(f(\eta')\right)=f(\eta)$ in $X$ are generic points, because all morphisms in sight are maximally dominant by hypothesis.
The situation is illustrated by the diagram of field extensions
	$$
	\xymatrix{k(\eta')&  k(\eta) \ar@{_{(}->}[l]\\ k\left(f'(\eta')\right)  \ar@{_{(}->}[u] & k\left(f(\eta)\right)  \ar@{_{(}->}[l]  \ar@{_{(}->}[u]}
	$$
where the extension $k(\eta)/k\left(f(\eta)\right)$ is finite (respectively finite and separable), 
the extension $k\left(f'(\eta')\right)/k\left(f(\eta)\right) $ is arbitrary
and $k(\eta') = k(\eta)k\left(f'(\eta')\right)$. So we see that 
$k(\eta')/k\left(f'(\eta')\right)$ is also finite (respectively finite and separable).
\end{proof}

\begin{lemma}
Let $f:Y\rightarrow X$ be a morphism in $\dom_k$.
The following conditions are equivalent:
\begin{enumerate}
\item For every generic point $\xi$ of $Y$, the extension of residue fields $k(\xi)/k\left(f(\xi)\right)$ is finite and separable.
\item The morphism $f$ is generically \'etale, i.e.\,there is a dense open subset $U\subset X$ over which $f$ is \'etale. 
\end{enumerate}
\end{lemma}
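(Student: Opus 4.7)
My plan is to treat the two implications separately. The direction (ii) $\Rightarrow$ (i) is essentially formal: if $f$ is \'etale over a dense open $U \subseteq X$, then any generic point $\xi$ of $Y$ maps under $f$ to a generic point $\eta = f(\xi)$ of $X$ by maximal dominance, and $\eta \in U$ since $U$ contains every generic point of $X$. Thus $f$ is \'etale at $\xi$, and because $\co_{Y,\xi}=k(\xi)$ and $\co_{X,\eta}=k(\eta)$, this means exactly that $k(\xi)/k(\eta)$ is a finite separable field extension.

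For (i) $\Rightarrow$ (ii), I would first reduce to the case where $X$ and $Y$ are both integral. Decomposing $X$ into irreducible components $X_i$ and passing to the dense open $\coprod_i(X_i\setminus\bigcup_{j\neq i}X_j)$, one may assume $X = \coprod X_i$ and then work on a single component, so that $X$ becomes integral. For $Y$, each irreducible component $Y_j$ dominates $X$ by maximal dominance, and (i) applied at the generic point of $Y_j$ forces $k(Y_j)/k(X)$ finite, whence $\dim Y_j=\dim X$; consequently the pairwise intersections $Y_i\cap Y_j$ are closed subsets of strictly smaller dimension, so the constructible set $f(Y\setminus Y^\circ)$, where $Y^\circ:=\coprod_j (Y_j\setminus\bigcup_{i\neq j}Y_i)$, has closure properly contained in $X$. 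Shrinking $X$ by this closure and then treating each connected component of $Y^\circ$ separately, one reduces to the case where $X$ and $Y$ are both integral, with respective generic points $\eta$ and $\xi$.

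In the integral case, the stalk map $k(\eta)=\co_{X,\eta}\to\co_{Y,\xi}=k(\xi)$ is a finite separable field extension by (i), hence \'etale, so $f$ is \'etale at $\xi$. By the openness of the \'etale locus, the set $E:=\{y\in Y : f\text{ is \'etale at }y\}$ is an open subset of $Y$ containing $\xi$, and its complement is therefore a proper closed subset of $Y$. The crucial step is to show that $\eta\notin f(Y\setminus E)$, which I would establish by proving that the set-theoretic fiber $f^{-1}(\eta)$ consists of $\xi$ alone: affine-locally writing $X=\Spec A$, $Y=\Spec B$ with $S:=A\setminus\{0\}$, the ring $S^{-1}B$ is a domain lying between $k(\eta)$ and $k(Y)$, and the finiteness of $k(Y)/k(\eta)$ forces every element of $S^{-1}B$ to be algebraic over $k(\eta)$; hence $S^{-1}B$ is integral over the field $k(\eta)$, so itself a field, and therefore equal to $k(Y)$. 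Consequently the constructible set $f(Y\setminus E)$ does not contain the generic point of the integral scheme $X$ and is thus not dense, so that $U:=X\setminus\overline{f(Y\setminus E)}$ yields the desired dense open over which $f$ is \'etale. The main obstacle lies in this last step, namely controlling the image of the non-\'etale locus; it requires combining openness of the \'etale locus with the finiteness of the generic fiber, and once that is set up the conclusion follows.
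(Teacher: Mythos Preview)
Your argument is correct. The direction (ii)$\Rightarrow$(i) is immediate as you say, and for (i)$\Rightarrow$(ii) your reduction to the integral case and the subsequent analysis via openness of the \'etale locus together with the computation that the generic fibre reduces to a single point are all sound. One small point worth making explicit: when you pass to affine charts to show $f^{-1}(\eta)=\{\xi\}$, you implicitly use that every non-empty affine open of the integral scheme $Y$ contains $\xi$, so that any prime of $B$ contracting to $(0)$ in $A$ really is the zero ideal; this is fine, but the phrase ``affine-locally writing $X=\Spec A$, $Y=\Spec B$'' could be tightened.

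By contrast, the paper does not argue directly at all: it simply invokes \cite[\href{https://stacks.math.columbia.edu/tag/02GU}{Tag 02GU}]{StacksProject}, pointing to the equivalence of conditions (6) and (9) there. So you have essentially written out, in the special case of varieties with a maximally dominant morphism, the proof of the implication that the paper cites as a black box. Your route buys self-containment and makes transparent why finiteness of the generic residue extension is what forces the non-\'etale locus to miss the generic point; the paper's route buys brevity. Neither is preferable in absolute terms, but be aware that the content of your argument is already packaged in the Stacks Project reference.
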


\begin{proof}
The equivalence follows from 
\cite[\href{https://stacks.math.columbia.edu/tag/02GU}{Tag 02GU}]{StacksProject}. 
Note, in particular, the equivalence between $(6)$ and $(9)$ in loc.\,cit.
\end{proof}

\begin{definition}\label{def: salt}
We call the topology on $\dom_k$ generated by generically \'etale proper morphisms 
the \emph{$\salt$-topology} and we denote the resulting site by $\dom_{k,\salt}$.
\end{definition}

\begin{corollary}\label{cor: categroy properties}
The triple $(\cC:= \dom_{k}, \cC':=\dom_{k,\salt}, \cC^{\mathrm{fp}}:=\dom_{k,\mathrm{gf}})$ satisfies the properties (a)--(e) of Appendix \ref{appendixB'}. 
\end{corollary}

\begin{proof}
This follows from Lemmas \ref{lem:domfiberproduct}, \ref{lem: dom base change} and by definition.
\end{proof}

\begin{definition}
 Let $\dom_k^{geo}$ \textup{(}resp.\,$\dom_k^{nc})$\textup{)} be the category of geometric pairs 
\textup{(}resp.\,normal crossing pairs\textup{)}, with morphisms $(\pi,\overline{\pi})$ such that $\pi$ is maximally dominant. 

Let $\dom_{k,\mathrm{gf}}^{geo}$ \textup{(}resp.\,$\dom_{k,\mathrm{gf}}^{nc})$\textup{)} be the category of geometric pairs 
\textup{(}resp.\,normal crossing pairs\textup{)}, with morphisms $(\pi,\overline{\pi})$ such that $\pi$ is maximally dominant and generically finite. 
\end{definition}

\begin{definition}
We call the topology on 
$\dom_k^{geo}$ induced from $\dom_{k,\salt}$ by the forgetful functor 
\[ \psi: \dom_k^{geo} \rightarrow \dom_k, \quad (X,\overline{X}) \mapsto X \]
the \emph{$\salt$-topology on $\dom_k^{geo}$} and we denote the resulting site by $\dom_{k,\salt}^{geo}$.
\end{definition}

\begin{corollary}
The triple $(\cC:= \dom^{geo}_{k}, \cC':=\dom^{geo}_{k,\salt}, \cC^{\mathrm{fp}}:=\dom^{geo}_{k,\mathrm{gf}})$ satisfies the properties (a)--(e) of Appendix \ref{appendixB'}. 
\end{corollary}
\begin{proof}
This follows from Corollary \ref{cor: categroy properties} and 
the argument in Remark \ref{rem:finiteinverselimit}. 
\end{proof}

Since $\dom_{k}$ and $\dom_{k}^{geo}$ admit fibre products if one of the morphisms is an $\salt$-morphism and the forgetful functor 
$\psi$ preserves fibre products, a family of morphisms $\{(X_i,\overline{X}_i) \to (X,\overline{X})\}_i$ 
is an $\salt$-covering in $\dom_k^{geo}$ if and only if the family of morphisms 
 $\{X_i \to X\}_i$ is an $\salt$-covering in $\dom_k$ (\cite[Cor.\,3.3]{V_1972}). 

\begin{remark}
\begin{enumerate}
	\item Recall that an alteration is a surjective generically finite proper morphism.
	Therefore the covering morphisms in the $\salt$-topology are exactly the generically \'etale alterations.
	\item  
	A diagram \eqref{eq:2-1} with the assumption in Question \ref{q:2-1} is nothing but a 
	split hypercovering $(X_{\bullet}, \overline{X}_{\bullet}) \to (X,\overline{X})$ in $\dom_{k,\salt}^{geo}$ such that 
	each $(X_i,\overline{X}_i)$ belongs to $\dom_k^{nc}$.
\end{enumerate}
\end{remark}

\begin{corollary}\label{cor:domc}
The assumptions and the condition \hyperref[ast]{$(*)$} in Appendix \ref{appendixB'} are satisfied for $\cC_{\tau} = \dom_{k,\salt}^{geo}$ and $\cC' = \dom_{k}^{nc}$. 
\end{corollary}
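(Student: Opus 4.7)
The plan is to verify the four hypotheses required by Appendix \ref{appendixB'} for the pair $(\cC_{\tau}, \cC') = (\dom_{k,\salt}^{geo}, \dom_k^{nc})$. Three of these are essentially formal; the only substantive input is the covering condition $(*)$, which is a direct consequence of de Jong's alteration theorem.

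First, fibre products in $\dom_k^{geo}$ have already been noted to exist in the paragraph immediately following the definition of $\dom_k^{geo}$: one combines Lemma \ref{lem:domfiberproduct} (which supplies fibre products in $\dom_k$) with the closure construction of Remark \ref{rem:finiteinverselimit}. For finite direct sums in both $\dom_k^{geo}$ and $\dom_k^{nc}$, I would simply observe that the componentwise disjoint union $(X_1,\overline{X}_1) \sqcup (X_2,\overline{X}_2) := (X_1 \sqcup X_2,\overline{X}_1 \sqcup \overline{X}_2)$ preserves properness, smoothness, density of the open immersion, and the property of having simple normal crossings at the boundary; moreover a morphism out of such a disjoint union is maximally dominant iff each restriction is. Thus $\sqcup$ is the coproduct in both categories. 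Finally, $\dom_k^{nc}$ is by construction a full subcategory of $\dom_k^{geo}$.

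The essential step is the condition $(*)$. Given $(X,\overline{X}) \in \dom_k^{geo}$, I would apply de Jong's alteration theorem \cite{dJ_1996} in the version that accommodates a boundary divisor: there exists a generically \'etale alteration $\overline{\pi}\colon \overline{X}' \to \overline{X}$ with $\overline{X}'$ proper and smooth over $k$, such that $\overline{\pi}^{-1}(\overline{X} \setminus X)_{\rm red}$ is a simple normal crossing divisor in $\overline{X}'$. Setting $X' := \overline{\pi}^{-1}(X)$, the pair $(X',\overline{X}')$ lies in $\dom_k^{nc}$, and the induced morphism $(X',\overline{X}') \to (X,\overline{X})$ is a morphism in $\dom_k^{geo}$ whose underlying map $X' \to X$ is a generically \'etale proper morphism, hence an $\salt$-covering by the very definition of the $\salt$-topology.

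In contrast to the $\cdp$-situation treated in Example \ref{example:varkgeocdp}, no Noetherian induction is required here: a single alteration already realises $X' \to X$ as an $\salt$-cover, because generic \'etaleness together with properness is precisely what generates the $\salt$-topology. The only mildly delicate point, which I would identify as the main obstacle, is extracting \emph{generic \'etaleness} (rather than merely generic finiteness) from de Jong's construction in positive characteristic together with simultaneous control of the boundary; this is where one uses the refined form of the theorem, either in \cite{dJ_1996} or in the Gabber version documented in \cite{ILO_2014}. Once this is in hand the corollary follows, and all conclusions of Appendix \ref{appendixB'} — in particular the cofilteredness of the homotopy categories ${\rm Ho}\{(X_\bullet,\overline{X}_\bullet)/(X,\overline{X})\}$ and ${\rm Ho}\{\overline{f}_\bullet/\overline{f}\}$ used repeatedly in Section \ref{sec: without resolution} — transfer automatically to the alterations setting.
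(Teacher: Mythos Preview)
Your proof is correct and follows essentially the same approach as the paper's own proof, which invokes de Jong's alteration theorem (with boundary control) to produce the required $\salt$-covering by an $nc$-pair, and appeals to the preceding results of the appendix for the remaining structural assumptions. The paper's proof is simply much terser, packaging all the formal verifications you spell out into the phrase ``from this fact and the results in this appendix up to here.''
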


\begin{proof}
By de Jong's alteration theorem, for any object $(X,\overline{X})$ in $\dom_{k}^{geo}$, there exists an $\salt$-covering $(X',\overline{X}') \to (X,\overline{X})$ with $(X',\overline{X}') \in \dom_{k}^{nc}$. From this fact and the results in this appendix up to here, 
we obtain the corollary. 
\end{proof}

Thus, for $(X,\overline{X}) \in \dom_k^{geo}$,  
if we define the category ${\rm Ho}\{(X_\bullet,\overline{X}_\bullet)/(X,\overline{X})\}$  as in Definition \ref{def:homotopycat}, it is cofiltered by 
Corollary \ref{cor:homotopycat}. 
Hence, for a fixed $(X, \overline{X})$, 
the inductive system 
\[ \left\{H^i_{\cris}((X_{\bullet}, \overline{X}_{\bullet})/W(k))\right\}_{(X_{\bullet}, \overline{X}_{\bullet}) \in {\rm Ho}\{(X_{\bullet},\overline{X}_{\bullet})/(X,\overline{X})\}} \]
is filtered and so the colimit 
$\varinjlim_{(X_{\bullet}, \overline{X}_{\bullet}) \in {\rm Ho}\{(X_{\bullet},\overline{X}_{\bullet})/(X,\overline{X})\}} 
H^i_{\cris}((X_{\bullet}, \overline{X}_{\bullet})/W(k))$ 
is defined. 
Also, for a morphism 
$\overline{f}: (X',\overline{X}') \to (X,\overline{X})$ in 
$\dom_k^{geo}$, 
if we define the category ${\rm Ho}\{\overline{f}_{\bullet}/\overline{f}\}$ as in Definition \ref{def:homotopycat}, it is cofiltered by 
Corollary \ref{cor:homotopycat} and we have a diagram of projections 
\[ {\rm Ho}\{(X'_{\bullet},\overline{X}'_{\bullet})/(X', \overline{X}')\} 
\leftarrow {\rm Ho}\{ \overline{f}_{\bullet}/\overline{f}\} 
\rightarrow {\rm Ho}\{(X_{\bullet},\overline{X}_{\bullet})/(X, \overline{X})\} 
\]
with the second arrow surjective on the set of objects. 
This diagram induces a morphism between colimits 
\[ \varinjlim_{(X_{\bullet}, \overline{X}_{\bullet}) \in {\rm Ho}\{(X_{\bullet},\overline{X}_{\bullet})/(X,\overline{X})\}} 
H^i_{\cris}((X_{\bullet}, \overline{X}_{\bullet})/W(k)) 
\to 
\varinjlim_{(X'_{\bullet}, \overline{X}'_{\bullet}) \in {\rm Ho}\{(X'_{\bullet},\overline{X}'_{\bullet})/(X',\overline{X}')\}} 
H^i_{\cris}((X'_{\bullet}, \overline{X}'_{\bullet})/W(k)). 
\] 

Next, for $X \in \dom_k$, denote by $\{(X,\overline{X})/X\}^{geo}$ the category of geometric pairs 
$(X,\overline{X})$ extending $X$. 
Then it is easy to see that this category is cofiltered (cf. Lemma \ref{lem: nc compactification}) and so the colimit 
\[ 
\varinjlim_{(X,\overline{X}) \in \{(X,\overline{X})/X\}^{geo}} 
\varinjlim_{(X_{\bullet}, \overline{X}_{\bullet}) \in {\rm Ho}\{(X_{\bullet},\overline{X}_{\bullet})/(X,\overline{X})\}} 
H^i_{\cris}((X_{\bullet}, \overline{X}_{\bullet})/W(k))
\] 
is defined. 
Also, for a morphism $f: X' \to X$ in $\dom_k$, denote by 
$\{ \overline{f}/f \}^{geo}$ the category of morphisms of geometric pairs 
$(X',\overline{X}') \to (X, \overline{X})$ extending $f$. It is easy to see that 
this category is also cofiltered and that there exists a diagram of projections 
\[ \{(X',\overline{X}')/X\}^{geo} \leftarrow \{ \overline{f}/f \}^{geo} 
\rightarrow \{(X,\overline{X})/X\}^{geo}
\]
where the second arrow is surjective on the set of objects (cf. Proposition \ref{prop: functorial}). This diagram induces a morphism 
\begin{align*}
& \varinjlim_{(X,\overline{X}) \in \{(X,\overline{X})/X\}^{geo}} 
\varinjlim_{(X_{\bullet}, \overline{X}_{\bullet}) \in {\rm Ho}\{(X_{\bullet},\overline{X}_{\bullet})/(X,\overline{X})\}} 
H^i_{\cris}((X_{\bullet}, \overline{X}_{\bullet})/W(k)) \\
\to & 
\varinjlim_{(X',\overline{X}') \in \{(X',\overline{X}')/X'\}^{geo}} 
\varinjlim_{(X'_{\bullet}, \overline{X}'_{\bullet}) \in {\rm Ho}\{(X'_{\bullet},\overline{X}'_{\bullet})/(X',\overline{X}')\}} 
H^i_{\cris}((X'_{\bullet}, \overline{X}'_{\bullet})/W(k)). 
\end{align*}

Therefore, we can make the following definition.

\begin{definition}
For $i=0,1$, we define the cohomology theory $X \mapsto H^i_{\salt}(X)$ on $\dom_k$ by 
\begin{equation}\label{eq:defH0H1}
	H^i_{\salt}(X) := 
\varinjlim_{(X,\overline{X}) \in \{(X,\overline{X})/X\}^{geo}} 
\varinjlim_{(X_{\bullet}, \overline{X}_{\bullet}) \in {\rm Ho}\{(X_{\bullet},\overline{X}_{\bullet})/(X,\overline{X})\}}
	H^i_{\cris}((X_{\bullet}, \overline{X}_{\bullet})/W(k)). 
\end{equation}

\end{definition}

The following is the reformulation of the results in Section 2.

\begin{theorem}\label{thm:reformulation}
For $i=0,1$, the inductive system on the right hand side of \eqref{eq:defH0H1} is constant. 
\end{theorem}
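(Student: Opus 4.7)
The plan is to deduce Theorem \ref{thm:reformulation} from the independence results of Theorems \ref{thm:H^0} and \ref{thm:H^1}. Concretely, I will show separately that the inner colimit and the outer colimit in \eqref{eq:defH0H1} each have all transition maps equal to isomorphisms, which for filtered systems suffices for constancy.

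For the inner colimit, fix $(X,\overline{X}) \in \dom_k^{geo}$. A morphism in the homotopy category ${\rm Ho}\{(X_\bullet, \overline{X}_\bullet)/(X, \overline{X})\}$ is a homotopy class of morphisms $\phi: (X'_\bullet, \overline{X}'_\bullet) \to (X_\bullet, \overline{X}_\bullet)$ between two split proper generically \'etale hypercoverings which are simplicial $nc$-pairs over $(X,\overline{X})$. By Theorem \ref{thm:H^0} (for $i=0$) and Theorem \ref{thm:H^1} (for $i=1$), any such $\phi$ induces an isomorphism on $H^i_{\cris}$; the well-definedness of the induced map on homotopy classes is the standard fact that homotopic maps of simplicial schemes induce chain-homotopic maps on the associated de Rham--Witt complexes and hence agree on cohomology. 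Thus every inner transition map is an isomorphism.

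For the outer colimit, I must show that every morphism $\overline{f}: (X, \overline{X}_2) \to (X, \overline{X}_1)$ in $\{(X,\overline{X})/X\}^{geo}$ induces an isomorphism on inner colimits. The key observation is that composition with $\overline{f}$ carries any split proper generically \'etale hypercovering $(X_\bullet, \overline{X}_{2,\bullet}) \to (X, \overline{X}_2)$ with $(X_\bullet, \overline{X}_{2,\bullet}) \in \dom_k^{nc}$ to a hypercovering of $(X, \overline{X}_1)$ whose underlying simplicial $nc$-pair is unchanged. Since by the first step both inner colimits are constant, equal to $H^i_{\cris}((X_\bullet, \overline{X}_{2,\bullet})/W(k))$ at this chosen hypercovering, the outer transition map is literally the identity on this common value.

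The substance of the argument thus lies entirely in the first step, where we invoke Theorems \ref{thm:H^0} and \ref{thm:H^1}; the second step is formal, reflecting the fact that the crystalline cohomology of a simplicial $nc$-pair depends only on the simplicial $nc$-pair itself and not on which compactification of $X$ it happens to map to.
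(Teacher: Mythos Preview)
Your proposal is correct and follows essentially the same approach as the paper's proof: first invoke Theorems \ref{thm:H^0} and \ref{thm:H^1} to show constancy of the inner system over ${\rm Ho}\{(X_\bullet,\overline{X}_\bullet)/(X,\overline{X})\}$, then use the composition trick (a hypercovering over $(X,\overline{X}_2)$ becomes one over $(X,\overline{X}_1)$ with the same underlying simplicial $nc$-pair) to handle the outer system. Your presentation is slightly more explicit than the paper's, in particular the remark that the outer transition map is literally the identity on the common cohomology value, but the logical content is the same.
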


\begin{proof}
Theorems \ref{thm:H^0}, \ref{thm:H^1} imply that the inductive system on
the right hand side of \eqref{eq:defH0H1} is constant with respect to 
${\rm Ho}\{(X_{\bullet},\overline{X}_{\bullet})/(X,\overline{X})\}$. 
Note also that, for a morphism $(X,\overline{X}') \to (X,\overline{X})$ in $\dom_k^{geo}$ and 
an object $(Y_{\bullet}, \overline{Y}_{\bullet}) \to (X, \overline{X}')$ in  
${\rm Ho}\{(X_{\bullet},\overline{X}_{\bullet})/(X,\overline{X}')\}$, the composition 
$(Y_{\bullet}, \overline{Y}_{\bullet}) \to (X, \overline{X}') \to (X, \overline{X})$ 
is an object in ${\rm Ho}\{(X_{\bullet},\overline{X}_{\bullet})/(X,\overline{X})\}$. 
This fact and Theorems \ref{thm:H^0}, \ref{thm:H^1} imply that the 
inductive system on the right hand side of \eqref{eq:defH0H1} is constant also 
with respect to $\{(X,\overline{X})/X\}^{geo}$. 
\end{proof}

\begin{corollary}\label{cor:reformulation}
\begin{enumerate}
	\item For any $X \in \dom_k$, $H^i_{\salt}(X) \, (i=0,1)$ are finitely generated over $W(k)$. 
	\item For any $(X,\overline{X}) \in \dom_k^{nc}$, there exist functorial isomorphisms 
	$$H^i_{\salt}(X) \cong H^i_{\cris}((X,\overline{X})/W(k)) \quad (i=0,1).$$
	\item For any $X \in \dom_k$, there exist functorial isomorphisms 
	$$H^i_{\salt}(X) \otimes_{\Z} \Q \cong H^i_{\rig}(X/K) \quad (i=0,1).$$
\end{enumerate}
\end{corollary}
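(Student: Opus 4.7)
The plan is to derive all three statements of Corollary \ref{cor:reformulation} as direct consequences of Theorem \ref{thm:reformulation}, once one records the key point: since the double inductive system on the right-hand side of \eqref{eq:defH0H1} is constant (for $i=0,1$), the group $H^i_{\salt}(X)$ is canonically isomorphic to $H^i_{\cris}((X_{\bullet}, \overline{X}_{\bullet})/W(k))$ for \emph{any} geometric pair $(X,\overline{X})$ extending $X$ and any object $(X_{\bullet}, \overline{X}_{\bullet}) \in {\rm Ho}\{(X_{\bullet},\overline{X}_{\bullet})/(X,\overline{X})\}$. The existence of such objects is guaranteed by Corollary \ref{cor:domc} (nonemptiness of the homotopy category is part of the cofilteredness statement in Corollary \ref{cor:homotopycat}).

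For (i), I would fix any choice of such $(X_{\bullet}, \overline{X}_{\bullet})$ and use the descent spectral sequence
\[
E_1^{s,t} = H^t_{\cris}((X_s, \overline{X}_s)/W(k)) \,\Longrightarrow\, H^{s+t}_{\cris}((X_{\bullet}, \overline{X}_{\bullet})/W(k)).
\]
Since each $(X_s, \overline{X}_s)$ lies in $\dom_k^{nc}$, the $E_1$-terms are finitely generated over $W(k)$ by the finiteness of log crystalline cohomology for proper log smooth schemes. Consequently the abutment in degrees $0$ and $1$ is finitely generated, and so is $H^i_{\salt}(X)$ via the canonical isomorphism.

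For (ii), given $(X,\overline{X}) \in \dom_k^{nc}$, I would use the constant simplicial object $(X_\bullet,\overline{X}_\bullet) = (X,\overline{X})$, which is trivially a split proper generically étale hypercovering of $(X,\overline{X})$ with each degree in $\dom_k^{nc}$. It therefore defines an object of ${\rm Ho}\{(X_{\bullet},\overline{X}_{\bullet})/(X,\overline{X})\}$, and specialising the canonical isomorphism to this object yields $H^i_{\salt}(X) \cong H^i_{\cris}((X,\overline{X})/W(k))$. Functoriality in $\dom_k^{nc}$ follows from the functoriality built into the colimit over $\{(X,\overline{X})/X\}^{geo}$ and ${\rm Ho}\{\overline{f}_{\bullet}/\overline{f}\}$ in Appendix \ref{appendixB'}.

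For (iii), I would again fix any $(X_{\bullet}, \overline{X}_{\bullet})$ as above and invoke the comparison $H^i_{\rig}(X/K) \cong H^i_{\cris}((X_\bullet,\overline{X}_{\bullet})/W(k)) \otimes_{\Z} \Q$ from \cite[(1.0.17), Cor.\,11.7 1)]{N_2012}. Combining with the canonical isomorphism gives the isomorphism of (iii). The main (mild) obstacle will be the functoriality in $\dom_k$: one must check that the isomorphisms in (ii) and (iii) are independent of the choices made and natural in morphisms $f : X' \to X$. This is handled by exactly the same argument as in the proofs of Theorem \ref{thm:comparison} and Theorem \ref{theorem: A-Chern clases}: for a morphism $\overline{f}: (X',\overline{X}') \to (X,\overline{X})$ in $\dom_k^{geo}$ extending $f$, the cofilteredness of ${\rm Ho}\{\overline{f}_{\bullet}/\overline{f}\}$ (Corollary \ref{cor:homotopycat}) together with surjectivity of the projection onto ${\rm Ho}\{(X_\bullet,\overline{X}_\bullet)/(X,\overline{X})\}$ reduces functoriality to the functoriality of log crystalline (resp.\ rigid) cohomology with respect to a fixed morphism of simplicial $nc$-pairs.
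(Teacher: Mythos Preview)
Your proposal is correct and follows essentially the same route as the paper's proof: all three items are deduced from Theorem \ref{thm:reformulation} (constancy of the inductive system), with (i) coming from finite generation of each term, (ii) from specialising to a particular object of the homotopy category, and (iii) from Nakkajima's comparison \cite[Cor.\,11.7\,1)]{N_2012}. You simply fill in more detail than the paper does---spelling out the descent spectral sequence for (i) and naming the constant simplicial object for (ii)---but the logic is the same.
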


\begin{proof}
	Since any member in the limit on the right hand side of \eqref{eq:defH0H1} is finitely generated over $W(k)$, 
	(i) follows from Theorem \ref{thm:reformulation}. Noting the functoriality of $H^i_{\salt}(X)$ 
	with respect to $X \in \dom_k$, we see that (ii) follows also 
	from Theorem \ref{thm:reformulation}. (iii) follows from the canonical isomorphism 
	$H^{\ast}_{\cris}((X_{\bullet}, \overline{X}_{\bullet})/W(k)) \otimes_{\Z} \Q \cong H^\ast_{\rig}(X/K)$  
	in \cite[Cor.\,11.7 1)]{N_2012}. 
\end{proof}

\end{document}